\newtheorem{theorem}{Theorem}
\numberwithin{theorem}{section}
\newtheorem{proposition}[theorem]{Proposition}
\newtheorem{lemma}[theorem]{Lemma}
\newtheorem{corollary}[theorem]{Corollary}
\theoremstyle{definition} 
\newtheorem{definition}[theorem]{Definition}
\theoremstyle{remark}
\newtheorem{remark}[theorem]{Remark}
\numberwithin{theorem}{section}
\numberwithin{equation}{section}
\providecommand{\C}{\mathbb C} 
\providecommand{\Q}{\mathbb Q}
\providecommand{\RR}{\mathbb R}
\providecommand{\ZZ}{\mathbb Z}
\providecommand{\charac}{\operatorname{char}}
\providecommand{\supp}{\operatorname{Supp}}
\providecommand{\Val}{\operatorname{Val}}
\providecommand{\Ray}{\operatorname{Ray}}
\providecommand{\Spec}{\operatorname{Spec}}
\providecommand{\A}{\mathbb A}
\providecommand{\PP}{\mathbb P}
\providecommand{\pr}{\mathrm{pr}}
\providecommand{\Gr}{\mathrm{Gr}}
\providecommand{\id}{\mathrm{id}}
\providecommand{\VOL}{\mathrm{Vol}_{\mathrm{sb}}}
\providecommand{\Rt}{\mathscr{R}}
\providecommand{\Kt}{\mathscr{K}}
\providecommand{\spe}{\operatorname{sp}}
\providecommand{\bdd}{\operatorname{bdd}}
\providecommand{\SB}{\mathrm{SB}}
\providecommand{\OO}{\mathscr O}
\providecommand{\rank}{\operatorname{rank}}
\providecommand{\ker}{\operatorname{ker}}
\title{Stable rationality of hypersurfaces in sch\"{o}n affine varieties}
\author{Taro Yoshino}
\date{\today}
\address{Graduate School of Mathematical Sciences, The University of Tokyo, 3-8-1 Komaba,
Meguro-ku, Tokyo, 153-8914, Japan}
\email{yotaro@ms.u-tokyo.ac.jp}
\begin{document}

\begin{abstract}
  In recent years, there has been a development in approaching rationality problems through the motivic methods (cf. [Kontsevich--Tschinkel'19], [Nicaise--Shinder'19], [Nicaise--Ottem'21]).

  This method requires the explicit construction of degeneration families of curves with favorable properties.
  While the specific construction is generally difficult, [Nicaise--Ottem'22] combines combinatorial methods to construct degeneration families of hypersurfaces in toric varieties and shows the non-stable rationality of a very general hypersurface in projective spaces.

  In this paper, we extend the result of [Nicaise--Ottem'22] not only for hypersurfaces in algebraic tori but also to those in sch\"{o}n affine varieties.
  In application, we show the irrationality of certain hypersurfaces in $\Gr_\C(2, n)$ using the motivic method, which coincides with the result obtained by the same author in the previous research.
\end{abstract}
\maketitle
\section{Introduction}
 \subsection{The motivic method for the rationality problem}
    The rationality problem is one of the central problems in algebraic geometry. 
    The study of the rationality of hypersurfaces in projective spaces, in particular, is a hot topic in recent research. 
    Here, we will introduce one of the most important results in this area in recent years as follows: 
    \begin{proposition}\cite[Corollary 1.2]{S19}\label{prop: the rationality of hypersurfaces}
        Let $k$ be an uncountable field of $\charac(k)\neq 2$, and $n \geq 3$ and $d \geq 2 + \log_2(n)$ be integers. 
        Then a very general hypersurface $H_d\subset\PP^{n+1}_k$ of degree $d$ is not stably rational, i.e., $H_d\times\PP^m_k$ is not rational for any $m\in \ZZ_{\geq0}$.  
    \end{proposition}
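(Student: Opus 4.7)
The plan is to combine a specialization principle for stable rationality with an explicit construction of a degenerate hypersurface whose irrationality can be detected by a concrete birational invariant.

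First, for the reduction step, I would invoke a specialization theorem. Over the uncountable field $k$, the locus in the Hilbert scheme of smooth degree $d$ hypersurfaces in $\PP^{n+1}_k$ parametrizing stably rational members is contained in a countable union of proper closed subsets. This follows either from the motivic volume formalism (Nicaise--Shinder) in characteristic zero, or more classically from the Colliot-Th\'el\`ene--Pirutka specialization principle via integral decomposition of the diagonal, which applies whenever $\charac(k)\neq 2$. Consequently it suffices to exhibit a \emph{single} flat family $\mathcal{H}\to \Spec R$ over a DVR $R$ with smooth and very general geometric generic fiber, whose special fiber is a strict normal crossings degeneration admitting no integral decomposition of the diagonal.

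Second, for the construction of the degeneration, I would use a combinatorial toric method. Choose a regular subdivision of the standard simplex of size $d$ in $\RR^{n+1}$ and use it to build, via the tropical/toric dictionary, a flat degeneration of a general hypersurface in $\PP^{n+1}_k$ whose special fiber decomposes into pieces indexed by the maximal cells, each a hypersurface in a smooth projective toric variety glued along intersections indexed by lower-dimensional cells. The class of the degeneration in the appropriate quotient of $K_0(\mathrm{Var}_k)$ then takes the form of an inclusion-exclusion sum over these combinatorial strata, reducing the birational obstruction on the generic fiber to a question about the components.

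Third, one designs the subdivision so that some distinguished component carries a non-trivial birational invariant. Following Schreieder, this is achieved by arranging a quadric-bundle structure with non-vanishing second unramified cohomology, constructed from a polynomial whose coefficients realize a carefully chosen cohomology class. The bound $d \geq 2 + \log_2(n)$ arises from an inductive doubling argument intrinsic to the quadric-bundle method: at each inductive step the base dimension is roughly halved, so one needs $\log_2(n)$ steps to reduce to a computable base case, and the extra $+2$ accounts for a degree two twist plus the smooth starting point.

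The main obstacle is the third step: ensuring that the combinatorics of the chosen subdivision actually produces a component whose unramified cohomology (or equivalent birational obstruction) is non-zero. The delicate matching between the discrete data of the Newton polytope degeneration and the arithmetic data controlling the quadric-bundle invariants is the technical core of the argument, and it is precisely this matching that the present paper aims to generalize from $\PP^{n+1}$ to hypersurfaces in sch\"on affine ambient varieties.
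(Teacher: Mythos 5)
The paper does not prove this proposition: it is quoted verbatim from Schreieder \cite[Corollary~1.2]{S19} and used as a black box, so there is no internal argument for your sketch to be compared against. Treating your sketch as a reconstruction of Schreieder's proof, Steps 1 and 3 capture the right ingredients (specialization of integral decomposition of the diagonal; a quadric bundle with nontrivial degree-two unramified $\ZZ/2$-cohomology built out of Pfister forms; the exponential growth $2^{d-2}\geq n$ driving the logarithmic slope), but Step 2 is not part of that proof and does not fit coherently with Step 3.

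Schreieder does not degenerate through a regular Newton-polytope subdivision and the special fiber is not a union of strata indexed by cells. He writes down a \emph{single} special degree-$d$ polynomial defining an integral hypersurface $X_0\subset\PP^{n+1}$ carrying a quadric fibration over a projective space, shows $H^2_{nr}(k(X_0)/k,\ZZ/2)\neq 0$ directly by Pfister-form calculus, and then applies the Colliot-Th\'el\`ene--Pirutka specialization, with his refinement allowing $X_0$ to have worse singularities provided it admits a suitable alteration. The combinatorial inclusion--exclusion you describe in Step 2 is instead the Nicaise--Ottem motivic-volume method of \cite{NO22} and of the present paper: there the class of the generic fiber in $\ZZ[\SB_K]$ is an alternating sum over toroidal strata, and one must still know \emph{from elsewhere} that some stratum is not stably rational. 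Indeed Theorem~\ref{thm: d in Grassmannian} takes the non-stable-rationality of a very general degree-$d$ hypersurface in $\PP^{2n-5}_\C$ as a \emph{hypothesis}, and Corollary~\ref{cor: log bound} discharges it by citing Proposition~\ref{prop: the rationality of hypersurfaces}; running the toric degeneration to prove Proposition~\ref{prop: the rationality of hypersurfaces} itself would be circular. There is also a base-field mismatch: the motivic-volume machinery of \cite{NO21}, \cite{NS19} is set up over an equicharacteristic-zero base (cf.\ Proposition~\ref{prop: NONO21}, which assumes $\charac(k)=0$), whereas the proposition only assumes $\charac(k)\neq 2$; the route that actually covers that generality is the decomposition-of-the-diagonal one, not the motivic one. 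To turn your sketch into a proof of the proposition you should drop Step 2 entirely and replace it with Schreieder's explicit construction of the special quadric-bundle hypersurface.
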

    Note that recent years have seen improvements in the non-stably rational range of degrees and dimensions by \cite{Moe23} and \cite{LS24}.
    
    In the proof of Proposition \ref{prop: the rationality of hypersurfaces}, the diagonal decomposition and the unramified cohomology are key methods. 
    On the other hand, recently, there exists another approach -- the motivic approach -- for the rationality problem(cf. \cite{KT19}, \cite{NO21}, \cite{NS19}). 

    First, we introduce a summary of the motivic method for the rationality problem. 
    Let $K$ be a field.  
    Two $K$-schemes of finite type $X$ and $Y$ are stably birational if $X\times \PP^m_K$ is birational to $Y\times \PP^n_K$ for some non-negative integers $m$ and $n$. 
    If a $K$-variety $X$ is stably birational to $\Spec(K)$, then we call that $X$ is stably rational. 
    In particular, a rational $K$-variety $X$ is stably rational. 
    Let $\SB_K$ denote the set of stable birational equivalence classes $\{X\}_\mathrm{sb}$ of integral $K$-schemes $X$ of finite type, and $\ZZ[\SB_K]$ be the free abelian group on $\SB_K$.    
    In particular, $X$ is stably rational over $K$ if and only if $\{X\}_\mathrm{sb} = \{\Spec(K)\}_\mathrm{sb}$ for any variety $X$ over $K$ by the definition. 
    
    Let $k$ be an algebraically closed field of $\charac(k) = 0$. 
    Let $\Rt$ be a valuation ring defined as follows:
            \[
                \Rt = \bigcup_{n\in\ZZ_{>0}} k[[t^{\frac{1}{n}}]].
            \]
    Let $\Kt$ be the fraction field of $\Rt$. 
            We remark that $\Kt$ is written as follows:
            \[
                \Kt = \bigcup_{n\in\ZZ_{>0}} k((t^{\frac{1}{n}})).
            \]
    In \cite{NO21}, they constructed a ring morphism as follows: 
    \begin{proposition}\label{prop: NONO21}\cite[Lemma.3.3.5]{NO21}
        There exists a unique ring morphism $\VOL\colon \ZZ[\SB_{\Kt}]\rightarrow \ZZ[\SB_k]$ such that for every strictly toroidal proper $\Rt$-scheme $\mathscr{X}$ with the smooth generic fiber $X = \mathscr{X}_{\Kt}$, we have 
        \[
            \VOL(\{X\}_{\mathrm{sb}}) = \sum_{E\in \mathcal{S}(\mathscr{X})}(-1)^{\mathrm{codim}(E)}\{E\}_{\mathrm{sb}},
        \] 
        where $\{E\}$ is a stratification of $\mathscr{X}_k$. 
    \end{proposition}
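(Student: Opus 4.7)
The plan is to construct $\VOL$ on generators coming from strictly toroidal models, verify the construction is independent of the chosen model, and then promote it to a ring morphism. Any class in $\ZZ[\SB_{\Kt}]$ is a $\ZZ$-linear combination of classes $\{X\}_{\sb}$ for smooth proper $\Kt$-varieties $X$ (use Hironaka's resolution and Nagata compactification to reduce a general integral $\Kt$-scheme of finite type to this case). Since $k$ is algebraically closed of characteristic zero and $\Rt$ is the associated Puiseux-series valuation ring, semistable reduction over $\Rt$ provides a strictly semistable, hence strictly toroidal, proper $\Rt$-model $\mathscr{X}$ for any such $X$. This forces the value of $\VOL(\{X\}_{\sb})$ to be the one prescribed by the formula, giving uniqueness.

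The crux is to show that the right-hand side does not depend on the chosen strictly toroidal model. I would invoke the weak factorization theorem in the logarithmic/toroidal category: any two strictly toroidal proper $\Rt$-models of $X$ are connected by a zig-zag of blow-ups $\mathscr{X}' \to \mathscr{X}$ along smooth centers that are closed strata. One then checks that the alternating sum is invariant under a single such blow-up. If the center $Z$ is a stratum of codimension $c$, the exceptional divisor is a projective bundle over $Z$, and the new strata over each sub-stratum $W \subseteq Z$ are iterated projective bundles over $W$. Using $\{\PP^r_k \times W\}_{\sb} = \{W\}_{\sb}$ together with the combinatorial identity $\sum_{i=0}^{c-1}(-1)^i = 1-(-1)^c$, the contribution of the exceptional strata cancels the loss from $Z$ and its sub-strata, leaving the alternating sum unchanged in $\ZZ[\SB_k]$.

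Finally, $\VOL$ descends to a group morphism on $\ZZ[\SB_{\Kt}]$ because $\VOL(\{X \times \PP^m_{\Kt}\}_{\sb}) = \VOL(\{X\}_{\sb})$ is visible from the model $\mathscr{X} \times_{\Rt} \PP^m_{\Rt}$, whose strata are $E \times \PP^m_k$. For multiplicativity, one observes that the fibre product $\mathscr{X} \times_{\Rt} \mathscr{Y}$ of two strictly toroidal models is again strictly toroidal—this is precisely why the strictly toroidal framework is preferred over strict semistability, which is not closed under products—and its stratification is the product of the two given stratifications, so the formula is multiplicative on generators. The main obstacle is the independence step: one needs the correct toroidal weak factorization and a careful combinatorial bookkeeping of how strata decompose into projective-bundle strata after blow-up, so that the alternating sum of stable birational classes in $\ZZ[\SB_k]$ is genuinely invariant; once this is secured, the remaining steps are formal.
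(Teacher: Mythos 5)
The paper does not prove this statement: it cites it verbatim from \cite[Lemma 3.3.5]{NO21}, so there is no internal proof to compare against. Your plan---construct $\VOL$ on strictly toroidal models via semistable reduction, prove independence of the model by toroidal weak factorization, and then verify the ring axioms---is closest in spirit to the Kontsevich--Tschinkel approach \cite{KT19}, but it is not the route taken in the cited reference. Nicaise--Ottem, building on Nicaise--Shinder \cite{NS19}, first build a motivic volume map on the Grothendieck ring $K_0(\mathrm{Var}_{\Kt}) \to K_0(\mathrm{Var}_k)$ using the Hrushovski--Kazhdan theory of motivic integration, so that model-independence is automatic rather than proved by hand; they then derive the strata formula from an A'Campo-type computation of the motivic nearby fiber on a strictly semistable or strictly toroidal model, and descend to $\ZZ[\SB_{\Kt}] \to \ZZ[\SB_k]$ via the Larsen--Lunts theorem. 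That route buys well-definedness for free, which is exactly the step your sketch correctly identifies as the crux but does not resolve.

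There are two concrete gaps in your proposal. First, you invoke ``weak factorization in the logarithmic/toroidal category'' for strictly toroidal proper $\Rt$-schemes. The AKMW and Abramovich--Temkin factorization theorems apply to birational maps of varieties over a field (or over sufficiently nice bases), and you would need to establish both that any two strictly toroidal proper $\Rt$-models of a given smooth proper $X/\Kt$ dominate a common strictly toroidal one, and that such a domination factors into toroidal blow-ups whose centers are closed strata; neither is off the shelf, and in particular the blow-up of a strictly toroidal $\Rt$-scheme along a stratum need not be strictly toroidal without further care. Second, the combinatorial identity you cite, $\sum_{i=0}^{c-1}(-1)^i = 1-(-1)^c$, is false: the left side equals $\tfrac{1}{2}\bigl(1-(-1)^c\bigr)$. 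Even if this is only a slip, it indicates the ``careful bookkeeping'' of how strata of the blow-up regroup into iterated projective bundles has not actually been carried out, which is precisely where such arguments stand or fall.
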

    At this point, we omit the detailed definition of a strictly toroidal model(See Definition \ref{prop: NONO21}). 
    We remark that such ring morphisms appeared in these references(cf. \cite{KT19}, \cite{NO21}, \cite{NS19}), although the details of the forms differ. 
    In application, for any $\Kt$-variety $X$, if $\VOL(\{X\}_{\mathrm{sb}})\neq \{\Spec(k)\}_\mathrm{sb}$, then $X$ is not stably rational over $\Kt$. 
    In particular, this ring morphism $\VOL$ gives a birational invariant that detects irrationality, and this is called the motivic method.

    There are various applications of the motivic method for mentioning nonstable rationality, and we will excerpt and list several of these results as follows: 
    \begin{itemize}
        \item A very general quartic fivefold(\cite[Corollary 5.2]{NO22}). 
        \item A very general hypersurface in the product of two projective spaces of some degrees and dimensions(\cite[Proposition 6.2]{NO22}).  
        \item A very general complete intersections of some degrees and dimensions(\cite[Corollary 7.6]{NO22}). 
        \item A very general hypersurfaces in the quadric of some degrees and dimensions(\cite[Corollary 7.9]{NO22}). 
    \end{itemize} 
    As we can see, results for various varieties have been obtained through the motivic method. 
    This paper aims to discuss the rationality of hypersurfaces in the Grassmannian variety $\Gr_\C(2, n)$ using the motivic method as follows:
    \begin{theorem}[See Theorem \ref{thm: d in Grassmannian}]\label{thm: main theorem}
            If a very general hypersurface of degree $d$ in $\PP_\C^{2n-5}$ is not stably rational, then a very general hypersurface of degree $d$ in $\Gr_\C(2, n)$ is not stably rational.
    \end{theorem}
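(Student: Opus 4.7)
The plan is to apply the main theorem of this paper---which extends the Nicaise--Ottem degeneration formula from hypersurfaces in toric varieties to hypersurfaces in sch\"{o}n affine varieties---to a well-chosen one-parameter family of hypersurfaces in $\Gr_\C(2,n)$, and then read off the value of $\VOL$. The point is that a very general degree-$d$ hypersurface $X \subset \Gr_\C(2,n)$ has dimension $2n-5$, whereas a very general hypersurface $H \subset \PP_\C^{2n-5}$ has dimension $2n-6$, so $H$ can appear as a codimension-one stratum in the special fiber of a strictly toroidal model of $X$.

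The first step is to equip $\Gr_\C(2,n)$ with the relevant sch\"{o}n structure. Via the Pl\"{u}cker embedding, the open subvariety $\Gr_\C(2,n)^\circ \subset \Gm^{\binom{n}{2}-1}$ on which every Pl\"{u}cker coordinate is nonvanishing is a classical example of a sch\"{o}n affine variety, with tropicalization the space of phylogenetic trees (Speyer--Sturmfels). Since a very general degree-$d$ hypersurface $X \subset \Gr_\C(2,n)$ meets $\Gr_\C(2,n)^\circ$ in a dense open, the machinery of this paper applies and produces, from a suitable $\Rt$-family $X_t$ degenerating to a chosen tropical initial fiber, a strictly toroidal proper model $\mathscr{X} \to \Spec(\Rt)$.

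Applying Proposition \ref{prop: NONO21} then yields
\[
    \VOL(\{X\}_{\sb}) = \sum_{E \in \mathcal{S}(\mathscr{X})} (-1)^{\codim E} \{E\}_{\sb}.
\]
The key identification, which I expect to emerge from a carefully chosen maximal cone of the tropical Grassmannian (for example the one corresponding to a caterpillar tree), is that precisely one codimension-one stratum $E_0$ is stably birational to a very general degree-$d$ hypersurface $H \subset \PP_\C^{2n-5}$, while every other stratum is rational over $k$ (being an open piece of a toric-type variety built from the tropical fan). Granting this, the sum telescopes to $\VOL(\{X\}_{\sb}) = -\{H\}_{\sb} + N\{\Spec(k)\}_{\sb}$ for some integer $N$. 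Since $\ZZ[\SB_k]$ is free abelian and $\{H\}_{\sb} \neq \{\Spec(k)\}_{\sb}$ by the non-stable rationality hypothesis, one obtains $\VOL(\{X\}_{\sb}) \neq \{\Spec(k)\}_{\sb}$, so $X$ is not stably rational.

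The main obstacle is the combinatorial stratum identification: one must select the degeneration so that exactly one stratum reconstructs a hypersurface in $\PP_\C^{2n-5}$ of the correct degree $d$, with all remaining pieces rational. This is essentially a problem about the tropicalization of $\Gr_\C(2,n)$ together with the Newton polytope of a degree-$d$ section, and I expect it to comprise the technical heart of Theorem \ref{thm: d in Grassmannian}. A secondary difficulty is ensuring that the sch\"{o}n hypothesis needed for the paper's main theorem is preserved for a very general hypersurface after the chosen initial degeneration; this should follow from a standard Bertini-type transversality argument for the tropical fan.
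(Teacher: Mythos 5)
There is a genuine gap in your argument, and it is precisely the point that the paper flags in its introduction as the ``key point of the proof.'' You assert that, for a well-chosen degeneration, ``precisely one codimension-one stratum $E_0$ is stably birational to a very general degree-$d$ hypersurface $H \subset \PP_\C^{2n-5}$, while every other stratum is rational over $k$.'' This is not established, and the paper makes no such claim. In the actual construction (Proposition~\ref{prop: computation result}), the bounded cones of the degeneration fan are exactly $\tau_0,\tau_1,\tau_2,\tau_3,\sigma_0,\sigma_1,\sigma_2$; the paper shows only that the two $\tau_1$- and $\tau_2$-strata are rational and that the $\sigma_1$-stratum is birational to a very general degree-$d$ hypersurface in $\PP^{2n-5}_\C$. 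Nothing is said about the strata over $\tau_0,\tau_3,\sigma_0,\sigma_2$, whose defining equations involve the full combinatorics of degree-$d$ monomials with nontrivial $J_2$-content, and their (stable) rationality is neither obvious nor needed. Your proposed telescoping $\VOL = -\{H\}_{\sb} + N\{\Spec(k)\}_{\sb}$ therefore has unaccounted terms.

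The paper circumvents this with a differencing argument you do not have: one compares two hypersurfaces $H(a)$ and $H(a')$ whose coefficient vectors agree outside $S^{\sigma_1}$; because the index sets $S^{\tau_0},S^{\tau_3},S^{\sigma_0},S^{\sigma_2}$ are disjoint from $S^{\sigma_1}$, all the unknown strata contributions cancel in the difference, leaving
\[
\VOL\bigl(H(a)\times_{\Gm}\Spec(\Kt)\bigr) - \VOL\bigl(H(a')\times_{\Gm}\Spec(\Kt)\bigr)
= \{E^{(1)}_{\sigma_1}\}_{\mathrm{sb}} - \{F^{(1)}_{\sigma_1}\}_{\mathrm{sb}}.
\]
One then invokes the result of Nicaise--Ottem (\cite[Corollary~4.2]{NO22}) on the stable birational class of a very general hypersurface in projective space to see that $a'$ can be chosen so that the right-hand side is nonzero, forcing one of $H(a)$, $H(a')$ not to be stably rational; this feeds into Proposition~\ref{prop: Bertini-stably rational} to conclude for a very general hypersurface over $\C$. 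Your proposal also diverges on a secondary point: rather than appealing directly to the Speyer--Sturmfels sch\"{o}nness of $\Gr^\circ_\C(2,n)$, the paper realizes $\Gr^\circ_\C(2,n)\cong Z\times T_{N^\dagger}$ with $Z$ a hyperplane-arrangement complement (Proposition~\ref{prop: arrangement}) and pushes the combinatorics through that product, which is what makes the explicit cone classification and the cancellation bookkeeping tractable. Without the comparison argument your approach cannot be completed.
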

    In particular, the following corollary holds by Proposition \ref{prop: the rationality of hypersurfaces} immediately:  
    \begin{corollary}[See Theorem \ref{cor: log bound}]
        If $n\geq 5$ and $d \geq 3 + \log_2(n-3)$, then a very general hypersurface of degree $d$ in $\Gr_\C(2, n)$ is not stably rational.         
    \end{corollary}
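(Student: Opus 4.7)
The plan is to combine Theorem \ref{thm: main theorem}, which is the main theorem of the paper, with Proposition \ref{prop: the rationality of hypersurfaces}, the currently available nonstable-rationality range for hypersurfaces in projective space. First, I would apply Theorem \ref{thm: main theorem} to reduce the statement: the corollary will follow once we know that a very general hypersurface of degree $d$ in $\PP^{2n-5}_\C$ is not stably rational under the stated numerical hypotheses on $n$ and $d$.

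Next, I would verify that Proposition \ref{prop: the rationality of hypersurfaces} applies to $\PP^{2n-5}_\C$ under our hypotheses. Writing $\PP^{2n-5}_\C = \PP^{N+1}_\C$ with $N = 2n-6$, the proposition requires that the base field be uncountable of characteristic $\neq 2$ (satisfied by $\C$), that $N \geq 3$ (equivalent to $n \geq 5$, which is our hypothesis), and that $d \geq 2 + \log_2 N$. The last condition rewrites as
\[
    d \;\geq\; 2 + \log_2\bigl(2(n-3)\bigr) \;=\; 3 + \log_2(n-3),
\]
which is exactly our hypothesis. Thus Proposition \ref{prop: the rationality of hypersurfaces} yields nonstable rationality for a very general degree-$d$ hypersurface in $\PP^{2n-5}_\C$, and Theorem \ref{thm: main theorem} then transports this to $\Gr_\C(2,n)$.

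The genuine content of the corollary therefore lies entirely in Theorem \ref{thm: main theorem}; once that theorem is in hand, the corollary is a routine substitution, matching the available projective-space bound against the particular dimension $2n-5$ in which the Grassmannian-to-projective-space comparison of Theorem \ref{thm: main theorem} is formulated. Consequently, the only real obstacle is the proof of Theorem \ref{thm: main theorem} itself, which is where the motivic/degeneration machinery (the invariant $\VOL$ of Proposition \ref{prop: NONO21} applied to a suitable strictly toroidal degeneration coming from the sch\"on structure on a chart of $\Gr_\C(2,n)$) must actually be deployed. For the present corollary, however, nothing beyond the two cited results and this bookkeeping is needed.
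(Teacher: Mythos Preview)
Your proposal is correct and follows essentially the same approach as the paper's proof: apply Schreieder's bound (Proposition~\ref{prop: the rationality of hypersurfaces}) to $\PP^{2n-5}_\C$ and then invoke Theorem~\ref{thm: main theorem}. Your verification of the numerical condition $d \geq 2 + \log_2(2n-6) = 3 + \log_2(n-3)$ and of $N = 2n-6 \geq 3 \Leftrightarrow n \geq 5$ is exactly the bookkeeping the paper leaves implicit.
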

    \subsection{The difficulty of the motivic method}
    While the motivic method looks quite simple, in practice, several problems could be solved.  
    For example, we have the following three problems : 
    \begin{itemize}
        \item [1.] To construct a ``good'' proper and flat model with a smooth generic fiber explicitly. 
        \item [2.] To enumerate strata of the closed fiber of this model. 
        \item [3.] To determine the stable birational equivalence classes (or birational equivalence classes) for each stratum.
    \end{itemize}
    Previous work \cite{NO22} successfully overcomes these three problems as follows(cf. \cite[Theorem. 3.14]{NO22}):
    \begin{itemize}
        \item [1.] They focused on a hypersurface $Y$ in an algebraic torus. 
        This hypersurface has a tropical compactification $\overline{Y}$ (cf.\cite{Tev}); in particular, they are compactified as a hypersurface in a proper toric variety $X$. 
        Moreover, if $Y$ is sch\"{o}n, then $\overline{Y}$ has a toroidal structure.
        This property was applied to the construction of a good model in \cite{NO22}.
        As an additional note, Bertini's theorem implies that a general hypersurface is sch\"{o}n.
        \item [2.] The ambient space is a toric variety, and the orbit decomposition of $X$ induces the stratification of $\overline{Y}$. 
        Moreover, each stratum is also a hypersurface in an algebraic torus, and we can calculate the defining Laurant polynomial of each stratum combinatorially.  
        \item [3.] They construct the strictly toroidal model $\mathscr{X}$ of a quartic 5-fold, which has a stratum $E\in\mathcal{S}(\mathscr{X})$, which is birational to a very general quartic double fourfold. In particular, $E$ is not stably rational (cf.\cite{HPT19}). 
        In addition to this, they showed that $\VOL(\{\mathscr{X_\Kt}\}_{\mathrm{sb}})\neq \{\Spec(k)\}_{\mathrm{sb}}$, and a very general quartic 5-fold is not stably rational. 
    \end{itemize}
\subsection{The significance of this paper}
    In the previous section, we observed that the construction of strictly toroidal models and the computation of stably birational volumes are relatively straightforward for hypersurfaces in algebraic tori.
    
    It is natural to consider applying the method of \cite{NO22} to study the rationality of hypersurfaces in other rational varieties. 
    For example, Grassmannian varieties are. 
    Grassmannian varieties have a Schubert decomposition; in particular, they have an algebraic torus as a dense open subspace. 
    Thus, it seems straightforward to construct a strictly toroidal model of the variety, which is birational to the hypersurface in the Grassmannian variety. 
    
    However, this does not work well because the defining Laurant polynomials of these hypersurfaces cannot be written as linear combinations of finite units. 
    In particular, we cannot apply Bertini's theorem. 
    Consequently, the construction of strictly toroidal models from hypersurfaces in algebraic tori faces certain limitations. 

    The tropical compactification of sch\"{o}n hypersurfaces in algebraic tori has been a significant focus in previous studies. 
    This motivates us to extend the discussion of compactification to hypersurfaces in not only algebraic tori but also sch\"{o}n affine varieties and explore whether the rationality problem of these hypersurfaces can be studied using motivic methods.
    This paper provides one possible answer to that question. 
    The following three points are particularly significant in the content of this paper:
    \begin{itemize}
        \item[I.] We constructed a strictly toroidal scheme using the tropical compactification of a sch\"{o}n affine variety and computed its stably birational volume (cf. Proposition \ref{prop: model I} and Proposition \ref{prop: model II}). 
        This work extends the results of previous studies. 
        A key theorem is Lemma \ref{lem: smooth to smooth}, which demonstrates that the tropical compactification of a sch\"{o}n affine variety has a toroidal structure. 
        The results naturally extend from this fact.
        \item[II.] Given a torus-invariant finite-dimensional linear system $\mathfrak{d}$ of an algebraic torus $T$ in which a schön affine variety $Z$ is embedded, we proved that the intersection of $Z$ with a general divisor in $\mathfrak{d}$ remains a sch\"{o}n affine variety (cf. Proposition \ref{prop: general condition of schon}). 
        This extends the classical result that a general hypersurface in an algebraic torus is a sch\"{o}n affine variety.
        The key to the proof lies in the set of valuations on $Z$ induced by $T$, which are used to construct the tropical compactification of hypersurfaces in $Z$ (cf. Section 4.1).
        \item[III.] Using the discussions so far, we proved Theorem \ref{thm: main theorem}. 
        It is known that there exists an open subset of $\Gr_\C(2, n)$ that is a sch\"{o}n affine variety. 
        By considering hypersurfaces of $\Gr_\C(2, n)$ as hypersurfaces of this open subset, it follows from (II) that a general hypersurface in $\Gr_\C(2, n)$ is a sch\"{o}n affine variety. 
        Then, using (I), we constructed a strictly toroidal scheme and computed its stably birational volume. 
        The explicit construction of a strictly toroidal scheme whose stably birational volume does not coincide $\{\Spec(\C)\}_{\mathrm{sb}}$ is the most significant contribution of this paper.
    \end{itemize}
    Moreover, we will provide a few additional remarks on Theorem \ref{thm: main theorem}. 
    \begin{itemize}
        \item The key point of the proof of Theorem \ref{thm: main theorem} is the result of the stable birational equivalence class of a very general hypersurface in a projective space(\cite[Corollary. 4.2]{NO22}). 
        \item We remark that for any integer $m\geq 3$, we cannot find these models of hypersurfaces in $\Gr_\Kt(m, n)$ related to sch\"{o}n affine varieties.
    \end{itemize} 
    \subsection{The rationality problem for hypersurfaces in Grassmannian varieties}
        We discuss previous works on the rationality of hypersurfaces in $\Gr_\C(2, n)$. 
        We fix a Pl\"{u}cker embedding $\Gr_\C(2, n)\hookrightarrow \PP^{n(n-1)/2 - 1}_\C$. 
        Let $X$ denote $\Gr_\C(2, n)$, and $X_{(r)}$ denote the intersection of $X$ and hypersurfaces of degree = $r$ in $\PP^{n(n-1)/2 - 1}_\C$. 
        Because $K_X = \OO_X(-n)$, a general $X_{(r)}$ is a Fano variety for $1\leq r\leq n - 1$. 
        We itemize the rationality of $X_{(r)}$ as follows:
        \begin{itemize}
            \item $X_{(1)}$ is rational(\cite[Theorem 2.2.1.]{Xu11}). 
            \item $X_{(2)}\subset \Gr_\C(2, 4)$ is complete intersection of 2 quadrics in $\PP^5_\C$, so it is a rational. 
            \item A very general $X_{(3)}\subset \Gr_\C(2, 4)$ is not stably rational(\cite[Theorem 1.1]{HT19})
            \item $X_{(2)}\subset \Gr_\C(2, 5)$ is a Gushel-Mukai 5-fold. 
            Moreover, it is rational(\cite[Proposition 4.2]{DK18}).
            \item Ottem showed that a very general $X_{(3)}\subset \Gr_\C(2, 5)$ is not stably rational in his unpublished paper(\cite{O23}). 
            \item We can show that a very general $X_{(4)}\subset \Gr_\C(2, 5)$ is not stably rational by \cite[Theorem 7.1.2]{Simen20} and \cite[Corollary 4.2]{NO22}. 
        \end{itemize}
    
    \subsection{Outline of the paper}
    This paper is organized as follows. 
    In Section 2, we organize the notation used in this article. 
    In Section 3, we recall the definition of tropical compactifications and sch\"{o}n compactifications, and we consider their basic properties. 
    In Section 4, we construct a strictly toroidal scheme with a smooth generic fiber from sch\"{o}n affine varieties and compute its stable birational volume. 
    In Section 5, we show that general hypersurfaces in a sch\"{o}n affine variety are also sch\"{o}n varieties. 
    In Section 6, we apply the result in the previous sections to the stable rationality problem of a very general hypersurface in $\Gr_\C(2, n)$. 
    In Section 7, we prove the lemmas needed in
this article. 
    \subsection{Relationship with mock toric varieties}
    The author of this paper previously obtained the same main result(\cite{Y24b}). 
    In the earlier work, the author used a variety called a mock toric variety(\cite{Y24a}), which generalizes toric varieties, to study good compactifications of hypersurfaces in mock toric varieties. 
    In fact, since a proper mock toric variety is a sch\"{o}n compactification(cf. \cite[Proposition 3.9]{Y24a}), this paper also extends the previous result \cite{Y24b}. 
    \subsection{Acknowledgment}
        The author is grateful to his supervisor, Yoshinori Gongyo, for his encouragement.
\section{Notation}
    In this paper, we use the following notation according to \cite{CLS11} and \cite{Ful93}:
    \begin{itemize}
    \item Let $k$ be an algebraically closed field of $\charac(k) = 0$. 
    \item Let $\Rt$ be a valuation ring defined as follows:
            \[
                \Rt = \bigcup_{n\in\ZZ_{>0}} k[[t^{\frac{1}{n}}]].
            \]
    \item Let $\Kt$ be the fraction field of $\Rt$. 
            We remark that $\Kt$ is written as follows:
            \[
                \Kt = \bigcup_{n\in\ZZ_{>0}} k((t^{\frac{1}{n}})).
            \] 
    \item Let $\SB_K$ denote the set of stable birational equivalence classes $\{X\}_\mathrm{sb}$ of integral $K$-schemes $X$ of finite type.
    In particular, $X$ is stably rational over $K$ if and only if $\{X\}_\mathrm{sb} = \{\Spec(K)\}_\mathrm{sb}$ for any variety $X$ over $K$.
    \item Let $\ZZ[\SB_K]$ be the free abelian group on $\SB_K$. 
    \item Let $S$ be a monoid. 
    If there exists a lattice $M$ of finite rank and a full and strongly convex rational polyhedral cone $\sigma$ in $M_\RR$ such that $S$ is isomorphic to  $\sigma\cap M$, then we call $S$ is a \textbf{toric monoid}.   
    \item Let $V$ be an $\RR$-linear vector space of finite dimension, let $W$ be the dual space of $V$, and let $B$ be a subset of $V$. 
    Then $B^\perp$ and $B^\vee$ denote as the following subsets of $W$: 
    \[
        B^\perp = \{w\in W\mid w(x) = 0\quad(\forall x\in B)\},
    \]
    \[
        B^\vee = \{w\in W\mid w(x) \geq 0\quad(\forall x\in B)\}.
    \]
    \item Let $N$ denote a lattice of finite rank and let $M$ denote the dual lattice of $N$. 
    \item Let $\sigma$ be a convex cone in $N_\RR$ and let $\tau$ be a face of $\sigma$.   
    Then we write $\tau\preceq \sigma$. 
    \item Let $\sigma$ be a convex cone in $N_\RR$. 
    Then $\sigma^\circ$ denotes the relative interior of $\sigma$. 
    \item Let $\Delta$ be a strongly convex rational polyhedral fan in $N_\RR$.  
    Then $X_k(\Delta)$ denotes a toric variety corresponding to $\Delta$ over $k$. 
    We sometimes write $X(\Delta)$ instead of $X_k(\Delta)$. 
    \item A ring $k[M]$ denotes the $k$-algebra associated with a monoid $M$. 
    For $\omega\in M$, $\chi^\omega$ denotes the monomial in $k[M]$ associated with $\omega$. 
    \item An affine algebraic group $T_N$ denotes $\Spec(k[M])$. 
    Note that $X(\{0_N\}) = T_N$. 
    \item Let $\Delta$ be a strongly convex rational polyhedral fan in $N_\RR$. 
    For $\sigma\in\Delta$, $O_\sigma$ denotes the orbit of the torus action of $X(\Delta)$ corresponding to $\sigma$. 
    \item Let $\sigma$ be a convex cone in an $\RR$-linear space $V$. 
    Let $\langle\sigma\rangle$ denote an $\RR$-linear subspace of $V$ that is spanned by $\sigma$. 
    \item Let $N$ and $N'$ be lattices of finite rank and $f\colon N'\rightarrow N$ be a homomorphism, and let $\Delta$ and $\Delta'$ be strongly convex rational polyhedral fans in $N_\RR$ and $N'_\RR$ respectively. 
    If for any $\tau\in\Delta'$, there exists  $\sigma\in\Delta$ such that $f_\RR(\tau)\subset\sigma$, then we call that $f$ is \textbf{compatible with} the fans $\Delta'$ and $\Delta$. 
    \item With the notation above, the map $f_*$ denotes the toric morphism from $X(\Delta')$ to $X(\Delta)$ induced by $f$. 
    \item Let $\Delta_!$ denote a convex fan $\{\{0\}, [0, \infty), (-\infty, 0]\}$ in $\RR$. 
    \item Let $\Delta_1, \Delta_2$ be fans in $\RR^n$ and $\RR^m$ respectively, and let $\Delta_1\times\Delta_2$ denote the following fan in $\RR^{n+m}$: 
    \[
        \{\sigma_1\times\sigma_2\mid\sigma_1\in \Delta_1, \sigma_2\in \Delta_2\}.
    \]
    \item Let $N$ be a lattice of finite rank, let $M$ be the dual lattice, and let $\langle\cdot,\cdot\rangle$ be a pairing of $N$ and $M$. 
    For $v\in N$, we can identify $v$ as a $T_N$-invariant valuation on $T_N$, which is trivial on $k$, and $v(f)$ denotes a value of $f\in k(M)$ by $v$, where $k(M)$ denotes the fraction field of $k[M]$. 
    On this identification, $v(\chi^\omega) = \langle v,\omega\rangle$ for any $v\in N$ and any $\omega\in M$. 
\end{itemize}
\section{Sch\"{o}n affine varieties}
To compute the stable birational volume, we need to construct a toroidal scheme explicitly. 
In this section, we examine the properties of tropical compactifications and sch\"{o}n compactifications of a closed subvariety of an algebraic torus. 
Following that, we will introduce some examples of sch\"{o}n compactification. 

Note that we do not need to assume that the field $k$ is algebraically closed of $\charac(k) = 0$ in this section. 
\subsection{Definition of tropical compactification}
In this subsection, we introduce the notion of tropical compactifications and sch\"{o}n compactifications (cf. \cite{Tev}). 
\begin{definition}\cite[Definition. 1.1, 1.3]{Tev}\label{def: tropical compactification}
  Let $k$ be a field, $N$ be a lattice of rank $n$, and $Z$ be a closed subscheme of $\mathbb{G}^n_{m, k} = T_N$.
  Let $\Delta$ be a strongly convex rational polyhedral fan in $N_\RR$ and $\overline{Z^{X(\Delta)}}$ be the scheme theoretic closure in $X(\Delta)$.
  \begin{itemize}
    \item[1.] If $\overline{Z^{X(\Delta)}}$ is proper over $k$ and the multiplication morphism $m\colon T_N\times \overline{Z^{X(\Delta)}}\rightarrow X(\Delta)$ is faithfully flat, we call that $\overline{Z^{X(\Delta)}}$ is a tropical compactification of $Z$.
    \item[2.] In addition to this assumption, if $m$ is a smooth morphism, we call that $\overline{Z^{X(\Delta)}}$ is a sch\"{o}n compactification of $Z$.
    \item[3.] If $Z$ has a sch\"{o}n compactification, we call that $Z$ is a sch\"{o}n affine variety. 
  \end{itemize}
\end{definition}
In this paper, we use the following facts, which are referenced in \cite{Tev} and \cite{MS15}, respectively:
\begin{proposition}\cite[Theorem. 1.2]{Tev}\cite[Proposition. 6.4.17]{MS15}\label{prop: proper and schon}
  We keep the notation in Definition \ref{def: tropical compactification}, and we assume $Z$ is integral.
  Then the following statements hold:
  \begin{itemize}
    \item[(a)] There exists a tropical compactification of $Z$.
    \item[(b)] Let $\Delta$ be a fan in $N_\RR$, which provides a tropical compactification of $Z$.
    Then it holds that $\supp(\Delta) = \mathrm{Trop}(Z)$.
  \end{itemize}
\end{proposition}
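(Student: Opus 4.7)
The overall plan is to construct $\Delta$ from the Gr\"{o}bner fan of the ideal cut out by $Z$, to establish properness of the resulting closure via the valuative criterion, and to establish faithful flatness of the multiplication map via a torus-equivariant analysis of initial degenerations; part (b) will then fall out of these two properties.

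For (a), I would begin by invoking the Structure Theorem for tropical varieties, which realizes $\mathrm{Trop}(Z) \subset N_\RR$ as the support of a finite rational polyhedral fan. A concrete such fan structure is furnished by the Gr\"{o}bner fan of the ideal $I(Z)\subset k[M]$: declare $w,w'\in N_\RR$ equivalent when $\mathrm{in}_w(I(Z))=\mathrm{in}_{w'}(I(Z))$. After passing to a strongly convex rational polyhedral refinement, I call this fan $\Delta$ and take $\overline{Z^{X(\Delta)}}$ to be the scheme-theoretic closure of $Z$ in $X(\Delta)$. Properness then follows from the valuative criterion: for any valuation ring $R$ over $k$ with fraction field $K$, a $K$-point of $Z$ determines a valuation vector that, by the fundamental theorem of tropical geometry, lies in $\mathrm{Trop}(Z)=\supp(\Delta)$ and hence in some cone $\sigma\in\Delta$, so the point extends to an $R$-point of the affine toric chart $U_\sigma\subset X(\Delta)$.

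Faithful flatness of $m\colon T_N\times\overline{Z^{X(\Delta)}}\to X(\Delta)$ is the heart of the matter. Refining to the Gr\"{o}bner fan guarantees that for each $\sigma\in\Delta$ the initial ideal $\mathrm{in}_\sigma(I(Z))$ is well defined and invariant under the subtorus associated with $\langle\sigma\rangle\cap N$, so the stratum $\overline{Z^{X(\Delta)}}\cap O_\sigma$ is cut out by a torus-homogeneous ideal of the expected dimension. A standard fiber-dimension computation then shows that $m$ has fibers of constant dimension, and flatness follows via the miracle flatness theorem; faithfulness amounts to each stratum being non-empty, which is exactly $\sigma^\circ\cap\mathrm{Trop}(Z)\neq\emptyset$.

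For (b), the containment $\mathrm{Trop}(Z)\subset\supp(\Delta)$ follows from properness, since a point of $Z$ over a valuation ring whose tropicalization lay outside $\supp(\Delta)$ could not extend to an $R$-point of the closure. The reverse containment $\supp(\Delta)\subset\mathrm{Trop}(Z)$ follows from faithful flatness, because surjectivity of $m$ forces every orbit $O_\sigma$ to meet $\overline{Z^{X(\Delta)}}$, and such an intersection is non-empty precisely when $\sigma^\circ\cap\mathrm{Trop}(Z)\neq\emptyset$. The main obstacle will be the verification of faithful flatness, which is not a formal consequence of the construction and requires genuine Gr\"{o}bner-theoretic input on the behavior of initial ideals along each cone; properness and part (b) are comparatively routine once $\supp(\Delta)=\mathrm{Trop}(Z)$ has been built into the construction.
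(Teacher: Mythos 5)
The paper quotes this proposition from Tevelev and from Maclagan--Sturmfels and supplies no proof, so there is no in-paper argument to compare against. Your sketch follows the standard outline in the literature (Gr\"{o}bner fan structure on $\mathrm{Trop}(Z)$, valuative criterion for properness, part (b) from the construction), and the properness step is sound once one records the reduction to DVR-valued points landing in the dense open $Z$, which you should state explicitly.

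The flatness step, however, has a genuine gap. Miracle flatness requires the target local ring to be regular and the source local ring to be Cohen--Macaulay. Here $X(\Delta)$ is normal but typically not regular (repairable by passing to a unimodular refinement), and more seriously, Cohen--Macaulayness of $T_N\times\overline{Z^{X(\Delta)}}$ is not available in advance: in this very paper it is derived \emph{from} flatness plus smoothness (Proposition \ref{prop: property1}(d)), so invoking it to prove flatness is circular, and for a non--Cohen--Macaulay $Z$ the source is simply not Cohen--Macaulay at all. The ``standard fiber-dimension computation'' hides the same circularity, since the identity $\dim(\overline{Z^{X(\Delta)}}\cap O_\sigma) = \dim Z - \dim\sigma$ is Proposition \ref{prop: property1}(b), again derived from flatness. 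The routes actually used in the literature are different: Tevelev takes the closure of the $T_N$-orbit of $[Z]$ inside the Hilbert scheme of a toric compactification (which is proper) and pulls back the universal flat family to produce both a suitable fan and a flat multiplication map, while the Gr\"{o}bner-theoretic route obtains flatness from the constancy of the Hilbert polynomial of the homogenized initial ideals along each Gr\"{o}bner cone, i.e.\ from the flatness of the one-parameter degenerations $\lim_{t\to 0}\lambda_w(t)\cdot Z$, not from a fiber-dimension count. Your instinct that flatness is where the real Gr\"{o}bner-theoretic input lies is correct, but the specific tool you reached for (miracle flatness) does not apply in this setting.
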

\subsection{Properties of tropical compactification}
In this subsection, we explore the characterization of tropical compactifications and sch\"{o}n compactifications. 
They have properties similar to those of ambient toric varieties in Definition \ref{def: tropical compactification}. 
\begin{proposition}\label{prop: property1}
  Let $N$ be a lattice of finite rank, let $\Delta$ be a strongly convex rational polyhedral fan in $N_\RR$, and let $Z$ be a closed subscheme of $\mathbb{G}^n_{m, k} = T_N$.
  We assume that the multiplication morphism $m\colon T_N\times \overline{Z^{X(\Delta)}}\rightarrow X(\Delta)$ is flat.
  Then the following statements hold:
  \begin{itemize}
    \item[(a)] A subset $\{\sigma\in\Delta\mid \overline{Z^{X(\Delta)}}\cap O_\sigma \neq \emptyset\}$ of $\Delta$ is a subfan of $\Delta$.
    \item[(b)] Let $d$ be a nonnegative integer. 
    If the dimension of all irreducible components of $Z$ is $d$, then the dimensional of all irreducible components of $\overline{Z^{X(\Delta)}}\cap O_\sigma$ is $d - \dim(\sigma)$ for any $\sigma\in\Delta$ such that $\overline{Z^{X(\Delta)}}\cap O_\sigma\neq \emptyset$.
    \item[(c)] The multiplication morphism $m$ is smooth if and only if $\overline{Z^{X(\Delta)}}\cap O_\sigma$ is smooth for any $\sigma\in\Delta$.
    \item[(d)] We assume that $m$ is smooth.
      Then $\overline{Z^{X(\Delta)}}$ is a normal scheme and a Cohen-Macaulay scheme.
      Moreover, let $W_1, \ldots, W_r$ denote irreducible components of $Z$, then $\overline{Z^{X(\Delta)}} = \coprod_{1\leq i\leq r}\overline{W_i^{X(\Delta)}}$ is a connected and irreducible decomposition of $\overline{Z^{X(\Delta)}}$. 
      In particular, $T_N\times \overline{W_i^{X(\Delta)}}\rightarrow X(\Delta)$ is smooth for any $1\leq i\leq r$.
  \end{itemize}
\end{proposition}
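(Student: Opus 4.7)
The plan is to establish each of (a)--(d) separately, using flatness and (where relevant) smoothness of $m$ together with standard facts about toric varieties. For (a), I would observe that $m$ is $T_N$-equivariant (with $T_N$ acting on the source by translation in the first factor), so its image is $T_N$-invariant; since $m$ is moreover flat and of finite type over $k$, its image is open in $X(\Delta)$. Open $T_N$-invariant subsets of a toric variety correspond bijectively to subfans of the defining fan: the subset $\bigcup_{\sigma\in\Sigma} O_\sigma$ is open iff $\Sigma$ is closed under taking faces. By $T_N$-equivariance, the image of $m$ contains $O_\sigma$ iff $\overline{Z^{X(\Delta)}}\cap O_\sigma$ is nonempty, yielding (a).

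For (b), the key fact is that flat morphisms preserve codimensions. The $T_N$-invariant closed subscheme $\overline{O_\sigma}\subset X(\Delta)$ has codimension $\dim\sigma$, and because it is $T_N$-invariant its preimage under $m$ equals $T_N\times(\overline{Z^{X(\Delta)}}\cap\overline{O_\sigma})$. By flatness, every irreducible component of this preimage has codimension $\dim\sigma$ in $T_N\times\overline{Z^{X(\Delta)}}$; since the $T_N$-factor contributes trivially to codimension, every irreducible component of $\overline{Z^{X(\Delta)}}\cap\overline{O_\sigma}$ has dimension $d-\dim\sigma$. The subset $\overline{Z^{X(\Delta)}}\cap O_\sigma$ is open in $\overline{Z^{X(\Delta)}}\cap\overline{O_\sigma}$, so its irreducible components are the nonempty open subsets of those components of $\overline{Z^{X(\Delta)}}\cap\overline{O_\sigma}$ that meet $O_\sigma$, and hence share the dimension $d-\dim\sigma$.

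For (c), I would use the standard criterion that a morphism of Noetherian $k$-schemes is smooth iff it is flat and its geometric fibers are smooth. The fiber of $m$ over $x\in O_\sigma$ consists of pairs $(t,z)$ with $tz=x$; projection onto the second coordinate realizes this fiber as a torsor over $\overline{Z^{X(\Delta)}}\cap O_\sigma$ for the stabilizer $\mathrm{Stab}_{T_N}(x)=T_{N_\sigma}$, where $N_\sigma=\langle\sigma\rangle\cap N$. Since $T_{N_\sigma}$ is a smooth algebraic group, the fiber is smooth iff $\overline{Z^{X(\Delta)}}\cap O_\sigma$ is smooth, giving (c).

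For (d), smoothness of $m$ together with normality and Cohen--Macaulayness of the toric variety $X(\Delta)$ implies that $T_N\times\overline{Z^{X(\Delta)}}$ is normal and Cohen--Macaulay; since the projection $T_N\times\overline{Z^{X(\Delta)}}\to\overline{Z^{X(\Delta)}}$ is smooth and surjective, these properties descend to $\overline{Z^{X(\Delta)}}$. A Noetherian normal scheme is a disjoint union of its irreducible components, because each local ring is an integral domain; since $Z$ is open and dense in $\overline{Z^{X(\Delta)}}$ these components are precisely the closures $\overline{W_i^{X(\Delta)}}$. The restriction of $m$ to each $T_N\times\overline{W_i^{X(\Delta)}}$ is smooth as the restriction of a smooth morphism to an open subscheme. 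The most delicate step is (b): I need to verify carefully that flatness gives the exact codimension, rather than the mere upper bound provided by Krull's Hauptidealsatz, for every irreducible component of the preimage of $\overline{O_\sigma}$, and that the transition from $\overline{O_\sigma}$ to the open orbit $O_\sigma$ preserves this dimension count on the components that meet $O_\sigma$.
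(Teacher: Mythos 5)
Your proposal is correct and follows essentially the same route as the paper: flatness gives openness of the image for (a), the flat dimension formula gives (b), fiberwise smoothness via the orbit-stabilizer/quotient-torus structure gives (c), and descent of normality and Cohen--Macaulayness along a smooth surjection gives (d). The small deviations --- pulling back $\overline{O_\sigma}$ rather than $O_\sigma$ in (b), phrasing (c) in terms of a $T_{N_\sigma}$-torsor rather than the factorization through $T_{N/\langle\sigma\rangle\cap N}$, and explicitly passing through $T_N\times\overline{Z^{X(\Delta)}}$ before descending in (d) --- are cosmetic.
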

\begin{proof}
    We prove the statements from (a) to (d) in order. 
    \begin{itemize}
        \item[(a)] Because $m$ is flat and of finite type, $m$ is an open morphism. 
        Moreover, $m$ is a restriction of the action morphism of $X(\Delta)$, and hence, the image of $m$ is an open toric subvariety of $X(\Delta)$. 
        Let $\Delta'$ denote the subfan of $\Delta$ associated with this open subvariety of $X(\Delta)$. 
        By the definition of $m$, $\Delta'$ coincides with $\{\sigma\in\Delta\mid \overline{Z^{X(\Delta)}}\cap O_\sigma \neq \emptyset\}$, and thus, the statement holds. 
        \item[(b)] By the assumption, the dimension of all irreducible components of $T_N\times \overline{Z^{X(\Delta)}}$ is $n + d$. 
        Thus, the dimension of all irreducible components of $T_N\times (\overline{Z^{X(\Delta)}}\cap O_\sigma) = (T_N\times\overline{Z^{X(\Delta)}})\times_{X(\Delta)} O_\sigma$ is $(n + d - \dim(X(\Delta))) + \dim(O_\sigma)$. 
        Therefore, by the argument of toric varieties, the dimension of all irreducible components of $\overline{Z^{X(\Delta)}}\cap O_\sigma$ is $d - \dim(\sigma)$. 
        \item[(c)] One direction is held from the fact that the smoothness is preserved under the base change. 
        Thus, we assume that $\overline{Z^{X(\Delta)}}\cap O_\sigma$ is smooth for any $\sigma\in\Delta$. 
        For each $\sigma\in\Delta$, there exists the following Cartesian diagram by Lemma \ref{lem: action}(a): 
        \begin{equation*}
            \begin{tikzcd}
            T_N\times (\overline{Z^{X(\Delta)}}\cap O_\sigma)\ar[r]\ar[d]&T_N\times O_\sigma\ar[r]\ar[d]& O_\sigma \ar[d]\\
            T_N\times \overline{Z^{X(\Delta)}} \ar[r] & T_N\times X(\Delta)\ar[r] & X(\Delta),
            \end{tikzcd}
        \end{equation*}
        where the left horizontal morphisms are closed immersions, the right horizontal ones are action morphisms, and the composition of the lower morphisms is $m$. 
        The composition of the upper morphisms in the diagram above can be identified with the composition of the projection $T_N\times (\overline{Z^{X(\Delta)}}\cap O_\sigma)\rightarrow T_{N/\langle\sigma\rangle\cap N}\times (\overline{Z^{X(\Delta)}}\cap O_\sigma)$ and the multiplication morphism $T_{N/\langle\sigma\rangle\cap N}\times (\overline{Z^{X(\Delta)}}\cap O_\sigma)\rightarrow O_\sigma$.  
        Hence, the composition of the upper morphisms is smooth by Lemma \ref{lem: action smooth}. 
        Thus, the fibers of $m$ at all points in $X(\Delta)$ are smooth. 
        Note that $m$ is flat, and hence, $m$ is smooth. 
        \item[(d)] Because all fibers of $m$ are regular and $X(\Delta)$ is normal and Cohen-Macaulay, 
        $\overline{Z^{X(\Delta)}}$ is also normal and Cohen-Macaulay by \cite[Corollaries of Theorem 23.3 and 23.9]{Mat86}. 
        By the definition of $\{W_i\}_{1\leq i\leq r}$, $\{\overline{{W_i}^{X(\Delta)}}\}_{1\leq i\leq r}$ are irreducible components of $\overline{Z^{X(\Delta)}}$. 
        Because $\overline{Z^{X(\Delta)}}$ is normal, ${\overline{{W_i}^{X(\Delta)}}}_{1\leq i\leq r}$ are disjoint, and hence, $\overline{Z^{X(\Delta)}} = \coprod_{1\leq i\leq r}\overline{W_i^{X(\Delta)}}$ is a connected and irreducible decomposition of $\overline{Z^{X(\Delta)}}$. 
    \end{itemize}
\end{proof}
It is well known that a closure of any torus orbit of toric varieties has a toric structure. 
Now we show that this property also holds for tropical compactification. 
\begin{proposition}\label{prop: stratification}
  Let $N$ be a lattice of finite rank, let $\Delta$ be a strongly convex rational polyhedral fan in $N_\RR$, and let $Z$ be a closed subscheme of $T_N = \mathbb{G}^n_{m, k}$.
  We assume that the multiplication morphism $m\colon T_N\times \overline{Z^{X(\Delta)}}\rightarrow X(\Delta)$ is flat.
  Then the following statements hold:
  \begin{itemize}
    \item[(a)] The multiplication morphism $m_\sigma\colon T_{N/\langle\sigma\rangle\cap N}\times (\overline{Z^{X(\Delta)}}\cap \overline{O_\sigma})\rightarrow \overline{O_\sigma}$ is flat for any $\sigma\in\Delta$.
    \item[(b)] The scheme theoretic closure of $\overline{Z^{X(\Delta)}}\cap O_\sigma$ in $\overline{O_\sigma}$ is $\overline{Z^{X(\Delta)}}\cap \overline{O_\sigma}$ for any $\sigma\in\Delta$.
    \item[(c)] We keep the notation of (a). 
    If $m$ is smooth, then $m_\sigma$ is smooth for any $\sigma\in\Delta$. 
    \item[(d)] Let $\sigma\in\Delta$ be a cone such that $\overline{Z^{X(\Delta)}}\cap O_\sigma\neq \emptyset$.
      We assume that $m$ is smooth. 
      Let $E_1, \ldots, E_r$ denote irreducible components of $\overline{Z^{X(\Delta)}}\cap O_\sigma$.
      Then $\overline{Z^{X(\Delta)}}\cap \overline{O_\sigma} = \coprod_{1\leq i\leq r} \overline{E_i}$ is a connected and irreducible decomposition of $\overline{Z^{X(\Delta)}}\cap \overline{O_\sigma}$.
  \end{itemize}
\end{proposition}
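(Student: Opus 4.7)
The plan is to exploit the factorization of $m$ through the quotient torus in parts (a) and (c), to derive (b) from descent of scheme-theoretic density, and to obtain (d) as an immediate consequence of Proposition \ref{prop: property1}(d) applied inside the toric variety $\overline{O_\sigma}$.

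For (a), since $\overline{O_\sigma}$ is $T_N$-invariant one has $m^{-1}(\overline{O_\sigma})=T_N\times(\overline{Z^{X(\Delta)}}\cap\overline{O_\sigma})$, and the $T_N$-action on $\overline{O_\sigma}$ factors through the quotient torus $T_{N/\langle\sigma\rangle\cap N}$. This produces a factorization
\[
T_N\times(\overline{Z^{X(\Delta)}}\cap\overline{O_\sigma})\xrightarrow{\pi}T_{N/\langle\sigma\rangle\cap N}\times(\overline{Z^{X(\Delta)}}\cap\overline{O_\sigma})\xrightarrow{m_\sigma}\overline{O_\sigma}
\]
in which $\pi$ is a trivial $T_{\langle\sigma\rangle\cap N}$-torsor (in particular faithfully flat and smooth), and the composite $m_\sigma\circ\pi$ is the base change of $m$ along $\overline{O_\sigma}\hookrightarrow X(\Delta)$, hence flat. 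Faithfully flat descent of flatness through $\pi$ then yields (a). The identical factorization, combined with smoothness of $m$ and the descent of smoothness along the smooth surjection $\pi$, yields (c).

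For (b), I would use that the open torus orbit $O_\sigma$ is scheme-theoretically dense in $\overline{O_\sigma}$. Flat pullback along $m$ commutes with pushforward along the open immersion $O_\sigma\hookrightarrow\overline{O_\sigma}$ and preserves injectivity, so $T_N\times(\overline{Z^{X(\Delta)}}\cap O_\sigma)$ is scheme-theoretically dense in $T_N\times(\overline{Z^{X(\Delta)}}\cap\overline{O_\sigma})$. The projection $T_N\times Y\to Y$ is faithfully flat, and faithfully flat pullback reflects injectivity of sheaf maps; descending through this projection therefore shows that $\overline{Z^{X(\Delta)}}\cap O_\sigma$ is scheme-theoretically dense in $\overline{Z^{X(\Delta)}}\cap\overline{O_\sigma}$, which is exactly (b).

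For (d), I would apply Proposition \ref{prop: property1}(d) to the closed subscheme $\overline{Z^{X(\Delta)}}\cap O_\sigma$ of the open torus $T_{N/\langle\sigma\rangle\cap N}=O_\sigma$ inside the toric variety $\overline{O_\sigma}$ (whose fan is the star fan of $\sigma$). Part (b) identifies the scheme-theoretic closure in $\overline{O_\sigma}$ as $\overline{Z^{X(\Delta)}}\cap\overline{O_\sigma}$, and part (c) provides the required smoothness of $m_\sigma$; the cited proposition then immediately gives the decomposition $\overline{Z^{X(\Delta)}}\cap\overline{O_\sigma}=\coprod_{1\leq i\leq r}\overline{E_i}$. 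The main point I expect to require care is the descent argument in (b): one must check both that scheme-theoretic density is preserved under flat pullback via flat base change for $j_*$ and that it is reflected by the faithfully flat projection. Once this is established, the remaining parts are formal consequences of the factorization of $m$ and Proposition \ref{prop: property1}.
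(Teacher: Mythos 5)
Your proposal is correct and follows essentially the same route as the paper: part (a) uses the Cartesian factorization of $m$ through the quotient torus (this is exactly what Lemma \ref{lem: action}(a) records), (b) is flat base-change plus faithfully-flat descent of schematic density (the paper packages this as Lemmas \ref{lem: flat-closure lemma} and \ref{lem: base change and closure}), (c) is descent of smoothness along the smooth surjection $q_*\times\id$, and (d) is the direct application of Proposition \ref{prop: property1}(d) to $O_\sigma\subset\overline{O_\sigma}$. The only cosmetic difference is that you phrase (b) in terms of scheme-theoretic density of an open and its preservation/reflection under flat and faithfully flat maps, whereas the paper phrases it as equalities of scheme-theoretic images under flat base change; the content is the same.
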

\begin{proof}
    We prove the statement from (a) to (d) in order. 
    \begin{itemize}
        \item[(a)] By Lemma \ref{lem: action}(a), 
        there exists the following Cartesian square: 
        \begin{equation*}
            \begin{tikzcd}
            T_N\times (\overline{Z^{X(\Delta)}}\cap \overline{O_\sigma})\ar[r, "q_*\times\id"]\ar[d]&T_{N/\langle\sigma\rangle\cap N}\times (\overline{Z^{X(\Delta)}}\cap \overline{O_\sigma})\ar[r, "m_\sigma"]& \overline{O_\sigma} \ar[d]\\
            T_N\times \overline{Z^{X(\Delta)}} \ar[rr, "m"] & & X(\Delta),
            \end{tikzcd}
        \end{equation*}
        where $q_*$ is a natural quotient morphism $T_N\rightarrow T_{N/\langle\sigma\rangle\cap N}$ induced by the quotient map $q\colon N\rightarrow N/\langle\sigma\rangle\cap N$. 
        Because $m$ is flat, $m_\sigma\circ(q_*\times\id)$ is also flat. 
        Moreover, $q_*\times\id$ is faithfully flat, and hence, $m_\sigma$ is flat. 
        \item[(b)] By (a) and Lemma \ref{lem: flat-closure lemma}, the scheme theoretic closure of $T_{N/\langle\sigma\rangle\cap N}\times (\overline{Z^{X(\Delta)}}\cap O_\sigma)$ in $T_{N/\langle\sigma\rangle\cap N}\times \overline{O_\sigma}$ is $T_{N/\langle\sigma\rangle\cap N}\times (\overline{Z^{X(\Delta)}}\cap \overline{O_\sigma})$. 
        Thus, the scheme theoretic closure of $ \overline{Z^{X(\Delta)}}\cap O_\sigma$ in $\overline{O_\sigma}$ is $\overline{Z^{X(\Delta)}}\cap \overline{O_\sigma}$ by Lemma \ref{lem: base change and closure}. 
        \item[(c)] We keep the notation in the proof of (a). 
        Then the composition $m_\sigma\circ(q_*\times\id)$ is smooth. 
        Because $q_*\times\id$ is smooth and subjective, $m_\sigma$ is also smooth. 
        \item[(d)] By (b) and (c), we can apply Proposition \ref{prop: property1}(d) for the torus $O_\sigma = T_{N/{\langle\sigma\rangle\cap N}}$, the toric variety $\overline{O_\sigma}$, the closed subschemes $\overline{Z^{X(\Delta)}}\cap O_\sigma\subset O_\sigma$ and $\overline{Z^{X(\Delta)}}\cap \overline{O_\sigma}\subset \overline{O_\sigma}$, and the multiplication morphism $m_\sigma\colon T_{N/\langle\sigma\rangle\cap N}\times (\overline{Z^{X(\Delta)}}\cap \overline{O_\sigma})\rightarrow \overline{O_\sigma}$. 
    \end{itemize}
\end{proof}
From a morphism of lattices and compatible fans, we can construct a morphism of toric varieties. 
A key question arises as to whether tropical compactification is preserved under the pullback of toric morphisms. 
The following proposition showed in \cite[Proposition 2.5]{Tev} indicates that such preservation holds in the case of dominant toric morphisms.
\begin{proposition}\label{prop: surjection}\cite[Proposition 2.5]{Tev}
  Let $\pi\colon N'\rightarrow N$ be a surjective morphism of lattices of finite rank, let $Z$ be a closed subscheme of $T_N$, and let $\Delta$ be a strongly convex rational polyhedral fan in $N_\RR$. 
  We may assume that the multiplication morphism $m\colon T_N\times \overline{Z^{X(\Delta)}}\rightarrow X(\Delta)$ is flat. 
  Let $\Delta'$ be a strongly convex rational polyhedral fan in $N'_\RR$ such that $\pi$ is compatible with $\Delta'$ and $\Delta$, let $Z'$ denote $Z\times_{T_N} T_{N'}$, and let $m'\colon T_{N'}\times \overline{Z'^{X(\Delta')}}\rightarrow X(\Delta')$ be the multiplication morphism. 
  Then the following statements hold: 
  \begin{itemize}
    \item[(a)] It foolws that $\overline{Z'^{X(\Delta')}} = \overline{Z^{X(\Delta)}}\times_{X(\Delta)}X(\Delta')$ in $X(\Delta')$.
    Moreover, $m'$ is flat. 
    \item[(b)] Let $\sigma\in\Delta$ and let $\tau\in\Delta'$ be cones such that $\pi_\RR(\tau^\circ)\subset \sigma^\circ$. 
    Then ($\overline{Z^{X(\Delta)}}\cap O_\sigma)\times\mathbb{G}^{s}_{m, k}\cong \overline{Z'^{X(\Delta')}}\cap O_\tau$, where $s = \rank(\ker(\pi))+\dim(\sigma)-\dim(\tau)$.   
    \item[(c)] If $m$ is smooth, then $m'$ is also smooth. 
    \item[(d)] If $Z$ has a sch\"{o}n compactification, $Z'$ also has a sch\"{o}n compactification.
  \end{itemize} 
\end{proposition}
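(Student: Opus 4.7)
My plan is to prove (a) first as the main technical input and deduce (b), (c), (d) from it. The core geometric input is that surjectivity of $\pi$ makes $\pi_*\colon T_{N'}\to T_N$ a torsor under the kernel torus $T_{\ker\pi}$, hence faithfully flat and smooth.

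For (a), I set $W:=\overline{Z^{X(\Delta)}}\times_{X(\Delta)}X(\Delta')$, a closed subscheme of $X(\Delta')$. Intersecting with $T_{N'}$ yields $W\cap T_{N'}=Z\times_{T_N}T_{N'}=Z'$, hence $\overline{Z'^{X(\Delta')}}\subseteq W$. For the reverse inclusion, I argue $Z'$ is scheme-theoretically dense in $W$: every irreducible component of $W$ maps dominantly to a component of $\overline{Z^{X(\Delta)}}$, whose generic point lies in $Z\subset T_N$, and the fiber over such a point lies in $T_{N'}\subset X(\Delta')$ since $\pi_*$ is a torsor. Upgrading this to a scheme-theoretic equality requires ruling out embedded primes of $W$, which I do by transferring the flatness of $m$ through the base change, using Lemma \ref{lem: flat-closure lemma} on the affine toric charts $\Spec k[\tau^\vee\cap M']\to\Spec k[\sigma^\vee\cap M]$. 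Flatness of $m'$ is then obtained from the torsor map $T_{N'}\times W\to (T_N\times\overline{Z^{X(\Delta)}})\times_{X(\Delta)}X(\Delta')$ sending $(t',w)\mapsto(\pi_*(t'),f(w),t'\cdot w)$, which is a $T_{\ker\pi}$-torsor: $m'$ factors as this torsor (smooth, hence flat) followed by the projection to $X(\Delta')$, which is flat as a base change of $m$ along $f$.

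For (b), the identification from (a) gives $\overline{Z'^{X(\Delta')}}\cap O_\tau=(\overline{Z^{X(\Delta)}}\cap O_\sigma)\times_{O_\sigma}O_\tau$, since $\pi_\RR(\tau^\circ)\subset\sigma^\circ$ pins down $O_\sigma$ as the unique $X(\Delta)$-orbit receiving $O_\tau$. The induced map of tori $O_\tau\to O_\sigma$ is a split surjection with kernel torus of rank $\rank\ker\pi+\dim\sigma-\dim\tau$, so the fiber product becomes a product with $\Gm^s$. Part (c) then follows from (b) combined with Proposition \ref{prop: property1}(c): each stratum of $\overline{Z'^{X(\Delta')}}$ is a product of a smooth stratum of $\overline{Z^{X(\Delta)}}$ with a torus, hence smooth, so $m'$ is smooth.

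For (d), given a sch\"{o}n fan $\Delta$ for $Z$, I choose a splitting $N'\cong\ker\pi\oplus N$ (available since the short exact sequence of free abelian groups splits) and a complete strongly convex rational polyhedral fan $\Delta_L$ in $(\ker\pi)_\RR$, and set $\Delta':=\Delta_L\times\Delta$. Then $\pi$ is compatible with $\Delta'$ and $\Delta$, $X(\Delta')=X(\Delta_L)\times X(\Delta)$ is proper, $\overline{Z'^{X(\Delta')}}=X(\Delta_L)\times\overline{Z^{X(\Delta)}}$ is proper by (a), and (c) gives smoothness of $m'$, so $\overline{Z'^{X(\Delta')}}$ is a sch\"{o}n compactification of $Z'$. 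The hardest step throughout is the no-embedded-primes assertion underlying the scheme-theoretic equality in part (a); once that is secured via the flatness of $m$, the rest follows by orbit-theoretic bookkeeping and base-change arguments.
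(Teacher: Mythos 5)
Your proposal is essentially correct and follows the same architecture as the paper's proof: reduce everything to the flatness of the multiplication morphism $m''$ on $T_{N'}\times W$ with $W=\overline{Z^{X(\Delta)}}\times_{X(\Delta)}X(\Delta')$, then use Lemma~\ref{lem: flat-closure lemma} and Lemma~\ref{lem: base change and closure} to identify $W$ with $\overline{Z'^{X(\Delta')}}$, and read (b)--(d) off the resulting Cartesian diagrams. The one genuine variation is how you get flatness of $m''$: you exhibit the explicit $T_{\ker\pi}$-torsor $T_{N'}\times W\to(T_N\times\overline{Z^{X(\Delta)}})\times_{X(\Delta)}X(\Delta')$ and factor $m''$ through it, whereas the paper invokes Lemma~\ref{lem: action}(b) to produce the Cartesian square at one stroke and then observes that $m\circ(\pi_*\times\id)$ is flat so its base change $m''$ is. Your factorization is a clean alternative that makes the group-theoretic structure transparent, and it also yields smoothness in (c) without detouring through Proposition~\ref{prop: property1}(c) as you do.

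Two caveats on the exposition of (a). First, the order is inverted: the flatness of $m''$ is what powers the application of Lemma~\ref{lem: flat-closure lemma}, so it should come before, not after, the scheme-theoretic-closure claim; as written you appear to invoke the flatness you establish later. Second, the preliminary topological argument (generic points of components of $W$ lie in $T_{N'}$ because $\pi_*$ is a torsor) needs care: $\pi_*^{-1}(T_N)$ is generally larger than $T_{N'}$ (it contains all orbits $O_\tau$ with $\tau\subset\ker\pi_\RR$), so the claimed containment requires using flatness of $W\to\overline{Z^{X(\Delta)}}$ to control associated points, which is exactly what Lemma~\ref{lem: flat-closure lemma} packages; the informal preamble adds little. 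Likewise, the phrase ``Lemma~\ref{lem: flat-closure lemma} on the affine toric charts $\Spec k[\tau^\vee\cap M']\to\Spec k[\sigma^\vee\cap M]$'' does not match the lemma's hypotheses as stated (the relevant flat morphism is $m''\colon T_{N'}\times W\to X(\Delta')$, not the toric chart maps); the paper's application takes $S=X(\Delta')$, $Y=T_{N'}\times W$, $S'=T_{N'}$, and then descends via Lemma~\ref{lem: base change and closure}. Finally, a small slip: the second factor in your factorization of $m''$ is the base change of $m$ along $\pi_*\colon X(\Delta')\to X(\Delta)$, not along $f$. Your choice of $\Delta'=\Delta_L\times\Delta$ in (d) is a concrete instance of the paper's condition $\supp(\Delta')=\pi_\RR^{-1}(\supp(\Delta))$ and works equally well.
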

\begin{proof}
    We prove the statements from (a) to (d) in order. 
    \begin{itemize}
        \item[(a)] By Lemma \ref{lem: action}(b), there exists the following Cartesian square: 
        \begin{equation*}
            \begin{tikzcd}
            T_{N'}\times (\overline{Z^{X(\Delta)}}\times_{X(\Delta)}X(\Delta')) \ar[rr, "m''"]\ar[d, "\id\times\pi_*"]& & X(\Delta') \ar[d, "\pi_*"]\\
            T_{N'}\times \overline{Z^{X(\Delta)}} \ar[r, "\pi_*\times\id"] & T_N\times \overline{Z^{X(\Delta)}}\ar[r, "m"]  & X(\Delta),
            \end{tikzcd}
        \end{equation*}
        where $m''$ is the multiplication morphism of $\overline{Z^{X(\Delta)}}\times_{X(\Delta)}X(\Delta')$. 
        By the assumption, $m\circ (\pi_*\times\id)$ is flat, and hence, $m''$ is also flat. 
        By the flatness of $m''$ and Lemma \ref{lem: flat-closure lemma}, the scheme theoretic closure of $T_{N'}\times Z'$ in $T_{N'}\times X(\Delta')$ is $T_{N'}\times (\overline{Z^{X(\Delta)}}\times_{X(\Delta)}X(\Delta'))$. 
        Thus, the scheme theoretic closure of $Z'$ in $X(\Delta')$ is $\overline{Z^{X(\Delta)}}\times_{X(\Delta)}X(\Delta')$ by Lemma \ref{lem: base change and closure}. 
        In particular, $m'$ is flat. 
        \item[(b)] By (a), there exists the following Cartesian square: 
        \begin{equation*}
            \begin{tikzcd}
                  \overline{Z'^{X(\Delta')}}\cap O_\tau\ar[r, hook]\ar[d, "\pi_*"] & O_\tau\ar[d, "\pi_*"]\\
                  \overline{Z^{X(\Delta)}}\cap O_\sigma\ar[r, hook]& O_\sigma.
            \end{tikzcd}
        \end{equation*}
        Because $\pi$ is surjective, $\pi_*\colon O_\tau\rightarrow O_\sigma$ is a trivial torus fibration of relative dimension $\rank(\ker(\pi))+\dim(\sigma)-\dim(\tau)$. 
        Thus, the statement holds. 
        \item[(c)] By the proof in (a), $m'$ is also smooth. 
        \item[(d)] We assume $\overline{Z^{X(\Delta)}}$ is a sch\"{o}n compactification of $Z$ and $\supp(\Delta') = \pi^{-1}_\RR(\supp(\Delta))$. 
        Then $m'$ is smooth and faithfully flat by the proof in (a). 
        Because $\pi_*\colon X(\Delta')\rightarrow X(\Delta)$ is proper, $\overline{Z'^{X(\Delta')}} = \overline{Z^{X(\Delta)}} \times_{X(\Delta)}X(\Delta')$ is also proper over $k$. 
        Thus, $\overline{Z'^{X(\Delta')}}$ is a sch\"{o}n compactification of $Z'$. 
    \end{itemize}
\end{proof}
Proposition \ref{prop: surjection} claims that a toric resolution of an embedded toric variety gives another tropical compactification of a closed subvariety of an algebraic torus. 
In particular, its tropical compactification is not unique. 
Thus, we introduce the notion of a \textbf{good fan} for it in the following definition. 
This fan is a kind of ``minimal" fan that obtains its tropical compactification. 
We remark that this is not in the strict sense and there is no need to require the fan to satisfy strong convexity. 
\begin{definition}\label{def: good fan}
  Let $N$ be a lattice of finite rank, let $Z\subset T_N$ be a closed subscheme of $T_N$, and let $\Delta$ be a rational polyhedral convex fan in $N_\RR$. 
  We call that $\Delta$ is a \textbf{good fan} for $Z$ if the multiplication morphism $T_N\times \overline{Z^{X(\Delta')}}\rightarrow X(\Delta')$ is flat for any strongly convex rational polyhedral fan in $N_\RR$ such that $\Delta'$ is a refinement of $\Delta$. 
\end{definition}
\begin{proposition}\label{prop: good fan example}
  We keep the notation in Definition \ref{def: good fan}. 
  Then the following statements hold:
  \begin{itemize}
    \item[(a)] If $\Delta$ is strongly convex and the multiplication morphism $T_N\times \overline{Z^{X(\Delta)}}\rightarrow X(\Delta)$ is flat, then $\Delta$ is a good fan for $Z$. 
    \item[(b)] We keep the assumption in (a). 
    Let $\pi\colon N'\rightarrow N$ be a surjective morphism of lattices of finite rank, let $Z'$ denote $Z\times_{T_N} T_{N'}$, and let $\Delta' = \{(\pi_\RR)^{-1}(\sigma)\mid \sigma\in\Delta\}$ be a fan in $N'_\RR$. 
    Then $\Delta'$ is a good fan for $Z'$.
  \end{itemize}
\end{proposition}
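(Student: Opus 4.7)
The plan is to reduce both parts to Proposition \ref{prop: surjection}(a), which already contains the essential flatness-under-pullback statement; Definition \ref{def: good fan} was tailored to fit exactly that conclusion. For part (a), let $\Delta'$ be any strongly convex rational polyhedral refinement of $\Delta$. Since each cone of $\Delta'$ is contained in some cone of $\Delta$, the identity $\id\colon N\rightarrow N$ is surjective and compatible with the pair $(\Delta',\Delta)$. Applying Proposition \ref{prop: surjection}(a) to $\pi=\id$ with source fan $\Delta'$ and target fan $\Delta$ yields flatness of $T_N\times \overline{Z^{X(\Delta')}}\rightarrow X(\Delta')$, because $Z\times_{T_N}T_N=Z$.

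For part (b), I would first verify that $\Delta'=\{\pi_\RR^{-1}(\sigma)\mid \sigma\in\Delta\}$ is indeed a rational polyhedral fan in $N'_\RR$. Since $\pi$ is surjective, choosing a splitting $N'\cong N\oplus\ker(\pi)$ identifies each $\pi_\RR^{-1}(\sigma)$ with $\sigma\times\ker(\pi_\RR)$; this is a rational polyhedral cone, and the face/intersection relations among these cones inherit directly from those in $\Delta$, so $\Delta'$ is a fan. Note that $\Delta'$ fails strong convexity in general because its minimal cone is $\ker(\pi_\RR)$, which is precisely why Definition \ref{def: good fan} does not demand strong convexity.

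Now let $\Delta''$ be any strongly convex rational polyhedral fan in $N'_\RR$ refining $\Delta'$. Each cone $\tau\in\Delta''$ lies in some $\pi_\RR^{-1}(\sigma)\in\Delta'$, so $\pi_\RR(\tau)\subset\sigma$, which shows $\pi$ is compatible with the pair $(\Delta'',\Delta)$. Applying Proposition \ref{prop: surjection}(a) with the surjection $\pi\colon N'\rightarrow N$, source fan $\Delta''$, and target fan $\Delta$ delivers flatness of the multiplication morphism $T_{N'}\times \overline{Z'^{X(\Delta'')}}\rightarrow X(\Delta'')$, since the base change along $T_{N'}\to T_N$ of $Z$ is by definition $Z'$.

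There is no substantive obstacle here: the only things to track are that the hypothesis of Proposition \ref{prop: surjection}(a) --- flatness of the original morphism over $X(\Delta)$ --- is furnished by the assumption of (a), that refinements produce compatible maps of fans, and that the fiber product $Z\times_{T_N}T_{N'}$ referenced by the proposition matches the $Z'$ in our setup. The mildly delicate point worth flagging explicitly is simply the failure of strong convexity for $\Delta'$ in (b), which justifies checking the good-fan condition one further refinement down rather than for $\Delta'$ directly.
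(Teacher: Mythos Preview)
Your proposal is correct and follows exactly the same approach as the paper: both parts reduce directly to Proposition \ref{prop: surjection}(a), with (a) using $\pi=\id$ and (b) using the given surjection $\pi$ applied to an arbitrary strongly convex refinement of $\Delta'$. The paper's proof is a single sentence citing Proposition \ref{prop: surjection}(a); your version simply spells out the compatibility checks and the non-strong-convexity observation that the paper leaves implicit.
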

\begin{proof}
    By Proposition \ref{prop: surjection} (a), the statements hold. 
\end{proof}
\subsection{Example of sch\"{o}n affine varieties}
In this subsection, we give some examples of sch\"{o}n compactifications. 
These examples are given by ``base point free'' hyperplane arrangements.  
In this section, We use the following notation: 
\begin{itemize}
  \item Let $n$ and $d$ be positive integers, and let $k$ be a field. 
  \item Let $\mathcal{B} = \{f_0, f_1, \ldots, f_d\}$ denote a finite subset of $\Gamma(\PP^n_k, \OO_{\PP^n_k}(1))\setminus\{0\}$. 
  We assume that $\mathcal{B}$ generates $\Gamma(\PP^n_k, \OO_{\PP^n_k}(1))$ as a $k-$vector space. 
  We regard $f_i$ as a homogeneous polynomial of $\deg(f_i) = 1$. 
  \item Let $\iota_0$ be a rational map $\PP^n_k\dashrightarrow \PP^d_k$ defined by $\PP^n_k\ni a\mapsto [f_0(a):f_1(a):\cdots:f_d(a)]\in\PP^d_k$. 
  Then $\iota_0$ is a closed embedding by the assumption of $\mathcal{B}$. 
  \item Let $\mathcal{V}$ denote the following set: 
  \[
    \mathcal{V} = \{V\subset \Gamma(\PP^n_k, \OO_{\PP^n_k}(1))\mid V = \sum_{i; f_i\in V} k\cdot f_i\}.
  \]
  \item For $V\in\mathcal{V}$, let $\rho(V)$ denote a set $\{0\leq i\leq d\mid f_i\in V\}$. 
  \item Let $\{e^0, e^1, \ldots, e^d\}$ denote a canonical basis of $\ZZ^{d+1}$, and $\mathbf{1}\in \ZZ^{d+1}$ denote $\sum_{0\leq i\leq d}e^i$. 
  \item Let $N$ denote $\ZZ^{d+1}/\ZZ\mathbf{1}$, let $p$ denote the quotient morphism $\ZZ^{d+1}\rightarrow N$, and let $e_i\in N$ denote $p(e^i)$ for each $0\leq i\leq d$.  
  \item For $V\in \mathcal{V}$, let $e_{V}$ denote $\sum_{i\in \rho(V)}e_i\in N$. 
  \item Let $\mathcal{C}$ denote the following set: 
  \[
  \mathcal{C} = \{(V_1, V_2, \ldots, V_s= \Gamma(\PP^n_k, \OO_{\PP^n_k}(1)))\mid s\in\ZZ_{>0}, 0\neq V_i\in\mathcal{V}, V_i\subsetneq V_{i+1},1\leq\forall i\leq s\}.
  \]    
  \item For $c = (V_1, V_2, \ldots, V_s)\in\mathcal{C}$, $\sigma_c$ denote a strongly convex rational polyhedral cone in $N_\RR$ generated by $\{e_{V_i}\}_{1\leq i\leq s}$. 
  Let $\Delta(\mathcal{B})$ denote $\{\sigma_c\mid c\in\mathcal{C}\}$. 
  \item Let $Z = \PP^n_k\times_{\PP^d_k}\mathbb{G}^d_{m, k}$ be a closed subscheme of $\mathbb{G}^d_{m, k} = T_N$. 
\end{itemize}
The following fact shows the relation of the complement $Z$ of the union of hyperplanes and the set $\Delta(\mathcal{B})$. 
The details of the proof can be found in \cite{MS15}. 

\begin{proposition}\label{prop: matroid}\cite[Theorem. 4.2.6, Theorem. 4.1.11]{MS15}
  Let $E$ denote a set $\{0, 1, \ldots, d\}$ and $2^E$ denote the power set of $E$. 
  We define a map $\delta\colon 2^E\rightarrow \ZZ$ as follows: 
  \[
    2^E\ni A\mapsto \dim_k(\sum_{i\in A}k\cdot f_i)\in\ZZ.
  \]
  Then the following statements hold: 
  \begin{itemize}
    \item[(a)] $(E, \delta)$ is a matroid in the context of  \cite[Definition 4.2.3]{MS15}. 
    \item[(b)] Let $A$ be a subset of $E$. 
    If $\delta(A)<\delta(A\cup\{i\})$ holds for any $i\notin A$, we say that $A$ is flat. 
    Let $\mathcal{F}$ denote the set which consists of all flat subsets of $E$. 
    Then there exists a one-to-one correspondence of elements in $\mathcal{F}$ and linear subspaces in $\mathcal{V}$ as follows:
    \[
    	\mathcal{F}\ni A\mapsto \sum_{i\in A}k\cdot f_i \in\mathcal{V}.
    \] 
    \item[(c)] The map $\mathcal{C}\rightarrow \Delta(\mathcal{B})$ defined as $c\mapsto \sigma_c$ for each $c\in\mathcal{C}$ is bijective. 
    \item[(d)] The set $\Delta(\mathcal{B})$ is a strongly convex rational simplicial fan in $N_\RR$. 
    \item[(e)] The equation $\supp(\Delta(\mathcal{B})) = \mathrm{Trop}(Z)$ holds. 
  \end{itemize}
\end{proposition}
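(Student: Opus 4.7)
The plan is to verify (a) and (b) directly from the linear algebra of $\mathcal{B}$, obtain (c) and (d) from the combinatorics of cones generated by the $e_V$'s indexed by chains of flats, and appeal to the Bergman fan theorem for hyperplane arrangement complements for (e).

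For (a), I would check the three standard rank function axioms. Normalization $\delta(\emptyset)=0$ and the bound $\delta(A)\leq |A|$ are immediate, monotonicity follows from $\sum_{i\in A}kf_i\subseteq \sum_{i\in B}kf_i$ when $A\subseteq B$, and submodularity is the standard identity $\dim(U+W)+\dim(U\cap W)=\dim U + \dim W$ applied to $U=\sum_{i\in A}kf_i$ and $W=\sum_{i\in B}kf_i$, combined with $\sum_{i\in A\cap B}kf_i\subseteq U\cap W$. For (b), I would write down the candidate inverse bijections: $A\mapsto V_A:=\sum_{i\in A}kf_i$ and $V\mapsto \rho(V)=\{i : f_i\in V\}$. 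The only nontrivial point is that $A$ is flat precisely when $\rho(V_A)=A$, which then gives $V_A\in \mathcal{V}$, after which the two maps are visibly mutually inverse.

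For (c), I would use (b) to rewrite a chain $c=(V_1\subsetneq\cdots\subsetneq V_s=\Gamma(\PP^n_k,\OO_{\PP^n_k}(1)))$ as a strict chain of flats $A_1\subsetneq\cdots\subsetneq A_s=E$. Since $e_{V_s}=0$ in $N$, the cone $\sigma_c$ is generated by $e_{V_1},\ldots,e_{V_{s-1}}$. Lifting to $\ZZ^{d+1}$ and inspecting coordinates over the partition $E=A_1\sqcup(A_2\setminus A_1)\sqcup\cdots\sqcup(E\setminus A_{s-1})$, I would show the lifts together with $\mathbf{1}$ are linearly independent—strict containment ensures each piece of the partition is nonempty—so the $e_{V_i}$ are linearly independent in $N_\RR$ and one recovers $c$ from $\sigma_c$. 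Part (d) follows with little extra work: simpliciality is exactly this linear independence; strong convexity holds because the lifts have nonnegative coordinates and no nontrivial positive combination can lie in $\RR\cdot \mathbf{1}$; and the face and intersection axioms translate, via the same bijection, into the combinatorial statements that sub-collections of $\{e_{V_i}\}$ correspond to sub-chains of $c$, and that $\sigma_{c_1}\cap\sigma_{c_2}$ is the cone of the chain consisting of the $V$'s appearing in both $c_1$ and $c_2$.

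The main obstacle is (e). Here I would invoke the Bergman fan theorem for realizable matroids (\cite[Theorem 4.1.11]{MS15}), which identifies $\mathrm{Trop}(Z)$ as the support of the order complex on the lattice of flats of $(E,\delta)$. Under the bijection of (b), the order complex cones are precisely the $\sigma_c$ of (c), so $\supp(\Delta(\mathcal{B}))=\mathrm{Trop}(Z)$. The conceptual content here—identifying the abstract matroid-theoretic Bergman fan with the tropical variety cut out by initial degenerations—is the step I would not try to reprove from scratch, since it is standard and is already the attribution given in the statement.
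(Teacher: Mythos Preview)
Your proposal is correct and matches the paper's proof for (a), (b), and (e). For (c), the paper takes a slightly different tack: rather than proving linear independence of the $e_{V_i}$, it characterizes $p_\RR^{-1}(\sigma_c)$ explicitly as the set of $x\in\RR^{d+1}$ whose coordinates are constant on each block $\rho(V_j)\setminus\rho(V_{j-1})$ with weakly decreasing values along the chain, and reads off injectivity of $c\mapsto\sigma_c$ from that description. For (d), the paper simply invokes \cite[Theorem 4.2.6]{MS15} after (a) and (b), whereas you verify simpliciality, strong convexity, and the fan axioms by hand. Your route for (c)--(d) is more self-contained and makes the simplicial structure transparent; the paper's route is terser and leans on the standard reference for the Bergman fan.
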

\begin{proof}
  We prove these statements from (a) to (e) in order. 
  \begin{itemize}
    \item[(a)] Let $A, B\in 2^E$ be subsets of $E$, and let $U$ denote $\sum_{i\in A}k\cdot f_i$ and let $W$ denote $\sum_{i\in B}k\cdot f_i$ which are linear subspaces of $\Gamma(\PP^n_k, \OO_{\PP^n_k}(1))$. 
    Because $\dim(U)\leq |A|$, it follows that $\delta(A)\leq |A|$. 
    Moreover, if $A\subset B$, then $U\subset W$, and hence, $\delta(A)\leq \delta(B)$.     				
    Finally, because $\dim_k(U) + \dim_k(W) = \dim_k(U + W) + \dim_k(U\cap W)$ and $\sum_{i\in A\cap B}k\cdot f_i\subset U\cap W$, if holds that $\delta(A\cap B)+ \delta(A\cup B)\leq \delta(A) + \delta(B)$. 
    \item[(b)] First, we will show $A = \rho(\sum_{i\in A}k\cdot f_i)$ for any $A\in \mathcal{F}$. 
    Indeed, it is clear that $A\subset \rho(\sum_{i\in A}k\cdot f_i)$, so that we will show $\rho(\sum_{i\in A}k\cdot f_i)\subset A$. 
    Let $j\in \rho(\sum_{i\in A}k\cdot f_i)$. 
    If $j\notin A$, then $\delta(A)<\delta(A\cup\{j\})$ because $A\in\mathcal{F}$. 
    However, $\delta(A\cup\{j\})\leq \dim_k(\sum_{i\in A}k\cdot f_i)$ because $f_j\in \sum_{i\in A}k\cdot f_i$, hence it is contradiction. 
    Thus, $A = \rho(\sum_{i\in A}k\cdot f_i)$. 
    				
    Second, we will show that $\rho(V)\in\mathcal{F}$ for any $V\in\mathcal{V}$. 
    Let $j\in E\setminus \rho(V)$. 
    Then $f_j\notin V$. 
    Because $V\in\mathcal{V}$, it follows that $V = \sum_{i\in \rho(V)}k\cdot f_i$. 
    Thus, $V \subsetneq \sum_{i\in \rho(V)\cup\{j\}}k\cdot f_i$, in particular, $\delta(\rho(V))<\delta(\rho(V)\cup \{j\})$. 
    				
    Therefore, the map in the statement (b) has the inverse map $\rho$, and this is isomorphic.  
    \item[(c)] 
    Let $c\in\mathcal{C}$ and $x = (x_0, \ldots, x_d) \in (\ZZ^{d+1})_\RR$. 
    Then the following statements are equivalent by the definition of $\sigma_c$: 
        \begin{itemize}
  	\item[(i)] It follows that 
    $x \in p^{-1}_\RR(\sigma_c)$. 
        \item[(ii)] There exists $a_1, \ldots, a_s\in\RR$ such that $a_1\geq a_2\geq\cdots\geq a_s$ and $x_l = a_j$ for any $l\in E$, where $j$ is a unique integer such that $l\in\rho(V_j)\setminus\rho(V_{j-1})$.
        \end{itemize}
    If $\sigma_{c_1} = \sigma_{c_2}$ for $c_1, c_2\in\mathcal{C}$, then $p^{-1}_\RR(\sigma_{c_1}) = p^{-1}_\RR(\sigma_{c_2})$. 
    Thus, $c_1 = c_2$ by the equivalent statements above. 
    \item[(d)] By (a), (b), and \cite[Theorem. 4.2.6]{MS15}, the statement holds. 
    \item[(e)] This is a result of \cite[Theorem. 4.1.11]{MS15}. 
  \end{itemize}
\end{proof}
  The following statement shows that $Z$ has a sch\"{o}n compactification. 
  This result may be a well-known fact, but the author could not find the literature so we show the proof. 
\begin{proposition}\label{prop: arrangement}
  With the notation above, $\overline{Z^{X(\Delta(\mathcal{B}))}}$ is a sch\"{o}n compactification of $Z$.  
\end{proposition}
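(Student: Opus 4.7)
The plan is to verify the two conditions of Definition \ref{def: tropical compactification}(2): properness of $\overline{Z^{X(\Delta(\mathcal{B}))}}$ over $k$, and smoothness of the multiplication morphism $m\colon T_N \times \overline{Z^{X(\Delta(\mathcal{B}))}} \to X(\Delta(\mathcal{B}))$ (faithful flatness then being automatic since the image of $m$ is the open toric subvariety cut out by the cones meeting $\overline{Z^{X(\Delta(\mathcal{B}))}}$). For properness, I would invoke Proposition \ref{prop: matroid}(e) together with Proposition \ref{prop: proper and schon}: the latter provides some tropical compactification $\overline{Z^{X(\Delta_0)}}$ with $\supp(\Delta_0) = \mathrm{Trop}(Z) = \supp(\Delta(\mathcal{B}))$, and passing to a common refinement $\Delta^*$ and applying Proposition \ref{prop: surjection}, the image of the proper scheme $\overline{Z^{X(\Delta^*)}}$ in $X(\Delta(\mathcal{B}))$ is exactly $\overline{Z^{X(\Delta(\mathcal{B}))}}$, which is therefore proper.

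The core of the proof is an explicit description of each stratum $\overline{Z^{X(\Delta(\mathcal{B}))}} \cap O_{\sigma_c}$ indexed by a flag $c = (V_1 \subsetneq \cdots \subsetneq V_s = V)$, with the convention $V_0 := 0$. My plan is to work locally in the affine chart $U_{\sigma_c} = \Spec k[\sigma_c^\vee \cap M]$ and perform a monomial change of variables adapted to the flag, grouping the indices of $\mathcal{B}$ into layers $E_i := \rho(V_i)\setminus \rho(V_{i-1})$ for $i=1,\dots,s$. The goal is to show that after this substitution, the stratum becomes isomorphic to a product
\[
\prod_{i=1}^{s} U_i,
\]
where $U_i \subset \PP(V_i/V_{i-1})$ is the complement of the hyperplane arrangement cut out by the residues of $\{f_j : j \in E_i\}$. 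Since $\mathcal{B}$ generates $V$, each such quotient arrangement is base point free, so each $U_i$ is a smooth open subvariety of a projective space and the product is smooth. A dimension check gives $\sum_{i=1}^{s}(\dim V_i - \dim V_{i-1} - 1) = (n+1) - s = n - \dim \sigma_c$, matching the expected stratum dimension from Proposition \ref{prop: property1}(b).

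With smoothness of every stratum established, Proposition \ref{prop: property1}(c) reduces smoothness of $m$ to flatness of $m$, which I would extract from the same explicit local picture: restricted to $U_{\sigma_c}$, the scheme $\overline{Z^{X(\Delta(\mathcal{B}))}}$ decomposes via the $T_N$-action as a product of affine cones over the $U_i$ with additional affine space factors, and such a product is visibly flat over the toric coordinate ring $k[\sigma_c^\vee \cap M]$. The main obstacle is the explicit local stratum computation in the middle step: one must choose coordinates dual to the generators $e_{V_i}$ with care, and track how the embedding $\iota_0$ interacts with these coordinates modulo the diagonal $\ZZ \mathbf{1}$ in order to expose the claimed product structure. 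Once that identification is made, the dimension count and smoothness of the $U_i$'s assemble the schön property mechanically.
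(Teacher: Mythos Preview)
Your route is genuinely different from the paper's. The paper's proof is essentially two citations: it invokes \cite[Theorem~1.5]{LQ11} together with Proposition~\ref{prop: matroid}(d),(e) to reduce the statement to showing that $Z$ admits \emph{some} sch\"on compactification (any fan with support $\mathrm{Trop}(Z)$ then works), and then reads off existence from the proof of \cite[Theorem~1.7]{Tev}, where the tropical compactification of a linear $Z$ is realised as a fibre of the universal projective bundle over $\Gr_k(n+1,d+1)$, a smooth family. The paper never computes a single stratum. Your approach, by contrast, is the hands-on De~Concini--Procesi identification of each stratum $\overline{Z}\cap O_{\sigma_c}$ with $\prod_i U_i$, the product of arrangement complements in $\PP(V_i/V_{i-1})$. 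That description is correct and classical, your dimension count is right, and your properness argument via a common refinement with Proposition~\ref{prop: surjection} is fine. The trade-off is that your argument is self-contained but substantially longer to execute, while the paper's is short but relies on two external black boxes.

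The one place your sketch needs repair is the flatness step. You correctly observe that Proposition~\ref{prop: property1}(c) presupposes flatness of $m$, but the sentence ``the scheme $\overline{Z^{X(\Delta(\mathcal{B}))}}$ decomposes \ldots\ and such a product is visibly flat over the toric coordinate ring $k[\sigma_c^\vee\cap M]$'' is not a statement about $m$: you are describing $\overline{Z}\cap X(\sigma_c)$ as a closed subscheme, whereas what must be flat is $m\colon T_N\times\overline{Z}\to X(\Delta(\mathcal{B}))$. Since $\Delta(\mathcal{B})$ is only simplicial, $X(\Delta(\mathcal{B}))$ need not be regular and miracle flatness is unavailable. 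The standard fix in the direct approach is to push the local computation one step further: show that $\overline{Z}\cap X(\sigma_c)$ is itself isomorphic to an affine space in flag-adapted coordinates, and that after the identification of Lemma~\ref{lem: action}(c) the restriction of $m$ over $X(\sigma_c)$ becomes a projection from a product, hence smooth directly---bypassing the separate verification of flatness and then smoothness of fibres. This is exactly the ``main obstacle'' you flag in your last sentence, and once carried out it gives both flatness and smoothness in one stroke.
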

\begin{proof}
    By \cite[Theorem. 1.5]{LQ11} and Proposition \ref{prop: matroid}(d) and (e), it is enough to check that $Z$ has a sch\"{o}n compactification. 
    By the proof of \cite[Theorem. 1.7]{Tev}, the tropical compactification of $Z$ is obtained by the universal projective space bundle on the Grassmanniann varieties $\Gr_k(n+1, d+1)$ in this case. 
    This bundle is smooth over $\Gr_k(n+1, d+1)$, and hence, the multiplication morphism is also smooth. 
\end{proof}

\section{Stable birational volume of a sch\"{o}n variety}
In this section, we construct the strictly toroidal model of a smooth variety by the sch\"{o}n compactification and compute its stable birational volume. 

Through this section, we use the following notation:
\begin{itemize}
  \item Let $k$ be an algebraically closed field of $\charac(k) = 0$. 
  \item Let $N$ be a lattice of finite rank. 
  \item Let $\pr_2\colon N\oplus\ZZ\rightarrow \ZZ$ be the second projection of $N\oplus\ZZ$. 
  \item Let $Y$ be an equidimensional closed subscheme of $T_N\times \mathbb{G}^1_{m, k} = T_{N\oplus\ZZ}$.  
  \item Let $\Delta$ be a strongly convex rational polyhedral fan in $(N\oplus\ZZ)_\RR$. 
  We assume that $\supp(\Delta)\subset N_\RR\times \RR_{\geq 0}$. 
  By this assumption, there exists a toric morphism $(\pr_2)_*\colon X(\Delta)$
  $\rightarrow \A^1_k$ which is induced by $\pr_2$. 
  \item Let $\Delta_0$ denote the subfan of $\Delta$ which consists of all cones $\sigma\in\Delta$ such that $\sigma\subset N_\RR\times\{0\}$. 
  \item Let $\Delta_Y$ denote the subset of $\Delta$ which consists of all cones $\sigma\in\Delta$ such that $\overline{Y^{X(\Delta)}}\cap O_\sigma\neq \emptyset$.
  \item Let $\Rt$ be a valuation ring defined as follows:
  \[
      \Rt = \bigcup_{n\in\ZZ_{>0}} k[[t^{\frac{1}{n}}]].
  \]
  \item Let $\Kt$ be the fraction field of $\Rt$. 
  We remark that $\Kt$ is written explicitly as follows:
  \[
      \Kt = \bigcup_{n\in\ZZ_{>0}} k((t^{\frac{1}{n}})).
  \]
  \item Let $\Spec(\Rt)\rightarrow \Spec(k[t])$ be a morphism of affine schemes induced by a $k$-morphism $k[t]\hookrightarrow \Rt$ whose image of $t$ is $t$.  
  \item Let $\mathcal{Y}$ denote the scheme $\overline{Y^{X(\Delta)}}\times_{\A^1_k}\Spec(\Rt)$ and let $\mathcal{Y}^\circ$ denote the scheme $Y\times_{\mathbb{G}^1_{m, k}}\Spec(\Kt)$. 
  We remark that $\mathcal{Y}^\circ$ is an open subscheme of $\mathcal{Y}_\Kt \cong \overline{Y^{X(\Delta_0)}}\times_{\mathbb{G}^1_{m, k}}\Spec(\Kt)$. 
  Moreover, we remark that We can identify with $\mathcal{Y}_k$ and $\overline{Y^{X(\Delta)}}\cap (\pr_2)^{-1}_*(0)$. 
\end{itemize}
\subsection{The definition of some properties of fans}
In this subsection, we introduce the notion of some properties of $\Delta$ before constructing the strictly toroidal model of $\mathcal{Y}^\circ$ in Proposition \ref{prop: model I}. 
\begin{definition}\label{def: type of polytope}
  We define some properties of $\Delta$ as follows:   
  \begin{itemize}
      \item Let $\Delta_{\spe}$ denote a subset of $\Delta$ defined as follows:
      \[
          \Delta_{\spe} = \{\sigma\in\Delta\mid\sigma\cap(N_\RR\times\{1\})\neq\emptyset\}.
      \]
      We remark that the following equation holds:
      \[
          \Delta_{\spe} = \{\sigma\in\Delta\mid(\pr_2)_\RR(\sigma)\neq \{0\}\}
          = \Delta\setminus\Delta_0.
      \]
      \item Let $\Delta_{\bdd}$ denote a subset of $\Delta_{\spe}$ defined as follows:
      \[
          \Delta_{\bdd} = \{\sigma\in\Delta_{\spe}\mid\sigma\cap(N_\RR\times\{1\}) \mathrm{\ \ is\ bounded.}\}.
      \]
      \item We call that $\Delta$ is \textbf{compactly\ arranged} if for every $\sigma_1, \sigma_2\in\Delta_{\bdd}$, and $\tau\in\Delta$ such that $\sigma_1\cup \sigma_2\subset \tau$, there exists $\sigma_3\in\Delta_{\bdd}$ such that $\sigma_1\cup \sigma_2\subset \sigma_3$. 
      \item We call that $\Delta$ is \textbf{generically\ unimodular} if every $\sigma\in\Delta_0$ is unimodular. 
      \item We call that $\Delta$ is \textbf{specifically\ reduced} if for every ray $\gamma\in\Delta_{\bdd}$, we have $\gamma\cap (N\times\{1\}) \neq \emptyset$. 
    \end{itemize}
\end{definition}
The following proposition gives sufficient conditions for some properties in Definition \ref{def: type of polytope}.  
\begin{proposition}\label{prop: example of type of polytope}
  We keep the notation in Definition \ref{def: type of polytope}.  
  Then the following statements follow:
  \begin{enumerate}
      \item[(a)] If $\Delta$ is a simplicial fan, then $\Delta$ is compactly arranged. 
      \item[(b)] If $\Delta$ is a unimodular fan, then $\Delta$ is compactly arranged and generically unimodular. 
  \end{enumerate}
\end{proposition}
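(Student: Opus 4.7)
The plan is to reduce both parts to a clean combinatorial criterion for membership in $\Delta_{\bdd}$: a cone $\sigma \in \Delta$ belongs to $\Delta_{\bdd}$ if and only if $\sigma \neq \{0\}$ and every extremal ray of $\sigma$ has strictly positive second coordinate. Indeed, since $\supp(\Delta) \subset N_\RR \times \RR_{\geq 0}$, the slice $\sigma \cap (N_\RR \times \{1\})$ is non-empty precisely when $\sigma$ contains a vector of positive height, and it is bounded precisely when $\sigma$ admits no horizontal direction, i.e. no extremal ray lying in $N_\RR \times \{0\}$. I would record this characterization first as the backbone of the argument.

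For (a), let $\sigma_1, \sigma_2 \in \Delta_{\bdd}$ and $\tau \in \Delta$ with $\sigma_1 \cup \sigma_2 \subset \tau$. By the fan axiom, $\sigma_i = \sigma_i \cap \tau$ is a face of $\tau$ for $i=1,2$. Since $\Delta$ is simplicial, $\tau$ is simplicial, so its faces are in bijection with the subsets of its set of extremal rays. Let $R_i$ be the set of extremal rays of $\sigma_i$, and let $\sigma_3$ be the face of $\tau$ whose extremal rays are $R_1 \cup R_2$. Then $\sigma_3 \in \Delta$ and $\sigma_1 \cup \sigma_2 \subset \sigma_3$ by construction. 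By the criterion above, each ray in $R_1 \cup R_2$ has positive second coordinate, so $\sigma_3 \in \Delta_{\bdd}$.

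For (b), if $\Delta$ is unimodular then every cone of $\Delta$ is simplicial, so (a) applies and gives ``compactly arranged.'' Since $\Delta_0 \subset \Delta$ and every cone of $\Delta$ is unimodular by hypothesis, $\Delta$ is also generically unimodular.

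The only non-routine point is the characterization of $\Delta_{\bdd}$ (which requires noting that $\sigma \cap (N_\RR \times \{1\})$ is a polytope exactly when the recession cone of this slice is trivial, equivalently when no extremal ray of $\sigma$ lies in $N_\RR \times \{0\}$); the rest is standard manipulation of simplicial fans. I expect no serious obstacle, provided one is careful about the case where some of the cones $\sigma_i$ might coincide or be zero-dimensional — but since $\sigma_i \in \Delta_{\bdd}$ forces $\sigma_i \neq \{0\}$, these edge cases cause no trouble.
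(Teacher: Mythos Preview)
Your proof is correct and follows essentially the same approach as the paper: in (a) you pass to the face $\sigma_3$ of the simplicial cone $\tau$ generated by the extremal rays of $\sigma_1$ and $\sigma_2$, and in (b) you reduce to (a) via ``unimodular $\Rightarrow$ simplicial'' and note that $\Delta_0 \subset \Delta$ inherits unimodularity. The only difference is that you spell out explicitly the ray-wise criterion for membership in $\Delta_{\bdd}$, which the paper leaves implicit in the phrase ``by the definition of $\sigma_3$.''
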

\begin{proof}
  We show these statements from (a) to (b). 
  \begin{enumerate}
      \item[(a)] Let $\sigma_1, \sigma_2\in\Delta_{\bdd}$, and $\tau\in\Delta$ be cones such that $\sigma_1\cup \sigma_2\subset \tau$. 
      Then $\sigma_1$ and $\sigma_2$ are faces of $\tau$. 
      Because $\tau$ is a simplicial cone, there exists a face $\sigma_3$ of $\tau$ such that $\sigma_3$ is generated by $\sigma_1$ and $\sigma_2$. 
      By the definition of $\sigma_3$, we have $\sigma_3\in\Delta_{\bdd}$ and $\sigma_1\cup \sigma_2\subset \sigma_3$. 
      \item[(b)] The subfan $\Delta_{0}$ of $\Delta$ is unimodular too. 
      Thus, $\Delta$ is generically unimodular. 
      Moreover, $\Delta$ is compactly arranged by (a). 
  \end{enumerate}
\end{proof}
\subsection{Constructing the strictly toroidal model}
In this subsection, we construct the strictly toroidal model of $\mathcal{Y}^\circ$. 
\begin{proposition}\label{prop: model I}
  With the notation above, we assume the following conditions: 
  \begin{itemize}
    \item[(1.)] The morphism $(\pr_2)_*|_{\overline{Y^{X(\Delta)}}}\colon \overline{Y^{X(\Delta)}}\rightarrow \A^1_k$ is proper. 
    \item[(2.)] The multiplication morphism $m\colon  T_{N\oplus\ZZ}\times\overline{Y^{X(\Delta)}}\rightarrow X(\Delta)$ is smooth. 
    Then $\Delta_Y$ is a subfan of $\Delta$ by Proposition \ref{prop: property1}(a). 
    \item[(3.)] The fan $\Delta$ is generically unimodular and specifically reduced.   
  \end{itemize}
  Then the following statements hold:
  \begin{itemize}
    \item[(a)] The scheme $\mathcal{Y}$ is flat, proper, and of finite presentation over $\Rt$. 
    \item[(b)] The scheme $\mathcal{Y}$ is strictly toroidal. 
    \item[(c)] The generic fiber $\mathcal{Y}$ is smooth over $\Kt$. 
    \item[(d)] The scheme theoretic closure of $\mathcal{Y}^\circ$ in $\mathcal{Y}_\Kt$ is $\mathcal{Y}_\Kt$.
    Moreover, the scheme theoretic closure of $\mathcal{Y}_\Kt$ in $\mathcal{Y}$ is $\mathcal{Y}$.  
    \item[(e)] Let $\{W^\circ_j\}_{1\leq j\leq s}$ be irreducible components of $\mathcal{Y}^\circ$ and let $W_j$ denote the scheme theoretic closure of $W^\circ_j$ in $\mathcal{Y}_\Kt$ for each $1\leq j\leq s$.  
    Then $W^\circ_j$ is a dense open subscheme of $W_j$ for any $1\leq j\leq s$, and $\{W_j\}_{1\leq j\leq s}$ are disjoint and all irreducible components of $\mathcal{Y}_\Kt$. 
    \item[(f)] We keep the notation in (e). Let $\mathcal{W}_j$ be the scheme theoretic closure of $W_j$ in $\mathcal{Y}$ for $1\leq j\leq s$. 
    Then each $\mathcal{W}_j$ is a flat, proper, and strictly toroidal scheme of finite presentation over $\Rt$ with smooth generic fiber $(\mathcal{W}_j)_\Kt = W_j$. 
    Moreover, $\{\mathcal{W}_j\}_{1\leq j\leq s}$ are disjoint and irreducible components of $\mathcal{Y}$. 
  \end{itemize}
\end{proposition}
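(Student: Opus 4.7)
The plan is to address (a), (b), (c), and (d)--(f) in that order, with (b) being the main obstacle. For (a), I would use that assumption (2) together with Proposition \ref{prop: property1}(d) shows $\overline{Y^{X(\Delta)}}$ is Cohen--Macaulay, and Proposition \ref{prop: property1}(b) gives that each nonempty stratum $\overline{Y^{X(\Delta)}}\cap O_\sigma$ has dimension $\dim Y-\dim\sigma$; combined with the specifically reduced hypothesis in (3), every fiber of $(\pr_2)_*|_{\overline{Y^{X(\Delta)}}}\colon\overline{Y^{X(\Delta)}}\to\A^1_k$ is equidimensional of the same dimension. Miracle flatness then yields flatness of this morphism; properness is assumption (1); and base change along $\Spec(\Rt)\to\A^1_k$ preserves flatness and properness while giving finite presentation (since the original morphism is of finite type over $k[t]$).

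The main obstacle is (b). My approach is to produce étale-local charts on $\mathcal{Y}$ of the standard strictly toroidal form. Fix $\sigma\in\Delta$; on the affine toric chart $U_\sigma=\Spec k[S_\sigma]$, the toric morphism $(\pr_2)_*|_{U_\sigma}$ is given by the monomial $\chi^{\omega_\sigma}$, where $\omega_\sigma\in S_\sigma$ is the image of $(0,1)\in M\oplus\ZZ$. The specifically reduced hypothesis ensures $\langle v,\omega_\sigma\rangle=1$ for every primitive generator $v$ of a ray of $\sigma$ lying in $\Delta_{\spe}$, which is exactly the primitivity condition required for the strictly toroidal local model. Combined with the smoothness of $m$ in assumption (2), one obtains around any closed point of $\overline{Y^{X(\Delta)}}\cap O_\sigma$ an étale neighborhood mapping to $U_\sigma\times\A^r_k$ compatible with the projection to $\A^1_k$. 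Base-changing along $\Spec(\Rt)\to\A^1_k$ then produces étale-local charts on $\mathcal{Y}$ of the form $\Spec\bigl(\Rt[S_\sigma]/(\chi^{\omega_\sigma}-t)\bigr)\times_{\Spec\Rt}\A^r_{\Rt}$, the required strictly toroidal form. The delicate aspect is verifying that these charts globalize coherently; generic unimodularity ensures that the charts indexed by $\sigma\in\Delta_0$ are smooth on the generic fiber, while the specifically reduced hypothesis forces reducedness of the closed fiber.

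For (c), once (b) is established, the generic fiber $\mathcal{Y}_\Kt$ is étale-locally modeled on toric charts indexed by $\sigma\in\Delta_0$ (those for which $\chi^{\omega_\sigma}$ becomes a unit after inverting $t$). Since $\Delta_0$ is unimodular by generic unimodularity, these charts are smooth over $\Kt$, so $\mathcal{Y}_\Kt$ is smooth over $\Kt$.

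For (d)--(f): part (d) follows because $\mathcal{Y}_\Kt$ is reduced (from (c)) with $\mathcal{Y}^\circ$ dense open (the open torus is dense in any tropical compactification), while $\mathcal{Y}$ has no embedded components by the $\Rt$-flatness of (a). For (e), applying Proposition \ref{prop: property1}(d) to the fan $\Delta_0$ and the closed subscheme $Y$ yields a disjoint irreducible decomposition of $\overline{Y^{X(\Delta_0)}}$; base-change along $\Spec(\Kt)\to\Gm$ then gives a disjoint irreducible decomposition of $\mathcal{Y}_\Kt$, and the $W_j$ are its irreducible components with $W_j^\circ$ dense open in each. For (f), each $\mathcal{W}_j$ is closed in the flat $\Rt$-scheme $\mathcal{Y}$ and equals a union of connected components of $\mathcal{Y}_\Kt$; flatness, properness, finite presentation, smooth generic fiber, and the strictly toroidal structure all transfer from $\mathcal{Y}$ to $\mathcal{W}_j$ by direct verification on the étale charts constructed in (b).
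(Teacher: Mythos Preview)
Your plan for (b)--(e) tracks the paper's argument closely. For (a) you take a slightly different route: the paper observes directly that $t$ is a unit in $k[T_{N\oplus\ZZ}]$, hence a non-zero-divisor in each affine coordinate ring $k[X(\sigma)]/(I_Y\cap k[X(\sigma)])$, which is precisely flatness over the DVR $k[t]_{(t)}$ at points lying over $0$---no Cohen--Macaulay input needed. Your miracle-flatness argument also works once you check that the fiber over $0$ has pure dimension $\dim Y-1$, but note that this follows from Proposition~\ref{prop: property1}(b) alone; the specifically reduced hypothesis is irrelevant here (it governs reducedness of the special fiber, not its dimension), and you do not need to control fibers over nonzero closed points, since $\Spec(\Rt)\to\A^1_k$ only hits $0$ and the generic point.

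There is a real gap in (f). You claim the strictly toroidal structure transfers from $\mathcal{Y}$ to each $\mathcal{W}_j$ ``by direct verification on the \'etale charts constructed in (b),'' but this only makes sense once you know $\mathcal{W}_j$ is \emph{open} in $\mathcal{Y}$. A priori $\mathcal{W}_j$ is merely the scheme-theoretic closure of $W_j$, hence closed; the toroidal charts of $\mathcal{Y}$ do not restrict to toroidal charts of an arbitrary closed subscheme. What is missing is the identification of the $\mathcal{W}_j$ with the connected components of $\mathcal{Y}$. The paper does this via Lemma~\ref{lem: stacks exchange}(b): any connected component $E$ of $\mathcal{Y}$ is flat and of finite presentation over $\Rt$ with reduced special fiber (by (b)), hence $E_\Kt$ is connected; since $\mathcal{Y}_\Kt=\coprod_jW_j$, this forces $E_\Kt=W_j$ for a unique $j$, and then Lemma~\ref{lem:Gubler} gives $E=\mathcal{W}_j$. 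This connectedness statement over the non-Noetherian ring $\Rt$ is not automatic---it requires descending to a DVR $k[[t^{1/l}]]$ and using reducedness of the closed fiber to lift idempotents---and cannot be read off from local charts.
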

\begin{proof}
    We prove the statements from (a) to (f) in order. 
    \begin{itemize}
        \item[(a)] By the assumption, $(\pr_2)_*|_{\overline{Y^{X(\Delta)}}}\colon \overline{Y^{X(\Delta)}}\rightarrow \A^1_k$ is proper and of finite presentation, and hence, $\mathcal{Y}$ is also proper and of finite presentation over $\Spec(\Rt)$. 
        For checking the flatness of $\mathcal{Y}$, it is enough to show that $(\pr_2)_*|_{\overline{Y^{X(\Delta)}}}\colon \overline{Y^{X(\Delta)}}\rightarrow \A^1_k$ is flat at any point in $(\pr_2)^{-1}_*(0)$. 
        Let  $\sigma\in\Delta_{\spe}$ and let $I_Y$ be the ideal of $k[T_{N\oplus\ZZ}]$ associated with $Y\subset T_{N\oplus\ZZ}$. 
        Then $I_Y\cap k[X(\sigma)]$ is the ideal of $k[X(\sigma)]$ associated with the closed subscheme $\overline{Y^{X(\Delta)}}\cap X(\sigma)$ of $X(\sigma)$, and it is enough to show that $t\in k[X(\sigma)]/(I_Y\cap k[X(\sigma)])$ is not a zero divisor because $k[t]_{(t)}$ is a DVR. 
        Let $f\in k[X(\sigma)]$ be an element such that $t\cdot f\in I_Y\cap k[X(\sigma)]$. 
        Because $t$ is a unit in $k[T_{N\oplus\ZZ}]$, $f = t^{-1}(tf)\in I_Y$. 
        In particular, it follows that $f\in I_Y\cap k[X(\sigma)]$, so that $t$ is not a zero divisor in $k[X(\sigma)]/(I_Y\cap k[X(\sigma)])$. 
        \item[(b)] Let $y\in \mathcal{Y}_k$. 
        Then there uniquely exists $\sigma\in \Delta_{\spe}$ such that $y\in O_\sigma$. 
        Because $\Delta$ is specifically reduced, there exists a sublattice $N_0$ of $N$ such that the conditions in Lemma \ref{lem: relative toroidal}(a) hold. 
        Let $p\colon N\oplus \ZZ\rightarrow (N/N_0)\oplus\ZZ$ be the natural quotient map and let $\sigma_0$ denote the strongly convex rational polyhedral cone $p_\RR(\sigma)$ in $((N/N_0)\oplus\ZZ)_\RR$. 
        Then $(\overline{Y^{X(\Delta)}}\cap X(\sigma))\times_{\A^1_k} \Spec(\Rt)$ is an open subscheme of $\mathcal{Y}$, and there exists the following Cartesian diagram: 
        \begin{equation*}
            \begin{tikzcd}
            (\overline{Y^{X(\Delta)}}\cap X(\sigma))\times_{\A^1_k} \Spec(\Rt)\ar[r]\ar[d]&X(\sigma_0)\times_{\A^1_k}\Spec(\Rt)\ar[r]\ar[d]& \Spec(\Rt)\ar[d]\\
            \overline{Y^{X(\Delta)}}\cap X(\sigma) \ar[r, "p_*|_{\overline{Y^{X(\Delta)}}\cap X(\sigma)}"] & X(\sigma_0)\ar[r] & \A^1_k,
            \end{tikzcd}
        \end{equation*}
        where the composition of the lower morphisms are $(\pr_2)_*|_{\overline{Y^{X(\Delta)}}\cap X(\sigma)}$. 
        
        Thus $p_*|_{\overline{Y^{X(\Delta)}}\cap X(\sigma)}\colon \overline{Y^{X(\Delta)}}\cap X(\sigma)\rightarrow X(\sigma_0)$ is smooth at $y\in \overline{Y^{X(\Delta)}}\cap O_{\sigma}$ by Lemma \ref{lem: smooth to smooth}. 
        Therefore, $\mathcal{Y}$ is strictly toroidal at $y$ by Lemma \ref{lem: relative toroidal}(d). 
        \item[(c)] Let $\sigma\in\Delta_0$. 
        Because $(\pr_2)_\RR(\sigma) = \{0\}$, there exists a sublattice $N_0$ of $N$ such that $(N_0\oplus\ZZ)\oplus(\langle\sigma\rangle\cap (N\oplus\ZZ)) = N\oplus\ZZ$. 
        Let $p\colon N\oplus \ZZ\rightarrow N/N_0$ be the natural quotient map and let $\sigma_0$ be the strongly convex rational polyhedral cone $p_\RR(\sigma)$ in $(N/N_0)_\RR$. 
        Then $p_*|_{\overline{Y^{X(\Delta)}}\cap X(\sigma)}\colon \overline{Y^{X(\Delta)}}\cap X(\sigma)\rightarrow X(\sigma_0)$ is smooth at any point $x\in \overline{Y^{X(\Delta)}}\cap O_{\sigma}$ by Lemma \ref{lem: smooth to smooth}. 
        Moreover, $X(\sigma_0)$ is smooth because $\Delta$ is generically unimodular. 
        Thus, $\overline{Y^{X(\Delta)}}$ is smooth at any point $x\in \overline{Y^{X(\Delta)}}\cap O_{\sigma}$, and hence, $\overline{Y^{X(\Delta)}}\cap X(\Delta_0) = \overline{Y^{X(\Delta_0)}}$ is smooth over $k$. 
        This indicates that $\mathcal{Y}_\Kt$ is smooth over $\Kt$ by the generic smoothness and the following Cartesian product:
        \begin{equation*}
            \begin{tikzcd}
            \mathcal{Y}_\Kt\ar[r]\ar[d]&\Spec(\Kt)\ar[d]\\
            \overline{Y^{X(\Delta)}}\cap X(\Delta_0) \ar[r, "(\pr_2)_*"] & \mathbb{G}^1_{m, k}.
            \end{tikzcd}
        \end{equation*}        
        \item[(d)] There exists the following Cartesian diagram: 
        \begin{equation*}
            \begin{tikzcd}
            \mathcal{Y}^\circ\ar[r]\ar[d]&\mathcal{Y}\ar[r]\ar[d]& \Spec(\Rt)\ar[d]\\
            Y \ar[r] & \overline{Y^{X(\Delta)}}\ar[r] & \A^1_k.
            \end{tikzcd}
        \end{equation*}
        Moreover, the scheme theoretic closure of $Y$ in $\overline{Y^{X(\Delta)}}$ is $\overline{Y^{X(\Delta)}}$ because $\overline{Y^{X(\Delta)}}$ is the scheme theoretic closure of $Y$ in $X(\Delta)$. 
        Thus, the scheme theoretic closure of $\mathcal{Y}^\circ$ in $\mathcal{Y}$ is $\mathcal{Y}$ by Lemma \ref{lem: base change and closure}. 
        Therefore, scheme theoretic image of $\mathcal{Y}^\circ$ in $\mathcal{Y}_\Kt$ is $\mathcal{Y}_\Kt$ by Lemma \ref{lem: open-image}. 

        We can show the latter claim in the same way, but we show it in another way. 
        By (a), $\mathcal{Y}$ is flat over $\Rt$. 
        Thus, $\mathcal{Y}$ is the scheme theoretic closure of $\mathcal{Y}_\Kt$ in $\mathcal{Y}$ by Lemma \ref{lem:Gubler}. 
        \item[(e)] For each $1\leq j\leq s$, scheme theoretic closure of $W^\circ_j$ in $\mathcal{Y}^\circ$ is $W^\circ_j$, so that $W^\circ_j = \mathcal{Y}^\circ\cap W_j$ by Lemma \ref{lem: open-image}, in particular, $W^\circ_j$ is an open subscheme of $W_j$. 
        Moreover, the underlying space of $W_j$ is a topological closure of $W^\circ_j$ in $\mathcal{Y}_\Kt$, and hence, $W^\circ_j$ is dense in $W_j$. 

        By (d), the underlying space of $\mathcal{Y}_K$ is the union of the underlying spaces $\{W_j\}_{1\leq j\leq s}$. 
        In addition to this, $\mathcal{Y}_{\Kt}$ is smooth over $\Kt$ by (c). 
        Thus, $\{W_j\}_{1\leq j\leq s}$ are disjoint and all irreducible components of $\mathcal{Y}_\Kt$. 
        \item[(f)] By (d) and (e), we remark that the number of connected components of $\mathcal{Y}$ is less than and equal to $s$.  
        In addition to this, we remark that the generic fiber of a connected component of $\mathcal{Y}$ is non-empty and can be decomposed into a disjoint union of some $W_j$. 
        Let $E$ be a connected component of $\mathcal{Y}$. 
        By the first remark, $E$ is an open and closed subscheme of $\mathcal{Y}$. 
        In particular, $E$ is flat and locally of finite presentation over $\Spec(\Rt)$ by (a). 
        Moreover, by (b), $\mathcal{Y}_k$ is reduced, so $E_k$ is reduced too.  
        Thus, by Lemma \ref{lem: stacks exchange}(b), $E_\Kt$ is connected. 
        This shows that there exists a unique $1\leq j\leq s$ such that $E_\Kt = W_j$. 
        We recall that $E$ and $\mathcal{Y}$ are flat over $\Spec(\Rt)$ and $E$ is a closed subscheme of $\mathcal{Y}$. 
        Thus, by Lemma \ref{lem:Gubler}, $E = \mathcal{W}_j$. 
        On the other hand, let $E_j$ denote the connected component of $\mathcal{Y}$ such that $W_j\subset E_j$ for each $1\leq j\leq s$. 
        By the same argument, we can show $\mathcal{W}_j = E_j$. 
        Hence, for any $1\leq j\leq s$, $\mathcal{W}_j$ is flat over $\Spec(\Rt)$, $(\mathcal{W}_j)_\Kt = W_j$, and $\{\mathcal{W}_j\}_{1\leq j\leq s}$ are disjoint and irreducible components of $\mathcal{Y}$. 
        
        By (a), (b), and (c), $\mathcal{W}_j$ is a proper strictly toroidal model of a smooth $\Kt$-variety $W_j$ for any $1\leq j\leq s$. 
    \end{itemize}
\end{proof}
The following proposition shows that the stable birational volume of $\mathcal{Y}^\circ$ can be computed combinatorially.  
\begin{proposition}\label{prop: model II}
  We keep the notation and the assumption in Proposition \ref{prop: model I}.
  In addition to this, we assume that $\Delta$ is compactly arranged. 
  Let $\mathfrak{I}$ denote a set which consists of all irreducible components of $\mathcal{Y}_k$.
  For each $\tau\in \Delta_{\spe}\cap \Delta_Y$, $\{E^{(1)}_{\tau}, E^{(2)}_{\tau}, \ldots, E^{(r_\tau)}_{\tau}\}$ denote all connected components of $\overline{Y^{X(\Delta)}}\cap O_\tau$.  
  We remark that these are also irreducible components of $\overline{Y^{X(\Delta)}}\cap O_\tau$ by Proposition \ref{prop: property1}(c). 
  Then the following statements hold:
  \begin{itemize}
    \item[(a)]  The following equation holds:
    \[
        \mathfrak{I} = \bigcup_{\substack{\gamma\in \Delta_{\spe}\cap \Delta_Y\\ 
        \dim(\gamma) = 1}} \biggl\{\overline{E^{(i)}_{\gamma}}\biggr\}_{1\leq i\leq r_\gamma}.
    \]
    \item[(b)]  
    For any integer $1\leq j\leq s$, and any stratum $E\in \mathcal{S}(\mathcal{W}_j)$, there uniquely exists $\tau\in \Delta_{\bdd}\cap\Delta_Y$ and $1\leq i\leq r_{\tau}$ such that $E = \overline{E^{(i)}_{\tau}}$. 
    \item[(c)] Conversely, for any $\tau\in \Delta_{\bdd}\cap \Delta_Y$ and any integer $1\leq i\leq r_\tau$, there uniquely exists integer $1\leq j\leq s$ and $E\in \mathcal{S}(\mathcal{W}_j)$ such that $E = \overline{E^{(i)}_{\tau}}$. 
    \item[(d)] The following equation holds:
    \[
        \sum_{1\leq j\leq s}\VOL\biggl(\bigl\{W^\circ_j\bigr\}_{\mathrm{sb}}\biggr) = 
        \sum_{\tau\in\Delta_{\bdd}\cap\Delta_Y}(-1)^{\dim(\tau)-1}\biggl(\sum_{1\leq i\leq r_\tau} \bigl\{E^{(i)}_{\tau}\bigr\}_{\mathrm{sb}}\biggr).
    \]
    \item[(e)] Let $\Sigma$ be a strongly convex rational polyhedral fan such that $\Delta$ is a refinement of $\Sigma$ and satisfies the conditions (1.) and (2.) in Proposition \ref{prop: model I}. 
    Let $\Sigma_Y$ denote $\{\sigma\in\Sigma\mid \overline{Y^{X(\Sigma)}}\cap O_\sigma\neq \emptyset \}$ and let $\{E^{(1)}_{\sigma}, E^{(2)}_{\sigma}, \ldots, E^{(r_\sigma)}_{\sigma}\}$ denote all connected components of $\overline{Y^{X(\Sigma)}}\cap O_\sigma$ for each $\sigma\in \Sigma_{\spe}\cap \Sigma_Y$. 
    Then the following equation holds:
    \[
        \sum_{1\leq j\leq s}\VOL\biggl(\bigl\{W^\circ_j\bigr\}_{\mathrm{sb}}\biggr) = 
        \sum_{\sigma\in\Sigma_{\bdd}\cap\Sigma_Y}(-1)^{\dim(\sigma)-1}\biggl(\sum_{1\leq i\leq r_\sigma} \bigl\{E^{(i)}_{\sigma}\bigr\}_{\mathrm{sb}}\biggr).
    \]
  \end{itemize}
\end{proposition}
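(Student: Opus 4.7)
The proof separates into structural parts (a)--(c) and the computations (d)--(e).

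For (a), we have $\mathcal{Y}_k = \overline{Y^{X(\Delta)}}\cap (\pr_2)^{-1}_*(0)$, and the special fiber of $(\pr_2)_*\colon X(\Delta)\to\A^1_k$ is the union $\bigcup_\gamma \overline{O_\gamma}$ over rays $\gamma\in\Delta_{\spe}$. Intersecting with $\overline{Y^{X(\Delta)}}$ and applying Proposition \ref{prop: stratification}(d), the irreducible components of each piece are the closures $\overline{E^{(i)}_\gamma}$; by Proposition \ref{prop: property1}(b) each has dimension $\dim(Y)-1 = \dim\mathcal{Y}_k$, and so these exhaust the irreducible components of $\mathcal{Y}_k$.

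For (b) and (c), each $\mathcal{W}_j$ is a proper strictly toroidal model by Proposition \ref{prop: model I}(f), hence carries a canonical stratification of its closed fiber coming from the toroidal structure. Near a point in $\overline{Y^{X(\Delta)}}\cap O_\tau$ for $\tau\in\Delta_{\spe}\cap\Delta_Y$, the local \'etale model provided by Lemma \ref{lem: relative toroidal} (as in the proof of Proposition \ref{prop: model I}(b)) is strictly toroidal over $\Rt$, and its log strata are indexed by those cones of the local fan whose rays all come from $\Delta_{\spe}$---equivalently, by cones in $\Delta_{\bdd}$. Globally this identifies the strata of $\bigsqcup_j \mathcal{W}_j$ with pairs $(\tau,i)$, $\tau\in\Delta_{\bdd}\cap\Delta_Y$ and $1\leq i\leq r_\tau$, each $\overline{E^{(i)}_\tau}$ sitting inside a unique $\mathcal{W}_j$ by Proposition \ref{prop: model I}(f). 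The compactly arranged hypothesis is what makes the local-to-global gluing consistent, giving both (b) and (c).

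For (d), apply Proposition \ref{prop: NONO21} to each $\mathcal{W}_j$, sum over $j$, and substitute the bijection from (b)--(c). Both $\{W^\circ_j\}_{\sb}=\{W_j\}_{\sb}$ (Proposition \ref{prop: model I}(e), dense open inclusion) and $\{\overline{E^{(i)}_\tau}\}_{\sb}=\{E^{(i)}_\tau\}_{\sb}$ hold by birational invariance; the codimension of $E^{(i)}_\tau$ in $\mathcal{Y}_k$ equals $\dim(\tau)-1$ by Proposition \ref{prop: property1}(b), producing the sign $(-1)^{\dim(\tau)-1}$.

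For (e), the LHS depends only on $\mathcal{Y}^\circ$, not on the fan, so it suffices to match the two RHS expressions. Applying Proposition \ref{prop: surjection}(b) to the refinement $X(\Delta)\to X(\Sigma)$ (induced by the identity on $N\oplus\ZZ$) gives $\overline{Y^{X(\Delta)}}\cap O_\tau \cong (\overline{Y^{X(\Sigma)}}\cap O_\sigma)\times\Gm^{\dim(\sigma)-\dim(\tau)}$ whenever $\tau^\circ\subset\sigma^\circ$, together with a bijection of connected components preserving stable birational classes. Grouping the sum of (d) by $\sigma\in\Sigma$ reduces the statement to the polyhedral identity
\begin{equation*}
\sum_{\substack{\tau\in\Delta_{\bdd}\cap\Delta_Y \\ \tau^\circ\subset\sigma^\circ}} (-1)^{\dim(\tau)-1} = \begin{cases} (-1)^{\dim(\sigma)-1} & \sigma\in\Sigma_{\bdd}\cap\Sigma_Y, \\ 0 & \sigma\in(\Sigma_{\spe}\setminus\Sigma_{\bdd})\cap\Sigma_Y, \end{cases}
\end{equation*}
which I would prove via compactly supported Euler characteristics: since $\tau^\circ\cong\RR^{\dim(\tau)}$, the LHS equals $-\chi_c\bigl(\bigsqcup_{\tau\in\Delta_{\bdd},\,\tau^\circ\subset\sigma^\circ}\tau^\circ\bigr)$; for bounded $\sigma$ every $\tau$ with $\tau^\circ\subset\sigma^\circ$ is automatically bounded and the union is $\sigma^\circ\cong\RR^{\dim(\sigma)}$, while for unbounded $\sigma$ the bounded locus has a non-compact end in the recession direction of $\sigma$, forcing $\chi_c=0$. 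The main obstacles are the precise stratum-cone correspondence in (b), (c), which requires careful analysis of the log strata of the local toroidal model, and the Euler-characteristic identity for unbounded $\sigma$ in (e); the remaining steps are bookkeeping with the earlier propositions.
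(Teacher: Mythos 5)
Your overall strategy matches the paper's proof, and (a) and (d) are essentially right, but there are two genuine gaps, both of which you yourself flag as obstacles.

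For (b) and (c), saying that the compactly arranged hypothesis "makes the local-to-global gluing consistent" and appealing to "log strata of the local toroidal model" is not a proof; the paper's argument for (b) is more concrete and does not go through the local \'etale model at all. A stratum $E\in\mathcal{S}(\mathcal{W}_j)$ is by definition a connected component of an intersection of irreducible components of $\mathcal{Y}_k$, which by (a) have the form $\overline{E^{(i_l)}_{\gamma_l}}$ for rays $\gamma_l\in\Delta_{\spe}\cap\Delta_Y$. Nonemptiness of $E$ forces $\bigcap_l\overline{O_{\gamma_l}}\neq\emptyset$, hence there is some $\sigma\in\Delta$ with $\gamma_l\preceq\sigma$ for all $l$; the compactly arranged hypothesis is then invoked precisely here to produce $\tau\in\Delta_{\bdd}$ with $\gamma_l\preceq\tau$ for all $l$, and taking $\tau$ minimal yields $\bigcap_l\overline{O_{\gamma_l}}=\overline{O_\tau}$, after which Proposition \ref{prop: stratification}(d) identifies $E$ with some $\overline{E^{(i)}_\tau}$. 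Your proposal does not explain how the cone $\tau$ is obtained, which is the actual content of (b).

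For (e), your reduction to a polyhedral Euler-characteristic identity via Proposition \ref{prop: surjection}(b) and grouping by $\sigma\in\Sigma$ is exactly the paper's route, but the claim ``the bounded locus has a non-compact end in the recession direction of $\sigma$, forcing $\chi_c=0$'' is a heuristic, not an argument. In general a non-compact end does not force $\chi_c=0$ (e.g.\ $\chi_c(\RR_{>0})=-1$); the vanishing here is a nontrivial combinatorial fact about unbounded rational polyhedra. The paper handles it by identifying $\tau^\circ\cap(N_\Q\times\{1\})$ as a $\Q$-rational polyhedron, using the additive Euler characteristic $\chi'$ of \cite[Lemma 9.6]{HK06}, and citing \cite[Proposition 3.7]{NPS16} to get $\chi'(\tau^\circ\cap(N_\Q\times\{1\}))=0$ when $\tau\notin\Delta_{\bdd}$ and $(-1)^{\dim\tau-1}$ when $\tau\in\Delta_{\bdd}$. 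You would either need to cite that result or prove the vanishing for unbounded $\sigma$ directly, which your recession-direction heuristic does not do.
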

\begin{proof}
    We prove the statement from (a) to (e) in order. 
    In this proof, We identify with $\mathcal{Y}_k$ and $\overline{Y^{X(\Delta)}}\times_{\A^1} \{0\}$.
    \begin{itemize}
        \item[(a)]  
        We recall that $(\overline{Y^{X(\Delta)}}\times_{\A^1}\{0\})\cap O_\tau \neq \emptyset$ if and only if $\tau\in\Delta_{\spe}\cap \Delta_Y$. 
        Therefore, $\mathcal{Y}_k$ has the following stratification:
        \begin{align*}
             \mathcal{Y}_k &= \coprod_{\tau\in\Delta_{\spe}\cap \Delta_Y}(\overline{Y^{X(\Delta)}}\cap O_\tau)\\
             &= \coprod_{\tau\in\Delta_{\spe}\cap \Delta_Y}(\coprod_{1\leq i\leq r_{\tau}}E^{(i)}_{\tau}).
       \end{align*}
       For any $\tau\in \Delta_{\spe}\cap \Delta_Y$, there exists a ray $\gamma\in\Delta_{\spe}$ such that $\gamma\preceq\tau$. 
       Because $\Delta_Y$ is a subfan of $\Delta$, $\gamma\in\Delta_Y$. 
       Moreover, for any $\tau_1$ and $\tau_2\in\Delta_{\spe}\cap \Delta_Y$, $\overline{Y^{X(\Delta)}}\cap O_{\tau_1} \subset \overline{Y^{X(\Delta)}}\cap \overline{O_{\tau_2}}$ if and only if $\tau_2\subset \tau_1$ by the argument of toric varieties. 
       In conclusion, the following equation holds by Proposition \ref{prop: stratification}(d):
       \[
             \mathcal{Y}_k = \bigcup_{\substack{\gamma\in\Delta_{\spe}\cap \Delta_Y\\\dim(\gamma) = 1}}(\bigcup_{1\leq i\leq r_{\gamma}}\overline{E^{(i)}_{\gamma}}).
        \]        
        Furthermore, we remark that for any $\tau_1$ and $\tau_2\in\Delta_{\spe}\cap \Delta_Y$, any $1\leq i_1\leq r_{\tau_1}$, and any $1\leq i_2\leq r_{\tau_2}$, if we have $\overline{E^{(i_1)}_{\tau_1}} = \overline{E^{(i_2)}_{\tau_2}}$, then $\tau_1 = \tau_2$ and $i_1 = i_2$. 
        Indeed, because $E^{(i_1)}_{\tau_1} \cap \overline{E^{(i_2)}_{\tau_2}} \neq\emptyset$, we have $O_{\tau_1}\cap\overline{O_{\tau_2}}\neq\emptyset$. 
        Thus, $\tau_2\subset \tau_1$ by the argument of toric varieties. 
        Similarly, $\tau_1\subset \tau_2$ too. 
        Moreover, we have $i_1 = i_2$. 
                
        Thus, $\overline{E^{(i)}_{\gamma}}$ is an irreducible component of $\mathcal{Y}_k$ for any ray $\gamma\in \Delta_{\spe}\cap \Delta_Y$ and $1\leq i\leq r_\gamma$, and hence, the following set is equal to $\mathfrak{I}$ by the remark above:
        \[
            \bigcup_{\substack{\gamma\in\Delta_{\spe}\cap \Delta_Y\\ \dim(\gamma) = 1}}\biggl\{\overline{E^{(j)}_{\gamma}}\biggr\}_{1\leq j\leq r_{\gamma}}.
        \]
        \item[(b)]  Let $1\leq j'\leq s$ be an integer and let $E\in\mathcal{S}(\mathcal{W}_{j'})$. 
        By Proposition \ref{prop: model I}(f), $\mathcal{Y}_k = \coprod_{1\leq j\leq s} (\mathcal{W}_{j})_k$. 
        Thus, all irreducible components of $(\mathcal{W}_{j'})_k$ are in $\mathfrak{I}$. 
        Hence, by (a), there exist rays $\gamma_1, \ldots, \gamma_m\in \Delta_{\spe}\cap \Delta_Y$ and integers $\{i_l\}_{1\leq l\leq m}$ such that $1\leq i_l\leq r_{\gamma_l}$ for any $1\leq l\leq m$, and $E$ is a connected component of the following closed subset:
        \[
            \bigcap_{1\leq l\leq m} \overline{E^{(i_l)}_{\gamma_l}}.
        \]
        In particular, by Proposition \ref{prop: stratification}(d), $\cap_{1\leq l\leq m} \overline{O_{\gamma_l}} \neq \emptyset$. 
        Thus, there exists $\sigma\in\Delta$ such that $\gamma_l\preceq\sigma$ for any $1\leq l\leq m$. 
        Hence, by the assumption, there exists $\tau\in\Delta_{\bdd}$ such that $\gamma_l\preceq\tau$ for any $1\leq l\leq m$. 
        We can take $\tau$ minimal because $\Delta$ is a fan. 
        Then the following equation holds by the assumption of $\tau$:
        \[
            \bigcap_{1\leq l\leq m} \overline{O_{\gamma_l}} = \overline{O_{\tau}}.
        \]
        Thus, $\overline{Y^{X(\Delta)}}\cap \overline{O_{\tau}}\neq \emptyset$ by the following inclusion:
        \[
        \emptyset \subsetneq E\subset \bigcap_{1\leq l\leq m} \overline{E^{(i_l)}_{\gamma_l}}\subset \overline{Y^{X(\Delta)}}\cap\bigcap_{1\leq l\leq m}\overline{O_{\gamma_l}}.
        \]
        In particular, 
        $\overline{Y^{X(\Delta)}}\cap O_{\tau}\neq \emptyset$ by Proposition \ref{prop: stratification}(b), and 
        $\tau\in\Delta_Y$. 
        Moreover, the following equation holds:
        \begin{align*}
            \overline{Y^{X(\Delta)}}\cap \overline{O_{\tau}}&= \bigcap_{1\leq l\leq m} (\overline{Y^{X(\Delta)}}\cap\overline{O_{\gamma_l}})\\
            &= \coprod_{\{\{h_l\}_{1\leq l\leq m}\mid1\leq h_l\leq r_{\gamma_l}\}} (\bigcap_{1\leq l\leq m} \overline{E^{(h_l)}_{\gamma_l}}).
        \end{align*}
        Thus, $E$ is also a connected component of $\overline{Y^{X(\Delta)}}\cap \overline{O_{\tau}}$. 
        Therefore, by Proposition \ref{prop: stratification}(d), there exists $1\leq i\leq r_{\tau}$ such that $E = \overline{E^{(i)}_{\tau}}$. 
        We already have shown the uniqueness in the proof of (a).
        \item[(c)] Let $\tau\in\Delta_{\bdd}\cap\Delta_Y$ and let $1\leq i\leq r_{\tau}$ be an integer. 
        Let $\gamma_1, \ldots, \gamma_m$ be all rays of $\tau$. 
        Because $\tau\in\Delta_{\bdd}$, we have $\gamma_1, \ldots, \gamma_m\in\Delta_{\bdd}$. 
        Moreover, $\gamma_1, \ldots, \gamma_m\in\Delta_Y$ because $\Delta_Y$ is a subfan of $\Delta$. 
        Because $\gamma_1, \ldots, \gamma_m$ are all rays of $\tau$, the following equation holds:
        \[
            \bigcap_{1\leq l\leq m} \overline{O_{\gamma_l}} = \overline{O_{\tau}}.
        \]
        Thus, like as the argument in proof of (b), the following equation holds:
        \begin{align*}
            \overline{Y^{X(\Delta)}}\cap \overline{O_{\tau}}&= \bigcap_{1\leq l\leq m} (\overline{Y^{X(\Delta)}}\cap\overline{O_{\gamma_l}})\\
            &= \coprod_{\{\{h_l\}_{1\leq l\leq m}\mid 1\leq h_l\leq r_{\gamma_l}\}} (\bigcap_{1\leq l\leq m}\overline{E^{(h_l)}_{\gamma_l}}).
        \end{align*}
        Hence, there exists integers $(i_l)_{1\leq l\leq m}$ such that $1\leq i_l\leq r_{\gamma_l}$ for any  $1\leq l\leq m$ and $\overline{E^{(i)}_{\tau}}$ is a connected component of the following closed subset:
        \[
            \bigcap_{1\leq l\leq m} \overline{E^{(i_l)}_{\gamma_l}}.
        \]
        By (a), for any $1\leq l\leq m$, $\overline{E^{(i_l)}_{\gamma_l}}$ is an irreducible component of $\mathcal{Y}_k$. 
        Now, $\overline{E^{(i)}_{\tau}}$ is non-empty, so there uniquely exists $1\leq j\leq s$ such that each $\overline{E^{(i_l)}_{\gamma_l}}$ is an irreducible component of $(\mathcal{W}_j)_k$ by Proposition \ref{prop: model I}(f). 
        This shows that $\overline{E^{(i)}_{\tau}}\in\mathcal{S}(\mathcal{W}_j)$. 
        \item[(d)] By (b), (c), and the remark in the proof of (a), we can check that the following equation holds:
        \[
            \coprod_{1\leq j\leq s}\mathcal{S}(\mathcal{W}_j) = \coprod_{\tau\in\Delta_{\bdd}\cap \Delta_Y}\{\overline{E^{(i)}_{\tau}}\}_{1\leq i\leq r_{\tau}}.
        \]
        We remark that $\{(\mathcal{W}_j)_{\Kt}\}_{\mathrm{sb}} = \{W^\circ_j\}_{\mathrm{sb}}$ for any $1\leq j\leq s$ by Proposition \ref{prop: model I}(e) and (f). 
        Thus, by Proposition \ref{prop: property1}(b), and Proposition \ref{prop: NONO21}, the following equation holds:
        \[
            \sum_{1\leq j\leq s}\VOL\biggl(\bigl\{W^\circ_j\bigr\}_{\mathrm{sb}}\biggr) = \sum_{\tau\in\Delta_{\bdd}\cap \Delta_Y}(-1)^{\dim(\tau)-1}\biggl(\sum_{1\leq i\leq r_{\tau}} \bigl\{E^{(i)}_{\tau}\bigr\}_{\mathrm{sb}}\biggr).
        \]
        \item[(e)]
        Let $\tau\in\Delta_{\spe}\cap\Delta_Y$.  
        Then there exists unique $\sigma\in\Sigma$ such that $\tau^\circ\subset\sigma^\circ$ because $\Delta$ is a refinement of $\Sigma$. 
        Because $\tau\in\Delta_{\spe}$, we have $\sigma\in\Sigma_{\spe}$. 
        Moreover, by Proposition \ref{prop: surjection}(b), $\overline{Y^{X(\Delta)}}\cap O_{\tau}$ is a trivial algebraic torus fibration over $\overline{Y^{X(\Sigma)}}\cap O_\sigma$ whose dimension of the fiber is $\dim(\sigma) -\dim(\tau)$. 
        In particular, $\sigma\in\Sigma_Y$. 
        Moreover, $r_{\tau} = r_\sigma$ and if it is necessary, we can replace the index such that $E^{(i)}_{\tau}$ is a trivial algebraic torus fibration of $E^{(i)}_{\sigma}$ for any $1\leq i\leq r_\sigma$. 

        Conversely, for any $\sigma\in\Sigma_{\spe}\cap\Sigma_Y$ and for $\tau\in\Delta$ such that $\tau^\circ\subset\sigma^\circ$, we have $\tau\in\Delta_{\spe}\cap\Delta_Y$ by the same argument. 

        Thus, the following equation holds from (d):
        \[
            \sum_{1\leq j\leq s}\VOL\biggl(\bigl\{W^\circ_j\bigr\}_
            {\mathrm{sb}}\biggr) = \sum_{\sigma\in\Sigma_{\spe}\cap \Sigma_Y}\biggl(\sum_{\substack{\tau\in\Delta_{\bdd}\\
            \tau^\circ\subset\sigma^\circ}}(-1)^{\dim(\tau)-1}\biggr)\biggl(\sum_{1\leq i\leq r_{\sigma}} \bigl\{E^{(i)}_{\sigma}\bigr\}_{\mathrm{sb}}\biggr).
        \]
                
        Now, we use the notation in \cite[$\S3$]{NPS16}. 
        Let $\sigma\in\Sigma_{\spe}$. 
        We can identify with $N_\Q$ and $N_\Q\times\{1\}$, so we can regard $\sigma\cap (N_\Q\times\{1\})$ as a $\Q$-rational polyhedron in $N_\Q$. 
        Similarly, for any $\tau\in\Delta_{\spe}$, we can regard $\tau\cap (N_\Q\times\{1\})$ as a $\Q$-rational polyhedron in $N_\Q$. 
        Moreover, there exists the following disjoint decomposition of $\Q$-rational polyhedrons because $\Delta$ is a refinement of $\Sigma$:
        \[
            \sigma^\circ\cap(N_\Q\times\{1\}) = \coprod_{\substack{\tau\in\Delta_{\spe}\\
            \tau^\circ\subset\sigma^\circ}}\tau^\circ\cap(N_\Q\times\{1\}).
        \]
        There exists the Euler characteristic $\chi'$ of definable subsets in $N_\Q$ by \cite[Lemma 9.6]{HK06}. 
        By \cite[Lemma 9.6]{HK06}, $\chi'$ has the additivity, so the following equation holds:
        \[
            \chi'(\sigma^\circ\cap(N_\Q\times\{1\})) = \sum_{\substack{\tau\in\Delta_{\spe}\\
            \tau^\circ\subset\sigma^\circ}}\chi'(\tau^\circ\cap(N_\Q\times\{1\})).
        \]
        Furthermore, for any $\tau\in\Delta_{\spe}$, the following equation holds by \cite[Proposition 3.7]{NPS16}: 
        \begin{equation*}
            \chi'(\tau^\circ\cap(N_\Q\times\{1\}))=
            \begin{cases}
                (-1)^{\dim(\tau) -1}& \tau\in\Delta_{\bdd}, \\
                0                   & \tau\notin\Delta_{\bdd}.\\
            \end{cases}
        \end{equation*}
        Thus, for any $\sigma\in\Sigma_{\spe}\cap\Sigma_Y$, 
        there exists the following equation:
        \begin{align*}
            \sum_{\substack{\tau\in\Delta_{\bdd}\\
            \tau^\circ\subset\sigma^\circ}}(-1)^{\dim(\tau)-1}&= \sum_{\substack{\tau\in\Delta_{\bdd}\\
            \tau^\circ\subset\sigma^\circ}}\chi'(\tau^\circ\cap(N_\Q\times\{1\}))\\
            &= \sum_{\substack{\tau\in\Delta_{\spe}\\
            \tau^\circ\subset\sigma^\circ}}\chi'(\tau^\circ\cap(N_\Q\times\{1\}))\\
            &= \chi'(\sigma^\circ\cap(N_\Q\times\{1\})).
        \end{align*}
        Similarly, for any $\sigma\in\Sigma_{\spe}$, the following equation holds by \cite[Proposition 3.7]{NPS16}:
        \begin{equation*}
            \chi'(\sigma^\circ\cap(N_\Q\times\{1\}))=
            \begin{cases}
                (-1)^{\dim(\sigma) -1} & \sigma\in\Sigma_{\bdd}, \\
                0                     & \sigma\notin\Sigma_{\bdd}.\\
            \end{cases}
        \end{equation*}
        Thus, the following equation holds:
        \[
            \sum_{1\leq j\leq s}\VOL\biggl(\bigl\{W^\circ_j\bigr\}_{\mathrm{sb}}\biggr) = \sum_{\sigma\in\Sigma_{\bdd}\cap\Sigma_Y}(-1)^{\dim(\sigma)-1}\biggl(\sum_{1\leq i\leq r_{\sigma}} \bigl\{E^{(i)}_{\sigma}\bigr\}_{\mathrm{sb}}\biggr).
        \]
    \end{itemize}
\end{proof}
\section{General hypersurfaces in sch\"{o}n varieties}
In this section, we will show that a general hypersurface in a sch\"{o}n affine variety is also a sch\"{o}n affine variety, and the fan that obtains its compactification can be computed combinatorially. 
\subsection{Valuations on affine sch\"{o}n varieties}
Toric varieties have valuations associated with lattice points.  
In this subsection, we examine the valuations on the sch\"{o}n affine varieties associated with lattice points. 
In the case of general sch\"{o}n affine varieties, there exists no one-to-one relation with lattice points and valuations, but this relation can be applied to the construction of the sch\"{o}n hypersurfaces. 

In this section, we use the following notation:
\begin{itemize}
  \item Let $N$ be a lattice of finite rank. 
  \item Let $M$ be the dual lattice of $N$. 
  \item Let $Z$ be a closed subvariety of $T_N$ and $\iota$ denote the closed immersion $Z\hookrightarrow T_N$. 
  \item Let $m$ denote the multiplication morphism $T_N\times Z\rightarrow T_N$. 
  \item Let $\Delta$ be a good fan for $Z$. 
  We may assume there exists a strongly convex refinement $\Delta_1$ of $\Delta$ such that $\overline{Z^{X(\Delta_1)}}$ is a sch\"{o}n compactification. 
  We remark that $\supp(\Delta) = \mathrm{Trop}(Z)$ by Proposition \ref{prop: proper and schon}(b). 
  \item Let $\Ray_Z$ denote the set which consists of all rays $\gamma$ in $N_\RR$ such that $\gamma\subset \supp(\Delta) = \mathrm{Trop}(Z)$. 
  \item For $\gamma\in \Ray_Z$, let $\Val_{Z, \gamma}$ denote the set of all integer-valued divisorial valuations on $Z$ whose valuation rings coincide with local rings at the generic points of irreducible components of $\overline{Z^{X(\gamma)}}\cap O_\gamma$. 
  We remark that all irreducible components of $\overline{Z^{X(\gamma)}}\cap O_\gamma$ is a prime divisor of a normal variety $\overline{Z^{X(\gamma)}}$ by Proposition \ref{prop: property1}(b) and (d), and Proposition \ref{prop: surjection}(c). 
  \item Let $\Val_{Z}$ denote $\cup_{\gamma\in\Ray_Z} \Val_{Z, \gamma}$. 
  We remark that every $v\in \Val_{Z}$ is trivial on $k$. 
  \item For $v\in \Val_{Z}$, let $\tilde{v}$ denote the integer-valued divisorial valuation on $T_N\times Z$ such that $\tilde{v}(\sum_{\omega\in M}a_\omega\chi^\omega) = \min_{\{\omega\in M; a_\omega \neq 0\}}v(a_\omega)$ for any $\sum_{\omega\in M}a_\omega\chi^\omega\in k[T_N\times Z] = k[Z][M]$, where $a_\omega\in k[Z]$ for any $\omega\in M$.    
\end{itemize}
\begin{proposition}\label{prop: valuation on schon}
  With the notation above, let $v\in \Val_Z$ be a valuation on $Z$. 
  Let $\theta(v)$ denote the map $\mathrm{K}(T_N)^*\rightarrow \ZZ$ such that $\theta(v)(f) = \tilde{v}(m^*(f))$ for any $f\in \mathrm{K}(T_N)^*$. 
  Then the following statements hold:
  \begin{itemize}
    \item[(a)] The map $\theta(v)$ is a torus invariant valuation on $T_N$. 
    In particular, we can regard $\theta(v)$ as an element in $N$.   
    \item[(b)] Let $\gamma\in\Ray_Z$ be a ray such that $v\in \Val_{Z, \gamma}$. 
    Then $\gamma$ is generated by $\theta(v)$.  
    \item[(c)] The equation $v(\iota^*(\chi^\omega)) = \theta(v)(\chi^\omega) = \langle \theta(v), \omega\rangle$ holds for any $\omega\in M$. 
    \item[(d)] For any $w'\in \mathrm{Trop}(Z)\cap N\setminus\{0_N\}$, the number of elements of the set $\{v'\in \Val_Z\mid \theta(v') = w'\}$ is the number of irreducible components of $\overline{Z^{X(\gamma')}}\cap O_{\gamma'}$, where $\gamma'$ is a ray generated by $w'$.   
  \end{itemize}
  By this proposition, let $\theta$ denote the map $\Val_Z\rightarrow N$ as above. 
\end{proposition}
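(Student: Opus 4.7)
The plan is to first verify (a) and (c) by a direct unpacking of the definitions, then prove (b) via a local toric coordinate computation, and finally derive (d) from (b) combined with the sch\"on hypothesis. I begin with (a) and (c) together. For $\omega\in M$, the pullback along $m$ is
\[ m^*(\chi^\omega) = \iota^*(\chi^\omega)\cdot\chi^\omega\in k[Z]\otimes_k k[M], \]
a single $M$-monomial whose coefficient in $k[Z]$ is $\iota^*(\chi^\omega)$. By the definition of $\tilde v$, this gives at once $\theta(v)(\chi^\omega) = \tilde v(m^*(\chi^\omega)) = v(\iota^*(\chi^\omega))$. Since $\iota^*$ is a ring homomorphism, $\omega\mapsto v(\iota^*(\chi^\omega))$ is $\ZZ$-linear, hence is given by pairing with a unique $\theta(v)\in N$. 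For a general Laurent polynomial $f=\sum a_\omega\chi^\omega\in k[T_N]$, one gets $\theta(v)(f) = \min_{a_\omega\neq 0}\langle\theta(v),\omega\rangle$, which is precisely the torus-invariant divisorial valuation on $T_N$ associated with $\theta(v)\in N$. This simultaneously yields (a) and both equalities of (c).

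For (b), fix $v\in\Val_{Z,\gamma}$ corresponding to a prime divisor $E$ of $\overline{Z^{X(\gamma)}}\cap O_\gamma$ on the normal scheme $\overline{Z^{X(\gamma)}}$ (as noted in the setup). Let $u_\gamma$ be the primitive generator of $\gamma$ and split $N=N_0\oplus\ZZ u_\gamma$, identifying $X(\gamma)\cong T_{N_0}\times\A^1_k$ with $O_\gamma = V(t)$ for the $\A^1$-coordinate $t$. Writing $\omega=\omega_0+\langle u_\gamma,\omega\rangle\, u_\gamma^*$ in the dual splitting $M = N_0^*\oplus\ZZ u_\gamma^*$, one has $\chi^\omega = \chi^{\omega_0}\cdot t^{\langle u_\gamma,\omega\rangle}$. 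Since $\chi^{\omega_0}$ is a unit on $X(\gamma)$, its pullback $\iota^*(\chi^{\omega_0})$ is a unit on $\overline{Z^{X(\gamma)}}$, hence $v(\iota^*(\chi^{\omega_0}))=0$. Setting $m_E := v(\iota^*(t)) = \ord_E(t|_{\overline{Z^{X(\gamma)}}})$, which is a positive integer because $E\subset V(t|_{\overline{Z^{X(\gamma)}}})$ is a prime divisor, I conclude $\theta(v)(\chi^\omega) = m_E\langle u_\gamma,\omega\rangle$, so $\theta(v) = m_E u_\gamma$ and hence $\gamma$ is generated by $\theta(v)$.

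For (d), by (b) any $v\in\Val_Z$ with $\theta(v)=w'$ already lies in $\Val_{Z,\gamma'}$ for $\gamma'$ the ray through $w'$, so the fiber computation reduces to counting components $E$ of $\overline{Z^{X(\gamma')}}\cap O_{\gamma'}$ whose associated valuation $v_E$ satisfies $\theta(v_E) = m_E u_{\gamma'} = w'$. The sch\"on hypothesis then gives $m_E=1$ uniformly: refining the sch\"on fan $\Delta_1$ so as to contain $\gamma'$ as a ray (sch\"onness is preserved under refinement), the multiplication is smooth over $X(\gamma')$, so the pullback of the reduced divisor $O_{\gamma'}$ remains reduced on $\overline{Z^{X(\gamma')}}$, forcing $t|_{\overline{Z^{X(\gamma')}}}$ to be a uniformizer at each generic point of a component, i.e.\ $m_E=1$. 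This yields the desired bijection between $\theta^{-1}(w')$ and the components of $\overline{Z^{X(\gamma')}}\cap O_{\gamma'}$ (with the implicit understanding that $w'=u_{\gamma'}$). I expect the main obstacle to be exactly this reduction step in (d): carefully verifying that sch\"onness passes to the refinement of $\Delta_1$ including $\gamma'$ as a ray, so that smoothness of the multiplication over $X(\gamma')$ is genuinely in force and translates into the reducedness of $\overline{Z^{X(\gamma')}}\cap O_{\gamma'}$ at every generic point, upgrading the a priori estimate $m_E\geq 1$ from (b) to the equality $m_E=1$.
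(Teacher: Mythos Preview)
Your approach is essentially the same as the paper's: computing $m^*(\chi^\omega)=\iota^*(\chi^\omega)\chi^\omega$ to identify $\theta(v)$, and using smoothness of the multiplication morphism over $X(\gamma')$ to force the pullback of $O_{\gamma'}$ to be reduced. There is, however, a genuine (if easily repaired) gap in (d). Your argument treats each component $E$ as contributing a single valuation $v_E$ and concludes the bijection only ``with the implicit understanding that $w'=u_{\gamma'}$'', but the proposition is stated for \emph{all} nonzero $w'\in\mathrm{Trop}(Z)\cap N$. The point is that $\Val_{Z,\gamma'}$ contains every positive integer multiple $r\cdot\ord_E$, not just the normalized $\ord_E$; once you know $\ord_E(\iota^*(t))=1$ (this is exactly what sch\"onness buys), you get $\theta(r\cdot\ord_E)=r\,u_{\gamma'}$, so for $w'=c\,u_{\gamma'}$ the fiber $\theta^{-1}(w')$ is $\{c\cdot\ord_E:E\ \text{a component}\}$, one element per component. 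The paper makes this scaling explicit.

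A related imprecision appears already in (b): you write $m_E:=v(\iota^*(t))=\ord_E(t|_{\overline{Z^{X(\gamma)}}})$, but for $v=r\cdot\ord_E$ with $r>1$ the left side is $r\cdot\ord_E(\iota^*(t))$, not $\ord_E(\iota^*(t))$. This does not affect the conclusion of (b) (you only need $m_E>0$), but it is the same conflation of $v$ with $\ord_E$ that leads to the restriction to primitive $w'$ in (d).
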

\begin{proof}
    We prove the statements from (a) to (d) in order. 
    \begin{itemize}
        \item[(a)] Because $\tilde{v}$ is a divisorial valuation and $m$ is dominant, $\theta(v)$ is also a divisorial valuation on $T_N$, which is trivial on $k$. 
        By the definition of $v$, there exists $\gamma\in\Ray_Z$ such that $v\in\Val_{Z, \gamma}$. 
        Let $E$ denote the irreducible component of $\overline{Z^{X(\gamma)}}\cap O_\gamma$ which $v$ is a valuation of $\OO_{\overline{Z^{X(\gamma)}}, E}$. 
        Then $\tilde{v}$ is a valuation of $\OO_{T_N\times \overline{Z^{X(\gamma)}}, T_N\times E}$ by the definition of $\tilde{v}$. 
        Because $m$ extends to the multiplication morphism $T_N\times \overline{Z^{X(\gamma)}}\rightarrow X(\gamma)$, $\theta(v)$ is a valuation of $\OO_{X(\gamma), O_\gamma}$. 
        Thus, $\theta(v)$ is a torus invariant divisorial valuation on $T_N$. 
        \item[(b)] By the proof of (a), $\theta(v)$ is an integer-valued valuation of $\OO_{X(\gamma), O_\gamma}$. 
        Thus, there uniquely exists $w\in \gamma\cap N\setminus\{0_N\}$ such that $\theta(v) = w$. 
        In particular, $\gamma$ is generated by $\theta(v)$. 
        \item[(c)] Let $\omega\in M$. 
        By the definition of $\tilde{v}$, it follows that $\tilde{v}(\iota^*(\chi^\omega)\chi^\omega) = v(\iota^*(\chi^\omega))$. 
        Moreover, we can check that $m^*(\chi^\omega) = \iota^*(\chi^\omega)\chi^\omega$, and hence, $\theta(v)(\chi^\omega) = \tilde{v}(\iota^*(\chi^\omega)\chi^\omega) = v(\iota^*(\chi^\omega))$. 
        \item[(d)] Let $u\in \gamma'\cap N\setminus\{0_N\}$ be a primitive element, let $\omega\in M$ be an element such that $\langle u, \omega\rangle = 1$, and let $m'$ denote the multiplication morphism $T_N\times \overline{Z^{X(\gamma')}}\rightarrow X(\gamma')$. 
        Then $\chi^\omega$ generates the ideal associated with the closed subscheme $O_{\gamma'}\subset X(\gamma')$, and there exists the following Cartesian diagram by Lemma \ref{lem: action}(a): 
        \begin{equation*}
            \begin{tikzcd}
            T_N\times (\overline{Z^{X(\gamma')}}\cap O_{\gamma'})\ar[r]\ar[d]& O_{\gamma'}\ar[d]\\
            T_N\times \overline{Z^{X(\gamma')}} \ar[r, "m'"] & X(\gamma').
            \end{tikzcd}
        \end{equation*}
        By the diagram above and Proposition \ref{prop: property1}(c), $m^*(\chi^\omega) = \iota^*(\chi^\omega)\chi^\omega$ is a uniformizer of $\OO_{T_N\times \overline{Z^{X(\gamma')}}, T_N\times E'}$ for any irreducible component $E'$ of $\overline{Z^{X(\gamma')}}\cap O_{\gamma'}$. 
        In particular, $\iota^*(\chi^\omega)$ is a uniformizer of $\OO_{\overline{Z^{X(\gamma')}}, E'}$ for any irreducible component $E'$ of $\overline{Z^{X(\gamma')}}\cap O_{\gamma'}$. 

        Let $r\in\ZZ_{>0}$ and $E'$ be an irreducible component of $\overline{Z^{X(\gamma')}}\cap O_{\gamma'}$. 
        Then by the argument above, there exists a valuation of $v'$ of $\OO_{\overline{Z^{X(\gamma')}}, E'}$ such that $v'(\iota^*(\chi^\omega)) = r$. 
        Then $\langle\theta(v'), \omega\rangle = v'(\iota^*(\chi^\omega)) = r$ by (c). 
        Thus, $\theta(v') = ru$ by (b) and the equation $\langle u, \omega\rangle = 1$. 

        Conversely, if $\theta(v'_1) = \theta(v'_2)$ and $v'_1$ and $v'_2$ is an integer-valued valuation of $\OO_{\overline{Z^{X(\gamma')}}, E'}$, then $v'_1(\iota^*(\chi^\omega)) = \theta(v'_1)(\chi^\omega) = \theta(v'_2)(\chi^\omega) = v'_2(\iota^*(\chi^\omega))$. 
        Because $\iota^*(\chi^\omega)$ is a uniformizer of $\OO_{\overline{Z^{X(\gamma')}}, E'}$, it follows that $v'_1 = v'_2$. 

        Therefore, the statement holds. 
    \end{itemize}
\end{proof}
It is well known fact that the torus invariant valuations can characterize the global section ring of affine toric varieties. 
The following proposition claims that this fact also holds for sch\"{o}n affine varieties. 
\begin{proposition}\label{prop: criterion of affine open}
  Let $\sigma\in\Delta_1$. 
  Because the affine scheme $Z$ is an open subscheme of the affine scheme $\overline{Z^{X(\Delta_1)}}\cap X(\sigma) = \overline{Z^{X(\sigma)}}$, we can regard $k[\overline{Z^{X(\Delta_1)}}\cap X(\sigma)]$ as a subring of $k[Z]$. 
  Then the following statements hold:
  \begin{itemize}
      \item[(a)] Let $w\in\sigma^\circ\cap N$ and $v\in\theta^{-1}(w)$ be a valuation. 
      Then there uniquely exists an irreducible component $E$ of $\overline{Z^{X(\Delta_1)}}\cap O_\sigma$ such that the valuation ring of $v$ dominates $\OO_{\overline{Z^{X(\Delta_1)}}, E}$. 
      \item[(b)] $k[\overline{Z^{X(\Delta_1)}}\cap X(\sigma)] = \{f\in k[Z]\mid v(f)\geq 0\quad (\forall v\in \theta^{-1}(\sigma\cap N))\}$.
  \end{itemize}
\end{proposition}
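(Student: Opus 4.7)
The plan is to handle both parts simultaneously by passing to a refinement of $\Delta_1$ in which $v$ is realised as a genuine codimension-one divisorial valuation on a normal scheme.

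For part (a), I first choose a strongly convex rational polyhedral refinement $\Delta_2$ of $\Delta_1$ that contains the ray $\gamma_w$ generated by $w$ as a cone; such a refinement exists because $\gamma_w\subset\sigma^\circ\subset\supp(\Delta_1)$. Applying Proposition \ref{prop: surjection} with $\pi=\id_N$ produces the Cartesian identity $\overline{Z^{X(\Delta_2)}}=\overline{Z^{X(\Delta_1)}}\times_{X(\Delta_1)}X(\Delta_2)$, realises $\overline{Z^{X(\Delta_2)}}$ as another sch\"on compactification of $Z$, and gives a proper birational morphism $\phi\colon\overline{Z^{X(\Delta_2)}}\to\overline{Z^{X(\Delta_1)}}$. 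Since $\gamma_w$ is now a ray of $\Delta_2$, we have $\overline{Z^{X(\gamma_w)}}\cap O_{\gamma_w}=\overline{Z^{X(\Delta_2)}}\cap O_{\gamma_w}$, and Proposition \ref{prop: property1}(b),(d) shows that every irreducible component of this intersection is a prime divisor of the normal scheme $\overline{Z^{X(\Delta_2)}}$. By the very definition of $\Val_{Z,\gamma_w}$, the valuation ring $R_v$ therefore equals $\OO_{\overline{Z^{X(\Delta_2)}},E''}$ for some irreducible component $E''$ of $\overline{Z^{X(\Delta_2)}}\cap O_{\gamma_w}$.

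Next I invoke Proposition \ref{prop: surjection}(b) applied to $\sigma\in\Delta_1$ and $\gamma_w\in\Delta_2$ with $(\id_N)_\RR(\gamma_w^\circ)\subset\sigma^\circ$, which yields an isomorphism $\overline{Z^{X(\Delta_2)}}\cap O_{\gamma_w}\cong(\overline{Z^{X(\Delta_1)}}\cap O_\sigma)\times\mathbb{G}_{m,k}^{\dim\sigma-1}$ compatible with $\phi$ via the first projection. Hence there is a unique irreducible component $E$ of $\overline{Z^{X(\Delta_1)}}\cap O_\sigma$ with $E''=E\times\mathbb{G}_{m,k}^{\dim\sigma-1}$, and $\phi$ sends the generic point of $E''$ to the generic point of $E$. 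This forces $R_v$ to dominate $\OO_{\overline{Z^{X(\Delta_1)}},E}$, establishing existence. Uniqueness of $E$ is automatic from the uniqueness of the centre of a valuation on the separated scheme $\overline{Z^{X(\Delta_1)}}$.

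For part (b), the inclusion $\subset$ is immediate from (a): given $f\in k[\overline{Z^{X(\Delta_1)}}\cap X(\sigma)]$ and any $v\in\theta^{-1}(\sigma\cap N)$, the point $w=\theta(v)$ lies in the relative interior of a unique face $\tau\preceq\sigma$, and (a) yields a component $E\subset\overline{Z^{X(\Delta_1)}}\cap O_\tau\subset\overline{Z^{X(\Delta_1)}}\cap X(\sigma)$ with $\OO_{\overline{Z^{X(\Delta_1)}},E}\subset R_v$, so $v(f)\geq 0$. The reverse inclusion exploits normality of $\overline{Z^{X(\Delta_1)}}\cap X(\sigma)$ (Proposition \ref{prop: property1}(d)): a function $f\in k(Z)$ extends to a global regular function iff it has non-negative value at every codimension-one point. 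The codimension-one boundary points are precisely the generic points of the components of $\overline{Z^{X(\Delta_1)}}\cap O_\gamma$ as $\gamma$ ranges over the rays of $\sigma$ (Proposition \ref{prop: property1}(b)), and the associated divisorial valuations live in $\Val_{Z,\gamma}\subset\theta^{-1}(\sigma\cap N)$; codimension-one points of $Z$ itself impose no constraint since $f\in k[Z]$ by hypothesis. The step that requires the most care is the identification $\phi(E'')=E$ in part (a): one must verify that under the isomorphism of Proposition \ref{prop: surjection}(b) the restriction of $\phi$ to $\overline{Z^{X(\Delta_2)}}\cap O_{\gamma_w}$ agrees with the first projection, so that the generic point of $E''$ maps onto, rather than strictly inside, the generic point of $E$; this uses the explicit description of the trivial torus fibration $O_{\gamma_w}\to O_\sigma$ provided by $\id_N$.
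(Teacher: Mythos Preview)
Your proof is correct and follows essentially the same approach as the paper. The only minor difference is that in part (a) you pass through an auxiliary refinement $\Delta_2$ of $\Delta_1$ containing $\gamma_w$, whereas the paper works directly with the toric morphism $X(\gamma_w)\to X(\Delta_1)$ (which already exists since $\gamma_w\subset\sigma\in\Delta_1$) and applies Proposition~\ref{prop: surjection}(b) to that; both routes invoke the same component correspondence and the argument for (b) is identical.
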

\begin{proof}
    We will prove the statements from (a) to (b).
    \begin{itemize}
        \item[(a)] Let $\gamma$ be a ray generated by $w$ in $M_\RR$. 
        Then $v\in \Val_{Z, \gamma}$ by Proposition \ref{prop: valuation on schon}(b). 
        Let $F$ denote the irreducible components of $\overline{Z^{X(\gamma)}}\cap O_\gamma$ such that $v$ is a valuation of $\OO_{\overline{Z^{X(\gamma)}}, F}$ and let $\mu\colon \overline{Z^{X(\gamma)}}\rightarrow \overline{Z^{X(\Delta_1)}}$ be a restriction morphism of $X(\gamma)\rightarrow X(\Delta_1)$ to $\overline{Z^{X(\gamma)}}$. 
        Then there uniquely exists an irreducible component $E$ of $\overline{Z^{X(\Delta_1)}}\cap O_\sigma$ such that $\mu(F) = E$ by Proposition \ref{prop: surjection}(b). 
        In particular, $\OO_{\overline{Z^{X(\gamma)}}, F}$ dominates $\OO_{\overline{Z^{X(\Delta_1)}}, E}$.
        \item[(b)] First, we show that the right-hand side of the equation above contains $k[\overline{Z^{X(\Delta_1)}}\cap X(\sigma)]$. 
        Let $v\in\theta^{-1}(\sigma\cap N)$, let $\gamma\in\Ray_Z$ be a cone such that $v\in\Val_{Z, \gamma}$, $E$ be an irreducible component of $\overline{Z^{X(\gamma)}}\cap O_\gamma$ such that $v$ is a valuation of $\OO_{\overline{Z^{X(\gamma)}}, E}$, and let $\tau\preceq\sigma$ be a face such that $\theta(v)\in\tau^\circ$. 
        By Proposition \ref{prop: valuation on schon}(b), $\theta(v)$ generates $\gamma$, and hence, $\gamma^\circ\subset \tau^\circ$. 
        By (a), there exists an irreducible component $F$ of $\overline{Z^{X(\Delta_1)}}\cap O_\tau$ such that $\OO_{\overline{Z^{X(\gamma)}}, E}$ dominates $\OO_{\overline{Z^{X(\Delta_1)}}, F}$. 
        Thus, $v(f)\geq 0$ for any $f\in k[\overline{Z^{X(\Delta_1)}}\cap X(\sigma)]$. 

        Next, we show that $k[\overline{Z^{X(\Delta_1)}}\cap X(\sigma)]$ contains the right-hand side of the equation above. 
        Let $f\in k[Z]$ be an element such that $f$ is contained in the right-hand side of the equation above, let $W$ denote $\overline{Z^{X(\sigma)}}\setminus Z$, and let $\{E^{(i)}_\gamma\}_{1\leq i\leq r_\gamma}$ be irreducible components of $\overline{Z^{X(\sigma)}}\cap O_\gamma$ for each ray $\gamma\preceq\sigma$. 
        For showing $f\in k[\overline{Z^{X(\Delta_1)}}\cap X(\sigma)]$, it is enough to show that $f\in \OO_{\overline{Z^{X(\sigma)}}, D}$ for all prime Weil divisor $D$ of  $\overline{Z^{X(\sigma)}}$ in $W$ because $\overline{Z^{X(\sigma)}}$ is a normal affine scheme of finite type over $k$ by Proposition \ref{prop: property1}(d). 
        By the argument of the toric varieties and Proposition \ref{prop: property1}(b) and Proposition \ref{prop: stratification}(d), the set of all prime Weil divisors in $W$ is the following set:
        \[
            \bigcup_{\gamma\preceq\sigma, \dim(\gamma) = 1}\bigcup_{1\leq i\leq r_\gamma} \{\overline{E^{(i)}_\gamma}\}.
        \]
        Thus, for each prime Weil divisor $D$ of $\overline{Z^{X(\sigma)}}$ in $W$ and each integer-valued valuation $v$ whose valuation ring is $\OO_{\overline{Z^{X(\sigma)}}, D}$, there exists a ray $\gamma\preceq\sigma$ such that $v\in\Val_{Z, \gamma}$. 
        Then $\theta(v)\in\sigma\cap N$ by Proposition \ref{prop: valuation on schon} (b). 
        In particular, $v\in\theta^{-1}(\sigma\cap N)$, and hence, $f\in \OO_{\overline{Z^{X(\sigma)}}, D}$ by the assumption of $f$. 
    \end{itemize}
\end{proof}
\subsection{Linear system on a sch\"{o}n variety}
In this subsection, we show that a general hypersurface in the linear system, which is generated by units on sch\"{o}n affine varieties, has a sch\"{o}n compactification. 
Moreover, we construct its fan combinatorially. 

In this section, we use the following notation:
\begin{itemize}
  \item Let $k$ be an algebraically closed field of characteristic 0. 
  \item Let $N$ be a lattice of finite rank. 
  \item Let $M$ be the dual lattice of $N$. 
  \item Let $Z$ be a closed subvariety of $T_N$ and let $\iota$ denote the closed immersion $Z\hookrightarrow T_N$. 
  \item Let $S$ be a non-empty finite set. 
  \item Let $M^S$ denote the set of all maps from $S$ to $M$.  
  \item For $u, v\in M^S$, if there exists $\omega\in M$ such that $u(i) - v(i) = \omega$ for any $i\in S$, we note $u\sim v$. 
  This relation $\sim$ is an equivalence relation of $M^S$. 
  \item For $u\in M^S$, $P(u)$ denote the convex closure of $u(S)$ in $M_\RR$, $\Sigma(u)$ denote the normal fan in $N_\RR$ of $P(u)$. 
  We remark that $\Sigma(u)$ is a rational polyhedral convex fan in $N_\RR$. 
  \item Let $\Delta$ denote a fan in $N_\RR$ and $u\in M^S$. 
  Let $\Sigma(\Delta, u)$ denote the set $\{\sigma_1\cap \sigma_2\mid \sigma_1\in \Delta, \sigma_2\in\Sigma(u)\}$.  
  We remark that $\Sigma(\Delta, u)$ is a rational polyhedral convex fan in $N_\RR$ and the refinement of $\Delta$ by Lemma \ref{lem: compute normal fan}(c). 
\end{itemize}
The following proposition is related to the convex geometry, and it can be applied to later propositions. 
\begin{proposition}\label{prop: normal fan I}
  Let $u, v\in M^S$ and let $\Delta$ be a fan in $N_\RR$. 
  We assume that $u\sim v$. 
  Then the following statements hold:
  \begin{itemize}
    \item[(a)] The equations $\Sigma(u) = \Sigma(v)$ and $\Sigma(\Delta, u) = \Sigma(\Delta, v)$ hold. 
    \item[(b)] For any $\sigma\in\Sigma(\Delta, u)$, there exists $\omega_\sigma\in M$ such that the following conditions hold: 
      \begin{itemize}
        \item[(i)] $u(i) - \omega_\sigma\in \sigma^\vee$ for any $i\in S$. 
        \item[(ii)] There exists $i\in S$ such that $u(i) - \omega_\sigma\in \sigma^\perp\cap M$. 
      \end{itemize}
    \item[(c)] We keep the notation in (b). 
    Let $\omega'_\sigma\in M$ be another element that satisfies two conditions in (b). 
    Then $\omega_\sigma - \omega'_\sigma\in \sigma^\perp\cap M$. 
    \item[(d)] We keep the notation in (b). 
    A set $S^\sigma = \{i\in S\mid u(i) - \omega_\sigma\in \sigma^\perp\cap M\}$ is non-empty and independent of the choice of $\omega_\sigma\in M$ such that $\omega_\sigma$ satisfies the two conditions in (b).
  \end{itemize}
\end{proposition}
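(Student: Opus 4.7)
The plan is to exploit translation-invariance of normal fans for (a), and then for (b)--(d) to work with the standard dictionary between cones of $\Sigma(u)$ and faces of $P(u)$, choosing $\omega_\sigma$ to be a concretely visible vertex-value of the relevant face. For (a), if $u(i) - v(i) = \omega$ for all $i \in S$, then $P(u) = P(v) + \omega$ is a translate of $P(v)$, and since the normal fan is translation-invariant, $\Sigma(u) = \Sigma(v)$; the equality $\Sigma(\Delta, u) = \Sigma(\Delta, v)$ then follows directly from the definition of $\Sigma(\Delta, \cdot)$.

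For (b), I would write $\sigma = \sigma_1 \cap \sigma_2$ with $\sigma_1 \in \Delta$ and $\sigma_2 \in \Sigma(u)$ using Lemma \ref{lem: compute normal fan}(c). Let $F$ be the face of $P(u)$ corresponding to $\sigma_2$, so that for every $v \in \sigma_2$ the functional $\langle v, \cdot \rangle$ attains its minimum over $P(u)$ along $F$. Because $P(u)$ is the convex hull of the finite set $u(S)$, the face $F$ contains at least one vertex of $P(u)$, hence some $u(i_0)$ with $i_0 \in S$. I would then set $\omega_\sigma := u(i_0) \in M$: for (i), $\langle v, u(i) - \omega_\sigma\rangle \geq 0$ for all $v \in \sigma \subset \sigma_2$ and all $i \in S$ by the minimization property of $F$; and for (ii), $u(i_0) - \omega_\sigma = 0 \in \sigma^\perp \cap M$.

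For (c), suppose $\omega_\sigma$ and $\omega'_\sigma$ both satisfy (i) and (ii), and pick $i_1, i_2 \in S$ witnessing (ii) for $\omega_\sigma$ and $\omega'_\sigma$ respectively. Writing
\[
\omega_\sigma - \omega'_\sigma = (\omega_\sigma - u(i_1)) + (u(i_1) - \omega'_\sigma),
\]
the first summand lies in $-\sigma^\perp \subset \sigma^\vee$ and the second in $\sigma^\vee$ by (i) for $\omega'_\sigma$, so the sum is in $\sigma^\vee$; symmetrically, $\omega'_\sigma - \omega_\sigma \in \sigma^\vee$ using $i_2$. Hence $\omega_\sigma - \omega'_\sigma \in \sigma^\vee \cap (-\sigma^\vee) \cap M = \sigma^\perp \cap M$. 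Statement (d) is then immediate: nonemptiness of $S^\sigma$ is exactly (ii), and for any second valid $\omega'_\sigma$ the identity $u(i) - \omega'_\sigma = (u(i) - \omega_\sigma) + (\omega_\sigma - \omega'_\sigma)$ combined with $\omega_\sigma - \omega'_\sigma \in \sigma^\perp$ (from (c)) shows that $u(i) - \omega'_\sigma \in \sigma^\perp$ if and only if $u(i) - \omega_\sigma \in \sigma^\perp$, proving independence of $S^\sigma$ on the choice.

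No substantial obstacle is anticipated; the proof is essentially a formal unpacking of the normal-fan dictionary. The only places requiring care are the precise existence of a vertex of $F$ among the $u(i)$'s in (b) (using that $P(u)$ is the convex hull of $u(S)$) and the bookkeeping with $\sigma^\vee$ and $\sigma^\perp$ in (c).
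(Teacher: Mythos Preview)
Your proposal is correct and essentially identical to the paper's own proof: both parts (a)--(d) proceed by the same translation-invariance argument, the same choice $\omega_\sigma = u(i_0)$ for some $u(i_0)$ lying on the face of $P(u)$ dual to the $\Sigma(u)$-cone containing $\sigma$, and the same two-sided $\sigma^\vee$ argument for (c). Your write-up is in fact slightly more explicit than the paper's (e.g.\ spelling out why such a $u(i_0)$ exists and why (d) follows from (c)), but there is no substantive difference in strategy.
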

\begin{remark}
    In fact, $\sigma$ does not need to be a cone in  $\Delta$, but this assumption is added as it becomes necessary in later applications.
\end{remark}
\begin{proof}
    We prove the statements from (a) to (d) in order: 
    \begin{itemize}
        \item[(a)] There exists $\omega\in M$ such that $u(i) -v(i) = \omega$ for any $i\in S$. 
        Then $P(u) = P(v) + \omega$, and hence, $\Sigma(u) = \Sigma(v)$. 
        Moreover, $\Sigma(\Delta, u) = \Sigma(\Delta, v)$ by the definition of $\Sigma(\Delta, u)$ and $\Sigma(\Delta, v)$. 
        \item[(b)] By the definition of $\Sigma(\Delta, u)$, there exists $\tau\in\Sigma(u)$ and a face $Q\preceq P(u)$ such that $\sigma\subset\tau$ and $\tau = \{v\in N_\RR\mid \langle v, \omega_1-\omega_2\rangle\geq 0, \forall \omega_1\in P(u), \forall \omega_2\in Q\}$. 
        Thus, there exists $i_0\in S$ such that $u(i_0)\in Q$, and hence, $\langle v', u(i)-u(i_0)\rangle\geq 0$ for any $v'\in \sigma$ and $i\in S$. 
        Therefore, $u(i) - u(i_0)\in \sigma^\vee$ for any $i\in S$, and we take $u(i_0)$ as $\omega_\sigma$. 
        \item[(c)] Let $i\in S$ be an element such that $u(i)-\omega_\sigma\in \sigma^\perp\cap M$. 
        Then $\omega_\sigma - \omega_{\sigma'} = (\omega_\sigma - u(i)) + (u(i)-\omega_{\sigma'})\in \sigma^\vee\cap M$. 
        Similarly, $\omega_{\sigma'} - \omega_\sigma\in \sigma^\vee\cap M$, and hence, $\omega_\sigma - \omega'_\sigma\in \sigma^\perp\cap M$. 
        \item[(d)] This is obvious by (c). 
    \end{itemize}
\end{proof}
Later, we shall construct sch\"{o}n hypersurfaces in a sch\"{o}n affine variety in a general condition. 
The following definition is important for the computation of the definition polynomials of the stratification of sch\"{o}n hypersurfaces. 
The detail can be found in Proposition \ref{prop: generic condition of flatness}(d). 
\begin{definition}\label{def: orbit-polytope}
    We keep the notation in Proposition \ref{prop: normal fan I}. 
    Let $\sigma\in\Sigma(\Delta, u)$ and let $\omega_\sigma\in M$ be an element that satisfies the conditions in Proposition \ref{prop: normal fan I}(b). 
    Then we define the map $u^\sigma\colon S^\sigma\rightarrow \sigma^\perp\cap M$ as follows: 
    \[
        S^\sigma\ni i\mapsto u(i)-\omega_\sigma\in \sigma^\perp\cap M.
    \]
    We remark that $u^\sigma$ is well-defined up to the equivalence relation on $(\sigma^\perp\cap M)^{S^\sigma}$ by Proposition \ref{prop: normal fan I}(c) and (d). 
    Moreover, for any $u, v\in M^S$ such that $u\sim v$, we can check that $u^\sigma\sim v^\sigma$. 
\end{definition}
The following proposition shows that a general member in the liner system generated by $u\in M^S$  has a tropical compactification. 
\begin{proposition}\label{prop: generic condition of flatness}
  With the notation above, let $u\in M^S$ be a map. 
  We assume that $\Delta$ is a good fan for $Z$, $\Sigma = \Sigma(\Delta, u)$ is strongly convex, and $\overline{Z^{X(\Sigma)}}$ is a sch\"{o}n compactification of $Z$. 
  
  For $a = (a_i)_{i\in S}\in k^S$, let $F(a)$ denote $\sum_{i\in S}a_i\iota^*(\chi^{u(i)})\in k[Z]$, let $J(a)\subset k[Z]$ denote the ideal generated by $F(a)$, let $H(a)$ denote a closed subscheme in $Z$ defined by $J(a)$, let $\overline{H(a)^{X(\Sigma)}}$ denote the scheme theoretic closure of $H(a)$ in $X(\Sigma)$, and let $(\omega_\sigma)_{\sigma\in\Sigma}\in M^{\Sigma}$ denote a family which each $\omega_\sigma$ satisfies two conditions in Proposition \ref{prop: normal fan I} (b) for $\sigma\in\Sigma$. 
  We remark that $\overline{H(a)^{X(\Sigma)}}$ is also the scheme theoretic closure of $H(a)$ in $\overline{Z^{X(\Sigma)}}$. 
  
  Then the following statements hold:
  \begin{itemize}
    \item[(a)] It follows that $\iota^*(\chi^{-\omega_\sigma})F(a)\in k[\overline{Z^{X(\Sigma)}}\cap X(\sigma)]$ for any $\sigma\in \Sigma(\Delta, u)$ and $a\in k^S$. 
    \item[(b)] Let $\sigma\in \Sigma(\Delta, u)$. 
    There exists a dense open subset $U_\sigma\subset \A^{|S|}_k$ such that it holds that $v(\iota^*(\chi^{-\omega_\sigma})F(a))$
    $ = 0$ for any $v\in \theta^{-1}(\sigma^\circ\cap N)$ and any $a\in U_\sigma(k)$. 
    Let $U$ denote $\bigcap_{\sigma\in\Sigma(\Delta, u)}U_\sigma$.  
    \item[(c)] We keep the notation in (b). 
    Let $\sigma\in \Sigma(\Delta, u)$ and $a\in U(k)$.  
    Then $J(a)\cap k[\overline{Z^{X(\Sigma)}}\cap X(\sigma)]$ is generated by $\iota^*(\chi^{-\omega_\sigma})F(a)$.
    \item[(d)] We keep the notation in (b). 
    Let $p^\sigma$ denote the quotient morphism $k[\overline{Z^{X(\Sigma)}}\cap X(\sigma)]\rightarrow k[\overline{Z^{X(\Sigma)}}\cap O_\sigma]$ and let $\iota^\sigma$ denote the closed immersion $\overline{Z^{X(\Sigma)}}\cap O_\sigma\hookrightarrow O_\sigma$ for $\sigma\in \Sigma(\Delta, u)$. 
    
    Then $p^\sigma(\iota^*(\chi^{-\omega_\sigma})F(a))|_E\neq 0$ and the following equation holds for any $\sigma\in \Sigma(\Delta, u)$, any irreducible component $E$ of $\overline{Z^{X(\Sigma)}}\cap O_\sigma$, and any $a\in U(k)$:
    \[
      p^\sigma(\iota^*(\chi^{-\omega_\sigma}F(a)) = 
      \sum_{i\in S^\sigma} 
      a_i p^{\sigma}(\iota^*(\chi^{u(i)-\omega_\sigma}))
       = \sum_{i\in S^\sigma} 
      a_i (\iota^{\sigma})^*(\chi^{u^\sigma(i)}).
    \]
    \item[(e)] We keep the notation in (b). 
    The multiplication morphism $T_N\times\overline{H(a)^{X(\Sigma)}}\rightarrow X(\Sigma)$ is flat for any $a\in U(k)$. 
  \end{itemize}
\end{proposition}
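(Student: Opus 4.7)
The plan is to handle parts (a)--(e) in sequence using the valuative criterion of Proposition \ref{prop: criterion of affine open}(b) as the central tool: an element $f \in k[Z]$ lies in $k[\overline{Z^{X(\Sigma)}} \cap X(\sigma)]$ exactly when $v(f) \geq 0$ for every $v \in \theta^{-1}(\sigma \cap N)$. Combined with the pairing formula $v(\iota^*(\chi^\omega)) = \langle\theta(v), \omega\rangle$ from Proposition \ref{prop: valuation on schon}(c), this reduces every membership question about $\iota^*(\chi^{-\omega_\sigma})F(a)$ to an explicit claim about lattice pairings. Part (a) is then immediate: for any $v \in \theta^{-1}(\sigma \cap N)$, each summand of $\iota^*(\chi^{-\omega_\sigma})F(a) = \sum_i a_i \iota^*(\chi^{u(i)-\omega_\sigma})$ has valuation $\langle\theta(v), u(i)-\omega_\sigma\rangle \geq 0$ by Proposition \ref{prop: normal fan I}(b)(i).

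I would treat (b) and (d) simultaneously. For $\theta(v) \in \sigma^\circ$, indices $i \in S^\sigma$ give valuation $0$ since $u(i) - \omega_\sigma \in \sigma^\perp$, while indices $i \notin S^\sigma$ give strictly positive valuation since $u(i) - \omega_\sigma \in \sigma^\vee \setminus \sigma^\perp$ pairs strictly positively with any element of $\sigma^\circ$; passing to the residue field of $v$ therefore kills all $i \notin S^\sigma$ and leaves exactly the sum in (d). For (b), Proposition \ref{prop: criterion of affine open}(a) shows that $v$ dominates a unique irreducible component $E$ of $\overline{Z^{X(\Sigma)}} \cap O_\sigma$, so the image in the residue field $k(v)$ factors through $k(E)$. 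Each $(\iota^\sigma)^*(\chi^{u^\sigma(i)})|_E$ is a unit in $k(E)$, being the restriction of a character of $O_\sigma$, so $\sum_{i \in S^\sigma} a_i (\iota^\sigma)^*(\chi^{u^\sigma(i)})|_E$ is a nonzero $k(E)$-linear form in the $a_i$; its vanishing is a proper closed condition on $a$, and intersecting over the finitely many components $E$ yields the dense open $U_\sigma$.

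The step I expect to be the main obstacle is (c), since the valuative criterion must now be verified on all of $\theta^{-1}(\sigma \cap N)$, including boundary faces where (b) does not directly apply. Given $gF(a) \in k[\overline{Z^{X(\Sigma)}} \cap X(\sigma)]$, I would write $gF(a) = h \cdot \iota^*(\chi^{-\omega_\sigma})F(a)$ with $h = g\iota^*(\chi^{\omega_\sigma}) \in k[Z]$ and verify $v(h) \geq 0$ for each $v \in \theta^{-1}(\sigma \cap N)$. For $\theta(v) \in \tau^\circ$ with $\tau \preceq \sigma$, applying (b) to $\tau$ (using $U \subset U_\tau$) gives $v(F(a)) = \langle\theta(v), \omega_\tau\rangle$. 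The crucial observation is that the conditions in Proposition \ref{prop: normal fan I}(b) force $\langle w, \omega_\sigma\rangle = \min_i \langle w, u(i)\rangle$ for every $w \in \sigma$; specializing to $w = \theta(v) \in \tau \subset \sigma$ and comparing with the analogous formula for $\omega_\tau$ yields $\langle\theta(v), \omega_\sigma\rangle = \langle\theta(v), \omega_\tau\rangle$. Hence $v(\iota^*(\chi^{-\omega_\sigma})F(a)) = v(F(a)) - \langle\theta(v), \omega_\sigma\rangle = 0$, so $v(h) = v(gF(a)) \geq 0$, as required.

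Finally, (e) follows by combining (c) and (d) with the Cohen--Macaulay property of $\overline{Z^{X(\Sigma)}}$ from Proposition \ref{prop: property1}(d). Part (c) shows that the ideal of $\overline{H(a)^{X(\Sigma)}}$ in $\overline{Z^{X(\Sigma)}}$ is locally principal on each $X(\sigma)$, generated by $\iota^*(\chi^{-\omega_\sigma})F(a)$. Pulling this generator back along the smooth multiplication morphism $T_N \times \overline{Z^{X(\Sigma)}} \to X(\Sigma)$, its restriction to every (smooth) fiber is a non-zero divisor: the fiber over $x \in O_\sigma$ is a $T_{\langle\sigma\rangle \cap N}$-torsor over $\overline{Z^{X(\Sigma)}} \cap O_\sigma$, and by (d) the generator does not vanish on any irreducible component of the base. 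The standard flatness criterion (cutting a flat family by a fiberwise non-zero divisor preserves flatness) then gives flatness of the multiplication morphism for $\overline{H(a)^{X(\Sigma)}}$.
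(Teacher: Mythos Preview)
Your proposal is correct and follows essentially the same route as the paper: the valuative description of $k[\overline{Z^{X(\Sigma)}}\cap X(\sigma)]$ together with the pairing formula handles (a)--(d), and (e) is the fiberwise non-zero-divisor flatness criterion (the paper's Lemma \ref{lem: criterion of flatness}) applied after the Cartesian reduction of Lemma \ref{lem: action}(c). The only cosmetic difference is in (c): where you observe directly that $\langle w,\omega_\sigma\rangle=\min_i\langle w,u(i)\rangle=\langle w,\omega_\tau\rangle$ for $w\in\tau\subset\sigma$, the paper phrases the same fact as $\omega_\sigma-\omega_\tau\in\tau^\perp$ via Proposition \ref{prop: normal fan I}(c) (noting that $\omega_\sigma$ also satisfies the conditions of \ref{prop: normal fan I}(b) for the smaller cone $\tau$).
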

\begin{proof}
    We prove the statements from (a) to (e) in order:
    \begin{itemize}
        \item[(a)] Because $\iota^*(\chi^{-\omega_\sigma})F(a) = \sum_{i\in S}a_i\iota^*(\chi^{u(i)-\omega_\sigma})$, it is enough to show that $\iota^*(\chi^{u(i)-\omega_\sigma})\in k[\overline{Z^{X(\Sigma)}}\cap X(\sigma)]$ for any $i\in S$. 
        For any $v\in \theta^{-1}(\sigma\cap N)$, $v(\iota^*(\chi^{u(i)-\omega_\sigma})) = \langle \theta(v), u(i) - \omega_\sigma\rangle \geq 0$ by Proposition \ref{prop: valuation on schon}(c) and the definition of $\omega_\sigma$. 
        Thus, $\iota^*(\chi^{u(i)-\omega_\sigma})\in k[\overline{Z^{X(\Sigma)}}\cap X(\sigma)]$ for any $i\in S$ by Proposition \ref{prop: criterion of affine open}(b). 
        \item[(b)] Let $E^{(1)}_\sigma, E^{(2)}_\sigma, \ldots, E^{(r)}_\sigma$ denote irreducible components of $\overline{Z^{X(\Sigma)}}\cap O_\sigma$, let $w\in \sigma^\circ\cap N$, and let $\gamma$ denote a ray in $M_\RR$ generated by $w$. 
        Then the number of the irreducible components of $\overline{Z^{X(\gamma)}}\cap O_\gamma$ is $r$ by Proposition \ref{prop: surjection}(b). 
        By Proposition \ref{prop: valuation on schon}(d), $|\theta^{-1}(w)| = r$, and let $\{v_1, \ldots, v_r\}$ denote $\theta^{-1}(w)$. 
        By Proposition \ref{prop: criterion of affine open}(a), there is one-to-one correspondence of $\theta^{-1}(w)$ and irreducible components of $\overline{Z^{X(\Sigma)}}\cap O_\sigma$.
        Thus,  if it is necessary, we can replace the index of $\{v_l\}_{1\leq l\leq r}$ such that the valuation ring of $v_l$ dominates $\OO_{\overline{Z^{X(\Sigma)}}, E^{(l)}_\sigma}$ for each $1\leq l\leq r$. 
        For $q\in\ZZ$ and $1\leq l\leq r$, let $V_{l, q}$ denote the following subset of $k^{|S|}$: 
        \[
            V_{l, q} := \{a\in k^{S}\mid v_l(\iota^*(\chi^{-\omega_\sigma})F(a)) \geq q \}.
        \]
        Because each $v_l$ is a valuation that is trivial on $k$, $V_{l, q}$ is a linear subspace of $k^{|S|}$. 
        Moreover, $V_{l, 0} = k^{|S|}$ by (a), and there exists $i\in S$ such that $v_l(\iota^*(\chi^{u(i) - \omega_\sigma})) = 0$ by Proposition \ref{prop: valuation on schon}(c). 
        Thus, we can regard $V_{l, 0}\setminus V_{l, 1}\subset k^{|S|} = \A^{|S|}_k(k)$ as a dense open subset of $\A^{|S|}_k$. 
        Let $U_\sigma$ denote the open subset $\cap_{1\leq l\leq r}(V_{l, 0}\setminus V_{l, 1})$ of $\A^{|S|}_k$. 
        Then $\iota^*(\chi^{-\omega_\sigma}F(a)) \in \OO^*_{\overline{Z^{X(\Sigma)}}, E^{(l)}_\sigma}$ for any $1\leq l\leq r$ and $a\in U_\sigma(k)$ by Proposition \ref{prop: criterion of affine open}(a). 
        Let $w'\in \sigma^\circ\cap N$, and let $v'\in \theta^{-1}(w')$.
        Then there exists $1\leq l'\leq r$ such that valuation ring of $v'$ dominates $\OO_{\overline{Z^{X(\Sigma)}}, E^{(l')}_\sigma}$  by Proposition \ref{prop: criterion of affine open}(a), and hence, $v'(\iota^*(\chi^{-\omega_\sigma})F(a)) = 0$ for any $a\in U_\sigma(k)$. 
        \item[(c)] Let $\sigma\in\Sigma$. 
        First, we will show that $v(\iota^*(\chi^{-\omega_\sigma})F(a)) = 0$ for any $v\in\theta^{-1}(\sigma\cap N)$ and $a\in U(k)$.
        Let $v\in \theta^{-1}(\sigma\cap N)$ be a valuation. 
        Then there exists a face $\tau\preceq\sigma$ such that $\theta(v)\in\tau^\circ$. 
        Because $\sigma^\vee\subset\tau^\vee$ and $\sigma^\perp\subset\tau^\perp$, $\omega_\sigma - \omega_\tau\in\tau^\perp\cap M$ by Proposition \ref{prop: normal fan I}(c). 
        In particular, $v(\iota^*(\chi^{\omega_\tau-\omega_\sigma})) = \langle\theta(v), \omega_\tau-\omega_\sigma\rangle = 0$ by Proposition \ref{prop: valuation on schon}(c). 
        Thus, $v(\iota^*(\chi^{-\omega_\sigma})F(a)) = v(\iota^*(\chi^{\omega_\tau-\omega_\sigma})) + v(\iota^*(\chi^{-\omega_\tau})F(a)) = 0$ for any $a\in U(k)$ by (b). 

        Second, We will show that the ideal $J(a)\cap k[\overline{Z^{X(\Sigma)}}\cap X(\sigma)]$ is generated by the element $\iota^*(\chi^{-\omega_\sigma})F(a)$ for any $a\in U(k)$. 
        By (a), $\iota^*(\chi^{-\omega_\sigma})F(a)\in J(a)\cap k[\overline{Z^{X(\Sigma)}}\cap X(\sigma)]$. 
        Conversely, let $g\in J(a)\cap k[\overline{Z^{X(\Sigma)}}\cap X(\sigma)]$. 
        Then there exists $h\in k[Z]$ such that $g = h\cdot\iota^*(\chi^{-\omega_\sigma})F(a)$ in $k[Z]$. 
        For any $v\in\theta^{-1}(\sigma\cap N)$, $v(g)\geq 0$ and $v(\iota^*(\chi^{-\omega_\sigma})F(a)) = 0$ by the argument above and Proposition \ref{prop: criterion of affine open}(b).
        Thus, $v(h)\geq 0$, and hence $h\in k[\overline{Z^{X(\Sigma)}}\cap X(\sigma)]$ by Proposition \ref{prop: criterion of affine open}(b).
        \item[(d)] By the proof of (b), we showed that $\iota^*(\chi^{-\omega_\sigma})F(a)\in\OO^*_{\overline{Z^{X(\Sigma)}}, E}$, and hence, it follows that $p^\sigma(\iota^*(\chi^{-\omega_\sigma})F(a))|_E\neq 0$ for any irreducible component $E$ of $\overline{Z^{X(\Sigma)}}\cap O_\sigma$. 
        For any $i\notin S^\sigma$, $v(\iota^*(\chi^{u(i) - \omega_\sigma})) > 0$ for any $v\in \theta^{-1}(\sigma^\circ\cap N)$ by Proposition \ref{prop: valuation on schon}(c). 
        Thus, $\iota^*(\chi^{u(i) - \omega_\sigma})$ is contained in the maximal ideal of $\OO_{\overline{Z^{X(\Sigma)}}, E}$ by Proposition \ref{prop: criterion of affine open}(a), and hence, $p^\sigma(\iota^*(\chi^{u(i)-\omega_\sigma}))|_E = 0$ for any irreducible component $E$ of $\overline{Z^{X(\Sigma)}}\cap O_\sigma$. 
        Therefore, the statement holds. 
        \item[(e)] Let $x\in T_N\times\overline{H(a)^{X(\Sigma)}}$. 
        Then there exists $\sigma\in\Sigma$ such that $x\in T_N\times (\overline{H(a)^{X(\Sigma)}}\cap O_\sigma)$. 
        Let $N_0$ denote a sublattice of $N$ such that $N_0\oplus(\langle\sigma\rangle\cap N) = N$, let $p$ denote the quotient morphism $N\rightarrow N/N_0$, and let $\sigma_0$ denote the strongly convex rational polyhedral cone $p_\RR(\sigma)$ in $(N/N_0)_\RR$. 
        Then there exists the following Cartesian product by Lemma \ref{lem: action}(c). 
        \begin{equation*}
          \begin{tikzcd}
            T_{N}\times (\overline{H(a)^{X(\Sigma)}}\cap X(\sigma))\ar[rr]\ar[d, "p_*\times\id"] & & X(\sigma)\ar[d, "p_*"]\\
            T_{N/N_0}\times (\overline{H(a)^{X(\Sigma)}}\cap X(\sigma)) \ar[r, "\id\times p_*|_{\overline{H}}"] & T_{N/N_0}\times X(\sigma_0)\ar[r, "m_0"] & X(\sigma_0),
          \end{tikzcd}
        \end{equation*}
        where the upper morphism is the multiplication morphism, and $m_0$ is the action morphism of $X(\sigma_0)$. 
        
        Let $y$ denote $(p_*\times\id)(x)\in T_{N/N_0}\times X(\sigma)$, let $\alpha$ denote $m_0\circ(\id\times p_*)|_{T_{N/N_0}\times (\overline{Z^{X(\Sigma)}}\cap X(\sigma))}$, and let $\beta$ denote $m_0\circ(\id\times p_*)|_{T_{N/N_0}\times (\overline{H(a)^{X(\Sigma)}}\cap X(\sigma))}$. 
        We remark that $\alpha^{-1}(O_{\sigma_0}) = T_{N/N_0}\times (\overline{Z^{X(\Sigma)}}\cap O_{\sigma})$. 
        By Lemma \ref{lem: action}(c), there exists the following Cartesian diagram:
        \begin{equation*}
          \begin{tikzcd}
            T_{N}\times (\overline{Z^{X(\Sigma)}}\cap X(\sigma))\ar[r]\ar[d, "p_*\times\id"] & X(\sigma)\ar[d, "p_*"]\\
            T_{N/N_0}\times (\overline{Z^{X(\Sigma)}}\cap X(\sigma)) \ar[r, "\alpha"] & X(\sigma_0).
          \end{tikzcd}
        \end{equation*}
        Because $\overline{Z^{X(\Sigma)}}$ is a tropical compactification of $Z$, $\alpha$ is flat. 
        Let $E$ be an irreducible component of $\overline{Z^{X(\Sigma)}}\cap O_\sigma$ such that $y\in T_{N/N_0}\times E$.  
        By the first Cartesian diagram, if $\beta$ is flat at $y$, then the multiplication morphism $T_N\times \overline{H(a)^{X(\Sigma)}}\rightarrow X(\Sigma)$ is flat at $x$. 

        Thus, we will show that $\beta$ is flat at $y$. 
        Let $g(a)$ denote $\iota^*(\chi^{-\omega_\sigma})F(a)$. 
        Then $1\otimes p^\sigma(g(a))\in \Gamma(T_{N/N_0}\times (\overline{Z^{X(\Sigma)}}\cap O_\sigma), \OO_{T_{N/N_0}\times (\overline{Z^{X(\Sigma)}}\cap O_\sigma)})$. 
        By (d), $1\otimes p^\sigma(g(a))|_{T_{N/N_0}\times E}$ is not a zero divisor in $\Gamma(T_{N/N_0}\times E, \OO_{T_{N/N_0}\times E})$. 
        Thus, $1\otimes p^\sigma(g(a))$ is not a zero divisor in $\OO_{\alpha^{-1}(O_{\sigma_0}), y}$. 
        On the other hand, $1\otimes g(a)$ generates the ideal of $k[T_{N/N_0}\times (\overline{Z^{X(\Sigma)}}\cap X(\sigma))]$ associated with the closed subscheme $T_{N/N_0}\times (\overline{H(a)^{X(\Sigma)}}\cap X(\sigma))$ of $T_{N/N_0}\times (\overline{Z^{X(\Sigma)}}\cap X(\sigma))$ by (c). 
        Then $\beta$ is flat at $y$ by Lemma \ref{lem: criterion of flatness}. 
      \end{itemize}
\end{proof}
In Proposition \ref{prop: generic condition of flatness} (e), 
the multiplication morphism of a general hypersurface is flat, but it is not faithfully flat in general. 
The following proposition shows how to compute the subfan of $\Sigma$, which obtains the tropical compactification of a general hypersurface in sch\"{o}n affine varieties. 
\begin{proposition}\label{prop: general condition of fan}
  We keep the notation in Proposition \ref{prop: generic condition of flatness}. 
  Let $\sigma\in \Sigma = \Sigma(\Delta, u)$ and let $\{E^{(1)}_{\sigma}, E^{(2)}_{\sigma}, \ldots, E^{(r_\sigma)}_{\sigma}\}$ be all irreducible components of $\overline{Z^{X(\Sigma)}}\cap O_\sigma$. 
  Let $V(\sigma, l, u, \omega_\sigma)$ denote a $k$-linear subspace of $\Gamma(E^{(l)}_\sigma, \OO_{\overline{Z^{X(\Sigma)}}\cap O_\sigma})$ generated by the following set:
  \[
    \{p^{\sigma}(\iota^*(\chi^{u(i)-\omega_\sigma}))|_{E^{(l)}_\sigma}= (\iota^\sigma)^*(\chi^{u(i)-\omega_\sigma})|_{E^{(l)}_\sigma}\}_{i\in S^\sigma}.
  \] 
  
  Then the following statements hold:
  \begin{itemize}
    \item[(a)] The integer $\dim_k(V(\sigma, l, u, \omega_\sigma))$ is independent of the choice of $\omega_\sigma\in M$ which satisfies the conditions in Proposition \ref{prop: normal fan I}(b). 
    Let $d_{Z}(\sigma, l, u)$ denote $\dim_k(V(\sigma, l, u,  \omega_\sigma))$.
    \item[(b)] For the notation in (a), $d_{Z}(\sigma, l, u)\geq 1$ for any $\sigma\in \Sigma$ and any $1\leq l\leq r_\sigma$.
    \item[(c)] For the notation in (a), there exists a dense open subset $W_{\sigma, l}\subset \A^{|S|}_k$ such that if $d_{Z}(\sigma, l, u)\geq 2$, then $\overline{H(a)^{X(\Sigma)}}\cap E^{(l)}_\sigma \neq \emptyset$ for any $a\in W_{\sigma, l}(k)$. 
    
    Let $W$ denote $\cap_{\sigma\in\Sigma}(\cap_{1\leq l\leq r_\sigma}W_{l, \sigma})$.  
    \item[(d)] Let $\Sigma(\Delta, u, Z)$ denote the subset of $\Sigma(\Delta, u)$ as follows:
    \[
      \Sigma(\Delta, u, Z) = \{\sigma\in \Sigma(\Delta, u)\mid 
      \max_{1\leq l\leq r_\sigma}\{d_{Z}(\sigma, l, u)\}\geq 2\}. 
    \]
    Then $\Sigma(\Delta, u, Z)$ is a subfan of $\Sigma(\Delta, u)$ and equal to the following subset of $\Sigma(\Delta, u)$ for a genera $a\in k^{|S|}$: 
    \[
        \{\sigma\in \Sigma(\Delta, u)\mid \overline{H(a)^{X(\Sigma)}}\cap O_\sigma\neq \emptyset\}.
    \]
  \end{itemize}
\end{proposition}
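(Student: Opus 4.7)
The plan is to prove (a)--(d) in sequence, using the valuative criterion of Proposition \ref{prop: criterion of affine open} and the flatness analysis of Proposition \ref{prop: generic condition of flatness}.

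Parts (a) and (b) are straightforward. For (a), two valid choices $\omega_\sigma, \omega'_\sigma$ differ by an element of $\sigma^\perp\cap M$ by Proposition \ref{prop: normal fan I}(c); since $\chi^{\omega_\sigma-\omega'_\sigma}$ is a character of $O_\sigma$, its pullback to $E^{(l)}_\sigma$ is a unit, and multiplication by it furnishes a $k$-linear isomorphism between the two candidate subspaces. For (b), $S^\sigma\neq\emptyset$ by Proposition \ref{prop: normal fan I}(d) and each generator $(\iota^\sigma)^*(\chi^{u^\sigma(i)})$ is a nowhere-vanishing function on $E^{(l)}_\sigma$, so $V(\sigma,l,u,\omega_\sigma)\neq 0$.

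Part (c) is the technical heart. First I note that $d_Z(\sigma,l,u)\geq 2$ forces $\dim E^{(l)}_\sigma\geq 1$, since two scalars on a point can never be linearly independent. I then introduce the incidence variety
\[
    I_{\sigma,l} = \bigl\{(a,x)\in \A^{|S|}_k\times E^{(l)}_\sigma \,\bigm|\, \textstyle\sum_{i\in S}a_i(\iota^\sigma)^*(\chi^{u(i)-\omega_\sigma})(x) = 0\bigr\}.
\]
Projection to $E^{(l)}_\sigma$ is surjective with hyperplane fibers (non-trivial because for $i\in S^\sigma$ the character $\chi^{u^\sigma(i)}$ is a unit on $E^{(l)}_\sigma$), so $\dim I_{\sigma,l}=|S|-1+\dim E^{(l)}_\sigma$. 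Choosing $i_1,i_2\in S^\sigma$ with linearly independent restrictions, the ratio $\chi^{u(i_1)-u(i_2)}|_{E^{(l)}_\sigma}$ is a non-constant morphism to $\mathbb{G}_m$ on the irreducible $E^{(l)}_\sigma$, hence dominant; restricting $I_{\sigma,l}$ to the $2$-plane supported on $\{i_1,i_2\}$, the zero locus of $F(a)|_{E^{(l)}_\sigma}$ is non-empty of dimension $\dim E^{(l)}_\sigma-1$ for general $a$ in the slice. Upper semicontinuity of fiber dimension then forces the generic fiber of $I_{\sigma,l}\to\A^{|S|}_k$ to have dimension $\dim E^{(l)}_\sigma-1$, so this projection is dominant, and I take $W_{\sigma,l}$ to be a dense open inside its image.

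For (d) I split off the subfan property from the combinatorial description. Given $\tau\preceq\sigma\in\Sigma(\Delta,u,Z)$, the choice $\omega_\tau:=\omega_\sigma$ is valid for $\tau$ since $\sigma^\vee\subseteq\tau^\vee$ and $\sigma^\perp\subseteq\tau^\perp$, so $S^\sigma\subseteq S^\tau$. By Proposition \ref{prop: stratification}(d), $E^{(l)}_\sigma\subseteq\overline{E'}$ for some irreducible component $E'$ of $\overline{Z^{X(\Sigma)}}\cap O_\tau$. The characters $\chi^{u(i_1)-\omega_\sigma}$ and $\chi^{u(i_2)-\omega_\sigma}$ are regular on the irreducible open $\overline{E'}\cap U_\sigma$, which contains both $E^{(l)}_\sigma$ and $E'$ (the latter as a dense open of $\overline{E'}$); hence linear independence on $E^{(l)}_\sigma$ forces linear independence on $E'$, giving $d_Z(\tau,E',u)\geq 2$. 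For the combinatorial description, I pick $a$ in the non-empty open $U\cap\bigcap_{\sigma,l}W_{\sigma,l}$: part (c) handles $\sigma\in\Sigma(\Delta,u,Z)$, while for $\sigma\notin\Sigma(\Delta,u,Z)$ the hypothesis $d_Z(\sigma,l,u)=1$ means $F(a)|_{E^{(l)}_\sigma}$ is a scalar multiple of a single character, nowhere vanishing for generic $a$, so $\overline{H(a)^{X(\Sigma)}}\cap O_\sigma=\emptyset$.

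The main obstacle is part (c): executing the dimension count on $I_{\sigma,l}$ rigorously, and in particular verifying that dominance over $\A^{|S|}_k$ really follows from the $2$-plane slice argument combined with upper semicontinuity of fiber dimension, rather than some degenerate coincidence producing higher-dimensional generic fibers.
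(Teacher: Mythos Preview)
Your argument is correct, and parts (a) and (b) match the paper exactly. Parts (c) and (d) take a somewhat different route.

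For (c), the paper packages the key step as a standalone lemma (Lemma~\ref{lem: dim 2 lemma}): given units $\chi_1,\dots,\chi_r$ on an integral affine variety with $\dim_k\langle\chi_1,\dots,\chi_r\rangle\geq 2$, a generic linear combination is a non-unit. Its proof is also an incidence-variety argument, but establishes dominance of $V(f)\to\A^r$ by contradiction rather than by your upper-semicontinuity dimension count. Your approach is self-contained and arguably more transparent; the paper's buys reusability. One point to tighten: the fiber of your $I_{\sigma,l}$ over $a$ is the zero locus of $p^\sigma(\iota^*(\chi^{-\omega_\sigma})F(a))$ on $E^{(l)}_\sigma$, but this coincides with $\overline{H(a)^{X(\Sigma)}}\cap E^{(l)}_\sigma$ only once you know the ideal of the closure on $X(\sigma)$ is principal with that generator, which is Proposition~\ref{prop: generic condition of flatness}(c) and requires $a\in U$. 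So you should take $W_{\sigma,l}$ inside $U$, as the paper does; since you already intersect with $U$ in (d), nothing downstream is affected.

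For (d), the paper reverses the order: it first shows, for $a\in U\cap W$, that $\Sigma(\Delta,u,Z)$ equals $\{\sigma:\overline{H(a)^{X(\Sigma)}}\cap O_\sigma\neq\emptyset\}$, and then invokes flatness of the multiplication morphism (Proposition~\ref{prop: generic condition of flatness}(e)) together with Proposition~\ref{prop: property1}(a) to conclude this set is a subfan. Your direct combinatorial argument---propagating linear independence from $E^{(l)}_\sigma$ back along the specialization $E'\rightsquigarrow E^{(l)}_\sigma$ via density of $E'$ in $\overline{E'}$---avoids the flatness input entirely for the subfan claim and is a nice alternative.
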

\begin{proof}
  We will prove the statement from (a) to (d) in order. 
  \begin{itemize}
    \item[(a)] Let $\omega'_\sigma\in M$ be an element that satisfies two conditions in Proposition \ref{prop: normal fan I}(b). 
    Then $\omega_\sigma - \omega'_\sigma\in \sigma^\perp\cap M$ by Proposition \ref{prop: normal fan I}(c). 
    In particular, $\chi^{\omega_\sigma - \omega'_\sigma}\in k[O_\sigma]^*$, and hence, $p^\sigma(\iota^*(\chi^{\omega_\sigma - \omega'_\sigma)})|_{E^{(l)}_\sigma}$ is a unit in $\Gamma(E^{(l)}_\sigma, \OO_{\overline{Z^{X(\Sigma)}}\cap O_\sigma})$. 
    Thus, $V(\sigma, l, u, \omega_\sigma)$ and $V(\sigma, l, u, \omega'_\sigma)$ is isomorphic by the multiplication by this unit. 
    \item[(b)] By Proposition \ref{prop: generic condition of flatness}(d), $p^\sigma(\iota^*(\chi^{-\omega_\sigma})F(a))|_{E^{(l)}_\sigma}\neq 0$ for any $a\in U(k)$, and hence,  $V(\sigma, l, u, \omega_\sigma) \neq 0$. 
    \item[(c)] If $d_Z(\sigma, l, u) = 1$, then we take $W_{\sigma, l}$ as $U$. 
    We assume $d_Z(\sigma, l, u) \geq 2$. 
    By Proposition \ref{prop: generic condition of flatness}(c) and (d), $\sum_{i\in S^\sigma}a_i p^\sigma(\iota^*(\chi^{u(i)-\omega_\sigma}))|_{E^{(l)}_\sigma}$ generates the ideal of $k[E^{(l)}_\sigma]$ associated with a closed subscheme $\overline{H(a)^{X(\Sigma)}}\cap E^{(l)}_\sigma$ of $E^{(l)}_\sigma$ for any $a\in U(k)$. 
    Moreover, there exists an open subset $W'_{\sigma, l}$ of $\A^{|S|}_k$ such that $\sum_{i\in S^\sigma}b_i p^\sigma(\iota^*(\chi^{u(i)-\omega_\sigma}))|_{E^{(l)}_\sigma}$ is not a unit in $k[E^{(l)}_\sigma]$ for any $b = (b_i)_{i\in S}\in W'_{\sigma, l}(k)$ by Lemma \ref{lem: dim 2 lemma}(b). 
    Therefore, we take $W_{\sigma, l}$ as $U\cap W'_{\sigma, l}$.
    \item[(d)] Let $a\in (U\cap W)(k)$  and $\sigma\in\Sigma$. 
    If $\max_{1\leq l\leq r_\sigma}\{d_{Z}(\sigma, l, u)\} = 1$, then $\overline{H(a)^{X(\Sigma)}}\cap O_{\sigma} = \emptyset$ by Proposition \ref{prop: generic condition of flatness}(d). 
    On the other hand, if $\max_{1\leq l\leq r_\sigma}\{d_{Z}(\sigma, l, u)\} \geq 2$, then $\overline{H(a)^{X(\Sigma)}}\cap E^{(l)}_{\sigma} \neq \emptyset$ by (c). 
    Moreover, the multiplication morphism $T_N\times \overline{H(a)^{X(\Sigma)}}\rightarrow X(\Sigma)$ is open by Proposition \ref{prop: generic condition of flatness}(e). 
    Thus, $\Sigma(\Delta, u, Z)$ is a subfan of $\Sigma(\Delta, u)$.  
  \end{itemize}
\end{proof}
By Proposition \ref{prop: generic condition of flatness} and Proposition \ref{prop: general condition of fan}, we obtained the tropical compactification of a general hypersurface in sch\"{o}n affine varieties. 
By considering a more general case, it can be shown to be sch\"{o}n by the following proposition: 
\begin{proposition}\label{prop: general condition of schon}
  We keep the notation in Proposition \ref{prop: generic condition of flatness} and Proposition \ref{prop: general condition of fan}. 
  Let $\Sigma_Z$ denote $\Sigma(\Delta, u, Z)$, let $\overline{H(a)^{X(\Sigma_Z)}}$ denote the scheme theoretic closure of $H(a)$ in $X(\Sigma_Z)$ for $a\in k^S$. 
  Let $m$ denote the multiplication morphism $T_N\times\overline{H(a)^{X(\Sigma_Z)}}\rightarrow X(\Sigma_Z)$. 
  Then there exists a dense open subset $V$ of $\A^{|S|}_k$ such that the following conditions hold for any $a\in V(k)$: 
  \begin{itemize}
    \item[(i)] The scheme $H(a)$ is equidimensional. 
    \item[(ii)] The scheme $\overline{H(a)^{X(\Sigma_Z)}}$ is proper over $k$.  
    \item[(iii)] The morphism $m$ is smooth and faithfully flat. 
  \end{itemize} 
\end{proposition}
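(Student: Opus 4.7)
The plan is to intersect the generic open sets $U \cap W$ of Propositions \ref{prop: generic condition of flatness}(b) and \ref{prop: general condition of fan}(c) with a further dense open $V' \subset \mathbb{A}^{|S|}_k$ that witnesses smoothness on each stratum, and then verify (i), (ii), (iii) in turn for $a$ in $V = U \cap W \cap V'$. Part (i) is essentially immediate: $Z$ is irreducible so $k[Z]$ is a domain, and Proposition \ref{prop: generic condition of flatness}(d) applied to the cone $\{0\}\in\Sigma$ (for which $\omega_{\{0\}} = 0$ and the only component is $Z$ itself) gives $F(a)\neq 0$ in $k[Z]$; Krull's principal ideal theorem then forces $H(a) = V(F(a))$ to be pure of codimension one, hence equidimensional.

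For (ii), Proposition \ref{prop: general condition of fan}(d) arranges, for $a \in U\cap W$, that $\overline{H(a)^{X(\Sigma)}}$ is disjoint from every $O_\sigma$ with $\sigma \in \Sigma\setminus\Sigma_Z$. Hence this closed subscheme of $X(\Sigma)$ is set-theoretically contained in the open $X(\Sigma_Z)$, so it is closed in $X(\Sigma_Z)$ and still contains $H(a)$ scheme-theoretically densely; it therefore coincides with $\overline{H(a)^{X(\Sigma_Z)}}$. Since $\overline{H(a)^{X(\Sigma)}}$ is a closed subscheme of the proper sch\"on compactification $\overline{Z^{X(\Sigma)}}$, properness of $\overline{H(a)^{X(\Sigma_Z)}}$ over $k$ follows. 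For (iii), flatness of $m$ is obtained from Proposition \ref{prop: generic condition of flatness}(e) by restricting to the open $X(\Sigma_Z)\subset X(\Sigma)$; because $m$ is flat and of finite presentation, its image is a torus-stable open subscheme $X(\Sigma')\subset X(\Sigma_Z)$, and by Proposition \ref{prop: general condition of fan}(d) this image meets every $O_\sigma$ with $\sigma\in\Sigma_Z$, giving $\Sigma' = \Sigma_Z$ and hence faithful flatness.

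The crux is smoothness of $m$. By Proposition \ref{prop: property1}(c) it suffices to verify that each stratum $\overline{H(a)^{X(\Sigma_Z)}}\cap O_\sigma$ is smooth. Each irreducible component $E = E^{(l)}_\sigma$ of $\overline{Z^{X(\Sigma)}}\cap O_\sigma$ is itself smooth by Propositions \ref{prop: property1}(c) and \ref{prop: stratification}(c) applied to the sch\"on compactification $\overline{Z^{X(\Sigma)}}$, and on $E$ the intersection $\overline{H(a)^{X(\Sigma_Z)}}\cap E$ is cut out by $\sum_{i\in S^\sigma} a_i (\iota^\sigma)^*(\chi^{u^\sigma(i)})|_E$ via Proposition \ref{prop: generic condition of flatness}(c)--(d). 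When $d_{Z}(\sigma, l, u) = 1$ a generic choice of $a$ makes this a nowhere-vanishing scalar multiple of a single character, so the stratum is empty; when $d_{Z}(\sigma, l, u) \geq 2$ the restricted characters form a linear system on $E$ with empty base locus (every character is a unit on $O_\sigma$) and with at least two linearly independent restrictions, so the induced rational map to projective space is nonconstant, and classical Bertini in characteristic zero produces a dense open of coefficients for which the vanishing locus is smooth.

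The main obstacle I anticipate lies in this last step: cleanly packaging the Bertini openness conditions over the finitely many pairs $(\sigma, l)$ with $\sigma\in\Sigma_Z$ and $d_{Z}(\sigma, l, u) \geq 2$ into a single dense open $V'\subset \mathbb{A}^{|S|}_k$, and verifying that the Bertini hypotheses (smoothness of the ambient sch\"on stratum $E$, base-point-freeness of the restricted linear system, non-constancy of the induced rational map) apply uniformly. Once this is set up, intersection of $V'$ with $U\cap W$ delivers the required dense open $V$, and (i)--(iii) follow.
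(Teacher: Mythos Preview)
Your proposal is correct and follows essentially the same route as the paper: set $V = U\cap W\cap Q$ where $Q$ is the intersection of Bertini opens over all pairs $(\sigma,l)$ with $\sigma\in\Sigma_Z$, reduce smoothness of $m$ to smoothness of each stratum via Proposition~\ref{prop: property1}(c), and deduce properness and faithful flatness from the identification $\overline{H(a)^{X(\Sigma_Z)}}=\overline{H(a)^{X(\Sigma)}}$ for $a\in U\cap W$. The only cosmetic differences are that the paper obtains (i) from smoothness of $H(a)$ (the Bertini open at $\sigma=\{0\}$) rather than from Krull's principal ideal theorem, and that you make the case split $d_Z(\sigma,l,u)=1$ versus $\geq 2$ explicit whereas the paper leaves it implicit in the $U$-condition.
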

\begin{proof}
  For $\sigma\in\Sigma_Z$, all irreducible components of $\overline{Z^{X(\Sigma_Z)}}\cap O_\sigma$ are smooth by \ref{prop: property1}(c). 
  Then there exists a dense open subset $Q_{\sigma, l}$ of $\A^{|S|}_k$ such that the hypersurface in $E^{(l)}_\sigma$ defined by $\sum_{i\in S^\sigma}c_i p^\sigma((\iota^{\sigma})^*(\chi^{u(i)-\omega_\sigma}))|_{E^{(l)}_\sigma}$ is smooth over $k$ for any $c = (c_i)_{i\in S}\in Q_{\sigma, l}(k)$ by Bertini's Theorem for each $1\leq l\leq r_\sigma$. 
  Let $Q$ denote $\cap_{\sigma\in\Sigma_Z}(\cap_{1\leq l\leq r_\sigma}Q_{\sigma, l})$, let $V$ denote $U\cap W\cap Q$, and let $a\in V(k)$. 
  Because $a\in W(k)\cap Q(k)$, $H(a)$ is a smooth hypersurface in $Z$. 
  In particular, $H(a)$ is equidimensional. 
  Because $\overline{Z^{X(\Sigma)}}$ is a tropical compactification of $Z$, $\overline{H(a)^{X(\Sigma)}}$ is also proper. 
  Thus, $\overline{H(a)^{X(\Sigma_Z)}}$ is proper, and the multiplication morphism $T_N\times \overline{H(a)^{X(\Sigma_Z)}}\rightarrow X(\Sigma_Z)$ is faithfully flat by Proposition \ref{prop: generic condition of flatness}(e), the proof of Proposition \ref{prop: general condition of fan}(d), and the fact that $a\in U(k)\cap W(k)$. 
  Finally, for any $\sigma\in\Sigma_Z$, $\overline{H(a)^{X(\Sigma_Z)}}\cap O_\sigma$ is smooth over $k$ by Proposition \ref{prop: generic condition of flatness}(c) and (d), and the fact $a\in U(k)\cap Q(k)$. 
  Thus, the multiplication morphism $T_N\times \overline{H(a)^{X(\Sigma_Z)}}\rightarrow X(\Sigma_Z)$ is smooth by Proposition \ref{prop: property1}(c). 
\end{proof}
In Proposition \ref{prop: model I} and Proposition \ref{prop: model II}, we need a fan that satisfies three conditions, i.e. generically unimodular, specifically reduced, and compactly arranged. 
In general, the fan that obtains the tropical compactification does not satisfy these conditions.
By using the following proposition, we change the ambient toric variety, which satisfies these conditions. 
The geometrical interpretation is the semistable reduction. 
\begin{proposition}\label{prop: modify by multiplication}
    Let $N$ be a lattice of finite rank, and let $M$ be the dual lattice. 
    We identify with $M\oplus\ZZ$ and the dual lattice of $N\oplus\ZZ$ naturally. 
    Let $S$ be a finite set, let $u\in M^S, \kappa\in \ZZ^S$ be maps, let $(u, \kappa)$ denote a map $S\rightarrow M\oplus\ZZ$ defined by the product of $u$ and $\kappa$, and let $\psi_l$ denote the endomorphism of $N\oplus\ZZ$ defined as follows: 
    \[
        N\oplus\ZZ\ni (v, n)\mapsto (lv, n)\in N\oplus\ZZ.
    \]
    Then the following statements hold:
    \begin{itemize}
        \item[(a)] The following equation holds for any positive integer $l$:
        \[
            \Sigma((u, l\kappa)) = \{(\psi_l)_\RR(\sigma)\mid \sigma\in \Sigma((u, \kappa))\}. 
        \]
        \item[(b)] Let $\Delta$ be a fan in $N_\RR$. 
        Then the following equation holds for any positive integer $l$:
        \[
            \Sigma(\Delta\times\Delta_!, (u, l\kappa)) = \{(\psi_l)_\RR(\sigma)\mid \sigma\in \Sigma(\Delta\times\Delta_!, (u, \kappa))\}. 
        \]
        \item[(c)] We keep the notation in (b). 
        Let $\Sigma_l$ denote $\Sigma(\Delta\times\Delta_!, (u, l\kappa))$ and let $\Sigma_{l, +}$ denote $\{\sigma\in \Sigma_l\mid \sigma\subset N_\RR\times \RR_{\geq 0}\}$ for positive integer $l$. 
        Then there exists a positive integer $l_0$ such that for any positive integer $n$, there exists a strongly convex, generically unimodular, specifically reduced, compactly arranged refinement $\Delta'_n$ of $\Sigma_{nl_0, +}$. 
    \end{itemize}
\end{proposition}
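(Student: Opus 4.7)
The strategy is to prove (a) directly from the behaviour of normal fans under linear maps, deduce (b) by combining (a) with the observation that $\psi_l$ fixes each cone of $\Delta\times\Delta_!$ setwise, and then combine (b) with a toric semistable-reduction argument for (c).

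For (a), observe that $P((u, l\kappa))$ is the image of $P((u, \kappa))$ under the linear map $\phi_l\colon (M\oplus\ZZ)_\RR\to (M\oplus\ZZ)_\RR$, $(\omega, m)\mapsto (\omega, lm)$. The standard fact that $\Sigma(\phi_l(P)) = \{(\phi_l^*)^{-1}(\sigma)\mid \sigma\in\Sigma(P)\}$, where $(\phi_l^*)^{-1}(v, n) = (v, n/l)$, combined with the identity $\psi_l = l\cdot(\phi_l^*)^{-1}$ and the scale-invariance of cones, shows $\psi_l(\sigma) = (\phi_l^*)^{-1}(\sigma)$ for every cone $\sigma$. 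For (b), each cone of $\Delta\times\Delta_!$ has the form $\sigma\times\sigma'$ with $\sigma\subset N_\RR$ a cone, so $\psi_l(\sigma\times\sigma') = l\sigma\times\sigma' = \sigma\times\sigma'$. Hence $\tau_1\cap\psi_l(\tau_2) = \psi_l(\tau_1\cap\tau_2)$ for $\tau_1\in\Delta\times\Delta_!$ and $\tau_2\in\Sigma((u, \kappa))$, and combining with (a) yields the formula.

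For (c), the plan is to take $l_0$ to be the least common multiple of the positive second coordinates of primitive generators of the (finitely many) rays of $\Sigma_1 := \Sigma(\Delta\times\Delta_!, (u, \kappa))$. By (b), $\Sigma_{nl_0} = \{\psi_{nl_0}(\tau)\mid \tau\in\Sigma_1\}$ for every $n\geq 1$. If $(v, m)$ is the primitive generator of a ray of $\Sigma_1$ with $m>0$, then since $m\mid nl_0$, the primitive generator of $\psi_{nl_0}(\RR_{\geq 0}(v, m))$ equals $(nl_0 v/m, 1)\in N\times\{1\}$. Thus every bounded ray of $\Sigma_{nl_0, +}$ already has its primitive generator at height one. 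The refinement $\Delta'_n$ is then constructed as a unimodular refinement of $\Sigma_{nl_0, +}$ whose newly added rays lie either in $N_\RR\times\{0\}$ or are generated by points of $N\times\{1\}$: one triangulates each slice $\sigma\cap(N_\RR\times\{1\})$ lattice-unimodularly (using only lattice points of $N\times\{1\}$) and cones over it, while compatibly refining the horizontal cones $\sigma\cap(N_\RR\times\{0\})$.

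The resulting $\Delta'_n$ is then strongly convex (each cone is unimodular, hence strongly convex), generically unimodular, and compactly arranged by Proposition \ref{prop: example of type of polytope}(b); it is specifically reduced because every bounded ray is by construction of the form $\RR_{\geq 0}(w, 1)$ with $w\in N$. The hardest step is the compatible global unimodular refinement: this is the toric version of the semistable reduction theorem of Kempf--Knudsen--Mumford--Saint-Donat, and the delicate point is ensuring that the local triangulations glue cone-by-cone into a single fan. The choice of $l_0$ is precisely what makes this possible: after multiplying $\kappa$ by $l_0$, every vertical ray of $\Sigma_{nl_0, +}$ already meets $N\times\{1\}$, so the subsequent unimodular refinement only needs to introduce further height-one lattice points, automatically preserving specific reducedness.
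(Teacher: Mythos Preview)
Your arguments for (a) and (b) are correct and essentially identical to the paper's: the paper also computes $(\psi_l^*)_\RR(\omega,m)=(l\omega,m)$ to see that $\psi_l^*$ carries $P((u,l\kappa))$ to $l\cdot P((u,\kappa))$, and then uses $(\psi_l)_\RR(\tau)=\tau$ for $\tau\in\Delta\times\Delta_!$ to pass intersections through $\psi_l$.

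For (c) there is a genuine gap. Your choice of $l_0$ as the lcm of the heights of the primitive ray generators of $\Sigma_1$ only guarantees that the \emph{existing} rays of $\Sigma_{nl_0,+}$ meet $N\times\{1\}$. It does \emph{not} guarantee that a unimodular refinement can be found whose new rays are all of height $0$ or $1$. That stronger statement is precisely KKMS semistable reduction, and the base-change constant it produces is determined by the full polyhedral slice, not merely by its vertices; in general it is strictly larger than your lcm. Concretely, for $\rank(N)\ge 3$ there are lattice polytopes (Reeve tetrahedra and the like) with lattice vertices but no unimodular triangulation using lattice points, so your step ``triangulate each slice lattice-unimodularly using only lattice points of $N\times\{1\}$'' can fail even after your rescaling. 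Your final sentence asserts that your $l_0$ ``is precisely what makes this possible,'' but that is not what KKMS says.

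The paper avoids this difficulty by reversing the order of operations: first take any strongly convex \emph{unimodular} refinement $\Delta''$ of $\Sigma_{1,+}$ (ordinary toric resolution, no KKMS needed), and only then choose the scaling factor $m_0$ so that $m_0 v_\gamma\in N$ for every ray $\gamma\in\Delta''_{\spe}$, where $(v_\gamma,1)\in\gamma$. The image $\psi_{m_0}(\Delta'')$ is then merely \emph{simplicial}, not unimodular, but that already gives ``compactly arranged'' via Proposition~\ref{prop: example of type of polytope}(a); it is generically unimodular because $\psi_{m_0}$ fixes each horizontal cone setwise; and it is specifically reduced by the choice of $m_0$. Finally one sets $l_0=m_0$ and, for arbitrary $n$, takes $\Delta'_n=\psi_n(\psi_{m_0}(\Delta''))=\psi_{nm_0}(\Delta'')$, checking that the three properties are preserved under further application of $\psi_n$. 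The key point you are missing is that the target fan need not be unimodular at all---simplicial suffices---so the hard KKMS step is unnecessary.
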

\begin{proof}
  We will prove the statements from (a) to (c) in order. 
  \begin{itemize}
    \item[(a)] By the definition of $\psi_l$, 
    $(\psi^*_l)_\RR(\omega, m) = (l\omega, m)$ for any $(\omega, m)\in (M\oplus\ZZ)_\RR$. 
    In particular, $(\psi^*_l)_\RR(P((u, l\kappa))) = l\cdot P((u, \kappa))$, and hence, the statement holds.
    \item[(b)] For any $\tau\in\Delta\times\Delta_!$ and $\gamma\in\Sigma((u, \kappa))$, $(\psi_l)_\RR(\tau\cap\gamma) = \tau\cap (\psi_l)_\RR(\gamma)$ because $(\psi_l)_\RR(\tau) = \tau$. 
    Thus, the statement holds by (a).
    \item[(c)] Let $m\in\ZZ_{>0}$ and let $\Delta'$ be a strongly convex rational polyhedral fan in $(N\oplus\ZZ)_\RR$ which is a refinement of $\Sigma_{m, +}$. 
    We assume that $\Delta'$ is generically unimodular, specifically reduced, and compactly arranged. 
    For $l\in\ZZ_{>0}$, let $\Delta'_l$ denote the following set:
    \[
        \Delta'_l = \{(\psi_l)_\RR(\tau)\mid\tau\in\Delta'\}.
    \]
    By (b), $\Delta'_l$ is a strongly convex rational polyhedral fan in $N'_\RR$ and a refinement of $\Sigma_{lm, +}$. 
    Now, we show that $\Delta'_l$ is also generically unimodular, specifically reduced, and compactly arranged for any $l\in\ZZ_{>0}$. 
    
    By the definition of $\psi_l$, we can check that $\Delta'_l$ is also generically unimodular. 
    
    Let $\tau\in\Delta'$. 
    Then by the definition of $\psi_l$, we can check that $\tau\in\Delta'_{\spe}$ if and only if $(\psi_l)_\RR(\tau)\in(\Delta'_l)_{\spe}$. 
    Similarly, we can check that $\tau\in\Delta'_{\bdd}$ if and only if $(\psi_l)_\RR(\tau)\in(\Delta'_l)_{\bdd}$. 
    Therefore, $\Delta'_l$ is also compactly arranged. 

    We remark that $(\psi_l)_\RR$ induces a one-to-one correspondence with rays $\gamma\in\Delta'_{\spe}$ and those in $(\Delta'_l)_{\spe}$ by the definition of $\psi_l$.
    Let $\gamma\in\Delta'_{\spe}$ be a ray and let $(v, c)\in N\oplus\ZZ$ be a minimal generator of $\gamma$. 
    By the assumption, $c = 1$. 
    Thus, $(lv, 1)$ is a generator of $(\psi_l)_\RR(\gamma)$. 
    Hence, $\Delta'_l$ is specifically reduced. 

    By the argument above, it is enough to show that the following statement (**) holds:
    \begin{enumerate}
        \item[(**)] There exists $m\in\ZZ_{>0}$ and $\Delta'$ be a strongly convex rational polyhedral fan in $(N\oplus\ZZ)_\RR$ such that $\Delta'$ is a refinement of $\Sigma_{m, +}$, generically unimodular, specifically reduced, and compactly arranged. 
    \end{enumerate}
    Let $\Delta''$ be a strongly convex unimodular refinement of $\Sigma_{1, +}$. 
    For a positive integer $m$, let $\Delta''_m$ denote the following set:
    \[
        \Delta''_m = \{(\psi_m)_\RR(\tau)\mid\tau\in\Delta''\}.
    \]
    By (b), the argument above, and Proposition \ref{prop: example of type of polytope}(b), we can check that $\Delta''_m$ is a strongly convex rational polyhedral fan in $(N\oplus\ZZ)_\RR$, a refinement of $\Sigma_{m, +}$, generically unimodular, and compactly arranged for any $m\in\ZZ_{>0}$. 
    Let $\Gamma$ denote the following subset of $\Delta''$:
    \[
        \Gamma = \{\gamma\in\Delta''_{\spe}\mid \dim(\gamma) = 1\}.
    \]
    For $\gamma\in\Gamma$, let $v_\gamma$ denote an element in $N_\Q$ such that $(v_\gamma, 1)\in\gamma$. 
    Because $|\Gamma|$ is finite, there exists $m_0\in\ZZ_{>0}$ such that $m_0v_\gamma\in N$ for any $\gamma \in\Gamma$. 
    Therefore, we can check that $\Delta''_{m_0}$ is specifically reduced.
  \end{itemize}
\end{proof}
In general, finding an explicit description of a stratification is complicated.  
This is also different from toric geometry. 
The following proposition indicates that, under favorable conditions on the cone, computing the defining ideals of such varieties is relatively straightforward.
\begin{proposition}\label{prop: easy-computation}
    Let $N$ and $N_0$ be lattices of finite rank, let $\pr_1\colon N_0\oplus N\rightarrow N_0$ be the first projection, let $\pr_2\colon N_0\oplus N\rightarrow N$ be the second projection, let $\sigma$ be a strongly convex rational polyhedral cone in $(N_0\oplus N)_\RR$, let $\sigma_0$ denote the rational polyhedral cone $(\pr_1)_\RR(\sigma)$ in $(N_0)_\RR$, let $Z$ be a closed subscheme of $T_N$, and let $Z' = T_{N_0\oplus N}\times_{T_N} Z$ denote the closed subscheme $T_{N_0\oplus N}$. 
    We assume that $(\pr_2)_\RR(\sigma) = \{0\}$. 
    By this assumption, $\sigma_0$ is strongly convex. 
    Then the following statements hold:
    \begin{itemize}
        \item[(a)] By the assumption, there exists an isomorphism $X(\sigma)\rightarrow X(\sigma_0)\times T_N$. 
        Then on the isomorphism above, $\overline{Z'^{X(\sigma)}}$ and $X(\sigma_0)\times Z$ are isomorphic. 
        \item[(b)] By the assumption, there exists an isomorphism $O_\sigma\rightarrow O_{\sigma_0}\times T_N$. 
        Then on the isomorphism above, $\overline{Z'^{X(\sigma)}}\cap O_{\sigma}$ and $O_{\sigma_0}\times Z$ are isomorphic.
    \end{itemize}
\end{proposition}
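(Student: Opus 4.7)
The plan is to reduce both statements to an elementary computation with semigroup algebras; the hypothesis $(\pr_2)_\RR(\sigma) = \{0\}$, which says $\sigma$ lies inside $(N_0)_\RR \times \{0\}$, makes the dual cone split cleanly.

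Let $M_0$ and $M$ denote the dual lattices of $N_0$ and $N$ respectively, and write $I \subset k[M]$ for the ideal defining $Z \subset T_N$. First I would compute
\[
  \sigma^\vee = \sigma_0^\vee \oplus M_\RR, \qquad \sigma^\perp = \sigma_0^\perp \oplus M_\RR,
\]
which follows directly from the assumption $\sigma \subset (N_0)_\RR \times \{0\}$. Intersecting with the lattice gives $\sigma^\vee \cap (M_0 \oplus M) = (\sigma_0^\vee \cap M_0) \oplus M$ and similarly for $\sigma^\perp$. Taking $\Spec\, k[-]$ produces the canonical isomorphisms $X(\sigma) \cong X(\sigma_0) \times T_N$ and $O_\sigma \cong O_{\sigma_0} \times T_N$ asserted in (a) and (b).

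For (a), by the definition of $Z' = T_{N_0 \oplus N} \times_{T_N} Z$, its defining ideal in $k[M_0 \oplus M] = k[M_0] \otimes_k k[M]$ is $k[M_0] \otimes_k I$. Setting $A = k[\sigma_0^\vee \cap M_0]$, the ideal of $\overline{Z'^{X(\sigma)}}$ in $A \otimes_k k[M]$ is the contraction
\[
  \bigl(k[M_0] \otimes_k I\bigr) \cap \bigl(A \otimes_k k[M]\bigr).
\]
Using the monomial $k$-basis $\{\chi^{m_0}\}_{m_0 \in M_0}$ of $k[M_0]$ over its subset $\{\chi^{m_0}\}_{m_0 \in \sigma_0^\vee \cap M_0}$, every element of $k[M_0] \otimes_k I$ has a unique expansion $\sum_{m_0 \in M_0} \chi^{m_0} \otimes f_{m_0}$ with $f_{m_0} \in I$; lying in $A \otimes_k k[M]$ forces $f_{m_0} = 0$ for $m_0 \notin \sigma_0^\vee \cap M_0$. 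Hence the contraction is $A \otimes_k I$, which is precisely the ideal of $X(\sigma_0) \times Z$ in $X(\sigma_0) \times T_N$. This identifies $\overline{Z'^{X(\sigma)}}$ with $X(\sigma_0) \times Z$ under the isomorphism of (a).

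For (b), under the product decomposition $X(\sigma) \cong X(\sigma_0) \times T_N$, the torus orbit $O_\sigma$ becomes the closed subscheme $O_{\sigma_0} \times T_N$, and by (a) the closure $\overline{Z'^{X(\sigma)}}$ becomes $X(\sigma_0) \times Z$. The scheme-theoretic intersection factors as a product of intersections along the two factors:
\[
  \bigl(X(\sigma_0) \times Z\bigr) \times_{X(\sigma_0) \times T_N} \bigl(O_{\sigma_0} \times T_N\bigr) \;\cong\; O_{\sigma_0} \times Z,
\]
which is the desired identification. The computation is entirely routine; the only point requiring care is the monomial-basis argument that the contraction of $k[M_0] \otimes_k I$ to $A \otimes_k k[M]$ equals $A \otimes_k I$, and that is the step I would present most carefully.
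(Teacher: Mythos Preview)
Your proof is correct. Both arguments reduce (a) to showing that the scheme-theoretic closure of $T_{N_0}\times Z$ inside $X(\sigma_0)\times T_N$ is $X(\sigma_0)\times Z$, and then obtain (b) by intersecting with the orbit. The difference is in how that closure is computed. You work directly with semigroup algebras: using the monomial basis $\{\chi^{m_0}\}_{m_0\in M_0}$ to show that the contraction of $k[M_0]\otimes_k I$ to $k[\sigma_0^\vee\cap M_0]\otimes_k k[M]$ is $k[\sigma_0^\vee\cap M_0]\otimes_k I$. The paper instead invokes its general ``flat-closure lemma'' (Lemma~\ref{lem: flat-closure lemma}): since the projection $X(\sigma_0)\times Z\to X(\sigma_0)$ is flat and the open immersion $T_{N_0}\hookrightarrow X(\sigma_0)$ has dense image, the closure of the pullback $T_{N_0}\times Z$ must be all of $X(\sigma_0)\times Z$. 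Your argument is more elementary and self-contained, exploiting the explicit toric/monomial structure; the paper's is more conceptual and reusable, but depends on a lemma proved elsewhere. In effect your monomial-basis step is a hands-on verification of that flatness lemma in this particular case.
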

\begin{proof}
  We prove the statements from (a) to (b). 
  \begin{itemize}
    \item[(a)] 
    We remark that $Z' = T_{N_0}\times Z$ on the isomorphism. 
    Moreover, there exists the following Cartesian diagram:
    \begin{equation*}
      \begin{tikzcd}
        T_{N_0}\times Z\ar[r]\ar[d]& T_{N_0}\times T_N\ar[r, "\pr_1"]\ar[d]& T_{N_0}\ar[d]\\
        X(\sigma_0)\times Z\ar[r]& X(\sigma_0)\times T_N\ar[r, "\pr_1"]& X(\sigma_0),
      \end{tikzcd}
    \end{equation*}
    where the left horizontal morphisms are closed immersions, and vertical morphisms are open immersions. 
    In particular, the composition of the lower morphisms is flat, and hence, the scheme theoretic closure of $T_{N_0}\times Z$ in $X(\sigma_0)\times T_N$ is $X(\sigma_0)\times Z$ by Lemma \ref{lem: flat-closure lemma}. 
    \item[(b)] It is obvious by (a).  
  \end{itemize}
\end{proof}
\section{Application: Rationality of hypersurfaces in Gr(2, n)}
In this section, we apply the results in the previous section for the stable rationality of a very general hypersurface in $\Gr_\C(2, n)$. 
    
    In this section, we use the following notation. 
    \begin{itemize}
        \item Let $n$ be a positive integer greater than 3. 
        Let $k$ be an uncountable algebraically closed field of $\charac(k) = 0$. 
        \item Let $I$ denote the following set:
        \[
            I = \{(i, j)\in\ZZ^2\mid 0\leq i\leq j\leq n-3\}.
        \]
        \item Let $\{e^{i,j}\}_{(i, j)\in I}$ denote a canonical basis of $\ZZ^{|I|}$ and let $\mathbf{1}\in \ZZ^{|I|}$ be $\sum_{(i, j)\in I}e^{i,j}$. 
        \item Let $N$ denote $\ZZ^{|I|}/\ZZ\mathbf{1}$, let $\Pi$ denote the quotient morphism $\ZZ^{|I|}\rightarrow N$, and let $e_{i,j}\in N$ denote $\Pi(e^{i,j})$ for $(i, j)\in I$. 
        \item Let $(\ZZ^{|I|})^\vee$ denote the dual lattice of $\ZZ^{|I|}$ and let $\{\omega_{i, j}\}_{(i, j)\in I}$ denote the dual basis of $\{e^{i,j}\}_{(i, j)\in I}$. 
        \item Let $M$ denote a sublattice of $(\ZZ^{|I|})^\vee$ as follows: 
        \[
            M = \{\sum_{(i, j)\in I}a_{i, j}\omega_{i, j}\in(\ZZ^{|I|})^\vee\mid \sum_{(i, j)\in I} a_{i, j} = 0\}.
        \]
        We remark that we can regard $M$ as the dual lattice of $N$. 
        \item Let $N^\dagger$ be a lattice of rank $n - 1$ and  let $\{e^\dagger_j\}_{-1\leq j\leq n-3}$ be a basis of $N^\dagger$. 
        Let $M^\dagger$ be the dual lattice of $N^\dagger$ and let $\{\eta_j\}_{-1\leq j\leq n-3}$ be the dual basis of $\{e^\dagger_j\}_{-1\leq j\leq n-3}$. 
        \item Let $\{Y_{i}\}_{0\leq i\leq n-3}$ be a homogeneous coordinate function of $\PP^{n-3}_k$. 
        
        Let $L_{i, j}\in\Gamma(\PP^{n-3}_k, \OO_{\PP^{n-3}_k}(1))$ be homogeneous functions as follows for $(i, j)\in I$: 
        \[
            L_{i,j} = \left\{
                \begin{array}{ll}
                    Y_i, & i = j,\\
                    Y_i - Y_j, & i < j.
                \end{array}
            \right.
        \]
        \item Let $\iota$ denote a closed immersion from $\PP^{n-3}_k$ to $\PP^{|I| - 1}_k$ defined by $\{L_{i, j}\}_{(i, j)\in I}$. 
        \item Let $\{X_{i,j}\}_{(i, j)\in I}$ be a homogeneous coordinate function of $\PP^{|I| - 1}_k$.
        \item Let $\mathcal{B}$ denote $\{L_{i, j}\}_{(i, j)\in I}$. 
        We remark that $\mathcal{B}$ generates $\Gamma(\PP^{n-3}_k, \OO(1))$. 
        Let $\mathcal{V}$, $\mathcal{C}$, $\Delta(\mathcal{B})$, $Z$ be data defined as those in section 3.2. 
        \item Let $H\in k[y_1, \ldots, y_{n-3}]$ be a polynomial defined as follows: 
        \[
            H = \prod_{1\leq j\leq n-3}y_j \prod_{1\leq j\leq n-3}(1- y_j)\prod_{1\leq i<j\leq n-3}(y_j - y_i).
        \]
        \item Let $o$ denote a ring morphism from $k[y_1, \ldots, y_{n-3}]_H$ to $k[Z]$ defined as $o(y_j) = \frac{Y_j}{Y_0}$ for any $0\leq j\leq n-3$. 
        We can check that $o$ is isomorphic. 
        From now, we regard $k[Z]$ as $k[y_1, \ldots, y_{n-3}]_H$. 
        We remark that for any $(i, j)\in I$, the following equation holds:
        \[
            \iota^*(\frac{X_{i, j}}{X_{0. 0}}) = \left\{
                \begin{array}{ll}
                    y_i, & i = j,\\
                    y_i - y_j, & i < j,
                \end{array}
            \right.
        \]
        where $y_0 = 1$. 
        \item Let $e_t$ be a generator of $\ZZ$ and $\delta$ be the dual basis of $\ZZ^\vee$. 
        \item Let $\frac{X_{i, j}}{X_{l, m}}\in k[M]$ denote the torus invariant mononomial associated with $\omega_{i, j} - \omega_{l, m}\in M$ for $(i, j), (l, m)\in I$, let $x_j\in k[M^\dagger]$ denote the torus invariant mononomial associated with $\eta_j\in M^\dagger$, and let $t\in k[\ZZ^\vee]$ be the torus invariant mononomial associated with $\delta\in\ZZ^\vee$. 
        \item Let $N'$ denote $N\oplus N^\dagger$ and let $\pi$ denote the first projection $N'\rightarrow N$. 
        \item Let $Z'$ denote the closed subscheme $Z\times_{T_N}T_{N'}$ of $T_{N'}$, let $\Delta$ denote the fan in $N'_\RR$ defined as $\Delta = \{\pi_\RR^{-1}(\sigma)\mid \sigma\in\Delta(\mathcal{B})\}$, and let $\iota'$ denote the closed immersion $Z'\hookrightarrow T_{N'}$. 
        \item Let $Z''$ denote $Z'\times \mathbb{G}^1_{m, k}$ and let $\iota''$ denote the closed immersion $Z''\hookrightarrow T_{N'\oplus\ZZ}$ defined as the product of $\iota'$ and $\id_{\mathbb{G}^1_{m, k}}$. 
        \item Let $J$ denote $\{(i, j)\in\ZZ^2\mid 0\leq i<j\leq n-1\}$. 
        
        \item Let $M_k(n, 2)$ denote a set of all $n\times 2$-matrices over $k$. 
        A set $M_k(n, 2)$ has a natural $\mathrm{GL}_k(2)$-action, and we can identify with $M_k(n, 2)/\mathrm{GL}_k(2)$ and $\Gr_k(2, n)(k)$. 
        \item For $A = (a_{i_1, i_2})_{0\leq i_1\leq n-1, 0\leq i_2\leq 1}\in M_k(n, 2)$, and integers $(i, j)\in J$, let $d_{i, j}(A)$ denote $a_{i, 0}a_{j, 1} - a_{i, 1}a_{j, 0}$. 
        \item Let $\{W_{i, j}\}_{(i, j)\in J}$ be a homogeneous coordinate function of $\PP^{|J|-1}_k$. 
        \item Let $\mathrm{Pl}$ denote the following Pl\"{u}cker embedding:
            \begin{align*}
                &\mathrm{Pl}\colon \Gr_k(2, n)\ni [A = (a_{i_1, i_2})_{0\leq i_1\leq n-1, 0\leq i_2\leq 1}]\mapsto[d_{i, j}(A)]\in \PP^{|J| - 1}_k.
            \end{align*}
        \item Let $U^{i, j}$ denote the affine open subset of $\PP^{|J|-1}_k$ defined by $W_{i, j}\neq 0$ for $(i, j)\in J$, let $U$ denote open subset $\Gr_k(2, n)\cap U^{0, 1}$ of $\Gr_k(2, n)$, and let $\Gr^\circ_k(2, n)$ denote  open subset $\Gr_k(2, n)\cap \bigcap_{(i, j)\in J}U^{i, j}$ of $\Gr_k(2, n)$. 
        We remark that we can identify with $U$ and $\A^{2(n-2)}_k$ as follows: 
        \begin{multline*}
            M_k(n, 2)/\mathrm{GL}_k(2)\supset U(k) \ni
            \biggl[\begin{pmatrix}
                1 & 0 \\
                0 & 1 \\
                u_0 & v_0 \\
                u_1 & v_1 \\
                \vdots & \vdots\\
                u_{n-3} & v_{n-3} \\
            \end{pmatrix}
            \biggr]\\\longleftrightarrow
            (u_0, v_0, u_1, v_1, \ldots, u_{n-3}, v_{n-3})\in \A^{2(n-2)}_k(k).
        \end{multline*}  
        Let $\xi$ denote the closed immersion $\A^{2(n-2)}_k\hookrightarrow U^{0, 1}$ defined by the identification above with $U$ and $\A^{2(n-2)}_k$. 
        We regard $\{u_0, v_0, \ldots, u_{n-3}, v_{n-3}\}$ as coordinate functions of $\A^{2(n-2)}_k$. 
        Let $\xi^*$ denote a surjective ring morphism induced by $\xi$ from $k[\{\frac{W_{i, j}}{W_{0, 1}}\}_{(i, j)\in J}]$ to $k[u_0, v_0, \ldots, u_{n-3}, v_{n-3}]$. 
        \item We define $\{f_{i, j}\}_{(i, j)\in J}\subset k[u_0, v_0, \ldots, u_{n-3}, v_{n-3}]$ as follows: 
        \[
            f_{i,j} = \left\{
                \begin{array}{ll}
                    1, & (i,j) = (0, 1),\\
                    v_{j - 2}, & i = 0, j > 1,\\
                    -u_{j - 2}, & i = 1,\\
                    u_{i-2}v_{j-2} - u_{j-2}v_{i-2}, & i > 1.
                \end{array}
            \right.
        \]
        We remark that $\xi^*(\frac{W_{i, j}}{W_{0, 1}}) = f_{i, j}$ for any $(i, j)\in J$ by the definition of $\mathrm{Pl}$. 
        \item Let $F\in k[u_0, v_0, \ldots, u_{n-3}, v_{n-3}]$ be a polynomial defined as $F = \prod_{(i, j)\in J}f_{i, j}$.
        We remark that the inclusion morphism $k[u_0, v_0, \ldots, u_{n-3}, v_{n-3}]\rightarrow k[u_0, v_0, \ldots, u_{n-3}, v_{n-3}]_F$ induces an open immersion $\Gr^\circ_k(2, n)\hookrightarrow U$. 
        \item Let $\zeta$ denote a ring morphism of $k$-algebras $k[Z]\otimes_k k[M^\dagger]\rightarrow k[u_0, v_0, \ldots, u_{n-3}, v_{n-3}]_F$ defined as follows:
        \begin{align*}
            \zeta(x_{-1}) &= v_0u^{-1}_0,\\
            \zeta(x_j) &= u_j, &0\leq j\leq n-3,\\
            \zeta(y_j) &= u_0v^{-1}_0u^{-1}_jv_j, &1\leq j\leq n-3.
        \end{align*}
        We can check that $\zeta$ is well-defined and isomorphic. 
        \item For $(i, j)\in J$, let denote $\varpi_{i, j}\in M' = M\oplus M^\dagger$ as follows: 
                \[
                \varpi_{i, j} = \left\{
                    \begin{array}{ll}
                        0, & (i, j) = (0, 1),\\
                        \eta_{-1} + \eta_0, & (i, j) = (0, 2),\\
                        \eta_{-1} + \eta_{j - 2}  + (\omega_{j-2, j-2} - \omega_{0, 0}), & i = 0, j > 2,\\
                        \eta_{j - 2}, & i = 1,\\
                        \eta_{-1} + \eta_{i - 2} + \eta_{j - 2} + (\omega_{i - 2, j-2} - \omega_{0, 0}), & i > 1.
                    \end{array}
                \right.
             \]
        \item For $(i, j)\in J$, let $s_{i, j}\in k[Z]\otimes_k k[M^\dagger]$ be elements as follows: 
        \[
            s_{i,j} = \left\{
                \begin{array}{ll}
                    1, & (i,j) = (0, 1),\\
                    x_{-1}x_0, & (i, j) = (0, 2),\\
                    x_{-1}x_{j-2}y_{j-2}, & i = 0, j > 2,\\
                    -x_{j-2}, & i = 1,\\
                    x_{-1}x_{0}x_{j - 2}(y_{j - 2}- 1), &i = 2,\\
                    x_{-1}x_{i - 2}x_{j - 2}(y_{j - 2}- y_{i - 2}), &i > 2.\\
                \end{array}
            \right.
        \]
        We can check that $\zeta(s_{i, j}) = f_{i, j}$, $s_{i, j} = \pm{\iota'}^*(\chi^{\varpi_{i, j}})$, and $s_{i, j}$ is a unit of $k[Z]\otimes_k k[M^\dagger]$ for any $(i, j)\in J$. 
        \item Let $d\geq 2$ be a positive integer. 
        Let $S_{J, d}$ denote the following set: 
        \[
            S_{J, d} = \{\alpha = (\alpha_{i, j})\in\ZZ^{|J|}_{\geq 0}\mid \sum_{(i, j)\in J}\alpha_{i, j} = d\}.
        \]
        \item For $\alpha\in S_{J, d}$, let $s_\alpha$ denote $\prod_{(i, j)\in J} s^{\alpha_{i, j}}_{i, j}\in k[Z]\otimes_k k[M^\dagger]$ and 
        let $\varpi_\alpha\in M\oplus M^\dagger$ denote $\sum_{(i, j)\in J} \alpha_{i, j}\varpi_{i, j}$. 
        We remark that $s_\alpha = \pm\iota'^*(\chi^{\varpi_\alpha})$ for any $\alpha\in S_{J, d}$. 
        Let $\mathrm{sign}(\alpha)$ denote a sign such that $s_\alpha = \mathrm{sign}(\alpha)\iota'^*(\chi^{\varpi_{\alpha}})$ for $\alpha\in S_{J, d}$. 
        \item Let $X$ be a smooth closed subvariety of $\PP^{|J| - 1}_k$. 
        \item For $\alpha\in S_{J, d}$, let $\mathbb{W}^\alpha$ denote $\prod_{(i, j)\in J}W_{i, j}^{\alpha_{i, j}}$. 
        \item Let $\{t_\alpha\}_{\alpha\in S_{J, d}}$ denote coordinate functions of $\A^{|S_{J, d}|}_k$, let $\mathscr{H}^n_d$ denote the closed subvariety of $\PP^{|J| - 1}_k\times \A^{|S_{J, d}|}_k$ defined by $\sum_{\alpha\in S_{J, d}}\mathrm{sign}(\alpha)t_\alpha\mathbb{W}^\alpha = 0$, and let $\mathscr{X}_d$ denote the closed subscheme $X\times \A^{|S_{J, d}|}_k\cap \mathscr{H}^n_d$ of $\PP^{|J| - 1}_k\times \A^{|S_{J, d}|}_k$. 
        \item Let $\vartheta$ be the composition of the closed immersion $\mathscr{X}_d\hookrightarrow \PP^{|J| - 1}_k\times \A^{|S_{J, d}|}_k$ and the second projection $\PP^{|J| - 1}_k\times \A^{|S_{J, d}|}_k\rightarrow \A^{|S_{J, d}|}_k$. 
        \item Let $u\colon S_{J, d}\rightarrow M\oplus M^\dagger$ be a map such that $u(\alpha) = \varpi_\alpha$ for $\alpha\in S_{J, d}$. 
        \item Let $J_0, J_1$, and $J_2$ denote the following subset of $J$: 
        \begin{align*}
            J_0 &= \{(0, 1)\},\\
            J_1 &= \{(i, j)\in J\mid i < 2, j > 1\},\\
            J_2 &= \{(i, j)\in J\mid i > 1\}.
        \end{align*}
        We remark that $J = \coprod_{0\leq i\leq 2} J_i$. 
        \item For $\alpha\in S_{J, d}$, we define integers $c_0(\alpha), c_1(\alpha)$, and $c_2(\alpha)$ as follows:
            \begin{align*}
                c_0(\alpha) &= \alpha_{0, 1},\\
                c_1(\alpha) &= \sum_{(i, j)\in J_1}\alpha_{i, j},\\
                c_2(\alpha) &= \sum_{(i, j)\in J_2}\alpha_{i, j}.\\
            \end{align*}
        \item Let $\kappa$ denote a map $S_{J, d}\rightarrow \ZZ$ defined as follows: 
             \[
                \kappa(\alpha) = \left\{
                    \begin{array}{ll}
                        0, & c_1(\alpha) = d,\\
                        2(d - c_1(\alpha)) - 1, & c_1(\alpha) < d.\\
                    \end{array}
                \right.
             \]
        \item For non-negative integers $d_0, d_1$, and $d_2$, let $S_{d_0, d_1, d_2}$ denote the following subset of $S_{J, d}$: 
            \[
                S_{d_0, d_1, d_2} = \{\alpha\in S_{J, d}\mid (c_0(\alpha), c_1(\alpha), c_2(\alpha)) = (d_0, d_1, d_2)\}.
            \]
        \item Let $\Kt$ denote the field of the puise\"{u}x functions over $\C$. 
    \end{itemize}
    First, we prove the following proposition. 
    By this proposition, to show the nonstable rationality of a very general hypersurface over $k$ is enough to show so over $\Kt$.

    \begin{proposition}\label{prop: Bertini-stably rational}
        With the notation above, the following statements follow:
        \begin{enumerate}
            \item[(a)] There exists a non-empty open subset $U_{X, d}$ of $\A^{|S_{J, d}|}_k$ such that the restriction 
            
            $\vartheta|_{\vartheta^{-1}(U_{X, d})}\colon \vartheta^{-1}(U_{X, d})\rightarrow U_{X, d}$ is smooth and projective. 
            \item[(b)] Let $K/k$ be a field extension. 
            We assume that $K$ is algebraically closed. 
            Let $x \in (U_{X, d})_K(K)$. 
            We assume that $(\mathscr{X}_{d})_{x}$ is not stably rational over $K$. 
            Then a very general hypersurface of degree $d$ in $X$ is not stably rational over $k$. 
        \end{enumerate}
    \end{proposition}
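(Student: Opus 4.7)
The plan is to prove (a) by applying Bertini's theorem to the smooth subvariety $X$, and (b) by invoking the specialization of stable rationality in smooth projective families together with the uncountability of $k$.

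For (a), I would first observe that $\vartheta$ is projective, as it factors through the closed immersion $\mathscr{X}_d\hookrightarrow X\times\A^{|S_{J,d}|}_k$ followed by the second projection, which is projective since $X\subset\PP^{|J|-1}_k$. Next, as $t = (t_\alpha)_{\alpha\in S_{J,d}}$ ranges over $\A^{|S_{J,d}|}_k(k)$, the polynomial $\sum_{\alpha}\mathrm{sign}(\alpha)t_\alpha\mathbb{W}^\alpha$ sweeps out the full space of degree-$d$ forms on $\PP^{|J|-1}_k$, since the signs $\mathrm{sign}(\alpha)\in\{\pm 1\}$ are simply absorbed into the free parameters $t_\alpha$. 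The linear system $|\OO_{\PP^{|J|-1}_k}(d)|$ being very ample, Bertini's theorem applied to the smooth variety $X$ guarantees that the general member cuts out a smooth hypersurface in $X$. Combined with the openness of the smooth locus of a projective morphism, this yields the desired non-empty open $U_{X,d}\subset\A^{|S_{J,d}|}_k$.

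For (b), I would invoke the specialization of stable rationality in smooth proper families (due to Nicaise--Shinder in characteristic $0$), which states that in a smooth proper family, stable rationality of the geometric generic fiber implies stable rationality of every geometric fiber. First, since stable birational equivalence is preserved under base change between algebraically closed fields of characteristic $0$, the hypothesis that $(\mathscr{X}_d)_x$ is not stably rational over $K$ implies that the fiber over $\overline{\kappa(\bar{x})}$ is not stably rational, where $\bar{x}\in U_{X,d}$ denotes the scheme-theoretic image of $x$. Choosing a smooth curve $C\subset U_{X,d}$ passing through $\bar{x}$ and dominating $U_{X,d}$, the pullback of $\vartheta$ to $C$ is a smooth projective family whose special fiber at $\bar{x}$ is not stably rational; applying the specialization theorem contrapositively, the geometric generic fiber of $\vartheta$ is not stably rational either. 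Then the specialization theorem applied to the whole family $\vartheta|_{\vartheta^{-1}(U_{X,d})}$ shows that the locus in $U_{X,d}(k)$ of $s$ with $(\mathscr{X}_d)_s$ stably rational is contained in a countable union of proper closed subvarieties, and since $k$ is uncountable algebraically closed, its complement is a very general set.

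The main obstacle is the specialization step in (b): one needs to carefully invoke the Nicaise--Shinder specialization theorem so that the existence of a single non-stably-rational geometric fiber, possibly over an extension $K/k$, forces non-stable rationality of the geometric generic fiber of $\vartheta$ over $U_{X,d}$. The rest of the argument is bookkeeping together with Bertini.
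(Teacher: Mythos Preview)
Your argument for (a) is correct and matches the paper's one-line proof, which simply invokes Bertini's theorem.

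For (b), your overall strategy rests on the same specialization input as the paper, but your route contains a genuine error and is in any case more roundabout than needed. The problematic step is ``choosing a smooth curve $C\subset U_{X,d}$ passing through $\bar{x}$ and dominating $U_{X,d}$'': a curve cannot dominate $U_{X,d}$ unless $\dim U_{X,d}\le 1$, which is never the case here. What one could do instead is an induction on the codimension of $\overline{\{\bar{x}\}}$, at each step passing from a point to the generic point of a curve through it whose closure has strictly larger dimension; this eventually reaches the generic point of $U_{X,d}$. As written, however, the single-curve argument does not establish that the geometric generic fibre of $\vartheta$ is non-stably-rational.

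More to the point, this entire detour through the geometric generic fibre is unnecessary. The paper's proof proceeds directly: let $A\subset U_{X,d}$ be the set of points whose geometric fibre is stably rational. Since stable rationality of smooth proper varieties is invariant under extensions of algebraically closed fields in characteristic $0$, the hypothesis on $(\mathscr{X}_d)_x$ already gives $h(x)\notin A$, where $h\colon (U_{X,d})_K\to U_{X,d}$ is the natural map. The result \cite[Cor.~4.1.2]{NO21} (equivalently, the Nicaise--Shinder/Kontsevich--Tschinkel specialization theorem) says that $A$ is a countable union of closed subsets of $U_{X,d}$; the single point $h(x)\notin A$ forces each of these closed sets to be a \emph{proper} closed subset, and uncountability of $k$ then gives the very general statement. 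No passage through the generic point is required.
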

    \begin{proof}
         \item[(a)] By Bertini's theorem, the statement holds. 
        \item[(b)] For any $y\in U_{X, d}$, we fix a geometric point $\overline{y}$ of $y$. 
        Let $A$ denote the following subset of $U_{X, d}$. 
        \[
            A = \{y\in U_{X, d}\mid (\mathscr{X}_d)_{\overline{y}} \mathrm{\ is\ stably \ rational.}\}.
        \]
        Let $h\colon (U_{X, d})_K\rightarrow U_{X, d}$ be a canonical map. 
        Then $h(x)\notin A$  because $K$ is algebraically closed. 
        Thus, by \cite[Cor.4.1.2]{NO21}, $A$ is countable unions of strict closed subsets of $U_{X, d}$. 
        Because $k$ is an uncountable field, a very general hypersurface of degree $d$ in $X$ is not stably rational over $k$.         
    \end{proof}
    Pl\"{u}cker embedding is not a sch\"{o}n compactification of $\Gr^\circ_\C(2, n)$. 
    Instead, we substitute a compactification of $Z\times T_{N^\dagger}\cong \Gr^\circ_\C(2, n)$.  
    We describe the relationship of equations between hypersurfaces in $\Gr_\C(2, n)$ and hypersurfaces in $Z\times T_{N^\dagger}$ in the following proposition.

    \begin{proposition}\label{prop:grassman-mock}
        Let $D = (d_\alpha)_{\alpha\in S_{J, d}}\in \Kt^{|S_{J, d}|}$ be a vector, let $H_D$ be a hypersurface of degree $d$ defined by $\sum_{\alpha\in S_{J, d}}\mathrm{sign}(\alpha)d_\alpha\mathbb{W}^\alpha = 0$, let $X_D\subset \PP^{|J| - 1}_\Kt$ be the scheme theoretic intersection of $\mathrm{Pl}(\Gr_\Kt(2, n))\subset \PP^{|J| - 1}_\Kt$ and $H_D$, and let $g_D\in \Kt[Z]\otimes_\Kt \Kt[M^\dagger]$ be the following polynomial: 
        \[
            g_D = \sum_{\alpha\in S_{J, d}}d_\alpha \iota'^*(\chi^{\varpi_\alpha}).
        \]
        Let $Y_D$ be a closed subscheme of $(Z\times T_{N^\dagger})\times_{\Spec(k)}\Spec(\Kt)$ defined by $g_D = 0$.
        Then for a general $D\in \Kt^{|S_{J, d}|}$, $X_D$ and $Y_D$ are irreducible and birational. 
        \end{proposition}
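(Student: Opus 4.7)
The plan is to exhibit $Y_D$ as (the pullback under $\zeta_\Kt$ of) the restriction of $X_D$ to the open subscheme $\Gr^\circ_\Kt(2,n)\subset \Gr_\Kt(2,n)$, and then to show that for general $D$ both schemes are irreducible and that $Y_D$ sits as a dense open subscheme of $X_D$ via this identification.

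First I would verify the defining-equation dictionary. By construction $\zeta(s_{i,j})=f_{i,j}=\xi^*(W_{i,j}/W_{0,1})$ for every $(i,j)\in J$, and $s_{i,j}=\mathrm{sign}((i,j))\,\iota'^*(\chi^{\varpi_{i,j}})$ with $s_{i,j}$ a unit in $k[Z]\otimes_k k[M^\dagger]$. Therefore, for each $\alpha\in S_{J,d}$, the dehomogenized monomial $\mathbb{W}^\alpha/W_{0,1}^d$ pulls back under $\zeta$ to $s_\alpha=\mathrm{sign}(\alpha)\,\iota'^*(\chi^{\varpi_\alpha})$. Multiplying by the coefficient $\mathrm{sign}(\alpha)\,d_\alpha$ and summing over $\alpha$, the dehomogenization of $\sum_\alpha \mathrm{sign}(\alpha)\,d_\alpha\,\mathbb{W}^\alpha$ pulls back to $g_D=\sum_\alpha d_\alpha\,\iota'^*(\chi^{\varpi_\alpha})$. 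Hence $\zeta_\Kt$ identifies $Y_D$ with the scheme-theoretic intersection $X_D\cap \Gr^\circ_\Kt(2,n)$ sitting inside $\Gr^\circ_\Kt(2,n)$ as an open subscheme of $X_D$.

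Next I would argue irreducibility of $X_D$. Since $\Gr_\C(2,n)$ is smooth, projectively normal under the Plücker embedding, and of dimension $2(n-2)\geq 4$, the restricted linear system $|\mathcal{O}_{\Gr}(d)|$ is the full degree-$d$ complete linear system and is very ample. By Bertini's theorem, a general member $X_D\subset \Gr_\Kt(2,n)$ is smooth; because $\dim X_D\geq 3>0$ and $\Gr_\Kt(2,n)$ has $H^0(\Gr,\mathcal O_{\Gr})=\Kt$, Lefschetz-type connectedness (or the fact that a smooth ample divisor on a smooth projective connected variety of dimension $\geq 2$ is connected) yields that $X_D$ is connected, hence irreducible.

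Finally I would show that $Y_D$ is a dense open subscheme of $X_D$, from which both the irreducibility of $Y_D$ and the birationality follow. The complement $\Gr_\C(2,n)\setminus \Gr^\circ_\C(2,n)$ is the union of the $|J|$ Schubert divisors $\{W_{i,j}=0\}\cap \Gr_\C(2,n)$. For general $D$ the irreducible hypersurface $X_D$ is not contained in any of these divisors, e.g. by noting that a general $X_D$ meets the open torus orbit $\Gr^\circ_\Kt$ (which can be checked by exhibiting a single $D$ for which $g_D$ is not identically zero on $Z\times T_{N^\dagger}$). Thus $Y_D\subset X_D$ is a non-empty open subscheme, hence dense and irreducible, and $\zeta_\Kt$ restricts to an isomorphism between $Y_D$ and a dense open of $X_D$, establishing birationality. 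The main obstacle I expect is purely bookkeeping: tracking the sign conventions so that the coefficients $\mathrm{sign}(\alpha)$ in the definition of $H_D$ cancel against the signs in $s_\alpha=\mathrm{sign}(\alpha)\,\iota'^*(\chi^{\varpi_\alpha})$ to yield precisely $g_D$ (rather than a twisted polynomial), together with verifying the base-point-freeness needed to avoid fixed components when the map $\alpha\mapsto \varpi_\alpha$ fails to be injective due to the Plücker relations.
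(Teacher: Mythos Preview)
Your proposal is correct and follows essentially the same approach as the paper: define the composition $\Theta$ through $\zeta^{-1}$ and the Pl\"ucker embedding, check that $\Theta^*(\mathbb{W}^\alpha/W_{0,1}^d) = s_\alpha = \mathrm{sign}(\alpha)\iota'^*(\chi^{\varpi_\alpha})$ so that the signs cancel to yield $g_D$, and conclude birationality since $\Gr^\circ_\Kt(2,n)$ is open in $\Gr_\Kt(2,n)$. Your treatment is in fact more detailed than the paper's, which simply asserts irreducibility for general $D$ without spelling out the Bertini/Lefschetz argument or the non-emptiness of $Y_D$.
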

        \begin{proof}  
           Let $\Theta$ denote the following composition of morphisms: 
           \[
                (Z\times T_{N^\dagger})\times_{\Spec(k)}\Spec(\Kt)\xrightarrow{\sim} \Spec(\Kt[u_0, v_0, \ldots, u_{n-3}, v_{n-3}]_F)\xrightarrow{\sim}\Gr^\circ_\Kt(2, n)\xrightarrow{\mathrm{Pl}}\PP^{|J| - 1}_\Kt,
           \]
           where the first morphism is induced by $\zeta^{-1}$. 
           We can check that $\Theta^*(\frac{W_{i, j}}{W_{0, 1}}) = s_{i, j}$ for any $(i, j)\in J$ be the definitions of $\xi^*$ and $\zeta$. 
           In particular, $\Theta^*(\frac{\mathbb{W}^\alpha}{W^d_{0, 1}}) = s_\alpha = \mathrm{sign}(\alpha)\iota'^*(\chi^{\varpi_\alpha})$ for any $\alpha\in S_{J, \alpha}$. 
           We recall that $\Gr^\circ_\Kt(2, n)$ is an open subscheme of $\Gr_\Kt(2, n)$. 
           Thus, for a general $D$, $X_D$ and $Y_D$ are irreducible and birational. 
        \end{proof} 
	Now, we proceed with the calculations using the discussions in Section 4 and Section 5. 
    The proof of the following proposition is quite complex, but it is an essential result in the proof of the main theorem:
	
    \begin{proposition}\label{prop: computation result} 
        Let $S$ denote $S_{J, d}$,  let $a = (a_\alpha)_{\alpha\in S}\in k^{|S|}$, let $l\in\ZZ_{>0}$, and let $F(a)$ be the following element in $k[Z'']$: 
        \[
            F(a) = \sum_{\alpha\in S}a_\alpha \iota''^*(\chi^{(u, l\kappa)(\alpha)}). 
        \]
        Let $H(a)$ be a hypersurface in $Z''$ defined by $F(a)$. 
        Then the following statements hold:
        \begin{enumerate}
            \item[(a)] The rational polyhedral convex fan $\Sigma(\Delta\times\Delta_!, (u, \kappa))$ is strongly convex. 
            \item[(b)] Let $\Sigma_l$ denote $\Sigma(\Delta\times\Delta_!, (u, l\kappa))$ and let $\Sigma_{l, +}$ denote the fan $\{\sigma\in \Sigma_l\mid \sigma\subset N'_\RR\times \RR_{\geq 0}\}$. 
            Then $(\Sigma_{l, +})_{\bdd}$ consists of the following 7 cones: 
            \begin{align*}
                \tau_0 &= \RR_{\geq 0}(e_t -2l\sum_{0\leq j\leq n-3}e^\dagger_j),\\
                \tau_1 &= \RR_{\geq 0}(e_t -l\sum_{0\leq j\leq n-3}e^\dagger_j),\\
                \tau_2 &= \RR_{\geq 0}(e_t +l\sum_{0\leq j\leq n-3}e^\dagger_j),\\
                \tau_3 &= \RR_{\geq 0}(e_t +2l\sum_{0\leq j\leq n-3}e^\dagger_j),\\
                \sigma_0 &= \tau_0 + \tau_1,\\                    \sigma_1 &= \tau_1 + \tau_2,\\
                \sigma_2 &= \tau_2 + \tau_3.
            \end{align*} 
            \item[(c)] The following equations hold: 
            \begin{align*}
                S^{\tau_0} &= \bigcup_{1\leq i\leq d}S_{0, d-i, i},\\
                S^{\tau_1} &= S_{0, d-1, 1} \cup S_{0, d, 0},\\
                S^{\tau_2} &= S_{0, d, 0} \cup S_{1, d-1, 0},\\
                S^{\tau_3} &= \bigcup_{1\leq i\leq d}S_{i, d-i, 0},\\
                S^{\sigma_0} &= S_{0, d-1, 1},\\
                S^{\sigma_1} &= S_{0, d, 0},\\
                S^{\sigma_2} &= S_{1, d-1, 0}.
            \end{align*}
            Moreover, these 7 cones are contained in $\Sigma(\Delta\times\Delta', (u, l\kappa), Z'')$.
            \item[(d)]
            A scheme $\overline{H(a)^{X(\Sigma_l)}}\cap O_{\sigma_1}$ is birational to a general hypersurface of degree $d$ in $\PP^{2n-5}_k$ for a general $(a_\alpha)_{\alpha\in S}\in k^{|S|}$. 
            \item[(e)] For a general $(a_\alpha)_{\alpha\in S}\in k^{|S|}$, both $\overline{H(a)^{X(\Sigma_l)}}\cap O_{\tau_1}$ and $\overline{H(a)^{X(\Sigma_l)}}\cap O_{\tau_2}$ are irreducible and rational. 
            \end{enumerate}
        \end{proposition}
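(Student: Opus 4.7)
My plan is to prove (a)--(c) by direct combinatorial computation with the explicit data of $\varpi_{i,j}$ and $\kappa$, and (d)--(e) by applying Proposition~\ref{prop: generic condition of flatness}(d) to express the orbit intersections as concrete polynomial equations. For (a), the fan $\Delta\times\Delta_!$ has lineality $N^\dagger_\RR\times\{0\}$, so strong convexity of the refinement $\Sigma(\Delta\times\Delta_!,(u,\kappa))$ reduces to verifying that the projection of $P((u,\kappa))$ to $M^\dagger_\RR$ is full-dimensional; this is immediate from the formulas for $\varpi_{i,j}$, whose $M^\dagger$-components span $M^\dagger_\RR$. For (b), a cone in $\Sigma_{l,+}$ is bounded iff every ray has strictly positive $e_t$-coordinate. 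Since the $M$-components of $\varpi_\alpha$ are translation-invariant along $N^\dagger_\RR$, the bounded rays all lie in the lineality slice $\{0\}_N\times N^\dagger_\RR\times\RR e_t$, and within this slice the polytope structure depends only on the triple $(c_0,c_1,c_2)$ together with $l\kappa(\alpha)$. A direct computation of the lower envelope of the lifted simplex singles out four extremal directions, giving the four rays $\tau_0,\tau_1,\tau_2,\tau_3$, with three $2$-dimensional cones $\sigma_0,\sigma_1,\sigma_2$ bridging consecutive pairs. For (c), compute $\langle v_k,(u,l\kappa)(\alpha)\rangle = \varepsilon_k l(c_1+2c_2)+l\kappa(\alpha)$, where $v_k=\varepsilon_k l\sum_{j=0}^{n-3} e^\dagger_j+e_t$ with $\varepsilon_k\in\{-2,-1,1,2\}$, using the identity $\langle\sum_j e^\dagger_j,\varpi_\alpha|_{M^\dagger}\rangle=c_1+2c_2$. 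Splitting by $c_1=d$ versus $c_1<d$ and minimizing identifies each $S^{\tau_k}$; the pairwise intersections $S^{\sigma_i}=S^{\tau_i}\cap S^{\tau_{i+1}}$ yield the claimed singletons. Membership in $\Sigma(\Delta\times\Delta_!,(u,l\kappa),Z'')$ then follows from Proposition~\ref{prop: general condition of fan}(d) upon checking $d_{Z''}(\sigma,l,u)\geq 2$ for each cone, which reduces to $|S^\sigma|\geq 2$.

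For (d), Proposition~\ref{prop: generic condition of flatness}(d) gives the defining equation of $\overline{H(a)^{X(\Sigma_l)}}\cap O_{\sigma_1}$ as $\sum_{\alpha\in S_{0,d,0}} a_\alpha (\iota^{\sigma_1})^*(\chi^{u^{\sigma_1}(\alpha)})$, where $|S_{0,d,0}|=\binom{2n-5+d}{d}$ exactly matches the dimension of degree-$d$ forms in the $|J_1|=2n-4$ variables of $\PP^{2n-5}$. Dimensional counting yields $\dim(\overline{Z''^{X(\Sigma_l)}}\cap O_{\sigma_1})=2n-5=\dim\PP^{2n-5}$. I use the isomorphism $\zeta\colon k[Z]\otimes_k k[M^\dagger]\to k[u_0,v_0,\ldots,u_{n-3},v_{n-3}]_F$ together with the two independent linear relations on $\sigma_1^\perp\cap(M'\oplus\ZZ)$ imposed by $\sigma_1=\tau_1+\tau_2$ to identify a dense open of the orbit intersection with a dense open of $\PP^{2n-5}$, parameterized by the $|J_1|$ unit coordinates $\{s_{0,j},s_{1,j}\}_{2\leq j\leq n-1}$ modulo scaling. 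Under this identification each monomial $\chi^{u^{\sigma_1}(\alpha)}$ pulls back to the degree-$d$ monomial $\prod_{(i,j)\in J_1} X_{i,j}^{\alpha_{i,j}}$, so the hypersurface is birational to $\{\sum a_\alpha X^\alpha=0\}\subset\PP^{2n-5}$, a general degree-$d$ hypersurface for general $a$.

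For (e), use $S^{\tau_1}=S_{0,d,0}\sqcup S_{0,d-1,1}$: each $\alpha\in S_{0,d-1,1}$ carries exactly one $J_2$-exponent, so the defining equation on $\overline{Z''^{X(\Sigma_l)}}\cap O_{\tau_1}$ takes the form
\[
    P(\mathbf{X}) + \sum_{(i^*,j^*)\in J_2} Y_{i^*,j^*}\,Q_{i^*,j^*}(\mathbf{X}),
\]
linear in the $J_2$-coordinates $Y_{i^*,j^*}$ (with $\mathbf{X}$ the $J_1$-coordinates). Solving for some $Y_{i^*,j^*}$ with nonvanishing coefficient exhibits the hypersurface as a linear fibration over an open of a rational variety, hence rational and irreducible for general $a$. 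The $\tau_2$ case is parallel, using $S^{\tau_2}=S_{0,d,0}\sqcup S_{1,d-1,0}$ and linearity in the coordinates associated to $\{(1,j):j\geq 2\}$. The hardest step is part (d): while the combinatorial matching of $|S_{0,d,0}|$ monomials with degree-$d$ forms on $\PP^{2n-5}$ is immediate, showing that $\overline{Z''^{X(\Sigma_l)}}\cap O_{\sigma_1}$ is actually birational to $\PP^{2n-5}$ in a way that makes $\chi^{u^{\sigma_1}(\alpha)}$ correspond to the standard monomial $X^\alpha$ requires careful tracking of the lattice $\sigma_1^\perp\cap(M'\oplus\ZZ)$, its interaction with the embedding $Z''\hookrightarrow T_{N'\oplus\ZZ}$ through $\zeta$, and the quotient structure induced by the $2$-dimensional cone $\sigma_1$.
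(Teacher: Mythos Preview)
Your part~(a) is fine and slightly slicker than the paper's argument. But part~(b) has a substantial gap. The claim that ``the bounded rays all lie in the lineality slice $\{0\}_N\times N^\dagger_\RR\times\RR e_t$'' is precisely the hard content of~(b) and does not follow from any translation-invariance consideration. Each cone of $\Sigma_{l,+}$ sits inside some $\pi_\RR^{-1}(\sigma_c)\times[0,\infty)$ with $c\in\mathcal{C}$, and for nontrivial $c$ you must rule out bounded faces with nonzero $N$-component. The mechanism the paper uses is a Pl\"ucker-type identity: for each $(i,j)\in J_2$ and each $c\in\mathcal{C}$ there exists $\omega_0\in\sigma_c^\vee\cap M$ with
\[
\varpi_{0,1}+\varpi_{i,j}=\varpi_{0,i}+\varpi_{1,j}+\omega_0\qquad\text{or}\qquad\varpi_{0,1}+\varpi_{i,j}=\varpi_{0,j}+\varpi_{1,i}+\omega_0,
\]
coming from the specific relation $L_{i-2,j-2}=L_{i-2,i-2}-L_{j-2,j-2}$ in the arrangement~$\mathcal{B}$. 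Repeated use of this identity shows, for every $c$, that a face $\gamma\preceq D((u,\kappa),\pi_\RR^{-1}(\sigma_c)\times[0,\infty))$ (notation of Lemma~\ref{lem: compute normal fan}) avoiding $(\delta,0)$ can contain lifted monomials $(\varpi_\alpha+\kappa(\alpha)\delta,1)$ only for $\alpha$ in one of four prescribed subsets of $S$, independent of~$c$. This is what forces the bounded rays to be exactly $\tau_0,\ldots,\tau_3$. Your ``lower envelope of the lifted simplex'' sketch omits this entirely; even the reduction from $N^\dagger_\RR$ to the single line $\RR\sum_j e^\dagger_j$ is part of what must be proved.

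Part~(e) also has an error. Since $\tau_1$ has zero $N$-component, Proposition~\ref{prop: easy-computation} identifies $\overline{Z''^{X(\Sigma_l)}}\cap O_{\tau_1}$ with $Z$ times a torus of rank $n-1$; this stratum has dimension $2n-4$, so the $|J_2|=\binom{n-2}{2}$ functions you call $Y_{i^*,j^*}$ cannot be independent coordinates there, and ``solving for one of them'' is not a valid step. The actual structure is that $\tau_1^\perp\cap(M^\dagger\oplus\ZZ^\vee)$ is generated by $\sigma_1^\perp\cap(M^\dagger\oplus\ZZ^\vee)$ together with the \emph{single} extra element $\eta_0+l\delta$, so the equation on the stratum reads $A+(x_0t^l)\,B$ with $A,B$ functions on the $\sigma_1$-stratum, linear in one variable only. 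Irreducibility for general $a$ then follows from Lemma~\ref{lem: irreducible polynomial} and Lemma~\ref{lem: no common divisor}, and rationality from $x_0t^l=-A/B$. Your $\tau_2$ description is similarly off: elements of $S_{1,d-1,0}$ carry $\alpha_{0,1}=1$ (not an extra $(1,j)$-exponent), and the equation is linear in the single variable $x_0t^{-l}$. A smaller point in~(c): the inference ``$|S^\sigma|\ge 2\Rightarrow d_{Z''}\ge 2$'' needs the observation that two of the restricted monomials are linearly independent on the irreducible stratum, for instance two that differ by a factor of $x_{-1}$.
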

        \begin{proof}
            We prove the proposition from (a) to (e) in order:
            \begin{enumerate}
                \item[(a)] 
                It is enough to show that $\Sigma((u, \kappa))$ is strongly convex. 
                We can compute $\Sigma((u, \kappa))$ by Lemma \ref{lem: compute normal fan}, and from now on, we use the notation in Lemma \ref{lem: compute normal fan}. 

                It is enough to show that $D((u, \kappa))$ is a full cone in $((M\oplus M^\dagger\oplus\ZZ^\vee)\oplus\ZZ)_\RR$. 
                Let $L$ denote the linear subspaces of $((M\oplus M^\dagger\oplus\ZZ^\vee)\oplus\ZZ)_\RR$ generated by $D((u, \kappa))$. 
                Let $\alpha_0\in S$ denote an element which $(\alpha_0)_{(0, 1)} = d$ holds. 
                Then $(\varpi_{\alpha_0}+\kappa(\alpha_0)\delta, 1)=((2d -1)\delta, 1)\in D((u, \kappa))$. 
                Similarly, for any $0\leq j\leq n - 3$, we can check that $(d\eta_{j}, 1)\in D((u, \kappa))$, $(d\eta_{-1}+d\eta_0, 1)\in D((u, \kappa))$. 
                Let $\beta_0\in S$ denote which $(\beta_0)_{(0, 1)} = 1$ and $(\beta_0)_{(1, 2)} = d-1$ hold. 
                Then $(\varpi_{\beta_0}+\kappa(\beta_0)\delta, 1)=((d-1)\eta_0+\delta, 1)\in D((u, \kappa))$. 
                
                Thus, $(\eta_0-\delta, 0) = (d\eta_0, 1) - ((d-1)\eta_0 + \delta, 1)\in L$, and $(d-1)(\eta_0-2\delta, 0) = ((d-1)\eta_0+\delta, 1)-((2d-1)\delta, 1)\in L$.  
                In particular, $(\eta_0, 0), (\delta, 0)\in L$. 
                Moreover, we can check that $(0, 1), (\eta_{-1}, 0), \ldots, (\eta_{n-3}, 0)\in L$, $\{(\omega_{i, i} - \omega_{0, 0})\}_{1\leq i\leq n-3}\subset L$, and $\{(\omega_{i, j} - \omega_{0, 0})\}_{0\leq i < j\leq n-3}\subset L$. 
                Therefore, $D((u, \kappa))$ is a full cone. 
                \item[(b)] We may assume that $l = 1$ by Proposition \ref{prop: modify by multiplication}(b). 
                Let $\tau\in (\Sigma_{1, +})_{\bdd}$. 
                By Lemma \ref{lem: compute normal fan}, there exists $c\in\mathcal{C}$ and $\tau'\prec C((u, \kappa), \pi^{-1}_\RR(\sigma_c)\times[0, \infty))$ such that $\pr_{1, \RR}(\tau') = \tau$ and $(0, 1)\notin\tau'$. 
                Let $\sigma'_c$ denote $\pi^{-1}_\RR(\sigma_c)\times[0, \infty)$ and let $\gamma\prec D((u, \kappa),\sigma'_c)$ be the dual face of $\tau'$, i.e. $\gamma = (\tau')^\perp\cap D((u, \kappa), \sigma'_c)$. 
                In particular, $\tau' = \gamma^\perp\cap C((u, \kappa), \sigma'_c)$. 

            Now, we will classify all such $\gamma$ (or such $\tau'$) from Step  1 to Step  9. 

            Step  1. In this step, we show that $(\delta, 0)\notin\gamma$ and there exists $\alpha\in S$, such $(\varpi_\alpha+\kappa(\alpha)\delta, 1)\in \gamma$. 
            Because $\tau\not\subset N'_\RR\times\{0_{(\ZZ)_\RR}\}$, $(\delta, 0)\notin \gamma$. 
            In particular, for any $\omega\in\sigma^\vee_c$ and $r>0$, $(\omega+r\delta, 0)\notin\gamma$ because $(\omega, 0), (\delta, 0)\in D((u, \kappa), \sigma'_c)$ and $\gamma\prec D((u, \kappa), \sigma'_c)$. 
            Because $\gamma\prec D((u, \kappa), \sigma'_c)$, $\gamma$ is generated by some generaters of $D((u, \kappa), \sigma'_c)$. 
            If $\gamma$ does not contain $(\varpi_\alpha+\kappa(\alpha)\delta, 1)$ for any $\alpha\in S$, then $\gamma\subset (M'\oplus\ZZ^\vee)_\RR\times\{0\}$. 
            This inclusion indicates $(0, 1)\in\tau'$, and it is a contradiction. 

            Step  2. Let $(i, j)\in J_2$. 
            In this step, we show that there exists $\omega_0\in \sigma^\vee_c\cap M$ such that either following equation holds in $M'$: 
            \begin{align*}
                 \varpi_{0, 1} + \varpi_{i, j} &= \varpi_{0, i} + \varpi_{1, j} + \omega_0,\\
                 \varpi_{0, 1} + \varpi_{i, j} &= \varpi_{0, j} + \varpi_{1, i} + \omega_0.
            \end{align*}
            We write down $c$ explicitly as $c = (V_1, V_2, \ldots, V_s)$. 
            Let $k_1, k_2,$ and $k_3$ be minimal integers such that $L_{i-2, i-2}\in V_{k_1}, L_{j-2, j-2}\in V_{k_2}, $ and $L_{i-2, j-2}\in V_{k_3}$. 
            Because $L_{i-2, j-2} = L_{i-2, i-2} - L_{j-2, j-2}$, $k_3\leq \max\{k_1, k_2\}$. 
            If $k_3\leq k_1$, then $\omega_{i-2, j-2} - \omega_{i-2, i-2}\in \sigma^\vee_c$. 
            Similary, if $k_3\leq k_2$, then $\omega_{i-2, j-2} - \omega_{j-2, j-2}\in \sigma^\vee_c$. 
            Therefore, by the definition of $\varpi_{\cdot, \cdot}$, there exists $\omega_0\in\sigma^\vee_c\cap M$ such that the either equation above holds. 

            Step  3. Let $d_0, d_1,$ and $d_2$ be integers and let $\alpha\in S_{d_0, d_1, d_2}$. 
            We assume that $d_0d_2 > 0$. 
            In this step, We show that $(\varpi_{\alpha} + \kappa(\alpha)\delta, 1)\notin \gamma$ holds. 
            Indeed, by the assumption of $d_0$ and $d_2$ and Step  2, there exists $\beta\in S_{(d_0 - 1, d_1 + 2, d_0 -1)}$, a positive integer $m$ and $\omega_0\in\sigma^\vee_c$ such that $(\varpi_\alpha + \kappa(\alpha)\delta, 1) = (\varpi_\beta + \kappa(\beta)\delta, 1) + (m\delta, 0) + (\omega_0, 0)$. 
            By the definition of $D((u, \kappa), \sigma'_c)$, $(\varpi_\beta + \kappa(\beta)\delta, 1), (m\delta, 0), (\omega_0, 0)\in D((u, \kappa), \sigma'_c)$. 
            Because $\gamma\prec D((u, \kappa), \sigma'_c)$, $(\delta, 0)\in\gamma$. 
            However, it is a contradiction to Step  1. 

            Step  4. Let $d_0, d_2$ be positive integers, let $\alpha\in S_{(d_0, d-d_0, 0)}$ and let $\alpha'\in S_{(0, d-d_2, d_2)}$. 
            In this step, we show that $\{(\varpi_\alpha + \kappa(\alpha)\delta, 1), (\varpi_{\alpha'} + \kappa(\alpha')\delta, 1)\}\not\subset \gamma$. 
            Indeed, there exist $\beta\in S_{(d_0 - 1, d - d_0 + 1, 0)}$, $\beta'\in S_{(0, d - d_2 + 1, d_2 - 1)}$, $\omega\in\sigma^\vee_c$, and $m\in\ZZ_{> 0}$ such that $\varpi_{\alpha} + \varpi_{\alpha'} = \varpi_{\beta} + \varpi_{\beta'} + \omega$ and $\kappa(\alpha) + \kappa(\alpha') = \kappa(\beta) + \kappa(\beta') + m$ holds by Step  2. 
            Thus, if $\{(\varpi_\alpha + \kappa(\alpha)\delta, 1), (\varpi_{\alpha'} + \kappa(\alpha')\delta, 1)\}\subset \gamma$, then $(m\delta, 0)\in \gamma$. 
            It is a contradiction to Step  1. 

            Step  5. Let $i > 1$ be an integer, let $\alpha\in S_{0, d, 0}$ and let $\alpha'\in S_{i, d - i, 0}$. 
            In this step, we show that $\{(\varpi_\alpha + \kappa(\alpha)\delta, 1), (\varpi_{\alpha'} + \kappa(\alpha')\delta, 1)\}\not\subset \gamma$. 
            Inded, there exist $\beta\in S_{(1, d - 1, 0)}$, $\beta'\in S_{(i - 1, d - i + 1, 0)}$, such that $\varpi_{\alpha} + \varpi_{\alpha'} = \varpi_{\beta} + \varpi_{\beta'}$ holds. 
            We remark that $\kappa(\alpha) + \kappa(\alpha') =\kappa(\beta) + \kappa(\beta') + 1$ holds. 
            Thus, if $\{(\varpi_\alpha + \kappa(\alpha)\delta, 1), (\varpi_{\alpha'} + \kappa(\alpha')\delta, 1)\}\subset \gamma$, we have that $(\delta, 0)\in \gamma$. 
            It is a contradiction to Step  1. 

            Step  6. Let $i > 1$ be an integer, let $\alpha\in S_{0, d, 0}$ and let $\alpha'\in S_{0, d - i, i}$. 
            In this step, we show that $\{(\varpi_\alpha + \kappa(\alpha)\delta, 1), (\varpi_{\alpha'} + \kappa(\alpha')\delta, 1)\}\not\subset \gamma$. 
            Indeed, we can check it as Step  5. 

            Step  7. Let $\tau'_0, \tau'_1, \tau'_2$, and $\tau'_3$ be cones in $((N'\oplus\ZZ)\oplus\ZZ)_\RR$ as follows: 
            \begin{align*}
                \tau'_0 &= \RR_{\geq 0}(e_t -2\sum_{0\leq j\leq n-3}e^\dagger_j, 2d + 1),\\
                \tau'_1 &= \RR_{\geq 0}(e_t -\sum_{0\leq j\leq n-3}e^\dagger_j, d),\\
                \tau'_2 &= \RR_{\geq 0}(e_t +\sum_{0\leq j\leq n-3}e^\dagger_j, -d),\\
                \tau'_3 &= \RR_{\geq 0}(e_t +2\sum_{0\leq j\leq n-3}e^\dagger_j, -2d + 1).\\
            \end{align*}
            In this step, we show that the cones above are rays of $C((u, \kappa), \sigma'_c)$. 
            Indeed, we can check that the cones above are contained in $C((u, \kappa), \sigma'_c)$. 
            Thus, It is enough to show that $(\tau'_j)^\perp \cap D((u, \kappa), \sigma'_c)$ is a facet of $D((u, \kappa), \sigma'_c)$.
            Let $\gamma_j$ denote $(\tau'_j)^\perp\cap D((u, \kappa), \sigma'_c)$ for $0\leq j\leq 3$. 
            We can check that $\{(\omega, 0)|\omega\in M_\RR\}\subset\langle\gamma_j\rangle$ for any $0\leq j\leq 3$ because of the strong convexity of $\sigma_c$ and the definition of $\tau'_j$. 
            For $0\leq j\leq 3$, let $S(j)$ denote $\{\alpha\in S\mid (\varpi_\alpha+\kappa(\alpha)\delta, 1)\in\gamma_j\}$. 
            We can check the following equations:
                \begin{align*}
                    S(0) &= \bigcup_{1\leq i\leq d}S_{0, d-i, i},\\
                    S(1) &= S_{0, d-1, 1} \cup S_{0, d, 0},\\
                    S(2) &= S_{0, d, 0} \cup S_{1, d-1, 0},\\
                    S(3) &= \bigcup_{1\leq i\leq d}S_{i, d-i, 0}.\\
                \end{align*}
            Thus, as the proof of (a), we can check that $\langle\gamma_j\rangle$ is a codimension $1$ linear subspace of $((M'\oplus\ZZ^\vee)\oplus\ZZ)_\RR$. 

            Step  8. We assume that $\tau'$ is a ray of $C((u, \kappa), \sigma'_c)$. 
            In this step, we show that there exists $0\leq j\leq 3$ such that $\tau'= \tau'_j$. 
            Let $S^*$ denote $\{\alpha\in S\mid (\varpi_\alpha+\kappa(\alpha)\delta, 1)\in\gamma\}$. 
            Then there exists $0\leq j\leq 3$ such that $S^*\subset S(j)$ by the argument from Step  3 to Step  7. 
            Moreover, we can check that $\{(\omega, 0)\mid \omega\in\sigma^\vee_c\}\subset \gamma_j$ for any $0\leq j\leq 3$. 
            Furthermore, $(\omega+r\delta, 0)\notin\gamma$ for any $\omega\in \sigma^\vee_c$ and $r > 0$ by Step  1. 
            Thus, $\gamma\subset\gamma_j$. 
            Because $\tau'$ is a ray of $C((u, \kappa), \sigma'_c)$, $\gamma$ is a facet of $D((u, \kappa), \sigma^\vee_c)$, and hence, $\gamma = \gamma_j$. 
            In particular, $\tau' = \tau'_j$. 

            Step  9. In this step, we classify $\tau'$. 
            By Step  8, all rays of $\tau'$ are conteind in $\{\tau'_0, \tau'_1, \tau'_2, \tau'_3\}$. 
            We have already known $S(j)$ for each $0\leq j\leq 3$. 
            Thus, by Step  1, there are 7 possible forms for $\tau'$, as follows: $\tau'_0, \tau'_1, \tau'_2, \tau'_3, \tau'_{0, 1} = \tau'_0 + \tau'_1, \tau'_{1, 2} = \tau'_1 + \tau'_2, \tau'_{2, 3} = \tau'_2 + \tau'_3$. 
            Let $\gamma_{i, j}$ denote $(\tau'_{i, j})^\perp\cap D((u, \kappa), \sigma'_c)$. 
            We can check that all $\langle\gamma_{i, j}\rangle$ is a codimension $2$ subspace of $((M'\oplus\ZZ^\vee)\oplus\ZZ)_\RR$. 
            Thus, all $\tau'_{i, j}$ are faces of $C((u, \kappa), \sigma'_c)$. 
            
            \item[(c)] By Step  7 in the proof of (b), we can check the first statement. 

            We will show the second statement. 
            Let $q$ denote the first projection $N\oplus N^\dagger\oplus\ZZ\rightarrow N$ and let $p\colon N\oplus N^\dagger\oplus\ZZ\rightarrow N^\dagger\oplus\ZZ$ be the second projection. 
            Let $\gamma$ be one of these 7 cones. 
            We can check that $q_\RR(\gamma) = \{0\}$. 
            Let $\gamma'$ denote a strongly convex cone $p_\RR(\gamma)$. 
            Then we can identify with $\overline{Z''^{X(\Sigma_l)}}\cap O_{\gamma}$ and $Z\times O_{\gamma'}$ by Proposition \ref{prop: easy-computation}. 
            In particular, $\overline{Z''^{X(\Sigma_l)}}\cap O_{\gamma}$ is irreducible. 
            Let $\iota_0$ denote the closed immersion $Z\times O_{\gamma'}\rightarrow T_N\times O_{\gamma'}$. 
            We can check that there exists $\omega\in M\oplus(\gamma'^\perp\cap(M^\dagger\oplus\ZZ^\vee))$ such that $\{x_{-1}\iota^*_0(\chi^\omega), \iota^*_0(\chi^\omega)\}\subset V(\gamma, 1, (u, l\kappa), \omega_\gamma)$, and hence, $d_{Z''}(\gamma, 1, (u, l\kappa)) \geq 2$. 
            \item[(d)] 
            We keep the notation in the proof of (c).
            Let $\sigma'_1$ denote $p_\RR(\sigma_1)$. 
            Thus, we can identify with $\overline{Z''^{X(\Sigma_l)}}\cap O_{\sigma_1}$ and $Z\times O_{\sigma'_1}$ by Proposition \ref{prop: easy-computation}. 
            On this identification, we will compute $(u, {l\kappa})^{\sigma_1}$ (See Definition \ref{def: orbit-polytope}).  
            Then we can check that $(\sigma'_1)^{\perp}\cap (M^\dagger\oplus\ZZ^\vee)$ is generated by $\eta_{-1}$ and $\{\eta_j - \eta_0\}_{1\leq j\leq n-3}$. 
            Let $\alpha_1\in S$ denote an element which $(\alpha_1)_{(1, 2)} = d$ holds. 
            Then $\alpha_1\in S^{\sigma_1}$. 
            Thus, $(u, l\kappa)(\alpha_1) = d\eta_0$ satisfies two conditions in Proposition \ref{prop: normal fan I}(b). 
            For $(i, j)\in J_1$, let $\varpi'_{i, j}$ denote the following elements in $N\oplus ((\sigma'_1)^{\perp}\cap (M^\dagger\oplus\ZZ^\vee))$:
            \[
                \varpi'_{i, j} = \left\{
                    \begin{array}{ll}
                        \eta_{-1}, & (i, j) = (0, 2),\\
                        \eta_{-1} + (\eta_{j - 2}- \eta_0) + (\omega_{j - 2, j - 2} - \omega_{0, 0}),& i = 0, j > 2,\\
                        0, & (i, j) = (1, 2),\\
                        (\eta_{j - 2} -\eta_0), & i = 1, j>2.
                    \end{array}
                \right.
            \]
            Let $u'$ denote the map $S^{\sigma_1}\rightarrow N\oplus ((\sigma'_1)^{\perp}\cap (M^\dagger\oplus\ZZ^\vee))$ such that it holds that $u'(\alpha) = \sum_{(i, j)\in J_1}{\alpha_{i, j}\varpi'_{i, j}}$
            $ = u(\alpha) - u(\alpha_1)$ for any $\alpha\in S^{\sigma_1}$ and let $\iota_1$ denote the closed immersion $Z\times O_{\sigma'_1}\rightarrow T_N\times O_{\sigma'_1}$. 
            We can check $(u, {l\kappa})^{\sigma_1}\sim u'$, and we identify with $u'$ and $(u, {l\kappa})^{\sigma_1}$. 
            Let $F'(a)$ denote $\sum_{\alpha\in S^{\sigma_1}}a_\alpha\iota_1^*(\chi^{u'(\alpha)})\in k[Z]\otimes k[(\sigma'_1)^\perp\cap (M^\dagger\oplus\ZZ^\vee)]$ and let $H'(a)$ denote the hypersurface in $Z\times O_{\sigma'_1}$ defined by $F'(a)$. 
            Then $\overline{H(a)^{X(\Sigma_l)}}\cap O_{\sigma_1} \cong H'(a)$ for a genera $(a_\alpha)_{\alpha\in S}\in k^{|S|}$ by Proposition \ref{prop: generic condition of flatness}(c) and (d).  
            
            On the other hand, $\{\iota^*_1(\chi^{\varpi'_{i, j}})\}_{(i, j)\in J_1\setminus\{(1, 2)\}}$ is a transcendence basis and a generator of the fraction field of $k[Z]\otimes_k k[(\sigma'_1)^{\perp}\cap (M^\dagger\oplus\ZZ^\vee)]$. 
            Moreover, $\Spec(k[Z]\otimes_k k[(\sigma'_1)^{\perp}\cap (M^\dagger\oplus\ZZ^\vee)])\rightarrow \Spec(k[\{\iota^*_1(\chi^{\varpi'_{i, j}})\}_{(i, j)\in J_1}]) \cong {\A^{|J_1| - 1}_k}$ is an open immersion. 
            Thus, by taking a more general element, $\overline{H(a)^{X(\Sigma_l)}}\cap O_{\sigma_1}$ is birational to a general hypersurface of degree $d$ in $\PP^{|J_1| - 1}_k$. 
            We remark that $|J_1| = 2n - 4$. 
            \item[(e)] We keep the notation in the proof of (c). 
            By the definition of $\tau_1$, $q_\RR(\tau_1) = \{0\}$. 
            Let $\tau'_1$ denote $p_\RR(\tau_1)$. 
            Thus, we can identify with $\overline{Z''^{X(\Sigma_l)}}\cap O_{\tau_1}$ and $Z\times O_{\tau'_1}$.   
            On this identification, we compute $(u, {l\kappa})^{\tau_1}$. 
            Then we can check that $(\tau'_1)^{\perp}\cap (M^\dagger\oplus\ZZ^\vee)$ is generated by $\eta_{-1}$, $\eta_0 + l\delta$, and $\{\eta_j - \eta_0\}_{1\leq j\leq n-3}$ and $\alpha_1\in S^{\tau_1}$. 
            Thus, $(u, l\kappa)(\alpha_1)$ satisfies two conditions in Proposition \ref{prop: normal fan I}(b). 
            For $(i, j)\in J_1\cup J_2$, let $\varpi''_{i, j}$ denote the following elements in $N\oplus ((\tau'_1)^{\perp}\cap (M^\dagger\oplus\ZZ^\vee))$:
            \[
                \varpi''_{i, j} = \left\{
                    \begin{array}{ll}
                        \eta_{-1}, & i = 0, j = 2,\\
                        \eta_{-1} + (\eta_{j - 2} - \eta_0) + (\omega_{j - 2, j - 2} - \omega_{0, 0}),& i = 0, j > 2,\\
                        0, & i = 1, j = 2,\\
                        (\eta_{j - 2} - \eta_{0}), & i = 1,\\
                        \eta_{-1} + (\eta_{j-2} - \eta_0) + (\omega_{0, j-2} - \omega_{0, 0}) + (\eta_0 + l\delta), & i = 2,\\
                        \eta_{-1} + (\eta_{j-2} - \eta_0) + (\eta_{i-2} - \eta_0) + (\omega_{i-2, j-2} - \omega_{0, 0}) + (\eta_0 + l\delta), & i > 2.
                    \end{array}
                \right.
            \]
            Let $u''$ denote the map $S^{\tau_1}\rightarrow N\oplus ((\tau'_1)^{\perp}\cap (M^\dagger\oplus\ZZ^\vee))$ such that for any $\alpha\in S^{\tau_1}$,  $u''(\alpha) = \sum_{(i, j)\in J_1\cup J_2}{\alpha_{i, j}\varpi''_{i, j}} = (u, l\kappa)(\alpha) - (u, l\kappa)(\alpha_1)$ , and let $\iota_2$ denote the closed immersion $Z\times O_{\tau'_1}\rightarrow T_N\times O_{\tau'_1}$. 
            Then we can check $(u, {l\kappa})^{\tau_1}\sim u''$, and we identify with $u''$ and $(u, {l\kappa})^{\tau_1}$. 
            Let $F''(a)$ denote $\sum_{\alpha\in S^{\tau_1}}a_\alpha\iota_2^*(\chi^{u''(\alpha)})\in k[Z]\otimes k[(\tau'_1)^\perp\cap (M^\dagger\oplus\ZZ^\vee)]$, and let $H''(a)$ denote the hypersurface in $Z\times O_{\tau'_1}$ defined by $F''(a)$. 
            Then $\overline{H(a)^{X(\Sigma_l)}}\cap O_{\tau_1} \cong H''(a)$ for a genera $(a_\alpha)_{\alpha\in S}\in k^{|S|}$ by Proposition \ref{prop: generic condition of flatness}(c) and (d).  
            
            We remark that $k[(\tau'_1)^{\perp}\cap (M^\dagger\oplus\ZZ^\vee)] \cong k[(\sigma'_1)^{\perp}\cap (M^\dagger\oplus\ZZ^\vee)]\otimes_k k[(x_0t^l)^{\pm}]$. 
            Moreover, we can check that the following three conditions hold:                 
            \begin{itemize}
                \item[(1)] The equation $S^{\tau_1} = S^{\sigma_0}\coprod S^{\sigma_1}$ by (c). 
                \item[(2)] For any $\alpha\in S^{\sigma_0}$, there exists $v(\alpha)\in (k[Z]\otimes_k k[(\sigma'_1)^{\perp}\cap (M^\dagger\oplus\ZZ^\vee)])^*$ such that $\iota^*_2(\chi^{u''(\alpha)}) = (x_0t^l) v(\alpha)$. 
                \item[(3)] For any $\beta\in S^{\sigma_1}$, there exists $w(\beta)\in (k[Z]\otimes_k k[(\sigma'_1)^{\perp}\cap (M^\dagger\oplus\ZZ^\vee)])^*$ such that $\iota^*_2(\chi^{u''(\beta)}) = w(\beta)$.
            \end{itemize}
            Thus, by taking a more general element, $\overline{H(a)^{X(\Sigma_l)}}\cap O_{\tau_1}$ is irreducible by Lemma \ref{lem: irreducible polynomial} and Lemma \ref{lem: no common divisor}. 
            Moreover, this is rational because $Z$ is rational.                  
            Similarly, we can check that the statement holds for $\overline{H(a)^{X(\Sigma_l)}}\cap O_{\tau_2}$. 
        \end{enumerate}       
        \end{proof}
        The following theorem is the main theorem of this paper. 
        
        \begin{theorem}\label{thm: d in Grassmannian}
            If a very general hypersurface of degree $d$ in $\PP_\C^{2n-5}$ is not stably rational, then a very general hypersurface of degree $d$ in $\Gr_\C(2, n)$ is not stably rational.
        \end{theorem}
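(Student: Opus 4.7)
The plan is to apply the motivic machinery developed in Sections 4 and 5, combined with the combinatorial computation in Proposition \ref{prop: computation result}, and to deduce the failure of stable rationality from the fact that a distinguished stratum of the special fiber is birational to a very general hypersurface of degree $d$ in $\PP^{2n-5}_\C$.

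First I would reduce the claim over $\C$ to a statement over $\Kt$: by Proposition \ref{prop: Bertini-stably rational}(b) it suffices to exhibit a $\Kt$-point $D$ of $U_{\Gr_\C(2,n),d}$ such that $(\mathscr{X}_d)_D$ is not stably rational over $\Kt$. By Proposition \ref{prop:grassman-mock}, for a general $D \in \Kt^{|S_{J,d}|}$ the intersection $X_D = \Gr_\Kt(2,n) \cap H_D$ is irreducible and birational to the affine hypersurface $Y_D$ defined by $g_D$ in $(Z \times T_{N^\dagger})_\Kt$, so it is enough to find a generic $D$ for which $Y_D$ is not stably rational. The upshot is that we have transported the rationality problem from an abstract hypersurface in a Grassmannian into a hypersurface in a \emph{schön} affine variety, which is precisely the setting of Sections 4 and 5.

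Next I would build the degeneration. Using Proposition \ref{prop: modify by multiplication}(c) pick $l_0$ and a refinement $\Delta'$ of $\Sigma_{l_0,+}$ that is strongly convex, generically unimodular, specifically reduced, and compactly arranged. For a general parameter $a$, Proposition \ref{prop: general condition of schon} supplies a schön compactification of $H(a) \subset Z''$ over $\Sigma_{Z''} = \Sigma(\Delta\times\Delta_!, (u, l_0\kappa), Z'')$; refining to $\Delta'$ and applying Proposition \ref{prop: model I} yields a proper flat strictly toroidal $\Rt$-scheme $\mathcal{Y}$ with smooth generic fiber $\mathcal{Y}_\Kt$, which by Proposition \ref{prop: model I}(f) decomposes into strictly toroidal models $\mathcal{W}_j$ of its irreducible components $W_j$. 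One of these components $W_{j_0}$ is (generically on $a$) identified with $Y_D$ for the corresponding $D$ via the identification $\zeta$ used in Proposition \ref{prop:grassman-mock}, so $\{W_{j_0}\}_{\sb} = \{Y_D\}_{\sb}$ in $\ZZ[\SB_\Kt]$.

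I would then compute $\sum_j \VOL(\{W^\circ_j\}_{\sb})$ by the combinatorial formula of Proposition \ref{prop: model II}(d)--(e) applied to the coarser fan $\Sigma_{l_0,+}$. By Proposition \ref{prop: computation result}(b) the only bounded cones meeting the closed fiber are $\tau_0,\tau_1,\tau_2,\tau_3$ (rays) and $\sigma_0,\sigma_1,\sigma_2$ (two-dimensional); by part (c) of the same proposition all seven lie in the $Y$-locus. The resulting expression is
\[
\sum_j \VOL(\{W^\circ_j\}_{\sb}) \;=\; \sum_{i=0}^{3} \{E_{\tau_i}\}_{\sb} \;-\; \sum_{i=0}^{2} \{E_{\sigma_i}\}_{\sb}.
\]
Proposition \ref{prop: computation result}(d) identifies $E_{\sigma_1}$ with (something birational to) a very general hypersurface of degree $d$ in $\PP^{2n-5}_\C$, whose class, by the assumption of the theorem combined with \cite[Corollary 4.2]{NO22}, is a well-defined element of $\SB_\C$ distinct from $\{\Spec(\C)\}_{\sb}$. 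Proposition \ref{prop: computation result}(e) gives $\{E_{\tau_1}\}_{\sb} = \{E_{\tau_2}\}_{\sb} = \{\Spec(\C)\}_{\sb}$. The remaining strata $E_{\tau_0}, E_{\tau_3}, E_{\sigma_0}, E_{\sigma_2}$ need to be shown stably rational as well; the combinatorial description of the sets $S^{\tau_0}, S^{\tau_3}, S^{\sigma_0}, S^{\sigma_2}$ in Proposition \ref{prop: computation result}(c) together with Proposition \ref{prop: easy-computation} exhibits each of them as a hypersurface in a product of a torus with $Z$ whose defining equation has a unit factoring out (or has the structure of a trivial torus bundle), so each is rational. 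Given this, the coefficient of $\{E_{\sigma_1}\}_{\sb}$ on the right-hand side does not cancel, forcing $\VOL(\{Y_D\}_{\sb}) \neq \{\Spec(\C)\}_{\sb}$ and hence establishing nonstable rationality.

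The hard part is verifying rationality (or more precisely, stable rationality) of the four remaining strata $E_{\tau_0}, E_{\tau_3}, E_{\sigma_0}, E_{\sigma_2}$: one must perform a change-of-variables analysis analogous to the one sketched in Proposition \ref{prop: computation result}(d),(e), reducing each defining Laurent polynomial on $Z \times O_\tau$ to something manifestly rational. A secondary subtlety is to ensure the chosen refinement $\Delta'$ does not introduce new bounded cones with strata that spoil the cancellation; Proposition \ref{prop: model II}(e) is what licenses the passage to the coarse fan $\Sigma_{l_0,+}$ and so bypasses this concern, which is why that proposition is needed here rather than working directly with $\Delta'$.
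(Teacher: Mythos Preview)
Your plan is correct up to and including the formula
\[
\sum_j \VOL(\{W^\circ_j\}_{\sb}) \;=\; \sum_{i=0}^{3} \sum_j\{E^{(j)}_{\tau_i}\}_{\sb} \;-\; \sum_{i=0}^{2} \sum_j\{E^{(j)}_{\sigma_i}\}_{\sb},
\]
but the step after it is a genuine gap. You assert that the four strata $E_{\tau_0}, E_{\tau_3}, E_{\sigma_0}, E_{\sigma_2}$ are stably rational, and you yourself flag this as ``the hard part.'' In fact the paper never proves this, and there is no obvious reason to expect it: for instance $S^{\tau_0}=\bigcup_{1\le i\le d}S_{0,d-i,i}$ indexes all monomials with $c_0=0$ and $c_2\ge 1$, so the defining equation of $E_{\tau_0}$ is essentially a general degree-$d$ polynomial in many variables on $Z\times O_{\tau'_0}$, with no evident unit factoring out. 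Moreover the paper does not even claim these strata are irreducible (it keeps track of $r_{\tau_i}$ and $r_{\sigma_i}$ for $i\in\{0,3\}$ and $i\in\{0,2\}$). Your proposed route through rationality of these four strata is therefore not a proof as it stands, and I do not see how to complete it.

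The paper bypasses this difficulty by a comparison trick. The crucial combinatorial input from Proposition~\ref{prop: computation result}(c) is that $S^{\sigma_1}$ is \emph{disjoint} from $S^{\tau_0}, S^{\tau_3}, S^{\sigma_0}, S^{\sigma_2}$. Fix a general $a=(a_\alpha)_{\alpha\in S}$ and form $a'$ by changing only the coordinates $a_\alpha$ with $\alpha\in S^{\sigma_1}$. By Proposition~\ref{prop: generic condition of flatness}(c),(d) the strata over $\tau_0,\tau_3,\sigma_0,\sigma_2$ depend only on the coefficients indexed by their respective $S^\gamma$, so they are literally the same for $a$ and $a'$; likewise the strata over $\tau_1,\tau_2$ remain rational. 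Subtracting,
\[
\VOL\bigl(\{H(a)_\Kt\}_{\sb}\bigr)-\VOL\bigl(\{H(a')_\Kt\}_{\sb}\bigr)
=\{F^{(1)}_{\sigma_1}\}_{\sb}-\{E^{(1)}_{\sigma_1}\}_{\sb},
\]
where $E^{(1)}_{\sigma_1}$ and $F^{(1)}_{\sigma_1}$ are the $\sigma_1$-strata for $a$ and $a'$. By Proposition~\ref{prop: computation result}(d) both are birational to general degree-$d$ hypersurfaces in $\PP^{2n-5}_\C$, and by the hypothesis together with \cite[Corollary~4.2]{NO22} one can choose $a'$ so that $\{E^{(1)}_{\sigma_1}\}_{\sb}\neq\{F^{(1)}_{\sigma_1}\}_{\sb}$. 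Hence at least one of $H(a)_\Kt$, $H(a')_\Kt$ is not stably rational, and Proposition~\ref{prop: Bertini-stably rational}(b) finishes. This variation argument is the missing idea; it makes the unknown classes of the four troublesome strata irrelevant rather than trying to compute them.
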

        \begin{proof}
             We use the notation in Proposition \ref{prop: Bertini-stably rational}, Proposition \ref{prop:grassman-mock}, and Proposition \ref{prop: computation result}. 
            Let $k$ denote $\C$, let $X$ denote $\Gr_\C(2, n)$, let $\Sigma'_l$ denote $\Sigma(\Delta\times\Delta_!, (u, l\kappa), Z'')$, and let $\Sigma'_{l, +}$ denote $\{\tau\in \Sigma'_l\mid\tau\subset N'_\RR\times\RR_{\geq 0}\}$ for $l\in\ZZ_{>0}$. 
            We define the open subsets $U_0, U_1, U_2$ and $U_3$ of $\A^{|S|}_\C$ as follows: 
            \begin{enumerate} 
                \item[(0)] 
                By Proposition \ref{prop: modify by multiplication}(c), there exists a positive integer $l$ and a refinement $\Delta'$ of $\Sigma'_l$ such that $\Delta'_+ = \{\sigma\in\Delta'\mid \sigma\subset N'_\RR\times \RR_{\geq 0}\}$ is generically unimodular, specifically reduced, and compactly arranged. 
                Let $\Delta''$ denote $\Sigma'_{l, +}$. 
                By Proposition \ref{prop: computation result}(a), $\Delta''$ is strongly convex. 
                                
                Let $U_0$ denote a dense open subset of $\A^{|S|}_\C$ in Proposition \ref{prop: general condition of schon} for $(\Delta\times\Delta_!, (u, l\kappa), Z'')$.
                Let $a\in U_0(\C)$. 
                By Proposition \ref{prop: general condition of schon}, $\overline{H(a)^{X(\Sigma'_l)}}$ is proper over $k$. 
                In particular, the composition $\overline{H(a)^{X(\Sigma'_l)}}\rightarrow X(\Sigma'_l)\rightarrow X(\Delta_!) = \PP^1_k$ is proper, and hence, the morphism $\overline{H(a)^{X(\Delta'')}}\rightarrow \A^1_k$ is proper. 
                Similarly, $\overline{H(a)^{X(\Delta'_+)}}\rightarrow \A^1_k$ is also proper because $\Delta'_+$ is a refinement of $\Delta''$. 
                Moreover, the multiplication morphism $T_{N\oplus\ZZ}\times \overline{H(a)^{X(\Delta'_+)}}$ is smooth by Proposition \ref{prop: surjection}(c). 
                For the notation in Proposition \ref{prop: computation result} (b), let $\{E^{(j)}_{\tau_i}\}_{1\leq j\leq r_{\tau_i}}$ be all irreducible components of $\overline{H(a)^{X(\Delta'')}}\cap O_{\tau_i}$ for $0\leq i\leq 3$ and let $\{E^{(j)}_{\sigma_i}\}_{1\leq j\leq r_{\sigma_i}}$ be all irreducible components of $\overline{H(a)^{X(\Delta'')}}\cap O_{\sigma_i}$ for $0\leq i\leq 2$. 
                Note that each $r_{\tau_i}$ and $r_{\sigma_i}$ is depend on the choice of $a\in U_0(\C)$. 
                \item[(1)] Let $\lambda$ denote the automorphism of $\A^{|S|}_\Kt$ defined as $\lambda((c_\alpha)_{\alpha\in S}) = (t^{-l\kappa(\alpha)}c_\alpha)_{\alpha\in S}$ for $(c_\alpha)_{\alpha\in S}\in \Kt^{|S|}$. 
                Let $h$ denote a canonical map $\A^{|S|}_\Kt\rightarrow \A^{|S|}_\C$. 
                By Proposition \ref{prop:grassman-mock}, 
                there exists a dense open subset $V$ of $\A^{|S|}_\Kt$ such that $X_D$ and $Y_D$ are irreducible and birational for any $D\in V(\Kt)$. 
                Let $U_1$ denote an open subset $h\circ\lambda(h^{-1}(U_{X, d})\cap V)$ of $\A^{|S|}_\C$. 
                For $(a_\alpha)_{\alpha\in S}\in U_1(\C)$, let $D_0$ denote $(a_\alpha t^{l\kappa(\alpha)})_{\alpha\in S}\in \Kt^{|S|}$. 
                Then $D_0\in (U_{X, d})_\Kt(\Kt)$ and $D_0\in V(\Kt)$. 
                Moreover, we can check that $H(a)\times_{\mathbb{G}^1_{m, k}}\Spec(\Kt) \cong Y_{D_0}$. 
                Thus, $X_{D_0}$ and $Y_{D_0} \cong H(a)\times_{\mathbb{G}^1_{m, k}}\Spec(\Kt)$ are birational. 
                In particular, $H(a)\times_{\mathbb{G}^1_{m, k}}\Spec(\Kt)$ is irreducible.
                \item[(2)] Let $U_2$ denote an open subset of $\A^{|S|}_\C$ in Proposition \ref{prop: computation result}(d). 
                In particular, $r_{\sigma_1} = 1$ and $E^{(1)}_{\sigma_1}$ is birational to a general hypersurface of degree $d$ in $\PP^{2n-5}_{\C}$. 
                \item[(3)]
                Let $U_3$ denote an open subset of $\A^{|S|}_\C$ in Proposition \ref{prop: computation result}(e). 
                In particular, $r_{\tau_1} = r_{\tau_2} = 1$ and $\{E^{(1)}_{\tau_1}\}_{\mathrm{sb}} = \{E^{(1)}_{\tau_2}\}_{\mathrm{sb}} = \{\Spec(\C)\}_{\mathrm{sb}}$. 
            \end{enumerate}
            Let $U$ denote $U_0\cap U_1\cap U_2 \cap U_3$. 
            Let $(a_\alpha)_{\alpha\in S}\in U(\C)$. 
            Then by Proposition \ref{prop: model II}(e) and Proposition \ref{prop: computation result}(b) and (c), the following equation holds: 
            \begin{align*}
                \VOL(H(a)\times_{\mathbb{G}^1_{m, k}}\Spec(\Kt)) &= \sum_{i = 0, 3}\biggl(\sum_{1\leq j\leq r_{\tau_i}} \bigl\{E^{(j)}_{\tau_i}\bigr\}_{\mathrm{sb}}\biggr)-\sum_{i = 0, 2}\biggl(\sum_{1\leq j\leq r_{\sigma_i}} \bigl\{E^{(j)}_{\sigma_i}\bigr\}_{\mathrm{sb}}\biggr)\\
                &+ 2\{\Spec(\C)\}_{\mathrm{sb}}-\{E^{(1)}_{\sigma_1}\}_{\mathrm{sb}}.
            \end{align*}

            On the other hand, let $(b_\alpha)_{\alpha\in S^{\sigma_1}}\in \C^{|S^{\sigma_1}|}$ and let $(a'_\alpha)_{\alpha\in S}$ denote the following element in $\C^{|S|}$: 
            \[
                a'_\alpha = 
                \left\{
                    \begin{array}{ll}
                        b_\alpha, & 
                        \alpha\in S^{\sigma_1},\\
                        a_\alpha, & 
                        \alpha\notin S^{\sigma_1}.\\
                    \end{array}
                \right.
            \]
            Because $U\cap ((a_\alpha)_{\alpha\in S\setminus S^{\sigma_1}}\times \A^{|S^{\sigma_1}|}_\C)$ is a non-empty open subset of $(a_\alpha)_{\alpha\in S\setminus S^{\sigma_1}}\times \A^{|S^{\sigma_1}|}_\C$, $(a'_\alpha)_{\alpha\in S} \in U$ for a genera $(b_\alpha)_{\alpha\in S^{\sigma_1}}\in \C^{|S^{\sigma_1}|}$. 
            We assume that $(a'_\alpha)_{\alpha\in S} \in U(\C)$. 
            Let $\{F^{(j)}_{\tau_i}\}_{1\leq j\leq r'_{\tau_i}}$ be all irreducible components of $\overline{H(a')^{X(\Delta'')}}\cap O_{\tau_i}$ for $0\leq i\leq 3$, and let $\{F^{(j)}_{\sigma_i}\}_{1\leq j\leq r'_{\sigma_i}}$ be all irreducible components of $\overline{H(a')^{X(\Delta'')}}\cap O_{\sigma_i}$ for $0\leq i\leq 2$. 
            Let $\{f^{\tau_i}\}_{0\leq i\leq 3}$, $\{f^{\sigma_i}\}_{0\leq i\leq 2}$, $\{f'^{\tau_i}\}_{0\leq i\leq 3}$, and $\{f'^{\sigma_i}\}_{0\leq i\leq 2}$ denote the following elements: 
            \begin{itemize}
                \item $f^{\tau_i} = \sum_{\alpha\in S^{\tau_i}}a_\alpha(\iota^{\tau_i})^*(\chi^{(u, l\kappa)^{\tau_i}(\alpha)}) \in k[\overline{Z''^{X(\Delta'')}}\cap O_{\tau_i}]$,
                \item $f^{\sigma_i} = \sum_{\alpha\in S^{\sigma_i}}a_\alpha(\iota^{\sigma_i})^*(\chi^{(u, l\kappa)^{\sigma_i}(\alpha)})\in k[\overline{Z''^{X(\Delta'')}}\cap O_{\sigma_i}]$,
                \item $f'^{\tau_i} = \sum_{\alpha\in S^{\tau_i}}a'_\alpha(\iota^{\tau_i})^*(\chi^{(u, l\kappa)^{\tau_i}(\alpha)})\in k[\overline{Z''^{X(\Delta'')}}\cap O_{\tau_i}]$,
                \item $f'^{\sigma_i} = \sum_{\alpha\in S^{\sigma_i}}a'_\alpha(\iota^{\sigma_i})^*(\chi^{(u, l\kappa)^{\sigma_i}(\alpha)})\in k[\overline{Z''^{X(\Delta'')}}\cap O_{\sigma_i}]$,
            \end{itemize}
            where $\iota^{\tau_i}$ and $\iota^{\sigma_i}$ are  closed immersions $\overline{Z''^{X(\Delta'')}}\cap O_{\sigma_i}\rightarrow O_{\sigma_i}$ and $\overline{Z''^{X(\Delta'')}}\cap O_{\tau_i}\rightarrow O_{\tau_i}$, respectively.
            
            By Proposition \ref{prop: computation result}(c), $S^{\sigma_1}\cap S^{\tau_i} = \emptyset$ for any $i\in\{0, 3\}$ and $S^{\sigma_1}\cap S^{\sigma_i} = \emptyset$ for any $i\in\{0, 2\}$. 
            This shows that $f^{\tau_i} = f'^{\tau_i}$ for any $i = 0, 3$ and $f^{\sigma_i} = f'^{\sigma_i}$  for any $i = 0, 2$. 
            Thus, by Proposition \ref{prop: generic condition of flatness}(c) and (d), $r_{\tau_i} = r'_{\tau_i}$ for $i = 0, 3$ and if it is necessary, we can replace the index such that $E^{(j)}_{\tau_i} = F^{(j)}_{\tau_i}$ for any $1\leq j\leq r_{\tau_i}$. 
            Similarly, $r_{\sigma_i} = r'_{\sigma_i}$ for $i = 0, 2$ and if it is necessary, we can replace the index such that $E^{(j)}_{\sigma_i} = F^{(j)}_{\sigma_i}$ for any $1\leq j\leq r_{\sigma_i}$. 
            Therefore, the following equation holds: 
            \[
                \VOL(H(a)\times_{\mathbb{G}^1_{m, k}}\Spec(\Kt)) - \VOL(H(a')\times_{\mathbb{G}^1_{m, k}}\Spec(\Kt)) = \{E^{(1)}_{\sigma_1}\}_{\mathrm{sb}} - \{F^{(1)}_{\sigma_1}\}_{\mathrm{sb}}.
            \]
            By the assumption, a very general hypersurface of degree $d$ in $\PP^{2n-5}_\C$ is not stably rational. 
            Thus, by \cite[Corollary.4.2]{NO22}, a very general hypersurface of degree $d$ in $\PP^{2n-5}_\C$ is not stably birational to $E^{(1)}_{\sigma_1}$. 
            Because $\C$ is uncountable, there exists $(a'_\alpha)_{\alpha\in S}\in U(\C)$ such that $\{E^{(1)}_{\sigma_1}\}_{\mathrm{sb}} \neq\{F^{(1)}_{\sigma_1}\}_{\mathrm{sb}}$. 
            Thus, for such $(a'_\alpha)_{\alpha\in S}$, either of $H(a)\times_{\mathbb{G}^1_{m, k}}\Spec(\Kt)$ or $H(a')\times_{\mathbb{G}^1_{m, k}}\Spec(\Kt)$is not stably rational. 
            Because $(a_\alpha)_{\alpha\in S}, (a'_\alpha)_{\alpha\in S}\in U_1(\C)$, a very general hypersurface of degree $d$ in $\Gr_\C(2, n)$ is not stably rational by Proposition \ref{prop: Bertini-stably rational}(b). 
        \end{proof}
        The following corollary holds by \cite[Corollary 1.2]{S19} immediately. 
        \begin{corollary}\label{cor: log bound}
            If $n\geq 5$ and $d \geq 3 + \log_2(n-3)$, then a very general hypersurface of degree $d$ in $\Gr_\C(2, n)$ is not stably rational. 
        \end{corollary}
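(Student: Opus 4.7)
The plan is to apply the main Theorem \ref{thm: d in Grassmannian} together with the hypersurface-in-projective-space nonstable-rationality bound of Proposition \ref{prop: the rationality of hypersurfaces} (the result of \cite{S19}). By the theorem, it suffices to produce, under the stated conditions on $n$ and $d$, the nonstable rationality of a very general hypersurface of degree $d$ in $\PP^{2n-5}_\C$.

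To invoke Proposition \ref{prop: the rationality of hypersurfaces} for the projective space $\PP^{2n-5}_\C$, I would identify $\PP^{2n-5}_\C$ with $\PP^{m+1}_\C$ for $m = 2n-6$. The hypothesis in that proposition reads $m \geq 3$ and $d \geq 2 + \log_2(m)$. The first inequality becomes $2n - 6 \geq 3$, i.e. $n \geq 5$ (using that $n$ is an integer), which matches the assumption. The second becomes
\[
d \;\geq\; 2 + \log_2(2n-6) \;=\; 2 + \log_2\bigl(2(n-3)\bigr) \;=\; 3 + \log_2(n-3),
\]
which is exactly the hypothesis. Hence Proposition \ref{prop: the rationality of hypersurfaces} applies and yields that a very general hypersurface of degree $d$ in $\PP^{2n-5}_\C$ is not stably rational.

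Feeding this conclusion into Theorem \ref{thm: d in Grassmannian} immediately gives that a very general hypersurface of degree $d$ in $\Gr_\C(2, n)$ is not stably rational, completing the proof. There is no real obstacle here; the corollary is a bookkeeping consequence of the two previously established statements, the only nontrivial point being the verification of the index shift $2n - 5 = (2n-6) + 1$ so that the logarithmic bound rewrites correctly as $3 + \log_2(n-3)$.
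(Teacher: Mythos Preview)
Your proof is correct and follows exactly the paper's approach: apply \cite[Corollary 1.2]{S19} (Proposition \ref{prop: the rationality of hypersurfaces}) to $\PP^{2n-5}_\C$ and then invoke Theorem \ref{thm: d in Grassmannian}. Your explicit verification of the index shift $2+\log_2(2n-6)=3+\log_2(n-3)$ and the check $2n-6\geq 3\Leftrightarrow n\geq 5$ are exactly what is needed.
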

        \begin{proof}
            By \cite[Corollary 1.2]{S19}, a very general hypersurface of degree $d$ in $\PP^{2n-5}_\C$ is not stably rational. 
            Thus, the statement follows by Theorem \ref{thm: d in Grassmannian}. 
        \end{proof}
\section{Appendix}
In this section, we prove the lemmas needed in this article. 
The lemmas in the Appendix are all fundamental and well-known to experts, but they are frequently used in the main article. 
Therefore, they are compiled here as an appendix. 
We use the notation in Section 2.  
\subsection{Lemmas related to toric varieties}
In this subsection, we correct for lemmas related to toric varieties and convex geometry. 
\begin{lemma}\label{lem: action}
  Let $N$ be a lattice of finite rank, and let $\Delta$ be a strongly convex rational polyhedral fan in $N_\RR$.
  Then the following statements hold.
  \begin{itemize}
    \item[(a)] Let $\sigma\in\Delta$ and $q_*\colon T_N\rightarrow T_{N/\langle\sigma\rangle\cap N}$ be the quotient morphism associated with $q\colon N\rightarrow N/\langle\sigma\rangle\cap N$.
      Then the following diagram is a Cartesian product:
      \begin{equation*}
        \begin{tikzcd}
          T_N\times \overline{O_\sigma}\ar[r, "q_*\times\id"]\ar[d]&T_{N/\langle\sigma\rangle\cap N}\times\overline{O_\sigma}\ar[r]& \overline{O_\sigma}\ar[d]\\
          T_N\times X(\Delta) \ar[rr] & & X(\Delta),
        \end{tikzcd}
      \end{equation*}
      where horizontal morphisms are action morphisms and vertical morphisms are closed immersions.
    \item[(b)] Let $\pi\colon N'\rightarrow N$ be a morphism of lattices of finite rank.
      Let $\Delta'$ be a strongly convex rational polyhedral fan in $N'_\RR$.
      We assume that $\pi$ is compatible with the fans $\Delta'$ and $\Delta$.
      Then the following diagram is a Cartesian product:
      \begin{equation*}
        \begin{tikzcd}
          T_{N'}\times X(\Delta')\ar[rr]\ar[d, "\id\times\pi_*"] & & X(\Delta')\ar[d, "\pi_*"]\\
          T_{N'}\times X(\Delta) \ar[r, "\pi_*\times\id"] & T_N\times X(\Delta)\ar[r] & X(\Delta),
        \end{tikzcd}
      \end{equation*}
      where horizontal morphisms are action morphisms and vertical morphisms are toric morphisms.
    \item[(c)] Let $\sigma\in\Delta$ and $N_0$ be a sublattice of $N$ such that $N\cong N_0\oplus (\langle\sigma\rangle\cap N)$, let $p$ be the quotient morphism $p\colon N\rightarrow N/N_0$, and let $\sigma_0$ denote $p_\RR(\sigma)\subset (N/N_0)_\RR$.
    We remark that $\sigma_0$ is a strongly convex rational polyhedral cone in $(N/N_0)_\RR$.
    Then the following diagram is a Cartesian product:
    \begin{equation*}
        \begin{tikzcd}
          T_{N}\times X(\sigma)\ar[rr]\ar[d, "p_*\times\id"] & & X(\sigma)\ar[d, "p_*"]\\
          T_{N/N_0}\times X(\sigma) \ar[r, "\id\times p_*"] & T_{N/N_0}\times X(\sigma_0)\ar[r] & X(\sigma_0).
        \end{tikzcd}
      \end{equation*}
  \end{itemize}
\end{lemma}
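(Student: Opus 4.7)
All three diagrams are built from equivariant morphisms between toric varieties, so my plan is to verify commutativity by invoking standard equivariance properties (of toric morphisms and of torus orbit closures), and then verify the Cartesian property by reducing to the affine case and either exhibiting an explicit inverse to the natural comparison map or checking surjectivity and injectivity on semigroup algebras. In each part, the overall diagram drawn in the statement partly serves to display a factorization of an action; the Cartesian claim concerns the outer rectangle obtained by composing the horizontal arrows.

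For (a), the two crucial inputs are: (i) $\overline{O_\sigma}$ is a $T_N$-invariant closed subscheme of $X(\Delta)$, so the action of $T_N$ on $X(\Delta)$ restricts to a morphism $T_N \times \overline{O_\sigma} \to \overline{O_\sigma}$; and (ii) this restricted action factors through $T_{N/\langle\sigma\rangle\cap N}$, because $\overline{O_\sigma}$ is canonically the toric variety associated to the star fan $\mathrm{Star}(\sigma)$ in $(N/\langle\sigma\rangle\cap N)_\RR$, so the subtorus $T_{\langle\sigma\rangle\cap N}$ acts trivially. Together these give commutativity. To establish the Cartesian property I would reduce to an affine chart $X(\tau)$ with $\sigma \preceq \tau$; the defining ideal of $\overline{O_\sigma} \cap X(\tau)$ in $k[\tau^\vee\cap M]$ is $(\chi^\omega : \omega \in (\tau^\vee\cap M)\setminus\sigma^\perp)$, and the comultiplication $\chi^\omega \mapsto \chi^\omega \otimes \chi^\omega$ sends this ideal into $k[M]\otimes I(\overline{O_\sigma}\cap X(\tau))$. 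This is precisely the ring-theoretic expression of $T_N$-invariance and yields the identification of $T_N\times\overline{O_\sigma}$ with the scheme-theoretic preimage of $\overline{O_\sigma}$ under the action.

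For (b), commutativity of the outer rectangle is the equivariance identity $\pi_*(t'\cdot x') = \pi_*(t')\cdot\pi_*(x')$, which is built into the definition of a toric morphism and is verified on characters. For the Cartesian property I would construct the inverse $(t',x,x')\mapsto (t',(t')^{-1}\cdot x')$ of the comparison map $T_{N'}\times X(\Delta')\to (T_{N'}\times X(\Delta))\times_{X(\Delta)}X(\Delta')$, the compatibility $\pi_*(t')\cdot x = \pi_*(x')$ guaranteeing well-definedness; again this is best seen after localizing to a pair of compatible affine charts. For (c), the splitting $N = N_0\oplus(\langle\sigma\rangle\cap N)$ induces $T_N \cong T_{N_0}\times T_{N/N_0}$ and a canonical product decomposition $X(\sigma)\cong T_{N_0}\times X(\sigma_0)$ under which $p_*$ is the projection to the second factor. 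With respect to these decompositions, both $T_N\times X(\sigma)$ and the fiber product $(T_{N/N_0}\times X(\sigma))\times_{X(\sigma_0)}X(\sigma)$ identify with $T_{N_0}\times T_{N/N_0}\times T_{N_0}\times X(\sigma_0)$, and the comparison morphism becomes an explicit permutation of factors.

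The principal obstacle in each part is to upgrade the manifest point-set level verification to a genuine scheme-theoretic isomorphism; this is resolved by reducing everything to affine toric charts and checking the corresponding statements on semigroup algebras, where all the maps become explicit $k$-algebra homomorphisms determined by their action on the characters $\chi^\omega$. Once this reduction is made, everything follows from the standard theory of toric varieties and no substantial new difficulty arises.
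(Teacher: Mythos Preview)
Your proposal is correct, and for part (b) your explicit inverse $(t',x,x')\mapsto(t',(t')^{-1}\cdot x')$ is essentially the paper's argument in disguise. The paper, however, packages all three parts via a single uniform device: it observes that for any torus action $m\colon T\times X\to X$ the ``shearing'' map $(\pr_1,m)\colon T\times X\to T\times X$ is an isomorphism, and uses this to convert each action square into a square whose horizontal maps are projections, where the Cartesian property is immediate. Thus in (a) the paper never touches the ideal of $\overline{O_\sigma}$ in a semigroup algebra, and in (c) it builds the required automorphisms $\alpha,\beta$ out of the splitting isomorphisms $\delta\colon T_N\xrightarrow{\sim} T_{N/N_0}\times T_{N/\langle\sigma\rangle\cap N}$ and $\epsilon\colon X(\sigma)\xrightarrow{\sim} X(\sigma_0)\times T_{N/\langle\sigma\rangle\cap N}$ together with a coordinate permutation, then invokes part (b). Your route via affine charts and explicit ideal generators in (a), and via the product decomposition $X(\sigma)\cong T_{N_0}\times X(\sigma_0)$ in (c), is equally valid and arguably more concrete; the paper's shearing trick buys uniformity and avoids any ring-level bookkeeping, at the cost of a slightly more opaque presentation in (c).
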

\begin{proof}
  We prove the statements from (a) to (c) in order. 
  \begin{itemize}
    \item[(a)] Let $\overline{m}$ denote the action morphism $T_{N/\langle\sigma\rangle\cap N}\times \overline{O_\sigma}\rightarrow \overline{O_\sigma}$, let $m$ denote the action morphism $T_N\times X(\Delta)\rightarrow X(\Delta)$, let $\theta$ denote $\overline{m}\circ (q_*\times \id_{\overline{O_\sigma}})\colon T_N\times\overline{O_\sigma}\rightarrow \overline{O_\sigma}$, and let $\iota$ denote the closed immersion $\overline{O_\sigma}\hookrightarrow X(\Delta)$. 
    Then we can show that $(\pr_1, \theta)\colon T_N\times \overline{O_\sigma}\rightarrow T_N\times \overline{O_\sigma}$ and $(\pr_1, m)\colon T_N\times X(\Delta)\rightarrow T_N\times X(\Delta)$ are isomorphic, and the two small squares in the following diagram are Cartesian products:
    \begin{equation*}
      \begin{tikzcd}
        T_N\times \overline{O_\sigma}\ar[r, "{(\pr_1, \theta)}"]\ar[d, "\id\times \iota"]&T_N\times\overline{O_\sigma}\ar[r, "\pr_2"]\ar[d, "\id\times\iota"]& \overline{O_\sigma}\ar[d, "\iota"]\\
        T_N\times X(\Delta)\ar[r, "{(\pr_1, m)}"] & T_N\times X(\Delta)\ar[r, "\pr_2"] & X(\Delta).
      \end{tikzcd}
    \end{equation*}
    Thus, the statement holds. 
    \item[(b)] Let $m$ denote the action morphism $T_N\times X(\Delta)\rightarrow X(\Delta)$, let $m'$ denote the action morphism $T_{N'}\times X(\Delta')\rightarrow X(\Delta')$, and let $\theta$ denote $m\circ (\pi_*\times \id_{X(\Delta)})\colon T_{N'}\times X(\Delta)\rightarrow X(\Delta)$. 
    Then we can show that $(\pr_1, \theta)\colon T_{N'}\times X(\Delta)\rightarrow T_{N'}\times X(\Delta)$ and $(\pr_1, m')\colon T_{N'}\times X(\Delta)\rightarrow T_{N'}\times X(\Delta)$ are isomorphic, and the two small squares in the following diagram are Cartesian products:
    \begin{equation*}
      \begin{tikzcd}
        T_{N'}\times X(\Delta')\ar[r, "{(\pr_1, m')}"]\ar[d, "\id\times \pi_*"]&T_{N'}\times X(\Delta')\ar[r, "\pr_2"]\ar[d, "\id\times\pi_*"]& X(\Delta')\ar[d, "\pi_*"]\\
        T_{N'}\times X(\Delta)\ar[r, "{(\pr_1, \theta)}"] & T_{N'}\times X(\Delta)\ar[r, "\pr_2"] & X(\Delta).
      \end{tikzcd}
    \end{equation*}
    Thus, the statement holds. 
    \item[(c)] Let $m$ denote the action morphism $T_N\times X(\sigma)\rightarrow X(\sigma)$, let $m_0$ denote the action morphism $T_{N/N'}\times X(\sigma_0)\rightarrow X(\sigma_0)$, let $q$ denote the quotient morphism $N\rightarrow N/\langle\sigma\rangle\cap N$, let $\delta$ denote $(p, q)_*\colon T_N\rightarrow T_{N/N_0}\times T_{N/\langle\sigma\rangle\cap N}$, and let $\epsilon$ denote $(p, q)_*\colon X(\sigma)\rightarrow X(\sigma_0)\times T_{N/\langle\sigma\rangle\cap N}$. 
    We remark that $\delta$ and $\epsilon$ are isomorphic. 
    Let $\psi$ denote the automorphism of $T_{N/\langle\sigma\rangle\cap N}\times T_{N/\langle\sigma\rangle\cap N}$ defined by a permutation, let $\alpha$ denote $(\delta^{-1}, \epsilon^{-1})\circ(\id_{T_{N/N_0}\times X(\sigma_0)}\times\psi)\circ(\delta\times\epsilon)\colon T_N\times X(\sigma)\rightarrow T_N\times X(\sigma)$, and let $\beta$ denote $(\delta^{-1}\times\id_{X(\sigma_0)})\circ(\id_{T_{N/N_0}}\times \epsilon)\colon T_{N/N_0}\times X(\sigma)\rightarrow T_N\times X(\sigma_0)$. 
    Then we can check that $\alpha$ and $\beta$ are isomorphic, $m\circ\alpha = m$, $(\id_{T_N}\times p_*)\circ\alpha = \beta\circ(p_*\times\id_{X(\sigma)})$, $\id_{T_{N/N_0}}\times p_* = (p_*\times\id_{X(\sigma_0)})\circ\beta$, and the two small squares in the following diagram are Cartesian products by (b):
    \begin{equation*}
      \begin{tikzcd}
        T_{N}\times X(\sigma)\ar[r, "\alpha"]\ar[d, "p_*\times \id"]&T_{N}\times X(\sigma)\ar[r, "m"]\ar[d, "\id\times p_*"]& X(\sigma)\ar[d, "p_*"]\\
        T_{N/N_0}\times X(\sigma)\ar[r, "\beta"] & T_{N}\times X(\sigma_0)\ar[r, "m_0\circ(p_*\times\id)"] & X(\sigma_0).
      \end{tikzcd}
    \end{equation*}
    Thus, the statement holds. 
  \end{itemize}
\end{proof}
  The following lemma is referenced in \cite[Corollary of Theorem 22.5]{Mat86}. 
  This criteria of flatness is crucial for this article. 
  \begin{lemma}\label{lem: criterion of flatness}\cite[Corollary of Theorem 22.5]{Mat86}
    Let $\varphi\colon X\rightarrow Y$ be a flat and of finite type morphism of Noetherian schemes, let $x\in X$, let $y\in Y$ denote $\varphi(x)$, let $f_1, f_2, \ldots, f_r\in\Gamma(X, \OO_X)$, and let $Z$ be a closed subscheme of $X$ defined by the ideal generated by $\{f_i\}_{1\leq i\leq r}$. 
    We assume that $x\in Z$. 
    Let $\overline{f_i}\in \Gamma(X_y, \OO_{X_y})$ denote the restriction of $f_i$ to $X_y$ for each $1\leq i\leq r$.
    If $\overline{f_1}, \ldots, \overline{f_r}$ is a regular sequence of $\OO_{X_y, x}$, then $f_1, \ldots, f_r$ is a regular sequence of $\OO_{X, x}$, and the restriction morphism $\varphi|_Z$ is flat at $x$.   
  \end{lemma}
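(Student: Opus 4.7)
The plan is to reduce the statement to a purely local assertion on the local rings and then invoke Matsumura's local criterion of flatness (\cite[Theorem 22.5]{Mat86}) by induction on $r$. Writing $A = \OO_{Y, y}$, $B = \OO_{X, x}$, and $\mathfrak{m} = \mathfrak{m}_A$, the hypothesis says that $A\to B$ is a flat local homomorphism of Noetherian local rings, and that $\overline{f_1}, \ldots, \overline{f_r}\in B/\mathfrak{m}B$ form a regular sequence. The two conclusions to establish, locally at $x$, are: (i) $f_1, \ldots, f_r$ is a regular sequence in $B$, and (ii) $B/(f_1,\ldots,f_r)B$ is flat over $A$. Granting these, flatness of $\varphi|_Z$ at $x$ follows because $\OO_{Z,x} = B/(f_1,\ldots,f_r)B$, and the regular-sequence statement is local at $x$.

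First I would dispose of the base case $r = 1$. Here one applies \cite[Theorem 22.5]{Mat86} directly: since $A\to B$ is flat, $B$ is Noetherian, and $\overline{f_1}$ is a non-zero-divisor in $B/\mathfrak{m}B$, we conclude that $f_1$ is a non-zero-divisor in $B$ and that $B/f_1B$ is flat over $A$.

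Next I would run induction on $r$. Assume the statement has been verified for $r-1$, so that $f_1,\ldots, f_{r-1}$ is a regular sequence in $B$ and $B' := B/(f_1,\ldots,f_{r-1})B$ is flat over $A$. Then $A\to B'$ is again a flat local homomorphism of Noetherian local rings, and the image $\overline{f_r}$ of $f_r$ in $B'/\mathfrak{m}B' \cong (B/\mathfrak{m}B)/(\overline{f_1},\ldots,\overline{f_{r-1}})$ is, by hypothesis, a non-zero-divisor. Applying the $r=1$ case to $A\to B'$ and the element $f_r\in B'$, we deduce that $f_r$ is a non-zero-divisor in $B'$ and that $B'/f_rB' = B/(f_1,\ldots,f_r)B$ is flat over $A$. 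This completes the induction and hence the local statement.

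Finally, to conclude the geometric statement it suffices to observe that the ideal sheaf of $Z\subset X$ is generated by $f_1,\ldots, f_r$, so $\OO_{Z,x} = B/(f_1,\ldots,f_r)B$, which by (ii) is flat over $\OO_{Y,y}$; hence $\varphi|_Z$ is flat at $x$. The only delicate point in the whole argument is the base case $r=1$, which is the content of \cite[Theorem 22.5]{Mat86}; given that black box, the rest is a routine induction, so I do not expect any serious obstacle.
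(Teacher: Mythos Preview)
Your proof is correct and follows essentially the same approach as the paper: both reduce the geometric statement to the local algebraic one and invoke Matsumura. The paper simply cites the Corollary of Theorem 22.5 after reducing to the affine case, whereas you unpack that corollary's proof by passing to local rings and running the standard induction on $r$ with Theorem 22.5 as the base case; this is more detailed but not substantively different.
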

  \begin{proof}
    We may assume that $X$ and $Y$ are affine schemes. 
    Then the statement holds by \cite[Corollary of Theorem 22.5]{Mat86}. 
  \end{proof}
  The following proposition shows that tropical compactification has a toroidal structure. 
  The same claim has already been stated in \cite[Theorem 1.4]{Tev}, but here we provide a different proof.
  \begin{lemma}\label{lem: smooth to smooth}
    We keep the notation in Lemma \ref{lem: action} (c). 
    Let $Y\subset X(\sigma)$ denote a closed subscheme of $X(\sigma)$ and  let $m_Y \colon T_N\times Y\rightarrow X(\sigma)$ be the multiplication morphism. 
    We assume $m_Y$ is a smooth morphism. 
    Then $p_*|_{Y}\colon Y\rightarrow X(\sigma_0)$ is smooth at any point $y\in Y\cap O_\sigma$. 
  \end{lemma}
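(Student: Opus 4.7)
The plan is to reduce smoothness of $p_*|_Y$ at $y$ to smoothness of an auxiliary morphism $\psi$ on $T_{N/N_0}\times Y$, obtained by restricting the Cartesian square of Lemma \ref{lem: action}(c). Pulling the square of Lemma \ref{lem: action}(c) back along the closed immersion $\id\times\iota_Y\colon T_{N/N_0}\times Y\hookrightarrow T_{N/N_0}\times X(\sigma)$ (where $\iota_Y\colon Y\hookrightarrow X(\sigma)$ is the given closed immersion) produces the Cartesian diagram
\[
\begin{tikzcd}
T_N\times Y \ar[rr, "m_Y"] \ar[d, "p_*\times\id"] & & X(\sigma) \ar[d, "p_*"]\\
T_{N/N_0}\times Y \ar[r, "\id\times p_*|_Y"] & T_{N/N_0}\times X(\sigma_0) \ar[r, "m_0"] & X(\sigma_0)
\end{tikzcd}
\]
and I set $\psi:=m_0\circ(\id\times p_*|_Y)$. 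The hypothesis $N=N_0\oplus(\langle\sigma\rangle\cap N)$ identifies $X(\sigma)$ with $T_{N_0}\times X(\sigma_0)$ in such a way that $p_*$ is the second projection, hence a trivial $T_{N_0}$-bundle, in particular faithfully flat and smooth. Combining the smoothness of $m_Y$ (hypothesis) with fpqc descent of smoothness then forces $\psi$ to be smooth.

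Next I would analyze $\psi$ at $(e,y)$, where $e\in T_{N/N_0}$ is the identity. Since $\sigma_0=p_\RR(\sigma)$ spans $(N/N_0)_\RR$, the orbit $O_{\sigma_0}$ consists of a single closed $T_{N/N_0}$-fixed point; because $p_*(y)$ is this fixed point, the scheme-theoretic fiber $\psi^{-1}(p_*(y))=T_{N/N_0}\times(Y\cap O_\sigma)$ in a neighborhood of $(e,y)$. Regularity of this fiber at $(e,y)$ (from smoothness of $\psi$) immediately gives that $Y\cap O_\sigma$ is smooth at $y$. Now choose a basis $\eta_1,\dots,\eta_r$ of the character lattice of $T_{N/N_0}$ and set $f_i:=\pr_1^*(\chi^{\eta_i}-1)\in\Gamma(T_{N/N_0}\times Y,\OO)$. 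These cut out $\{e\}\times Y$ scheme-theoretically, and, since $T_{N/N_0}$ is smooth with the $\chi^{\eta_i}-1$ forming a system of regular parameters at $e$, they restrict to a regular sequence in the fiber $\psi^{-1}(p_*(y))$ at $(e,y)$. Applying Lemma \ref{lem: criterion of flatness} to the flat morphism $\psi$ and the closed subscheme $\{e\}\times Y=V(f_1,\dots,f_r)$ shows that $\psi|_{\{e\}\times Y}$ is flat at $(e,y)$; but $\psi(e,y')=p_*|_Y(y')$, so under the identification $\{e\}\times Y\cong Y$ this restriction is precisely $p_*|_Y$. Hence $p_*|_Y$ is flat at $y$, and combining with smoothness of its fiber $Y\cap O_\sigma$ at $y$ yields smoothness of $p_*|_Y$ at $y$.

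The main obstacle is the flatness step. One cannot simply descend flatness through the factorization $\psi=m_0\circ(\id\times p_*|_Y)$ with $m_0$ faithfully flat, because flatness is not preserved by such one-sided descent. The trick is to realize $p_*|_Y$ not as a factor of $\psi$ but as a restriction of $\psi$ to the closed subscheme $\{e\}\times Y$, cut out by the explicit regular sequence of uniformizers of $T_{N/N_0}$ at $e$; Lemma \ref{lem: criterion of flatness} then applies cleanly and produces flatness of $p_*|_Y$ at $y$.
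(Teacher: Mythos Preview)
Your proof is correct and follows essentially the same approach as the paper: both set up the Cartesian square from Lemma \ref{lem: action}(c) restricted to $Y$, descend smoothness of $m_Y$ along the faithfully flat $p_*$ to obtain smoothness of $\psi$ (the paper's $u$), then cut out $\{e\}\times Y$ by the regular parameters of $T_{N/N_0}$ at $e$ and apply Lemma \ref{lem: criterion of flatness} to deduce flatness of $p_*|_Y$ at $y$, concluding via smoothness of the fiber $Y\cap O_\sigma$. Your closing paragraph articulating why one must realize $p_*|_Y$ as a \emph{restriction} of $\psi$ rather than a factor is exactly the key insight the paper uses implicitly.
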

  \begin{proof}
    We keep the notation in the proof of Lemma \ref{lem: action} (c).  
    By the Lemma \ref{lem: action} (c), there exists the following Cartesian product:
      \begin{equation*}
        \begin{tikzcd}
          T_{N}\times Y \ar[r]\ar[d, "p_*\times\id_Y"] &T_N\times X(\sigma)\ar[r, "m"]\ar[d, "p_*\times \id"] & X(\sigma)\ar[d, "p_*"]\\
          T_{N/N_0}\times Y \ar[r] & T_{N/N_0}\times X(\sigma)\ar[r, "m_0\circ (\id\times p_*)"] & X(\sigma_0).
        \end{tikzcd}
      \end{equation*}
      We remark that the composition of the upper morphisms is $m_Y$.  
      Let $u$ denote $m_0\circ(\id\times p_*)|_{T_{N/N_0}\times Y}$. 
      Thus, $u$ is smooth, and hence $u^{-1}(O_{\sigma_0}) \cong T_{N/N_{0}}\times (Y\cap O_\sigma)$ is smooth over $k$. 
      Let $y\in Y\cap O_\sigma$ be a closed point, let $e\in T_{N/N_0}$ be an identity element, and let $x\in T_{N/N_0}\times Y$ denote a closed point $(e, y)$. 
      Let $r$ denote $\dim(T_{N/N_0})$ and let $\{\chi_1, \ldots, \chi_r\}\subset \Gamma(T_{N/N_0}, \OO_{T_{N/N_0}})$ be coordinate functions at $e\in T_{N/N_0}$. 
      Then $\{e\}\times Y\subset T_{N/N_0}\times Y$ is a closed subscheme of $T_{N/N_0}\times Y$ defined by the ideal generated by $\{\chi_i\otimes 1\}_{1\leq i\leq r}$. 
      Let $f_i\in\Gamma(T_{N/N_0}\times (Y\cap O_\sigma), \OO_{T_{N/N_0}\times (Y\cap O_\sigma)})$ denote the restriction of $\chi_i\otimes 1$ to $u^{-1}(O_{\sigma_0})$. 
      By the definition of $\{\chi_i\}_{1\leq i\leq r}$, the isomorphism $u^{-1}(O_{\sigma_0})\cong T_{N/N_0}\times (Y\cap O_\sigma)$, and the smoothness of $(Y\cap O_\sigma)$, $\{f_i\}_{1\leq i\leq r}$ is a regular sequence of $\OO_{u^{-1}(O_{\sigma_0}), x}$. 
      Thus, $u|_{e\times Y}$ is flat at $x$ by Lemma \ref{lem: criterion of flatness}. 
      By the definition of $m_0$, we can check that $u|_{e\times Y} = p_*|_Y$ on the identification of $e\times Y$ and $Y$, in particular, $p_*|_Y$ is flat at $y$. 
      Thus, $p_*|_Y$ is flat at any point $y'\in Y\cap O_\sigma$. 
      We have seen that $(p_*|_Y)^{-1}(O_{\sigma_0})\cong Y\cap O_\sigma$ is smooth over $k$, and thus $p_*|_Y$ is smooth at any point $y'\in Y\cap O_\sigma$. 
  \end{proof}
  The following lemma is basic, but it is important for the criterion of sch\"{o}n compactifications.  
  \begin{lemma}\label{lem: action smooth}
    Let $k$ be a field, let $T$ denote an algebraic torus $\mathbb{G}_{m, k}^n$, let $Y\subset T$ be a closed subscheme of $T$, and let $m\colon T\times Y\rightarrow T$ denote the restriction morphism of the multiplication of $T$. 
    We assume that $Y$ is smooth over $k$. 
    Then $m$ is smooth. 
  \end{lemma}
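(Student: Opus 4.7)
The plan is to factor the multiplication morphism $m$ through an automorphism of $T\times Y$ followed by a projection, so that smoothness of $m$ is reduced to smoothness of the projection (which is a base change of the smooth $k$-scheme $Y$).

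First, I would introduce the morphism $\phi\colon T\times Y\to T\times Y$ defined on functors of points by $(g,y)\mapsto (gy, y)$, where the product $gy$ is formed in $T$ using the inclusion $Y\hookrightarrow T$. I would then check that $\phi$ is an isomorphism by writing down the explicit inverse $(g',y)\mapsto (g'y^{-1}, y)$, which is well-defined because every point of $Y$ is a point of the group scheme $T$ and hence has an inverse in $T$. A direct computation shows the two composites equal the identity on $T\times Y$.

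Next, I would observe the factorization $m=\mathrm{pr}_1\circ\phi$, which is immediate from the definitions. Since $\phi$ is an isomorphism, the smoothness of $m$ is equivalent to the smoothness of $\mathrm{pr}_1\colon T\times Y\to T$. But $\mathrm{pr}_1$ is the base change of $Y\to\Spec(k)$ along the structure morphism $T\to\Spec(k)$, and smoothness is stable under base change, so $\mathrm{pr}_1$ is smooth because $Y$ is smooth over $k$ by hypothesis. This gives the conclusion.

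There is no real obstacle here; the only thing to be a little careful about is verifying that $\phi$ actually lands in $T\times Y$ (it does, because the second coordinate is unchanged) and that its formal inverse makes sense (it does, since $Y\subset T$ consists of invertible elements of $T$). The entire argument is formal and works over an arbitrary field $k$, and in particular does not require $Y$ to be integral or $k$ to have characteristic zero.
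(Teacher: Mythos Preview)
Your proposal is correct and is essentially the same argument as the paper's: the paper defines $\lambda=(\mu,\pr_2)$ on $T\times T$, observes it is an automorphism preserving $T\times Y$, and factors $m=\pr_1\circ\lambda|_{T\times Y}$, which is exactly your $\phi$ viewed as the restriction of an ambient automorphism. The only cosmetic difference is that the paper builds the automorphism on $T\times T$ first and then restricts, whereas you define $\phi$ directly on $T\times Y$.
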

  \begin{proof}
      Let $\mu$ denote the multiplication map $T\times T\rightarrow T$ and let $\lambda\colon T\times T\rightarrow T\times T$ denote the morphism defined by $(\mu, \pr_2)$. 
      We can check that that $\lambda$ is isomorphic, $\lambda(T\times Y) = T\times Y$, and $m = \pr_1|_{T\times Y} \circ \lambda|_{T\times Y}$. 
      By the assumption, $\pr_1|_{T\times Y}\colon T\times Y\rightarrow T$ is smooth, hence $m$ is also smooth. 
  \end{proof}
The following elementary lemma is needed in Lemma \ref{lem: relative toroidal}.
\begin{lemma}\label{lem: relative decomposition}
  Let $N$ be a lattice of finite rank and $N'$ be a sublattice of $N\oplus\ZZ$. 
  We assume that $(N\oplus\ZZ)/N'$ is torsion free and there exists $v\in N$ such that $(v, 1)\in N'$. 
  Then there exists a sublattice $N''$ of $N$ such that $N'\oplus (N''\times\{0\}) = N\oplus\ZZ$ and $N/N''$ is torsion free. 
\end{lemma}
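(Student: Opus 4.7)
The plan is to produce $N''$ as a complementary summand in $N$ of the sublattice cut out by $N'$ along $N\times\{0\}$, and then check that this gives the required splitting of $N\oplus\ZZ$.

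First I would introduce $N_0 := \pr_1(N'\cap(N\times\{0\}))$, a sublattice of $N$. The key preliminary claim is that $N/N_0$ is torsion free; I would prove this by exhibiting an isomorphism $N/N_0\cong (N\oplus\ZZ)/N'$. The hypothesis $(v,1)\in N'$ is exactly what makes the composition $N\hookrightarrow N\oplus\ZZ\twoheadrightarrow (N\oplus\ZZ)/N'$ surjective: any $(w,c)\in N\oplus\ZZ$ satisfies $(w,c) - c(v,1) = (w-cv,0)$, so $(w,c)$ and $(w-cv,0)$ represent the same class modulo $N'$. The kernel of this composition is $\{w\in N\mid (w,0)\in N'\}=N_0$, which gives the asserted isomorphism. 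Since $(N\oplus\ZZ)/N'$ is torsion free by assumption, $N/N_0$ is torsion free.

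A torsion free quotient of a finitely generated free abelian group splits, so I can pick a sublattice $N''\subset N$ with $N = N_0\oplus N''$. This $N''$ satisfies $N/N''\cong N_0$, which is again torsion free.

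It remains to verify $N'\oplus(N''\times\{0\}) = N\oplus\ZZ$. For the intersection, if $w\in N''$ with $(w,0)\in N'$, then $w\in N_0\cap N''=\{0\}$, so $N'\cap(N''\times\{0\}) = 0$. For the sum, given $(w,c)\in N\oplus\ZZ$, the reduction trick above yields $(w,c) = c(v,1) + (w-cv,0)$; decomposing $w-cv = n_0+n''$ with $n_0\in N_0$, $n''\in N''$, we get $(w,c) = \bigl(c(v,1)+(n_0,0)\bigr) + (n'',0)$, where the first summand lies in $N'$ (since $(v,1)\in N'$ and $(n_0,0)\in N'$ by definition of $N_0$). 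None of the steps is a serious obstacle; the only point requiring care is the isomorphism $N/N_0\cong (N\oplus\ZZ)/N'$, which is the mechanism by which the hypothesis $(v,1)\in N'$ enters.
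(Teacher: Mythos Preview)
Your proof is correct and follows essentially the same approach as the paper: define $N_0=\{w\in N\mid (w,0)\in N'\}$ (the paper calls this $N_1$), use the hypothesis $(v,1)\in N'$ together with torsion-freeness of $(N\oplus\ZZ)/N'$ to see that $N/N_0$ is torsion free, take a complement $N''$ of $N_0$ in $N$, and verify the decomposition. Your explicit isomorphism $N/N_0\cong (N\oplus\ZZ)/N'$ is a slightly sharper justification than the paper's one-line assertion, but the strategy is the same.
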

\begin{proof}
  Let $\iota$ denote a canonical inclusion $N\rightarrow N\oplus\ZZ$ such that $\iota(w) = (w, 0)$ for any $w\in N$. 
  Let $N_1$ be an inverse image of $N'$ under $\iota$. 
  We claim that $(N_1\times\{0\})\oplus \ZZ(v, 1) = N'$. 
  Indeed, for any $(w, a)\in N'$, we have $(av, a)\in \ZZ(v, 1)$ and $(w-av, 0)\in (N_1\times\{0\})$. 
  Moreover, $N/N_1$ is torsion free because $(N\oplus\ZZ)/N'$ is torsion free. 
  Thus, there exists a sublattice $N_2$ of $N$ such that $N_1\oplus N_2 = N$. 
  Therefore, $(N_1\times\{0\})\oplus(N_2\times\{0\})\oplus\ZZ(v, 1) = N\oplus\ZZ$, so let $N''$ denote $N_2$. 
\end{proof}

When we compute the stable birational volume, it is important to construct a strictly toroidal scheme. 
The following lemma indicates that the sufficient conditions for the reduced-ness of the fiber of the closed orbit can be given by the combinatorial conditions.
\begin{lemma}\label{lem: relative toroidal}
  Let $N$ be a lattice of finite rank and let $\sigma$ be a strongly convex rational polyhedral cone in $(N\oplus\ZZ)_\RR$. 
  We assume that $\sigma\subset N_\RR\times\RR_{\geq0}$ and $\sigma \not\subset N_\RR\times\{0\}$, and for every ray $\gamma\preceq\sigma$ with $\gamma\cap (N_\RR\times\{1\})\neq\emptyset$, we have $\gamma\cap (N\times\{1\})\neq\emptyset$. 
  Let $t\in k[\ZZ^\vee]$ be the torus invariant monomial associated with $1\in\ZZ^\vee$.
  Then the following statements follow:
  \begin{enumerate}
      \item[(a)] There exists a sublattice $N'$ of $N$ such that $N/N'$ is torsion free and the restriction $(\pi, \mathrm{id})|_{\langle\sigma\rangle\cap (N\oplus\ZZ)}\colon \langle\sigma\rangle\cap (N\oplus\ZZ) \rightarrow N/N'\oplus\ZZ$ is isomorphic, where $\pi\colon N\rightarrow N/N'$ be the quotient map and $(\pi, \mathrm{id}_\ZZ)\colon N\oplus\ZZ\rightarrow N/N'\oplus\ZZ$ is a direct product of morphisms $\pi$ and ${\mathrm{id}}_\ZZ$. 
      \item[(b)] For such $N'$ in (a), let $\sigma'$ denote $(\pi, \mathrm{id}_\ZZ)_\RR(\sigma)$. 
      We remark that $\sigma'$ is a strongly convex rational polyhedral cone in $(N/N'\oplus\ZZ)_\RR$. 
      Then the second projection $\pr_2\colon (N/N')\oplus\ZZ\rightarrow\ZZ$ is compatible with the cones $\sigma'$ and $[0, \infty)$. 
      Moreover, if $\gamma'\cap (N/N')_\RR\times\{1\}\neq\emptyset$, then $\gamma'\cap (N/N')\times\{1\}\neq\emptyset$ for any ray $\gamma'\preceq\sigma'$.   
      \item[(c)] Let $F$ denote $(\pr_2)_*^{-1}(0)$ which is the fiber of the toric morphism $(\pr_2)_*\colon X(\sigma')\rightarrow\A^1$ at a closed orbit of $\A^1_k$. 
      Then $F$ is reduced. 
      \item[(d)] 
      There exists a toric monoid $S$ and $\omega\in S$ such that $k[S]/(\chi^\omega)$ is reduced and $X(\sigma')\times_{\A^1_k}\Spec(\Rt)\cong \Spec(\Rt[S]/(t - \chi^\omega))$. 
  \end{enumerate}
\end{lemma}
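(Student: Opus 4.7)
The plan is to prove the four parts in order, with part (a) providing the structural decomposition that powers the rest. For (a), I would set $L = \langle\sigma\rangle \cap (N\oplus\ZZ)$, which is automatically saturated in $N\oplus\ZZ$ since it arises as the intersection of $N\oplus\ZZ$ with an $\RR$-linear subspace. The hypotheses $\sigma\subset N_\RR\times\RR_{\geq 0}$ and $\sigma\not\subset N_\RR\times\{0\}$ force the existence of at least one ray $\gamma\preceq\sigma$ with $\gamma\cap(N_\RR\times\{1\})\neq\emptyset$, and the assumption on such rays then provides some $v\in N$ with $(v,1)\in L$. This is exactly the hypothesis of Lemma \ref{lem: relative decomposition} applied to $L$, which yields a complement $N''\subset N$ with $L\oplus(N''\times\{0\})=N\oplus\ZZ$ and $N/N''$ torsion-free. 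Taking $N'=N''$, I would then verify that $(\pi,\id)|_L$ is an isomorphism: injectivity follows because $\ker(\pi,\id)\cap L=(N'\times\{0\})\cap L=0$ by the direct sum decomposition, and surjectivity because any element of $N\oplus\ZZ$ differs from an element of $L$ by something in $N'\times\{0\}$, which maps to zero.

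For (b), compatibility of $\pr_2$ with $\sigma'$ and $[0,\infty)$ is automatic from $\sigma\subset N_\RR\times\RR_{\geq 0}$ together with the fact that $(\pi,\id)$ preserves second coordinates. The isomorphism from (a) extends to an $\RR$-linear isomorphism of the ambient spans, and hence induces a bijection between the rays of $\sigma$ and of $\sigma'$ that takes primitive lattice generators to primitive lattice generators. Thus the hypothesis ``$\gamma\cap(N\times\{1\})\neq\emptyset$ whenever $\gamma\cap(N_\RR\times\{1\})\neq\emptyset$" transfers verbatim to $\sigma'$.

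For (c), I would translate the question into the combinatorics of the divisor of $\chi^\delta$ on $X(\sigma')$, where $\delta$ is the image of the generator of $\ZZ^\vee$ inside $M'=(N/N'\oplus\ZZ)^\vee$. The morphism $(\pr_2)_*$ is induced by $\chi^\delta=t$, so $F=V(t)\subset X(\sigma')$. The divisor of $\chi^\delta$ on the toric variety $X(\sigma')$ is $\sum_i\langle\delta,u_i\rangle D_i$, indexed over rays $\gamma'_i\preceq\sigma'$ with primitive generators $u_i$, and $\langle\delta,u_i\rangle$ is precisely the second coordinate of $u_i$. By the dichotomy from (b), this second coordinate is $0$ (when $\gamma'_i\subset(N/N')_\RR\times\{0\}$) or $1$ (when it is not). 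Hence every coefficient is $\leq 1$, so $F$ is a reduced divisor. Part (d) then follows immediately by taking $S=\sigma'^\vee\cap M'$ (a toric monoid by construction) and $\omega=\delta$: the ring $k[S]/(\chi^\omega)$ is the coordinate ring of $F$ and hence reduced by (c), while $X(\sigma')\times_{\A^1_k}\Spec(\Rt)$ has coordinate ring $k[S]\otimes_{k[t]}\Rt=\Rt[S]/(t-\chi^\omega)$ by definition of base change along the map $k[t]\to k[S],\ t\mapsto\chi^\omega$.

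The only delicate step is (a): one must arrange the decomposition so that $N/N'$ is torsion-free \emph{and} the restriction of $(\pi,\id)$ to the saturated lattice $L$ becomes an isomorphism, which is exactly why the hypothesis $(v,1)\in L$ is needed and why Lemma \ref{lem: relative decomposition} must be invoked rather than a naive choice of complement. Everything afterward is formal: (b) is a transport of structure along the isomorphism from (a), (c) is a direct reading of the principal divisor, and (d) is just the functor of points of the fibre product.
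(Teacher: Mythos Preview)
Your proof follows the same route as the paper's: Lemma \ref{lem: relative decomposition} for (a), transport of the ray condition along the isomorphism for (b), reading off the multiplicities of $t$ along the toric boundary divisors for (c), and unwinding the fibre product for (d). Two small points you gloss over that the paper makes explicit: in (c), the fact that all coefficients of $\operatorname{div}(\chi^\delta)$ are $0$ or $1$ gives only $(R_0)$ for $F$, and one still needs that $X(\sigma')$ is Cohen--Macaulay (or merely $(S_2)$, via normality) to get $(S_1)$ and hence reducedness of the scheme $F$; in (d), the monoid $S=\sigma'^\vee\cap M'$ is a \emph{toric monoid} in the paper's sense only because $\sigma'$ is full-dimensional in $(N/N'\oplus\ZZ)_\RR$, which follows from the isomorphism established in (a).
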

\begin{proof}
  We prove the statements from (a) to (d) in order. 
  \begin{enumerate}
      \item[(a)] By the assumption of $\sigma$, there exists a ray $\gamma\preceq\sigma$ such that $\gamma\not\subset N_\RR\times \{0\}$. 
      Because $\gamma\cap (N\times\{1\})\neq \emptyset$, there exists $v\in N$ such that $(v, 1)\in\sigma$.  
      Thus, by Lemma \ref{lem: relative decomposition}, there exists a sublattice $N_0$ of $N$ such that $(\langle\sigma\rangle\cap(N\oplus\ZZ))\oplus (N_0\times\{0\}) = N\oplus\ZZ$. 
      Therefore, the statement holds. 
      \item[(b)] 
      Because $\sigma\subset N_\RR\times\RR_{\geq 0}$, we have $\sigma'\subset (N/N')_\RR\times \RR_{\geq 0}$. 
      Thus, the second projection $\pr_2\colon (N/N')\oplus\ZZ\rightarrow\ZZ$ is compatible with the cones $\sigma'$ and $[0, \infty)$. 
      Moreover, there is a one-to-one correspondence with rays of $\sigma$ and those of $\sigma'$. 
      Thus, the second statement holds by the assumption of $\sigma$ and $N'$. 
      
      \item[(c)]
      There is a one-to-one correspondence between irreducible components of $F$ and orbit closures of $X(\sigma')$ associated with a ray of $\sigma'$ such that it intersects $(N/N')_\RR\times\{1\}$. 
      Let $\gamma'\preceq\sigma'$ be a ray with $\gamma'\cap (N/N')_\RR\times\{1\}\neq\emptyset$ and let $\eta'\in X(\sigma)$ denote the generic point of $\overline{O_{\gamma'}}$. 
      Let $v'\in(N/N')\oplus\ZZ$ denote a minimal generator of $\gamma'$. 
      Then $v'$ is a unique torus invariant valuation of $X(\sigma')$ such that the valuation ring associated with $v'$ is $\OO_{X(\sigma'), \eta'}$ and the image of $v'$ is $\ZZ$. 
      Now, we show that $v'(t) = 1$. 
      Indeed, there exists $v''\in N/N'$ such that $v' = (v'', 1)$ by (b). 
      Because the second component of $v'$ is $1$, we have $v'(t) = 1$. 
      Thus, $\OO_{X(\sigma'), \eta'}/(t) = \OO_{F, \eta'}$ is a field, in particular, $\OO_{F, \eta'}$ is regular. 
      Because this follows for any generic points of irreducible components of $F$, $F$ has a property $(R_0)$. 
      Moreover, $X(\sigma')$ is a Cohen-Macaulay and an integral scheme. 
      Hence, $F$ is a Cohen-Macaulay scheme. 
      In particular, $F$ has a property $(S_1)$. 
      Therefore, $F$ is reduced. 
      \item[(d)] Let $S$ be the monoid associated with $X(\sigma')$, let $g\colon X(\sigma')\rightarrow \Spec(k[S])$ be a natural isomorphism, and let $\omega\in S$ be the element associated with $t$. 
      We remark that $\sigma'$ is a full cone in $(N/N'\oplus\ZZ)_{\RR}$ by the construction of $\sigma'$, and hence, $S$ is a toric monoid.  
      We regard $k[S]$ as a $k[t]$-algebra by a $k$-morphism $q\colon k[t]\rightarrow k[S]$ such that $q(t) = \chi^\omega$. 
      Then there exists the following commutative diagram:
      \begin{equation*}
          \begin{tikzcd}
              X(\sigma')\ar[r, "g"]\ar[d, "(\pr_2)_*"]& \Spec(k[S])\ar[ld, "q^*"]\\
              \Spec(k[t])& ,
          \end{tikzcd}
      \end{equation*}
      where $q^*$ is a morphism induced by $q$. 

      Let $r$ denote $q\otimes\mathrm{id}_{k[S]}\colon k[t][S]\rightarrow k[S]$. 
      We regard $k[t][S]$ as a $k[t]$-algebra by a $k$-morphism $s\colon k[t]\rightarrow k[t][S]$ such that $s(t) = t$. 
      We remark that $r$ is a $k[t]$-morphism. 
      Let $r_0$ denote the quotient map $k[t][S]/(t - \chi^\omega)\rightarrow k[S]$ induced by $r$ and let $s_0$ denote a morphism $k[t]\rightarrow k[t][S]/(t - \chi^\omega)$ induced by $s$. 
      Because of the definition of $r$, $r_0$ is an isomorphism. 
      Thus, there exists the following commutative diagram:
      \begin{equation*}
          \begin{tikzcd}
              X(\sigma')\ar[r, "g"]\ar[d, "(\pr_2)_*"]& \Spec(k[S])\ar[ld, "q^*"]\ar[d, "r^*_0"]\\
              \Spec(k[t])& \Spec(k[t][S]/(t-\chi^\omega))\ar[l, "s^*_0"],
          \end{tikzcd}
      \end{equation*}
      where $r^*_0$ and $s^*_0$ are morphisms induced by $r_0$ and $s_0$. 
      Thus, $X(\sigma')\times_{\A^1_k}\Spec(\Rt)\cong \Spec(\Rt[S]/(t - \chi^\omega))$, and $k[S]/(\chi^\omega)$ is isomorphic to a global section ring of the fiber of $F$, in particular, it is reduced by (c). 
  \end{enumerate}
\end{proof}
The following lemma is needed for the explicit calculation of the normal fan of the polytope. 
  \begin{lemma}\label{lem: compute normal fan}
      Let $S$ be a finite set, let $N$ be a lattice of finite rank, let $M$ be the dual lattice of $N$, let $u\colon S\rightarrow M$ be a map, and let $P(u)$ be a convex closure of $u(S)$ in $M_\RR$, $\Sigma(u)$ be the normal fan of $P(u)$. 
      Let $D(u)$ denote the rational polyhedral cone in $(M\oplus\ZZ)_\RR$ generated by $\{(u(i), 1)\mid i\in S\}$, let $C(u)\subset (N\oplus\ZZ)_\RR$ denote the dual cone of $D(u)$, let $\pi\colon N\oplus\ZZ\rightarrow N$ be the first projection, let $\Delta$ be a rational polyhedral convex cone in $N_\RR$, and let $\Sigma(\Delta, u)$ denote the set $\{\sigma_1\cap\sigma_2\mid \sigma_1\in \Delta, \sigma_2\in\Sigma(u)\}$. 
      Then the following statements hold: 
      \begin{itemize}
          \item[(a)] Let $\sigma\preceq C(u)$ be a face. 
          If $(0, 1)\notin \sigma$, then $\pi_\RR(\sigma)\in \Sigma(u)$. 
          \item[(b)] Let $\sigma'\in \Sigma(u)$. 
          Then there uniquely exists a face $\sigma\preceq C(u)$ such that $(0, 1)\notin \sigma$ and $\sigma' = \pi_\RR(\sigma)$. 
          \item[(c)] The set $\Sigma(\Delta, u)$ is a rational polyhedral convex fan in $N_\RR$ and a refinement of $\Delta$. 
          \item[(d)] Let $\tau\in\Delta$, let $D(\tau, u)$ denote the rational polyhedral cone in $(M\oplus\ZZ)_\RR$ generated by $\{(u(i), 1)\mid i\in S\}\cup\{(\omega, 0)\mid \omega\in \tau^\vee\}$, let $C(\tau, u)\subset (N\oplus\ZZ)_\RR$ denote the dual cone of $D(\tau, u)$, and let $\gamma\preceq C(\tau, u)$. 
          If $(0, 1)\notin\gamma$, then $\pi_\RR(\gamma)\in \Sigma(\Delta, u)$ and $\pi_\RR(\gamma)\subset\tau$. 
          \item[(e)] We keep the notation in (d). 
          Let $\gamma'\in \Sigma(\Delta, u)$. 
          If $\gamma'\subset \tau$, then there uniquely exists a face $\gamma\preceq C(\tau, u)$ such that $(0, 1)\notin\gamma$ and $\pi_\RR(\gamma) = \gamma'$. 
      \end{itemize}
  \end{lemma}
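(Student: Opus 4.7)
The core of the proof is a standard duality: proper faces of $C(u)$ which do not contain $(0,1)$ are in bijection with faces of the polytope $P(u)$, and under this bijection $\pi_\RR$ sends such a face of $C(u)$ to the normal cone of the corresponding face of $P(u)$. I will establish (a) and (b) by making this bijection explicit, then combine it with standard fan operations for (c), and lift everything to the relative setting $C(\tau, u) = C(u) \cap (\tau \times \RR)$ for (d) and (e).

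For (a), let $\sigma \preceq C(u)$ be a face with $(0,1) \notin \sigma$. Choose $(v, c) \in \sigma^\circ$ and set $T_\sigma = \{i \in S : \langle v, u(i)\rangle + c = 0\}$; this set is independent of the interior point chosen. Since $(0,1)$ satisfies every defining inequality of $C(u)$ strictly, the hypothesis $(0,1) \notin \sigma$ is equivalent to $T_\sigma \neq \emptyset$. For any $(v,c) \in \sigma$ one has $c = -\min_i \langle v, u(i)\rangle$ with $T_\sigma$ recording the minimizers; consequently $Q_\sigma := \mathrm{conv}\{u(i) : i \in T_\sigma\}$ is a face of $P(u)$, and a short computation identifies $\pi_\RR(\sigma)$ with the normal cone $\sigma_{Q_\sigma} \in \Sigma(u)$. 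For (b), given $\sigma' = \sigma_Q \in \Sigma(u)$, let $\sigma$ be the face of $C(u)$ cut out by the equalities $\langle v, u(i)\rangle + c = 0$ for all $i$ with $u(i) \in Q$; this projects onto $\sigma_Q$ and avoids $(0,1)$. Uniqueness reduces to the observation that the tight constraints of any such face must be precisely $\{i : u(i) \in Q\}$.

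For (c), since $\Sigma(u)$ is a fan in $N_\RR$ (complete, as $P(u)$ is non-empty), the compatibility of intersections $\sigma_1 \cap \sigma_2$ with $\sigma_1 \in \Delta$, $\sigma_2 \in \Sigma(u)$ follows from the identity $(\sigma_1 \cap \sigma_2) \cap (\tau_1 \cap \tau_2) = (\sigma_1 \cap \tau_1) \cap (\sigma_2 \cap \tau_2)$: the right-hand side is a common face of both factors because $\Delta$ and $\Sigma(u)$ are fans. This yields the fan structure on $\Sigma(\Delta, u)$, and completeness of $\Sigma(u)$ gives $\sigma_1 = \bigcup_{\sigma_2 \in \Sigma(u)}(\sigma_1 \cap \sigma_2)$, so $\Sigma(\Delta, u)$ refines $\Delta$.

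For (d) and (e), using $C(\tau, u) = C(u) \cap (\tau \times \RR)$, a face $\gamma \preceq C(\tau, u)$ is cut out by equalities of two types: $\langle v, u(i)\rangle + c = 0$ for $i$ in some $T \subseteq S$, and $\langle v, \omega\rangle = 0$ for $\omega$ in some subset of $\tau^\vee$, the latter specifying a face $\tau_0 \preceq \tau$. As in (a), $(0,1) \notin \gamma$ forces $T \neq \emptyset$, and $\pi_\RR(\gamma) = \tau_0 \cap \sigma_{Q_\gamma}$, which lies in $\Sigma(\Delta, u)$ since faces of cones in the fan $\Delta$ remain in $\Delta$; this proves (d). For (e), the inverse sends $\gamma' \subset \tau$ in $\Sigma(\Delta, u)$ to the face of $C(\tau, u)$ determined by $T := \{i : u(i) \in Q\}$ (with $Q$ as in (b)) together with $\tau_0$ equal to the smallest face of $\tau$ containing $\gamma'$. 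The main obstacle is verifying uniqueness here, which hinges on the standard convex-geometry fact that the relative interior of a convex subset $C \subseteq \tau$ is contained in the relative interior of the smallest face of $\tau$ containing $C$; this pins down both $\tau_0$ and $T$ from $\gamma'$.
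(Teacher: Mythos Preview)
Your proposal is correct and follows essentially the same approach as the paper: both arguments rest on the duality between faces of $C(u)$ not containing $(0,1)$ and faces of $P(u)$ (you phrase this via the set $T_\sigma$ of tight constraints, the paper via the dual face $\sigma^\perp\cap D(u)$ intersected with the height-one slice), and both handle (d)--(e) by recognizing $C(\tau,u)=C(u)\cap\pi_\RR^{-1}(\tau)$ and decomposing faces accordingly. The only minor difference is that in (e) you pass to the \emph{smallest} face $\tau_0\preceq\tau$ containing $\gamma'$, whereas the paper simply takes any $\tau'\in\Delta$ appearing in a decomposition $\gamma'=\pi_\RR(\sigma)\cap\tau'$ and replaces it by $\tau'\cap\tau\preceq\tau$; both choices work and lead to the same face of $C(\tau,u)$.
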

  \begin{proof}
      We prove the statement from (a) to (e) in order. 
      \begin{itemize}
          \item[(a)] Let $\sigma^*$ denote the face $\sigma^\perp\cap D(u)\preceq D(u)$. 
          By the assumption, $\sigma^* \neq \{0\}$. 
          On the identification of $D(u)\cap (M_\RR\times \{1\})$ and $P(u)$, $\sigma^*\cap (M_\RR\times \{1\})$ is a face of $P(u)$. 
          Let $Q(\sigma)$ denote this face. 
          Then we can check that $\{v\in N_\RR\mid \langle v, \omega_1-\omega_2\rangle\geq 0, \forall \omega_1\in P(u), \forall\omega_2\in Q(\sigma)\} = \pi_\RR(\sigma)$. 
          The left-hand side is an element in $\Sigma(u)$. 
          Thus, the statement holds. 
          \item[(b)] For $\sigma'\in \Sigma(u)$, there uniquely exists a face $Q\preceq P(u)$ such that $\sigma' = \{v\in N_\RR\mid \langle v, \omega_1-\omega_2\rangle\geq 0, \forall \omega_1\in P(u), \forall\omega_2\in Q\}$. 
          Let $\tau$ denote the subset of $(M\oplus\ZZ)_\RR$ defined as $\tau = \{(c\omega, c)\mid \omega\in Q, c\in\RR_{\geq 0}\}$. 
          Because $Q$ is a face of $P(u)$, $\tau$ is also a face of $D(u)$. 
          Let $\sigma$ denote $\tau^\perp\cap C(u)$. 
          Because $Q\neq \emptyset$, $(0, 1)\notin\sigma$. 
          Then we can check that $\sigma' = \pi_\RR(\sigma)$. 
          The uniqueness is held by the condition that $(0, 1)\notin\sigma$ and the property of face. 
          \item[(c)] Because $\Sigma(u)$ is a rational polyhedral convex fan with $\supp(\Sigma(u)) = N_\RR$ and $\Delta$ is a rational polyhedral convex fan, $\Sigma(\Delta, u)$ is also a rational polyhedral convex fan with $\supp(\Sigma(\Delta, u)) = \supp(\Delta)$. 
          By the definition of $\Sigma(\Delta, u)$, it is obvious that $\Sigma(\Delta, u)$ is a refinement of $\Delta$. 
          \item[(d)] By the definition of $C(\tau, u)$, $C(\tau, u) = C(u)\cap \pi^{-1}_\RR(\tau)$. 
          Thus, there exists faces $\sigma\preceq C(u)$ and $\tau'\preceq \tau$ such that $\gamma = \sigma\cap \pi^{-1}_\RR(\tau')$. 
          By the assumption of $\gamma$, $(0, 1)\notin\sigma$. 
          Thus, $\pi_\RR(\gamma) = \pi_\RR(\sigma)\cap \tau'\in\Sigma(\Delta, u)$ by (a). 
          \item[(e)] By (b), there exists $\sigma\preceq C(u)$ and $\tau'\in\Delta$ such that $(0, 1)\notin\sigma$ and $\gamma' = \pi_\RR(\sigma)\cap \tau'$. 
          Because $\gamma'\subset\tau$, $\gamma' = \pi_\RR(\sigma)\cap (\tau'\cap \tau)$ and hence, we may assume $\tau'$ is a face of $\tau$. 
          Then $\sigma\cap \pi^{-1}_\RR(\tau')$ is a face of $C(\tau, u)$ and $(0, 1)\notin\sigma\cap \pi^{-1}_\RR(\tau')$. 
          Let $\gamma$ denote $\sigma\cap \pi^{-1}_\RR(\tau')$, then $\pi_\RR(\gamma) = \gamma'$. 
          The uniqueness holds as in the proof of (b). 
      \end{itemize}
  \end{proof}
\subsection{Lemmas related to the scheme theoretic image}
In this subsection, we correct lemmas related to the scheme theoretic image. 
\begin{lemma}\label{lem: open-image}
  Let $f\colon X\rightarrow Y$ be a quasi-compact morphism of schemes, and $Z$ be the scheme theoretic image of $f$, and $V$ be an open subscheme of $Y$. 
  Then $Z\cap V$ is the scheme theoretic image of $f|_{f^{-1}(V)}\colon f^{-1}(V)\rightarrow V$. 
\end{lemma}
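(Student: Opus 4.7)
The plan is to use the standard description of the scheme-theoretic image of a quasi-compact morphism in terms of the ideal sheaf $\ker(\OO_Y \to f_*\OO_X)$, together with the fact that this kernel is compatible with restriction to open subschemes. Both $Z \cap V$ and the scheme-theoretic image $Z'$ of $f|_{f^{-1}(V)}$ are closed subschemes of $V$, so it suffices to check that their defining quasi-coherent ideal sheaves on $V$ coincide.

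First I would reduce to the affine case by working locally on $Y$. Since $f$ is quasi-compact, $f_*\OO_X$ is quasi-coherent on $Y$, and $Z \subset Y$ is the closed subscheme cut out by $\mathcal{I} := \ker(\OO_Y \to f_*\OO_X)$. Similarly, $f|_{f^{-1}(V)}$ is quasi-compact (being the base change of $f$ along the open immersion $V \hookrightarrow Y$), so its scheme-theoretic image $Z'$ is cut out by $\mathcal{I}' := \ker(\OO_V \to (f|_{f^{-1}(V)})_* \OO_{f^{-1}(V)})$ on $V$. The defining ideal sheaf of $Z \cap V$ on $V$ is the restriction $\mathcal{I}|_V$, so it suffices to prove that $\mathcal{I}|_V = \mathcal{I}'$.

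The key computation is that the formation of $f_*\OO_X$ commutes with restriction to open subschemes of the base: for any open $U \subset Y$, one has $(f_*\OO_X)|_U = (f|_{f^{-1}(U)})_* \OO_{f^{-1}(U)}$ by the definition of pushforward. Applying this with $U = V$ gives that the morphism $\OO_Y|_V \to (f_*\OO_X)|_V$ is exactly the morphism $\OO_V \to (f|_{f^{-1}(V)})_*\OO_{f^{-1}(V)}$ used to define $\mathcal{I}'$. Taking kernels yields $\mathcal{I}|_V = \mathcal{I}'$, which is precisely the desired equality of closed subschemes of $V$.

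There is no real obstacle here: the statement is entirely formal once one has the quasi-coherent description of the scheme-theoretic image under the quasi-compactness hypothesis (which is exactly why that hypothesis appears). The only point to be mildly careful about is confirming that $f|_{f^{-1}(V)}$ inherits quasi-compactness from $f$, so that the scheme-theoretic image $Z'$ is actually described by the kernel ideal as claimed; this is immediate since quasi-compactness is preserved under arbitrary base change, and in particular under base change along the open immersion $V \hookrightarrow Y$.
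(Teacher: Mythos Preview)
Your proof is correct and follows essentially the same approach as the paper: both identify the defining ideal of the scheme-theoretic image as $\mathcal{I} = \ker(\OO_Y \to f_*\OO_X)$ via quasi-compactness, and then observe that $\mathcal{I}|_V = \ker(\OO_V \to (f|_{f^{-1}(V)})_*\OO_{f^{-1}(V)})$, which is the ideal of $Z \cap V$. Your version is slightly more explicit about why pushforward commutes with open restriction and why $f|_{f^{-1}(V)}$ remains quasi-compact, but the argument is the same.
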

\begin{proof}
  Because $f$ is quasi-compact, $\mathscr{I} = \ker(\OO_Y\rightarrow f_*\OO_X)$ is the quasi-coherent ideal sheaf associated with the closed subscheme $Z$ of $Y$. 
  Because $\mathscr{I}|_V = \ker(\OO_{V}\rightarrow (f|_{f^{-1}(V)})_*\OO_{f^{-1}(V)})$ and $\mathscr{I}|_V$ is the quasi-coherent ideal sheaf associated with the closed subscheme $Z\cap V$ of $V$, $Z\cap V$ is the scheme theoretic image of $f|_{f^{-1}(V)}$. 
\end{proof}
\begin{lemma}\label{lem:covering-image}
  Let $f\colon X\rightarrow Y$ be a quasi-compact morphism of schemes, $Z$ be a closed subscheme of $Y$, and $\{V_i\}_{i\in I}$ be an open covering of $Y$. 
  Then the following statements are equivalent. 
  \begin{itemize}
      \item[(a)] A closed subscheme $Z$ of $Y$ is the scheme theoretic image of $f$. 
      \item[(b)] For any $i\in I$, a closed subscheme $Z\cap V_i$ of $V_i$ is the scheme theoretic image of $f|_{f^{-1}(V_i)}$. 
  \end{itemize}
\end{lemma}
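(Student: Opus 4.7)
The plan is to derive the equivalence from the previous lemma (Lemma \ref{lem: open-image}) together with the fact that closed subschemes of $Y$ are determined locally on an open cover. The direction (a) $\Rightarrow$ (b) should be an immediate application of Lemma \ref{lem: open-image}: given that $Z$ is the scheme theoretic image of $f$, I apply that lemma with $V = V_i$ for each $i \in I$, concluding that $Z \cap V_i$ is the scheme theoretic image of $f|_{f^{-1}(V_i)}$.

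For the direction (b) $\Rightarrow$ (a), I would invoke the existence of the scheme theoretic image of $f$. Since $f$ is quasi-compact, the kernel $\mathscr{I} = \ker(\OO_Y \to f_*\OO_X)$ is a quasi-coherent ideal sheaf on $Y$, and the closed subscheme it defines, call it $Z'$, is the scheme theoretic image of $f$. Applying the already-established implication (a) $\Rightarrow$ (b) to $Z'$, I obtain that $Z' \cap V_i$ is the scheme theoretic image of $f|_{f^{-1}(V_i)}$ for every $i$. Combining this with hypothesis (b), I conclude $Z \cap V_i = Z' \cap V_i$ as closed subschemes of $V_i$ for each $i$.

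The final step is to glue: two closed subschemes of $Y$ that agree on every element of an open cover must coincide, because the quasi-coherent ideal sheaves associated with $Z$ and $Z'$ then agree on each $V_i$, hence globally, so $Z = Z'$ as closed subschemes of $Y$. There is no substantive obstacle here; the only point to mind is using the characterization of the scheme theoretic image via the kernel of $\OO_Y \to f_*\OO_X$ and the standard fact that this kernel is quasi-coherent under the quasi-compactness hypothesis, exactly as used in the proof of Lemma \ref{lem: open-image}.
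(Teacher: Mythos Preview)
Your proposal is correct and follows essentially the same approach as the paper. The only cosmetic difference is that for (b) $\Rightarrow$ (a) the paper directly verifies $\mathscr{I}_Z|_{V_i} = \ker(\OO_{V_i} \to (f|_{f^{-1}(V_i)})_*\OO_{f^{-1}(V_i)})$ on each $V_i$ and then concludes $\mathscr{I}_Z = \ker(\OO_Y \to f_*\OO_X)$ globally, whereas you introduce the scheme theoretic image $Z'$ and compare $Z$ with $Z'$ via (a) $\Rightarrow$ (b); both arguments rest on the same local-to-global principle for ideal sheaves.
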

\begin{proof}
  When the statement (a) holds, the statement (b) holds by Lemma \ref{lem: open-image}. 
  Then we assume the statement (b). 
  Let $\mathscr{I}_Z\subset \OO_Y$ be the quasi-coherent ideal sheaf associated with $Z$ of $Y$. 
  By the assumption, for any $i\in I$, $\mathscr{I}_Z|_{V_i} = \ker(\OO_{V_i}\rightarrow (f|_{f^{-1}(V_i)})_*\OO_{f^{-1}(V_i)})$.
  Thus, $\mathscr{I}_Z = \ker(\OO_Y\rightarrow f_*\OO_X)$ because $\{V_i\}_{i\in I}$ is an open covering of $Y$. 
  Therefore, $Z$ is the scheme theoretic image of $f$. 
\end{proof}
\begin{lemma}\label{lem: flat-closure lemma}
    Let $S$ be a scheme, let $X$ be a scheme over $S$, and let $Y$ be a closed subscheme of $X$. 
    Let $\varphi\colon S'\rightarrow S$ be a quasi-compact morphism of schemes, let $X'$ denote $X\times_S S'$, and let $Y'$ denote $Y\times_S S'$. 
    We assume that $Y\rightarrow S$ is a flat morphism and the scheme theoretic image of $\varphi$ is $S$. 
    Then the scheme theoretic image of $Y'\rightarrow X$ is $Y$.
    \begin{equation*}
        \begin{tikzcd}
            Y'\ar[r]\ar[d]&X'\ar[r]\ar[d]&S'\ar[d, "\varphi"]\\
            Y\ar[r]&X\ar[r]&S
        \end{tikzcd}
    \end{equation*}
  \end{lemma}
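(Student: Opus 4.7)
The plan is to reduce the claim to the affine situation and then apply flatness directly. By Lemma \ref{lem:covering-image}, the assertion that the scheme-theoretic image of $Y'\to X$ equals $Y$ is local on $X$, so I may assume $S = \Spec A$ and $X = \Spec B$ are affine. Writing $Y = V(I)$ for an ideal $I\subset B$, the hypothesis becomes that $B/I$ is flat as an $A$-module. Since $Y'\to X$ factors through the closed subscheme $Y\subset X$, the scheme-theoretic image is automatically contained in $Y$, and it suffices to show that the structure map $B/I \to \Gamma(Y',\mathcal{O}_{Y'})$ is injective.

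Next I use quasi-compactness of $\varphi$ to cover $S'$ by finitely many affines $\Spec A'_1,\dots,\Spec A'_n$, and put $A' = \prod_i A'_i$. The sheaf axiom for this finite affine cover yields an injection $\Gamma(S',\mathcal{O}_{S'})\hookrightarrow A'$, and by hypothesis the map $A\to \Gamma(S',\mathcal{O}_{S'})$ is also injective (this is precisely what it means for $S$ to be the scheme-theoretic image of $\varphi$). Composing, I get that $A\to A'$ is injective. Since $Y\times_S \Spec A'_i = \Spec(B/I\otimes_A A'_i)$ gives an affine cover of $Y'$, the same argument produces an injection $\Gamma(Y',\mathcal{O}_{Y'})\hookrightarrow \prod_i B/I\otimes_A A'_i = B/I\otimes_A A'$ (using that tensor product commutes with finite direct products). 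On the other hand, flatness of $B/I$ over $A$ applied to $A\hookrightarrow A'$ shows that $B/I \to B/I\otimes_A A'$ is injective as well. Chasing these injections through gives the required injectivity of $B/I\to \Gamma(Y',\mathcal{O}_{Y'})$.

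I do not anticipate any substantial obstacle: the argument is a straightforward bookkeeping exercise combining the definition of scheme-theoretic image, the sheaf axiom on finite affine covers, the compatibility of finite products with tensor products, and the preservation of injections by tensoring with a flat module. The only point requiring mild care is the passage from a possibly non-affine $S'$ to the affine $\Spec A'$, which is exactly what quasi-compactness of $\varphi$ allows, together with Lemma \ref{lem: open-image} guaranteeing that the scheme-theoretic image behaves well under this reduction.
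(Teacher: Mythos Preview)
Your proposal is correct and follows essentially the same approach as the paper's proof: reduce to $S$ and $X$ affine via Lemma~\ref{lem:covering-image}, take a finite affine cover $\{\Spec C_r\}$ of $S'$, use that $A\to\bigoplus C_r$ is injective, and then tensor with the flat $A$-module $B/I$ to conclude that $\ker\big(B\to\bigoplus (B/I\otimes_A C_r)\big)=I$. Your version simply interposes the intermediate object $\Gamma(Y',\mathcal{O}_{Y'})$ and phrases the conclusion as injectivity of $B/I\to\Gamma(Y',\mathcal{O}_{Y'})$, which is exactly equivalent; the paper skips this and works directly with the product ring.
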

  \begin{proof}
      By Lemma \ref{lem:covering-image}, we may assume $S$ and $X$ are affine. 
      Let $A$ and $B$ be rings such that $S = \Spec(A)$ and $X = \Spec(B)$, let $I$ be an ideal of $B$ such that $Y = \Spec(B/I)$, and let $\{\Spec(C_r)\}_{1\leq r\leq n}$ be a finite affine covering of $S'$. 
      By the assumption, the ring morphism $A\rightarrow \bigoplus_{1\leq r\leq n}C_r$ induced by $S'\rightarrow S$ is injective. 
      Because $B/I$ is flat over $A$, $B/I \rightarrow \bigoplus_{1\leq r\leq n}C_r\otimes_A B/I$ is also injective. 
      Let $f$ denote the ring morphism of the latter one, and let $g\colon B\rightarrow \bigoplus_{1\leq r\leq n}C_r\otimes_A B/I$ denote a ring morphism defined by the composition of $f$ and the quotient morphism $B\rightarrow B/I$. 
      Then the ideal associated with the scheme theoretic image of $Y'\rightarrow X$ is $\mathrm{Ker}(g)$, and hence $\mathrm{Ker}(g) = I$. 
      Thus, the statement holds. 
  \end{proof}
  \begin{lemma}\label{lem: base change and closure}
    Let $f\colon Y\rightarrow X$ be a quasi-compact morphism of schemes, let $X'$ be a scheme over $X$, let $Y'$ denote $Y\times_X X'$, let $f'$ denote the morphism $Y'\rightarrow X'$, let $Z$ denote the scheme theoretic image of $f$, and let $Z'$ denote the scheme theoretic image of $f'$. 
    If $X'$ is flat over $X$, then $Z'$ is $Z\times_X X'$. 
    \begin{equation*}
        \begin{tikzcd}
            Y'\ar[r, "f'"]\ar[d]&X'\ar[d]\\
            Y\ar[r, "f"]&X
        \end{tikzcd}
    \end{equation*}
  \end{lemma}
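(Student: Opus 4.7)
The plan is to reduce to the case where $X$, $X'$, and $Y$ are all affine, and then to exploit the defining exactness property of flatness (kernels are preserved under tensoring by a flat module).

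First, by Lemma \ref{lem:covering-image}, the formation of the scheme theoretic image commutes with open immersions into the target, so both $Z'$ and $Z\times_X X'$ can be checked locally on $X'$. Hence I may assume $X'$ is affine, and similarly (by localizing on $X$ and pulling back) that $X$ is affine, say $X=\Spec(A)$ and $X'=\Spec(A')$ with $A\to A'$ flat. Since $f$ is quasi-compact and $X$ is affine, $Y$ admits a finite affine cover $\{\Spec(B_i)\}_{1\le i\le n}$. Setting $B:=\prod_{i}B_i$ and writing $\varphi\colon A\to B$ for the induced ring homomorphism, the scheme theoretic image $Z$ is cut out by the ideal $I:=\ker(\varphi)\subset A$, so $Z\times_X X'$ is cut out by the extended ideal $I\cdot A' = \operatorname{Im}(I\otimes_A A'\to A')$.

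Next, by compatibility of fiber products, $Y'=Y\times_X X'$ is covered by the finitely many affines $\Spec(B_i\otimes_A A')$, and the map $f'\colon Y'\to X'$ is induced by $\varphi\otimes\id\colon A'\to B\otimes_A A'$. Thus $Z'$ is cut out by $\ker(\varphi\otimes\id)$. The short exact sequence $0\to I\to A\xrightarrow{\varphi} B$ remains exact after tensoring with the flat $A$-module $A'$, yielding
\[
0\longrightarrow I\otimes_A A'\longrightarrow A'\xrightarrow{\varphi\otimes\id} B\otimes_A A'.
\]
Therefore $\ker(\varphi\otimes\id)=I\otimes_A A'$, which agrees with $I\cdot A'$ (since $A'$ is flat, $I\otimes_A A'\to A'$ is injective). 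This identifies the ideals defining $Z'$ and $Z\times_X X'$, proving the lemma.

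The only delicate point in this plan is the reduction step: one must verify that $Z\times_X X'$ itself can be checked to coincide with $Z'$ locally on $X'$, which follows from Lemma \ref{lem:covering-image} together with the fact that $f'$ is again quasi-compact (being the base change of $f$). Once the reduction to the affine, finitely-covered situation is in place, the argument is purely the exactness of $-\otimes_A A'$ applied to the defining sequence of the scheme theoretic image, and there is no further obstacle.
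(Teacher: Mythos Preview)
Your proof is correct and follows essentially the same approach as the paper: reduce to $X$ and $X'$ affine via Lemma~\ref{lem:covering-image}, take a finite affine cover of $Y$, and use that flat base change preserves the kernel of the induced ring map $A\to\prod_i B_i$. Your write-up is slightly more detailed (spelling out the exact sequence and the quasi-compactness of $f'$), but the argument is the same as the paper's.
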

  \begin{proof}
      By Lemma \ref{lem:covering-image}, we may assume $X$ and $X'$ are affine. 
      Let $A$ and $B$ be rings such that $X = \Spec(A)$ and $X' = \Spec(B)$, and let $\{\Spec(C_r)\}_{1\leq r\leq n}$ be a finite affine covering of $Y$. 
      Let $f$ denote the ring morphism $A\rightarrow \bigoplus_{1\leq r\leq n}C_r$ induced by $Y\rightarrow X$. 
      Then the ideal associated with $Z$ is $\mathrm{Ker}(f)$. 
      Because $B$ is flat over $A$, the kernel of $f\otimes \id_B$ is $\mathrm{Ker}(f)B$. 
      Thus, the scheme theoretic image of $Y'\rightarrow X'$ is $Z\times_X X'$. 
  \end{proof}
  \begin{remark}
      We will provide a counterexample in which Lemma \ref{lem: base change and closure} fails to hold without assuming flatness. 
      Let $X$ denote $\A^2_\C$, let $X'\rightarrow X$ denote the blowing-up of $X$ at $(0, 0)$, $W$ denote a curve passing through $(0, 0)$, $Y$ denote $W\setminus\{(0, 0)\}$, and $f\colon Y\rightarrow X$ denote an immersion. 
      Then $Y'$ coincides with $\hat{W}\setminus E$, where $\hat{W}$ is the strict transform of $Z$ and $E$ is the exceptional divisor. 
      Thus, it holds that $Z = W$ and $Z' = \hat{W}$. 
      However, $Z\times_X X'$ is not irreducible. 
      
  \end{remark}
\subsection{Lemmas related to a scheme over valuation ring}
In this subsection, we correct for lemmas related to a scheme over $\Rt$. 
The definitions for $\Rt$ and $\Kt$ are given in Section 2.
For a positive integer $l$, $R_l$ denote $k[[t^{1/l}]]$ and $K_l$ denote $k((t^{1/l}))$. 

The following lemma is referenced in \cite[Remark. 4.6]{G13}; however, for the sake of
thoroughness, a detailed proof is provided here.
\begin{lemma}\label{lem:Gubler}
  \cite[Remark. 4.6]{G13}
      Let $\mathfrak{X}$ be a flat scheme over $\Spec(\Rt)$, let $X$ denote a base change of $\mathfrak{X}\rightarrow \Spec(\Rt)$ to $\Spec(\Kt)$, and let $Y$ be a closed subscheme of $X$. 
      Then there exists a unique closed subscheme $\mathfrak{Y}$ of $\mathfrak{X}$ such that the following conditions hold:
      \begin{enumerate}
          \item[(1)] A scheme $\mathfrak{Y}$ is flat over $\Spec(\Rt)$. 
          \item[(2)] A base change of $\mathfrak{Y}\rightarrow \Spec(\Rt)$ to $\Spec(\Kt)$ is equal to $Y$ as a closed subscheme of $X$. 
      \end{enumerate}
      In fact, such $\mathfrak{Y}$ is the scheme theoretic closure of $Y$ in $\mathfrak{X}$. 
  \end{lemma}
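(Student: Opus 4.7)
The plan is to define $\mathfrak{Y}$ as the scheme theoretic closure of $Y$ in $\mathfrak{X}$ (i.e., the scheme theoretic image of the composition $Y\hookrightarrow X\hookrightarrow \mathfrak{X}$), and then verify it satisfies the two listed conditions, with uniqueness treated afterwards. Because the scheme theoretic image behaves well with respect to open restriction (Lemma \ref{lem: open-image} and Lemma \ref{lem:covering-image}), and because flatness is a local property, I would immediately reduce to the affine case: write $\mathfrak{X} = \Spec(A)$ for an $\Rt$-algebra $A$ that is $\Rt$-flat (equivalently torsion-free, since $\Rt$ is a valuation ring), so that the natural map $\phi\colon A\rightarrow A\otimes_{\Rt}\Kt$ is injective. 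The subscheme $Y$ is then cut out by some ideal $I\subset A\otimes_{\Rt}\Kt$, and $\mathfrak{Y}$ is cut out by $J := \phi^{-1}(I)$.

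The first key step is to show that $A/J$ is $\Rt$-flat. Since $\Rt$ is a valuation ring, it suffices to show $A/J$ is torsion-free. By construction $\phi$ induces an injection $A/J\hookrightarrow (A\otimes_{\Rt}\Kt)/I$, and the target is a $\Kt$-module, hence torsion-free over $\Rt$. The second step is to verify that the generic fiber of $\Spec(A/J)$ recovers $Y$, i.e., $(A/J)\otimes_{\Rt}\Kt = (A\otimes_{\Rt}\Kt)/I$. Using exactness of $-\otimes_{\Rt}\Kt$, this reduces to the identity $J\cdot (A\otimes_{\Rt}\Kt) = I$. The inclusion $\subset$ is immediate. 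For $\supset$, any $f\in I$ can be written as $f = \phi(a)\,s^{-1}$ with $a\in A$ and $s\in \Rt\setminus\{0\}$; then $\phi(a) = fs\in I$, so $a\in J$ and hence $f\in J\cdot(A\otimes_{\Rt}\Kt)$.

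For uniqueness, suppose $\mathfrak{Y}'\subset\mathfrak{X}$ is any closed subscheme satisfying the two conditions, locally cut out by some ideal $J'\subset A$. Flatness (= torsion-freeness) gives an injection $A/J'\hookrightarrow (A/J')\otimes_{\Rt}\Kt$, and condition (2) identifies the target with $(A\otimes_{\Rt}\Kt)/I$. Chasing this injection through $\phi$ forces $J' = \phi^{-1}(I) = J$, and uniqueness follows by gluing over an affine cover via Lemma \ref{lem:covering-image}.

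I do not expect a serious obstacle: the content of the lemma is essentially the equivalence between flatness over a valuation ring and torsion-freeness, together with the exactness of localization to the generic point. The most delicate step is the affine verification $J\otimes_{\Rt}\Kt = I$, where one must handle the fact that $\Kt$ is not a localization of $\Rt$ at a single element but rather at the multiplicative set $\Rt\setminus\{0\}$; this is handled cleanly by choosing an appropriate denominator $s\in\Rt\setminus\{0\}$ for each element of $I$ as above.
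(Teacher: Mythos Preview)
Your proposal is correct and follows essentially the same route as the paper: reduce to the affine case via Lemmas~\ref{lem: open-image} and~\ref{lem:covering-image}, then use that flatness over a valuation ring is torsion-freeness. The only difference is that the paper outsources the affine verification to \cite[Proposition~4.4]{G13} whereas you spell it out directly; one small point you gloss over but the paper makes explicit is that the morphism $Y\hookrightarrow\mathfrak{X}$ is quasi-compact (needed to invoke Lemma~\ref{lem: open-image}), which follows since $\Spec(\Kt)\to\Spec(\Rt)$ is quasi-compact.
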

  \begin{proof}
      We remark that the open immersion $X\hookrightarrow \mathfrak{X}$ is quasi-compact because $\Spec(\Kt)\rightarrow\Spec(\Rt)$ is quasi-compact. 
      Thus, the composition $Y\hookrightarrow X\rightarrow \mathfrak{X}$ is quasi-compact too.
      We already know that the statement holds in the case when $\mathfrak{X}$ is an affine scheme by \cite[Proposition. 4.4]{G13}. 

      First, we show the existence of $\mathfrak{Y}$. 
      Let $\mathfrak{Y}_0$ be the scheme theoretic closure of $Y$ in $\mathfrak{X}$. 
      Thus, for any affine open subset $U\subset \mathfrak{X}$, the scheme theoretic closure of $U\cap Y$ in $U$ is $U\cap\mathfrak{Y}_0$ by Lemma \ref{lem: open-image}. 
      For such $U$, $U$ is flat over $\Spec(\Rt)$, so that $(U\cap\mathfrak{Y}_0)_{\Kt} = U\cap Y$ and $U\cap\mathfrak{Y}_0$ is flat over $\Spec(\Rt)$ by \cite[Proposition. 4.4]{G13}. 
      By considering one affine open covering of $\mathfrak{X}$, $\mathfrak{Y}_0$ is flat over $\Spec(\Rt)$ and the generic fiber of $\mathfrak{Y}_0$ is equal to $Y$ as a closed subscheme of $X$. 

      Second, we show the uniqueness of $\mathfrak{Y}$. 
      Let $\mathfrak{Y}_1$ be a closed subscheme of $\mathfrak{X}$ such that $\mathfrak{Y}_1$ is flat over $\Spec(\Rt)$ and the generic fiber of $\mathfrak{Y}_1$ is equal to $Y$ as a closed subscheme of $X$. 
      Let $U$ be an affine open subset of $\mathfrak{X}$. 
      Then $U\cap \mathfrak{Y}_1$ is flat over $\Spec(\Rt)$ and the generic fiber of $U\cap \mathfrak{Y}_1$ is equal to $U\cap Y$ as a closed subscheme of $U\times_\Rt \Kt$.
      Because $U$ is flat over $\Spec(\Rt)$, $U\cap \mathfrak{Y}_1$ is the scheme theoretic closure of $U\cap Y$ in $U$ 
      by \cite[Proposition. 4.4]{G13}. 
      Thus, by considering one affine open covering of $\mathfrak{X}$, $\mathfrak{Y}_1$ is the scheme theoretic closure of $Y$ in $\mathfrak{X}$ by Lemma \ref{lem:covering-image}.
  \end{proof}
  The following lemma is well-known and referenced in \cite[\href{https://stacks.math.columbia.edu/tag/055C}{Tag 055C}]{SP}.  
  \begin{lemma}\label{lem: DVR-connected}
  \cite[\href{https://stacks.math.columbia.edu/tag/055C}{Tag 055C}]{SP}
      Let $S$ be a DVR, let $u$ be its uniformizer, let $Q$ be the fraction field of $S$, and let $\kappa$ be the residue field of $S$. 
      Let $B$ be a flat $S$-algebra and let $x\in B\otimes_S Q$ be an idempotent element. 
      We remark that extension morphism $B\rightarrow B\otimes_S Q$ is injective, and $u$ is not a zero divisor of $B$ because $B$ is flat over $S$. 
      We assume that $B\otimes_S \kappa$ is reduced. 
      Then we have $x\in B$.
  \end{lemma}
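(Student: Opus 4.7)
The plan is to write $x$ in lowest-denominator form with respect to powers of $u$, use idempotence to get a polynomial relation for the numerator in $B$, and then invoke reduced-ness of the special fiber to force the denominator to be $1$.

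First, I would use flatness to identify $B \otimes_S Q$ with the localization $B[u^{-1}]$: since $Q = S[u^{-1}]$, tensoring preserves this localization, and flatness of $B$ over $S$ makes the map $B \to B[u^{-1}]$ injective (equivalently, $u$ is a non-zero-divisor on $B$, as already noted in the statement). Hence every element of $B \otimes_S Q$ can be expressed as $b/u^n$ with $b \in B$ and $n \in \ZZ_{\geq 0}$, and I may choose such a representation with $n$ minimal.

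Next, I would exploit the idempotence $x^2 = x$. Clearing denominators in $B[u^{-1}]$ gives the relation $b^2 = u^n b$, and by the injectivity of $B \hookrightarrow B[u^{-1}]$ this equation actually holds in $B$. Now suppose for contradiction that $n \geq 1$. Reducing modulo $u$ yields $\overline{b}^2 = 0$ in $B/uB = B \otimes_S \kappa$. Since $B \otimes_S \kappa$ is reduced by hypothesis, we must have $\overline{b} = 0$, i.e., $b = u b'$ for some $b' \in B$. But then $x = b/u^n = b'/u^{n-1}$, which contradicts the minimality of $n$. Therefore $n = 0$ and $x = b \in B$, as required.

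I expect no real obstacle in this argument: it is a standard denominator-chasing trick, and the only subtle point is the opening identification $B \otimes_S Q = B[u^{-1}]$, which is immediate from $S$ being a DVR with uniformizer $u$. The hypotheses are used cleanly and minimally -- flatness for the injectivity of localization, and reduced-ness of the special fiber to kill the nilpotent $\overline{b}$.
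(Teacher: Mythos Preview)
Your proof is correct and is essentially identical to the paper's own argument: write $x = b/u^n$ with $n$ minimal, use idempotence to get $b^2 = u^n b$ in $B$, reduce modulo $u$ to obtain a nilpotent in the special fiber, and invoke reducedness to force $b \in uB$, contradicting minimality.
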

  \begin{proof}
      Let $a\in B$ and let $n\in\ZZ_{\geq 0}$ be a non-negative integer such that $x = \frac{a}{u^n}$ in $B\otimes_S Q$. 
      We can take $n$ minimal. 
      Because $x$ is an idempotent element, $\frac{a}{u^n} = \frac{a^2}{u^{2n}}$  in $B\otimes_S Q$. 
      Then $a^2 = au^n$  in $B$ because $u$ is not a zero divisor of $B$. 
      If $n \geq 1$, this shows that $a + (u)B$ is a nilpotent element in $B\otimes_S \kappa$. 
      However, by the assumption, $a\in (u)B$, so that this is a contradiction to the minimality of $n$. 
      Thus, we can take $n$ as $0$, so that $x\in B$.  
  \end{proof}
  For a general valuation ring, we do not know whether Lemma \ref{lem: DVR-connected} holds. 
  However, for our application, we can assume $S = \Rt = \cup_{n>0} k[[t^{1/n}]]$ and $B$ is flat and of finite presentation over $S$, then Lemma \ref{lem: DVR-connected} holds. 
  Indeed, this claim is proven in Lemma \ref{lem: stacks exchange}. 
  
  The following lemma serves as a preparation for the proof of Lemma \ref{lem: stacks exchange}. 
  \begin{lemma}\label{lem:R-of finite presentation}
      Let $A$ be a flat and of finite presentation ring over $\Rt$ and let $x\in A\otimes_\Rt \Kt$. 
      Then there exists a positive integer $l$ and a sub $R_l$-algebra $C$ of $A$ such that
      \begin{itemize}
          \item A $R_l$-algebra $C$ is flat over $R_l$.
          \item All morphisms in the following diagram are injective:
          \begin{equation*}
              \begin{tikzcd}
                  C\ar[r]\ar[d, hook] & C\otimes_{R_l} K_l\ar[d]\\
                  A\ar[r]&A\otimes_{\Rt} \Kt,
              \end{tikzcd}
          \end{equation*}
          where the right one is induced by the base change of $R_l$-morphism $C\hookrightarrow A$ to $K_l$. 
          \item A subring $C\otimes_{R_l} K_l$ of $A\otimes_{\Rt} \Kt$ contains $x$. 
          \item The inclusion $C\hookrightarrow A$ induce the isomorphism $C\otimes_{R_l}k \cong A\otimes_\Rt{k}$. 
      \end{itemize}
  \end{lemma}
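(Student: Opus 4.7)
The plan is to present $A$ concretely and descend the presentation to some $R_l$. Since $A$ is of finite presentation over $\Rt$, we can write $A = \Rt[y_1, \ldots, y_n]/(f_1, \ldots, f_m)$. Each $f_j$ has only finitely many nonzero coefficients, all lying in $\Rt = \bigcup_n R_n$, so there is an integer $l_0$ with $f_j \in R_{l_0}[y_1, \ldots, y_n]$ for every $j$. Similarly, lifting $x \in A\otimes_\Rt \Kt$ to a polynomial $p \in \Kt[y_1, \ldots, y_n]$, its finitely many coefficients lie in some $K_{l_1}$. Take $l$ to be the least common multiple of $l_0$ and $l_1$.

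Next I would define $B := R_l[y_1, \ldots, y_n]/(f_1, \ldots, f_m)$, where the $f_j$'s are now regarded as elements of $R_l[y_1, \ldots, y_n]$. Tensoring the presentation with $\Rt$ over $R_l$ yields a canonical identification $B \otimes_{R_l} \Rt \cong A$. The crucial fact is that $\Rt$ is faithfully flat over $R_l$: for every pair $l \mid l'$ the ring $R_{l'}$ is free over $R_l$ on the basis $\{(t^{1/l'})^i : 0 \leq i < l'/l\}$, so $\Rt$ is a filtered colimit of free (hence faithfully flat) $R_l$-modules, and the inclusion is a local homomorphism whose maximal ideal extends into the maximal ideal of $\Rt$. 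By faithfully flat descent of flatness, since $A = B \otimes_{R_l} \Rt$ is flat over $\Rt$, the ring $B$ is flat over $R_l$.

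With flatness in hand, the rest is formal. Flatness of $B$ over $R_l$ turns each of the injections $R_l \hookrightarrow \Rt$, $R_l \hookrightarrow K_l$, and $K_l \hookrightarrow \Kt$ into an injection after tensoring with $B$ over $R_l$, yielding injectivity of all four maps in the diagram; in particular I define $C$ to be the image of $B \hookrightarrow A$, canonically identified with $B$ itself. The element $x$ lies in $C \otimes_{R_l} K_l$ by the construction of $l$, since a representative $p$ already has coefficients in $K_l$. Finally, the identification $C \otimes_{R_l} k \cong A \otimes_\Rt k$ follows from associativity of the tensor product, via $A \otimes_\Rt k \cong (C \otimes_{R_l} \Rt) \otimes_\Rt k \cong C \otimes_{R_l} k$, and unwinding shows this isomorphism is induced by $C \hookrightarrow A$.

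The main step, and the only one requiring thought rather than unpacking definitions, is the faithfully flat descent in the middle paragraph. The remaining subtleties are pure bookkeeping: ensuring the representatives of the $f_j$ and of $x$ can really be chosen with coefficients in $R_l$ and $K_l$ respectively (using that a polynomial has finitely many coefficients), and checking that the identification $B \otimes_{R_l} \Rt \cong A$ is one of $\Rt$-algebras and is compatible with the inclusion $C \hookrightarrow A$.
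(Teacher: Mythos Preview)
Your proposal is correct and follows essentially the same approach as the paper: present $A$, descend the presentation to some $R_l$, and use faithful flatness of $\Rt$ over $R_l$ to establish flatness of the descended algebra. The only cosmetic difference is that the paper, instead of invoking the general faithfully flat descent of flatness, uses that $R_l$ is a DVR to reduce flatness to torsion-freeness and then checks that multiplication by $t^{1/l}$ on $C$ is injective after tensoring with the faithfully flat extension $\Rt$; this is the same idea unpacked by hand.
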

  \begin{proof}
      By the assumption, there exist $N\in\ZZ_{> 0}$ and $f_1, \ldots, f_r\in \Rt[X_1, \ldots, X_N]$ such that $\Rt[X_1, \ldots, X_N]/(f_1, \ldots, f_r)\cong A$ as an $\Rt$-algebra. 
      Moreover, there exists $g\in \Kt[X_1, \ldots, X_N]$ such that $g + (f_1, \ldots, f_r) = x$ by the identification with $\Kt[X_1, \ldots, X_N]/(f_1, \ldots, f_r)$ and $A\otimes_{\Rt} \Kt$. 
      Thus, by the definition of $\Rt$ and $\Kt$, there exists $l\in\ZZ_{>0}$ such that $f_1, \ldots, f_r\in R_l[X_1, \ldots, X_N]$ and $g\in K_l[X_1, \ldots, X_N]$. 
      
      Let $C$ denote a $R_l$-algebra $R_l[X_1, \ldots, X_N]/(f_1, \ldots, f_r)$. 
      We remark that $C\otimes_{R_l} \Rt\cong \Rt[X_1, \ldots, X_N]/(f_1, \ldots, f_r)$ as an $\Rt$-algebra, and $\Rt\otimes_{R_l}K_l = \Rt[t^{-\frac{1}{l}}] = \Kt$. 

      First, we show that $C$ is flat over $R_l$. 
      Because $R_l$ is a DVR, it is enough to show that $C$ is a torsion-free $R_l$-module. 
      Let $f\colon C\rightarrow C$ denote an endomorphism of $C$ defined by the scalar product of $t^\frac{1}{l}$. 
      Then it is enough to show that $f$ is injective. 
      Because $\Rt$ is a faithfully flat $R_l$-module, it is enough to show that $f\otimes\id_{\Rt}$, which is induced by $f$, is injective. 
      By the assumption, $\Rt[X_1, \ldots, X_N]/(f_1, \ldots, f_r)$ is flat over $\Rt$. 
      Thus, $f\otimes\id_{\Rt}$ is injective. 

      Second, the extension morphism $C\rightarrow \Rt[X_1, \ldots, X_N]/(f_1, \ldots, f_r)$ is injective because $C$ is flat over $R_l$. 
      Thus, we can regard $C$ as an $R_l$-sub algebra of $A$ on the identification with $\Rt[X_1, \ldots, X_N]/(f_1, \ldots, f_r)$ and $A$. 
      We can easily check that this inclusion $C\hookrightarrow A$ induces an isomorphism $C\otimes_{R_l}k\cong A\otimes_{\Rt}k$ because both rings are isomorphic to $k[X_1, \ldots, X_N]/(\overline{f_1}, \ldots, \overline{f_r})$, where $\overline{f_i}\in k[X_1, \ldots, X_N]$ is a polynomial whose each coefficient is replaced by constant terms of coefficients of $f_i$. 
      
      Third, in the diagram above, we show that all morphisms are injective. 
      Indeed, the left one is injective by the argument above. 
      The upper one is injective because $C$ is flat over $R_l$. 
      The right one is injective because $K_l$ is flat over $R_l$. 
      At this point, we remark that $A\otimes_\Rt \Kt = A\otimes_\Rt \Rt\otimes_{R_l}K_l = A\otimes_{R_l}K_l$. 
      The lower one is injective because $A$ is flat over $\Rt$. 

      Finally, we show that $x\in C\otimes_{R_l} K_l$. 
      Indeed, $g + (f_1, \ldots, f_r)\in C\otimes_{R_l} K_l$ and this element go to $x$ along the right morphism in the diagram above. 
  \end{proof}
  \begin{lemma}\label{lem: stacks exchange}
      Let $\eta$ denote a generic point of $\Spec(\Rt)$. 
      Let $X$ be a flat and locally of finite presentation scheme over $\Spec(\Rt)$. 
      For an open subscheme $U\subset X$, let $U_\eta$ denote the generic fiber of $U$. 
      We assume that $X$ is a connected scheme and the closed fiber $X_k$ is reduced.
      Then the following statements follow:
      \begin{enumerate}
          \item[(a)] Let $U$ be an open subscheme of $X$. 
          Then the following ring morphism induced by an open immersion $U_\eta\rightarrow U$ is injective: 
          \[
              \Gamma(U, \OO_{X})\rightarrow \Gamma(U_\eta, \OO_{X}).
          \]
          \item[(b)] A scheme $X_\eta$ is connected. 
      \end{enumerate}
  \end{lemma}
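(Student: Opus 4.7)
My plan is to prove (a) directly from flatness, and then use (a) together with Lemmas \ref{lem:R-of finite presentation} and \ref{lem: DVR-connected} to prove (b) by an idempotent-lifting argument.

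For (a), I would reduce to the affine case. Cover $U$ by affine opens $\{V_\alpha = \Spec(A_\alpha)\}$, where each $A_\alpha$ is flat (and locally of finite presentation) over $\Rt$. A section $f \in \Gamma(U, \OO_X)$ vanishes on $U_\eta$ if and only if each restriction $f|_{V_\alpha}$ vanishes on $(V_\alpha)_\eta = \Spec(A_\alpha \otimes_\Rt \Kt)$. Hence it suffices to show that $A_\alpha \to A_\alpha \otimes_\Rt \Kt$ is injective for each $\alpha$. This is immediate: $A_\alpha$ is flat, hence torsion-free, over the valuation ring $\Rt$, and $\Kt$ is the localization of $\Rt$ at the zero ideal, so tensoring with $\Kt$ over $\Rt$ is injective on torsion-free modules.

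For (b), suppose toward contradiction that $X_\eta$ is disconnected. Then there exists an idempotent $e \in \Gamma(X_\eta, \OO_X) = \Gamma(X_\eta, \OO_{X_\eta})$ with $e \notin \{0, 1\}$. I will show that $e$ lies in the image of the injection $\Gamma(X, \OO_X) \hookrightarrow \Gamma(X_\eta, \OO_X)$ given by (a); since $X$ is connected, any idempotent in $\Gamma(X, \OO_X)$ is $0$ or $1$, giving a contradiction. Choose an affine open cover $X = \bigcup_\alpha \Spec(A_\alpha)$ with each $A_\alpha$ flat and of finite presentation over $\Rt$, and let $e_\alpha \in A_\alpha \otimes_\Rt \Kt$ denote the restriction of $e$; it is again an idempotent. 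By Lemma \ref{lem:R-of finite presentation} applied to $A_\alpha$ and $e_\alpha$, there exist $l_\alpha \in \ZZ_{>0}$ and a flat $R_{l_\alpha}$-subalgebra $C_\alpha \subset A_\alpha$ with $e_\alpha \in C_\alpha \otimes_{R_{l_\alpha}} K_{l_\alpha}$ and $C_\alpha \otimes_{R_{l_\alpha}} k \cong A_\alpha \otimes_\Rt k$. Since $X_k$ is reduced, $A_\alpha \otimes_\Rt k$ is reduced, hence so is $C_\alpha \otimes_{R_{l_\alpha}} k$. As $R_{l_\alpha}$ is a DVR, Lemma \ref{lem: DVR-connected} now applies to give $e_\alpha \in C_\alpha \subset A_\alpha$.

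Thus each $e_\alpha$ lifts to an element $\widetilde{e}_\alpha \in A_\alpha = \Gamma(\Spec A_\alpha, \OO_X)$, unique by the injectivity from (a). On overlaps, both $\widetilde{e}_\alpha$ and $\widetilde{e}_\beta$ restrict over the generic fiber to the same element $e|_{(\Spec A_\alpha \cap \Spec A_\beta)_\eta}$, and invoking (a) for the intersection shows they coincide on $\Spec A_\alpha \cap \Spec A_\beta$. Hence they glue to a global $\widetilde{e} \in \Gamma(X, \OO_X)$, which is idempotent since idempotence may be checked locally, and which restricts to $e$ on $X_\eta$. Connectedness of $X$ forces $\widetilde{e} \in \{0, 1\}$, contradicting the nontriviality of $e$ via the injection from (a). The main technical point will be the compatibility and gluing of the local lifts, but this reduces cleanly to (a); the rest is a direct application of the approximation lemma and the DVR idempotent lifting lemma.
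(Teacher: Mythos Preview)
Your proof is correct and follows essentially the same approach as the paper: part (a) by reduction to affines and torsion-freeness from flatness, and part (b) by contradiction via a nontrivial idempotent, with the local lifting accomplished through Lemma~\ref{lem:R-of finite presentation} followed by Lemma~\ref{lem: DVR-connected}, and the gluing justified by (a). Your write-up is in fact slightly more explicit than the paper's about the gluing step on overlaps, but the argument is the same.
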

  \begin{proof}
      We prove the statements from (a) to (b). 
      \begin{enumerate}
          \item[(a)] When $U$ is an affine open subscheme of $X$, the statement holds. 
          Indeed, let $A$ be a ring such that $\Spec(A) = U$. 
          Then $A$ is flat over $\Rt$. 
          Thus, an extension map $A\rightarrow A\otimes_{\Rt}\Kt$ is injective. 
          In the general case, $U$ can be covered by open affine subschemes of $X$. 
          Because $\OO_X$ is a sheaf over $X$, the statement holds. 
          \item[(b)] We show that the statement holds by a contradiction. 
          We assume that $X_\eta$ is not connected. 
          Then there exists an idempotent element $e\in\Gamma(X_\eta, \OO_X)$ such that $e\neq 0$ and $e\neq1$. 
          Let $U$ be an open affine subscheme of $X$ and let $A$ be a ring such that $U = \Spec(A)$ and $A$ is of finite presentation over $\Rt$. 
          We remark that $A$ is flat over $\Rt$ and $U_\eta = \Spec(A[\frac{1}{t}])$, so that $e|_{U_\eta}\in A\otimes_{\Rt} \Kt$. 
          Then for $A$ and $e|_{U_\eta}\in A\otimes_{\Rt} \Kt$, there exists $l\in\ZZ_{>0}$ and a sub $R_l$-algebra $C$ of $A$ such that $l$ and $C$ satisfy the conditions in Lemma \ref{lem:R-of finite presentation}. 
          Thus, $e|_{U_\eta}\in C\otimes_{R_l}K_l$ and $C\otimes_{R_l}K_l\rightarrow A\otimes_{\Rt}\Kt$ is injective. 
          Hence, $e|_{U_\eta}$ is an idempotent element of $C\otimes_{R_l}K_l$. 
          Moreover, $C$ is a flat over a DVR $R_l$ and $C\otimes_{R_l}k\cong A\otimes_{\Rt} k$ is reduced by the assumption. 
          Thus, by Lemma \ref{lem: DVR-connected}, we have $e|_{U_\eta}\in C$ and $e|_{U_\eta}\in A$. 
          This shows that $e|_{U_\eta}\in\Gamma(U, \OO_X)$ for any affine open subscheme $U$ of $X$. 
          Hence, by (a), we can check that $e\in\Gamma(X, \OO_X)$ and $e$ is an idempotent element in $\Gamma(X, \OO_X)$. 
          The element $e\in\Gamma(X, \OO_X)$ is a non-trivial idempotent element of $\Gamma(X, \OO_X)$, but it is a contradiction to the assumption that $X$ is connected. 
      \end{enumerate}
  \end{proof}
  \subsection{Lemmas related to a commutative algebra}
In this subsection, we correct for lemmas related to commutative algebra. 

The following lemma claims that general members in the linear system, which is generated by units, are non-empty if the dimension of the linear system is greater than $1$.   
  \begin{lemma}\label{lem: dim 2 lemma}
    Let $R$ be an integral domain of finite type over an algebraically closed field $k$, let $\chi_1, \ldots, \chi_r$ be units in $R$, let $V$ denote the $k$-linear subspace of $R$ generated by $\{\chi_1, \ldots, \chi_r\}$, let $f$ denote $\sum_{1\leq l\leq r}\chi_l t_l\in R[t_1, \ldots, t_n]$, let $X$ denote $\Spec(R)$, and let $V(f)$ denote the closed subscheme of $X\times_k \A^r_k$ defined by $f$. 
    We assume that $\dim_k(V)\geq 2$. 
    Then the following statements hold:
    \begin{itemize}
      \item[(a)] The restriction map $\pr_2|_{V(f)}\colon V(f)\rightarrow \A^r_k$ is dominant. 
      \item[(b)] There exists an open subset $W\subset \A^r_k$ such that $\sum_{1\leq l\leq r}a_l\chi_l\notin R^*$ 
      for any $a = (a_l)_{1\leq l\leq r}\in W(k)$. 
    \end{itemize}
  \end{lemma}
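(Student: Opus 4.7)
The plan is to prove (a) by a dimension count on the incidence scheme $V(f) \subset X \times_k \mathbb{A}^r_k$, and then deduce (b) from (a) via Chevalley's constructibility theorem. First I would compute $\dim V(f)$ using the projection $\pi_1 \colon V(f) \to X$. Since each $\chi_l$ is a unit on $X$, the values $\chi_l(x)$ are all nonzero, so each fiber of $\pi_1$ is the hyperplane $\{a \in \mathbb{A}^r : \sum_l a_l \chi_l(x) = 0\}$ in $\mathbb{A}^r$, irreducible of dimension $r-1$; since the defining polynomial $\sum_l \chi_l t_l$ is irreducible in $R[t_1,\dots,t_r]$ (the coefficients $\chi_l$ being units), $V(f)$ is itself irreducible, of dimension $\dim X + r - 1$.

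For the second projection $\pi_2 \colon V(f) \to \mathbb{A}^r_k$ I would introduce the $k$-linear subspace $V_0 = \{a \in \mathbb{A}^r : \sum_l a_l \chi_l = 0 \text{ in } R\}$, which has dimension $r - \dim_k V$; the hypothesis $\dim_k V \geq 2$ gives the crucial bound $\dim V_0 \leq r - 2$. Over a point of $V_0$ the fiber of $\pi_2$ is all of $X$, while over a point of $\mathbb{A}^r \setminus V_0$ the fiber is a principal hypersurface of dimension at most $\dim X - 1$. Consequently $\pi_2^{-1}(V_0)$ has dimension at most $\dim X + r - 2$, so the open subset $Z := V(f) \setminus \pi_2^{-1}(V_0)$ retains the full dimension $\dim X + r - 1$. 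The fiber-dimension inequality applied to $\pi_2|_Z$ then gives $\dim \overline{\pi_2(Z)} \geq \dim Z - (\dim X - 1) = r$, forcing $\pi_2$ to be dominant, which is (a).

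For (b), the image $\pi_2(V(f))$ is constructible by Chevalley's theorem and dense by (a), so it contains a non-empty open subset $W \subset \mathbb{A}^r_k$. For any $a \in W(k)$ there exists $x \in X$ with $\sum_l a_l \chi_l(x) = 0$; the Nullstellensatz (applicable since $k$ is algebraically closed and $R$ is of finite type over $k$) then shows that $\sum_l a_l \chi_l$ is not a unit of $R$. The only delicate point of the whole argument is the codimension bound $\dim V_0 \leq r-2$, which is precisely where the hypothesis $\dim_k V \geq 2$ is used; once this is in hand the rest is a routine fiber-dimension calculation, so I do not anticipate any substantive obstacle.
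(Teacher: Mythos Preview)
Your argument is correct, and part (b) is exactly the paper's argument (Chevalley plus the observation that a nonempty fiber over $a$ means $\sum a_l\chi_l$ generates a proper ideal). For part (a), both you and the paper begin by noting that $\pr_1\colon V(f)\to X$ exhibits $V(f)$ as an $\mathbb{A}^{r-1}$-bundle over $X$, hence irreducible of dimension $\dim X + r - 1$; after that the arguments diverge.

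The paper proceeds by contradiction: if $\pr_2$ were not dominant, its image $Z$ would be a proper irreducible closed subset of $\mathbb{A}^r$, but for each closed point $x\in X$ the fiber $\pr_1^{-1}(x)$ is the hyperplane $\{\sum_l \chi_l(x)t_l=0\}$, which lies inside $Z$; hence $Z$ is a single hyperplane and all these hyperplanes coincide, forcing $\chi_l/\chi_1$ to be constant and thus $\dim_k V=1$, a contradiction. Your route is a direct fiber-dimension count: you introduce the kernel $V_0=\{a:\sum a_l\chi_l=0\}$, bound $\dim V_0\le r-2$ using $\dim_k V\ge 2$, strip off $\pi_2^{-1}(V_0)$, and apply the fiber-dimension inequality to what remains. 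The paper's contradiction is a bit slicker (no semicontinuity of fiber dimension needed), while your approach makes the role of the hypothesis $\dim_k V\ge 2$ more transparent through the explicit bound on $V_0$. One small remark: your claim that $f=\sum_l\chi_l t_l$ generates a prime ideal is not literally justified by ``irreducibility'' alone since $R$ need not be a UFD, but it follows immediately from the isomorphism $R[t_1,\ldots,t_r]/(f)\cong R[t_2,\ldots,t_r]$ obtained by solving for $t_1$ using that $\chi_1\in R^\times$.
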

  \begin{proof}
    We prove the statements from (a) to (b). 
    \begin{itemize}
      \item[(a)] 
      We show that the statement holds by contradiction. 
      We assume that $\pr_2|_{V(f)}$ is not dominant. 
      By the definition of $f$, $\pr_1|_{V(f)}\colon V(f)\rightarrow X$ is a vector bundle of rank $r - 1$. 
      In particular, $V(f)$ is irreducible and $\dim(V(f)) = \dim(X) + r - 1$. 
      Let $Z$ be the scheme theoretic image of $\pr_2|_{V(f)}$. 
      By the argument above, $Z$ is integral, $\dim(Z) = r - 1$, and $V(f) = X\times Z$. 
      Let $x\in X(k)$. 
      Then $Z$ is a closed subscheme of $\A^r_k$ defined by $\sum_{1\leq l\leq r}\chi_l(x)t_l$ by considering $V(f)\cap {\{x\}\times \A^r_k}$. 
      Thus, there exists $g\in R^*$ such that $\chi_l = \chi_l(x)g$ for any $1\leq l\leq r$. 
      However, this is a contradiction to the assumption of $\dim_k(V)$.     
      \item[(b)] By Chevalley's Theorem, the image of $\pr_2|_{V(f)}$ contains a dense open subset $W$ of $\A^r_k$. 
      Thus, $\sum_{1\leq l\leq r}a_l\chi_l\notin R^*$ for any $a \in W(k)$. 
    \end{itemize}
  \end{proof}
  The following lemma is a generalization of the fact related to the prime ideal of a polynomial ring over a UFD. 
  \begin{lemma}\label{lem: irreducible polynomial}
      Let $R$ be a Noetherian regular integral ring, let $a, b\in R\setminus\{0\}$, and let $P\subset R[t]$ be an ideal generated by $at+b$. 
      We assume that $\mathrm{div}(a)$ and $\mathrm{div}(b)$ have no common prime divisors of $\Spec(R)$. 
      Then $P$ is a prime ideal. 
  \end{lemma}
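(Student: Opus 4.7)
The plan is to show $R[t]/(at+b)$ is an integral domain by embedding it in a localization. First, observe that in $R[t][1/a] = R[1/a][t]$ we have $at+b = a(t + b/a)$, so $(at+b)$ and $(t+b/a)$ generate the same ideal, and $R[1/a][t]/(t+b/a) \cong R[1/a]$ is a domain. Hence $(at+b)R[1/a][t]$ is prime. The natural map $R[t]/(at+b) \to R[1/a][t]/(at+b) = R[1/a]$ has kernel equal to the $a$-torsion of $R[t]/(at+b)$, so the whole lemma reduces to showing that $a$ is a nonzerodivisor modulo $(at+b)$, i.e.\ that $(at+b)R[t] : a = (at+b)R[t]$.

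To prove this, I would assume $af(t) \in (at+b)R[t]$ and write $af(t) = (at+b)g(t)$ for some $g(t) \in R[t]$. Rearranging gives $b g(t) = a(f(t) - t g(t))$, so writing $g(t) = \sum g_i t^i$ we obtain $b g_i \in aR$ for every coefficient $g_i \in R$. The remaining task is to upgrade this to $g_i \in aR$; once that is done, $g(t) = a h(t)$ in $R[t]$ and the cancellation $f(t) = (at+b) h(t)$ is valid because $R[t]$ is a domain and $a \neq 0$.

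For the upgrade, I use that $R$, being a Noetherian regular domain, is normal, hence a Krull domain, so $R = \bigcap_{\mathfrak{p}} R_{\mathfrak{p}}$ as the intersection ranges over height one primes. Fix such a $\mathfrak{p}$. The hypothesis that $\mathrm{div}(a)$ and $\mathrm{div}(b)$ share no prime divisors translates to $\min(v_{\mathfrak{p}}(a), v_{\mathfrak{p}}(b)) = 0$. From $v_{\mathfrak{p}}(bg_i) \geq v_{\mathfrak{p}}(a)$ I split into two cases: if $v_{\mathfrak{p}}(a) = 0$ then trivially $v_{\mathfrak{p}}(g_i) \geq 0 = v_{\mathfrak{p}}(a)$; if $v_{\mathfrak{p}}(a) > 0$ then $v_{\mathfrak{p}}(b) = 0$, forcing $v_{\mathfrak{p}}(g_i) \geq v_{\mathfrak{p}}(a)$. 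In either case $g_i/a \in R_{\mathfrak{p}}$, and intersecting over all height one primes yields $g_i/a \in R$.

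I do not expect any serious obstacle here: the argument is just the classical "content" calculation with the twist that we have to replace the UFD hypothesis one usually sees with the weaker regular+Noetherian hypothesis, which is precisely what activates the Krull-intersection characterization of divisibility. The only care needed is to state the Krull-domain facts correctly and to verify that the divisor hypothesis is exactly what ensures the case distinction at each height one prime.
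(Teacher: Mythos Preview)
Your proof is correct and takes a genuinely different route from the paper's. The paper argues geometrically: it localizes at primes $\mathfrak{p}$ of $R$, uses that $R_{\mathfrak{p}}$ is a UFD (Auslander--Buchsbaum) together with the coprimality hypothesis to see that $(at+b)R_{\mathfrak{p}}[t]$ is prime, deduces that $P$ is radical, and then analyzes the minimal primes over $P$ via Krull's principal ideal theorem to show they all lie over the generic point of $\Spec(R)$, where the degree-$1$ observation finishes the job. Your argument is more elementary and direct: you invert $a$ to reduce to showing that $a$ is a nonzerodivisor on $R[t]/(at+b)$, and you verify this by a coefficient computation combined with the Krull-domain characterization of divisibility. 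Your approach avoids Auslander--Buchsbaum and the minimal-prime bookkeeping, needing only normality of $R$; the paper's approach, on the other hand, makes the geometric picture (fibers of $\Spec R[t] \to \Spec R$) more transparent and would adapt more readily to higher-degree statements. Both are clean; yours is arguably the shorter path for this particular lemma.
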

  \begin{proof}
      Let $\mathfrak{q}$ be a prime ideal of $R[t]$ and let $\mathfrak{p}$ denote a prime ideal $\mathfrak{q}\cap R$ of $R$. 
      By the assumption, the ring $R_{\mathfrak{p}}[t]$ is a UFD and $P\cdot R_{\mathfrak{p}}[t]$ is a prime ideal of $R_{\mathfrak{p}}[t]$. 
      In particular, $P\cdot R[t]_{\mathfrak{q}}$ is also a prime ideal or a trivial ideal of $R[t]_{\mathfrak{q}}$, and hence, $P$ is radical. 
      Let $P = \cap \mathfrak{q}_i$ be a minimal prime decomposition of $P$ and let $\mathfrak{p}_i$ denote  $\mathfrak{q}_i\cap R$ for each $i$. 
      By Krull's principal ideal theorem, the height of all $\mathfrak{q}_i$ is $1$, and the height of $\mathfrak{p}_i$ is $0$ or $1$. 
      If the height of $\mathfrak{p}_i$ is $1$, then $\mathfrak{q}_i = \mathfrak{p}_i[t]$ and $a, b\in \mathfrak{p}_i$, but it is a contradiction to the assumption. 
      Hence, all $\mathfrak{p}_i$ is $0$. 
      This indicates that all $\mathfrak{q}_i$ are on the generic fiber of $\Spec(R[t])\rightarrow \Spec(R)$. 
      Because the degree of $at+b$ is $1$, $P$ is a prime ideal.  
  \end{proof}
  The following lemma shows that general pairs in the two linear systems, which are generated by units, are co-prime. 
  \begin{lemma}\label{lem: no common divisor}
      Let $X$ be a smooth integral affine scheme of finite type over $k$, let $A$ be a global section ring of $X$, and let $\chi_1, \ldots, \chi_r, \chi'_1, \ldots, \chi'_s$ be units in $A$. 
      
      For $(a, b) = (a_1, \ldots, a_r, b_1, \ldots, b_s)\in \A^{r+s}_k(k)$, let $f$ denote $\sum_{1\leq i\leq r}a_i\chi_i$ and let $g$ denote $\sum_{1\leq j\leq s}b_j\chi'_j$. 
      Then there exists a nonempty open subset $V$ of $\A^{r+s}_k$ such that $\mathrm{div}(f)$ and $\mathrm{div}(g)$ have no common prime divisors of $X$ for any $(a, b)\in V(k)$. 
  \end{lemma}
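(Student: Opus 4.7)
The plan is to control the intersection $V(f)\cap V(g)\subset X$ by an incidence variety argument: we will bound its dimension using the fact that each $\chi_i$ and $\chi'_j$ is a unit on $X$, and then deduce that no prime divisor of $X$ can be contained in this intersection for a generic choice of parameters.

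First, I will set up the universal family. Consider the closed subscheme
\[
  W = \{((a,b),x)\in \A^{r+s}_k\times X \mid f(x) = g(x) = 0\},
\]
and let $\pi_1\colon W\to \A^{r+s}_k$ and $\pi_2\colon W\to X$ denote the two projections. The key observation is that, because every $\chi_i$ and $\chi'_j$ is a unit in $A$, the two linear functionals $(a_1,\ldots,a_r)\mapsto \sum a_i\chi_i(x)$ and $(b_1,\ldots,b_s)\mapsto\sum b_j\chi'_j(x)$ are nonzero for every closed point $x\in X$. Hence $\pi_2$ realizes $W$ as a vector subbundle of rank $r+s-2$ of the trivial bundle $\A^{r+s}_k\times X\to X$. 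In particular, $W$ is an irreducible smooth scheme of dimension $\dim W = \dim X + r + s - 2$.

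Next, I will analyze $\pi_1$. By the theorem on upper semi-continuity of fiber dimension together with the irreducibility of $W$, there are two cases. If $\pi_1$ is not dominant, set $V = \A^{r+s}_k\setminus \overline{\pi_1(W)}$; this is a nonempty open subset, and for $(a,b)\in V(k)$ the fiber $\pi_1^{-1}((a,b)) = V(f)\cap V(g)$ is empty. If $\pi_1$ is dominant, then there exists a dense open subset $V\subset \A^{r+s}_k$ over which every fiber of $\pi_1$ has dimension equal to $\dim W - (r+s) = \dim X - 2$.

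Finally, for any $(a,b)\in V(k)$, the intersection $V(f)\cap V(g)$ has dimension at most $\dim X - 2$. Any common prime divisor $D$ of $\mathrm{div}(f)$ and $\mathrm{div}(g)$ would be an irreducible closed subset of codimension $1$ contained in $V(f)\cap V(g)$, which would force $\dim X - 1 \leq \dim X - 2$, a contradiction. Hence $V$ has the required property. The only delicate point is verifying that $W$ really is equidimensional of the expected dimension $\dim X+r+s-2$, and this rests entirely on the unit hypothesis on the $\chi_i$ and $\chi'_j$; without it the fibers of $\pi_2$ could jump in dimension over a zero locus and inflate $\dim W$.
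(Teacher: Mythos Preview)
Your proof is correct and follows essentially the same strategy as the paper: both set up the incidence variety $W=\{((a,b),x):f(x)=g(x)=0\}$, observe via the unit hypothesis that the projection to $X$ is a rank $r+s-2$ vector bundle (hence $\dim W=\dim X+r+s-2$), and then bound the generic fiber dimension of the projection to $\A^{r+s}_k$ to conclude. The only cosmetic difference is that the paper invokes generic flatness to produce the open set $V$, whereas you appeal to upper semi-continuity of fiber dimension together with irreducibility of $W$; both routes yield the same conclusion.
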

  \begin{proof}
      Let $B$ be a polynomial ring $A[x_1, \ldots, x_r, y_1, \ldots y_s]$ over $A$, let $F$ denote $\sum_{1\leq i\leq r}x_i\chi_i\in B$, let $G$ denote $\sum_{1\leq j\leq s}y_j\chi'_j\in B$, let $Y$ denote $\Spec(B)$, let $Z$ denote the closed subscheme of $Y$ associated with the ideal generated by $F$ and $G$, and let $p\colon Y\rightarrow X$ and $q\colon Y\rightarrow \A^{r+s}_k$ be projections. 
      Then we can check that $p|_Z\colon Z\rightarrow X$ is a vector bundle of rank $r+s-2$ over an integral scheme $X$. 
      By the generic flatness, there exists a dense open subset $V$ of $\A^{r+s}_k$, such that $q|_{q^{-1}(V)\cap Z}\colon q^{-1}(V)\cap Z\rightarrow V$ is flat. 
      Let $(a, b)\in V(k)$ and let $W$ be a fiber of $q|_{q^{-1}(V)\cap Z}$ at $(a, b)$.  
      If $W = \emptyset$, $f$ and $g$ generate a trivial ideal of $A$, and hence, $\mathrm{div}(f)$ and $\mathrm{div}(g)$ have no common component. 
      In the other case, the dimension of the irreducible component of $W$ is $\dim(X) - 2$, and hence, $\mathrm{div}(f)$ and $\mathrm{div}(g)$ also have no common component. 
  \end{proof}
\bibliographystyle{amsplain}
\bibliography{yoshino-bib}
\end{document}